\numberwithin{equation}{section}
\definecolor{theoremColor}{rgb}{0.9,1,0.9}
\definecolor{proofColor}{rgb}{0.9,0.9,0.9}
\begin{document}

\title{ {\Huge \textbf{Summability Calculus}}}
\author{
	{\Large Ibrahim M. Alabdulmohsin}\\[5pt]
	{\small ibrahim.alabdulmohsin@kaust.edu.sa}\\[12pt]
	{\small \textbf{Computer, Electrical, and Mathematical Sciences and Engineering (CEMSE) Division}}\\[5pt]
	{\small \textbf{King Abdullah University of Science and Technology (KAUST)}}\\[5pt]
	{\small Thuwal 23955-6900, Kingdom of Saudi Arabia}
	}
\date{\today}

\maketitle 

\newcommand{\simplefsum}[5]{ #1(#2)=\sum_{#3=#4}^#2 #5(#3)}
\newcommand{\simplefinitesum} {$\simplefsum{f}{n}{k}{a}{g}$}
\newcommand{\convfsum}[5]{#1(#2)=\sum_{#3=#4}^#2 #5(#3, #2)}
\newcommand{\convolutedsum}{$\convfsum{f}{n}{k}{a}{g}$}

\newtheoremstyle{thrmstyle}{12pt}{}{\slshape}{}{\scshape}{.}{ }{}
\theoremstyle{thrmstyle}
\newtheorem{theorem}{Theorem}[section]
\newtheorem{lemma}{Lemma}[section]
\newtheorem{claim}{Claim}[section]
\newtheorem{corollary}{Corollary}[section]
\theoremstyle{definition}
\newtheorem{definit}{Definition}

\hyphenpenalty=1000

\begin{abstract}In this paper, we present the foundations of Summability Calculus, which places various established results in number theory, infinitesimal calculus, summability theory, asymptotic analysis, information theory, and the calculus of finite differences under a single simple umbrella. Using Summability Calculus, any given finite sum of the form $f(n) = \sum_{k=a}^n s_k\, g(k,n)$, where $s_k$ is an arbitrary periodic sequence, becomes immediately \emph{in analytic form}. Not only can we differentiate and integrate with respect to the bound $n$ without having to rely on an explicit analytic formula for the finite sum, but we can also deduce asymptotic expansions, accelerate convergence, assign natural values to divergent sums, and evaluate the finite sum for any $n\in\mathbb{C}$. This follows because the discrete definition of the simple finite sum $f(n) = \sum_{k=a}^n s_k\, g(k,n)$ embodies a \emph{unique natural} definition for all $n\in\mathbb{C}$. Throughout the paper, many established results are strengthened such as the Bohr-Mollerup theorem, Stirling's approximation, Glaisher's approximation, and the Shannon-Nyquist sampling theorem. In addition, many celebrated theorems are extended and generalized such as the Euler-Maclaurin summation formula and Boole's summation formula. Finally, we show that countless identities that have been proved throughout the past 300 years by different mathematicians using different approaches can actually be derived in an elementary straightforward manner using the rules of Summability Calculus. \end{abstract}

\tableofcontents

\addtocontents{lof}{\linespread{2}\selectfont} \listoffigures



\chapter{Introduction} \label{ChaptIntro}
\epigraph{\emph{One should always generalize}}{Carl Jacobi (1804 -- 1851)}

\section{Preliminary Discussion}\label{Sect1dot1}
Generalization has been an often-pursued goal since the early dawn of mathematics. It can be loosely defined as the process of introducing new systems in order to extend consistently the domains of existing operations while still preserving as many prior results as possible. Generalization has manifested in many areas of mathematics including fundamental concepts such as numbers and geometry, systems of operations such as the arithmetic, and even domains of functions as in analytic continuation. One historical example of mathematical generalization that is of particular interest in this paper is extending the domains of special \emph{discrete} functions such as finite sums and products to non-integer arguments. Such process of generalization is quite different from mere interpolation, where the former is meant to preserve some fundamental properties of discrete functions as opposed to mere blind fitting. Consequently, generalization has intrinsic significance that provides deep insights and leads naturally to an evolution of mathematical thought.  

For instance if one considers the discrete power sum function $S_m(n)$ given in Eq \ref{PSDisctrete} below, it is trivial to realize that an infinite number of analytic functions can correctly interpolate it. In fact, let $S$ be one such function, then the sum of $S$ with any function $p(n)$ that satisfies $p(n)=0$ for all $n\in\mathbb{N}$ will also interpolate correctly the discrete values of $S_m(n)$. However, the well-known Bernoulli-Faulhaber formula for the power sum function additionally \emph{preserves the recursive property} of $S_m(n)$ given in Eq \ref{PSRecursive} for all real values of $n$, which makes it a suitable candidate for a generalized definition of power sums. In fact, it is indeed the \emph{unique} family of \emph{polynomials} that enjoys such advantage; hence it is the unique most natural generalized definition of power sums if one considers polynomials to be the simplest of all possible functions. The Bernoulli-Faulhaber formula is given in Eq \ref{BFformula}, where $B_r$ are Bernoulli numbers and $B_1=-\tfrac{1}{2}$.
\begin{equation} \label{PSDisctrete} S_m(n)=\sum_{k=1}^n k^m  \end {equation} 
\begin{equation} \label{PSRecursive} S_m(n)=n^m+S_m(n-1)  \end{equation} 
\begin{equation} \label{BFformula} S_m(n)=\frac{1}{m+1} \sum_{j=0}^n (-1)^j \binom{m+1}{j} B_j n^{m+1-j}  \end{equation}

Looking into the Bernoulli-Faulhaber formula for power sums, it is not immediately obvious, without aid of the finite difference method, why $S_m(n)$ can be a polynomial with degree $m+1$, even though this fact becomes literally trivial using the simple rules of Summability Calculus presented in this paper. Of course, once a correct guess of a closed-form formula to a finite sum or product is available, it is usually straightforward to prove correctness using the method of induction. However, arriving at the right guess itself often relies on intricate ad hoc approaches. 

A second well-known illustrative example to the subject of mathematical generalization is extending the definition of the discrete factorial function to non-integer arguments, which had withstood many unsuccessful attempts by reputable mathematicians such as Bernoulli and Stirling until Euler came up with his famous answer in the early 1730s in a series of letters to Goldbach that introduced his infinite product formula and the Gamma function \cite{Euler1738, Davis1959,MathWorldGamma}. Indeed, arriving at the Gamma function from the discrete definition of factorial was not a simple task, needless to mention proving that it was the \emph{unique} natural generalization of factorials as the Bohr-Miller theorem nearly stated \cite{Krantz1999}. Clearly, a systematic approach is needed in answering such questions so that it can be readily applied  to the potentially infinite list of special discrete functions such as the factorial-like \emph{hyperfactorial} and \emph{superfactorial} functions, defined in Eq \ref{hyperf} and Eq \ref{superf} respectively (for a brief introduction to such family of functions, the reader is kindly referred to \cite{MathWorldHypFactorial} and \cite{MathWordGFunct}). Summability Calculus provides us with the answer. 
\begin{align}
\label{hyperf} \textbf{Hyperfactorial:} &\quad \quad H(n)=\prod_{k=1}^n k^k \\[5pt]
\label{superf} \textbf{Superfactorial:} &\quad \quad S(n)=\prod_{k=1}^n k!
\end{align}

Aside from obtaining exact closed-form generalizations to discrete functions as well as performing infinitesimal calculus, which aids in computing \emph{n}th-order approximation among other applications, deducing asymptotic behavior is a third fundamental problem that could have enormous applications, too. For example, Euler showed that the harmonic sum was asymptotically related to the logarithmic function, which led him to introduce the famous constant $ \lambda$ that bears his name, which is originally defined by Eq \ref{EulerConst} \cite{EulerSumProg,SandifierGC}. Stirling, on the other hand, presented the famous asymptotic formula for the factorial function given in Eq \ref{StirlingAppx}, which is used almost ubiquitously such as in the study of algorithms and data structures \cite{Robbins1955, CLRAlgorithm}. Additionally, Glaisher in 1878 presented an asymptotic formula for the hyperfactorial function given in Eq \ref{hyperfAppx}, which led him to introduce a new constant, denoted $A$ in Eq \ref{hyperfAppx}, that is intimately related to Euler's constant and the Riemann zeta function \cite{Glashier1893,MathWorldGlashConstant,MathWorldHypFactorial}. 
\begin{equation}\label{EulerConst} \lim_{n\to\infty} \Bigl\{ \sum_{k=1}^n \frac{1}{k} - \log{n}\Bigr\}=\lambda \end{equation} 
\begin{equation}\label{StirlingAppx} n! \sim \sqrt{2\pi n} \Bigl(\frac{n}{e}\Bigr)^n \end{equation}
\begin{equation}\label{hyperfAppx} H(n) \sim A \; n^{\frac{n^2+n}{2}+\frac{1}{12}} \,e^{-\frac{n^2}{4}}, \;\; A\approx 1.2824 \end{equation}

While these results have been deduced at different periods of time in the history of mathematics using different approaches, a fundamental question that arises is whether there exists a simple universal calculus that bridges all of these different results together and makes their proofs almost elementary. More crucially, we desire that such calculus yield an elementary approach for obtaining asymptotic behavior to oscillating sums as well, including, obviously, the special important class of alternating sums. The answer to this question is, in fact, in the affirmative and that universal calculus is Summability Calculus.

The primary statement of Summability Calculus is quite simple: given a discrete finite sum of the form $\convfsum{f}{n}{k}{a}{g}$, then such finite sum is in \emph{analytic form}. Not only can we perform differentiation and integration with respect to $n$ without having to rely on an explicit formula, but we can also immediately evaluate the finite sum for fractional values of $n$, deduce asymptotic expressions even if the sum is oscillating, accelerate convergence of the infinite sum, assign natural values to divergent sums, and come up with a potentially infinite list of interesting identities as a result; all without having to \emph{explicitly} extend the domain of $f(n)$ to non-integer values of $n$. To reiterate, this follows because the expression  $\convfsum{f}{n}{k}{a}{g}$ embodies within its discrete definition an immediate natural definition of $f(n)$ for all $n \in\mathbb{C}$ as will be shown in this paper.

Summability Calculus vs. conventional infinitesimal calculus can be viewed in light of an interesting duality. In traditional calculus, infinitesimal behavior at small intervals is used to understand global behavior at all intervals that lie within the same analytic disc. For instance, Taylor series expansion for a function $f(x)$ around a point $x=x_0$ is often used to compute the value of $f(x)$ for some points $x$ outside $x_0$, sometimes even for the entire function's domain. Thus, the behavior of the analytic function at an infinitesimally small interval is sufficient to deduce global behavior of the function. Such incredible property of analytic functions, a.k.a. \emph{rigidity}, is perhaps the cornerstone of traditional calculus that led its wonders. In Summability Calculus, on the other hand, we follow the contrary approach by employing our incomplete knowledge about the global behavior of a function to reconstruct accurately its local behavior at any desired interval.

The aforementioned duality brings to mind the well-known Sampling Theorem. Here, if a function is \emph{bandlimited}, meaning that its non-zero frequency components are restricted to a bounded region in the frequency domain, then discrete samples taken at a sufficiently high rate can be used to represent the function completely \emph{without loss of any information}. In such case, incomplete global information, i.e. discrete samples, can be used to reconstruct the function perfectly at all intervals, which is similar to what Summability Calculus fundamentally entails. Does Summability Calculus have anything to do with the Sampling Theorem? Surprisingly, the answer is yes. In fact, we will use results of Summability Calculus to prove the Sampling Theorem.

Aside from proving the Sampling Theorem using Summability Calculus, there is another more subtle connection between the two subjects. According to the Sampling Theorem, discrete samples can always be used to perfectly reconstruct the \emph{simplest} function that interpolates them, if \emph{bandwidth} is taken as a measure of complexity. As will be shown repeatedly throughout this paper, Summability Calculus also operates implicitly on the simplest of all possible functions that can correctly interpolate discrete values of finite sums and additionally preserve their defining recursive prosperities. So, if the simplest interpolating function, using bandwidth as a measure of complexity, happens to satisfy the defining properties of finite sums, we would expect the resulting function to agree with what Summability Calculus yields. Indeed, we will show that this is the case.

Sampling necessitates interpolation. One crucial link between interpolation and Summability Calculus is polynomial fitting. For example, given discrete samples of the power sum function mentioned earlier in Eq \ref{PSDisctrete}, then such samples can be perfectly fitted using polynomials. Because polynomials can be reconstructed from samples taken at any arbitrarily low sampling rate, their bandwidth must be zero regardless of whether or not polynomials are Fourier transformable in the strict sense of the word. Therefore, if one were to use bandwidth as a measure of complexity, polynomials are among the simplest of all possible functions, hence the Bernoulli-Faulhaber Formula is, by this measure, the \emph{unique most natural generalization} to power sums. In Summability Calculus, polynomial fitting is a cornerstone; it is what guarantees Summability Calculus to operate on the unique most natural generalization to finite sums. It may be counter-intuitive to note that polynomial fitting arises in the context of arbitrary finite sums, but this result will be established in this paper. In fact, we will show that the celebrated Euler-Maclaurin summation formula itself and it various analogues fundamentally arise out of polynomial fitting!

We have stated that Summability Calculus allows us to perform infinitesimal calculus, such as differentiation and integration, and evaluate fractional finite sums without having to explicitly extend the domain of finite sums to non-integer arguments. However, if a finite sum is discrete in nature, what does its derivative mean in the first place? To answer this question, we need to digress to a more fundamental question: what does a finite sum of the form \simplefinitesum\ fundamentally represent? Traditionally, the symbol $\textstyle\sum$ was meant to be used as a shorthand notation for an iterated addition, and was naturally restricted to integers. However, it turns out that if we \textbf{define} a finite sum more broadly by \textbf{two properties}, we immediately have a natural definition that extends domain to the complex plane $\mathbb{C}$. The two properties are: \\[5pt] \, \, (1)\quad $\displaystyle\sum_{k=x}^x g(k) = g(x)$ \\[5pt] \, \, (2) \quad $\displaystyle\sum_{k=a}^b g(k) + \displaystyle\sum_{k=b+1}^x g(k)= \displaystyle\sum_{k=a}^x g(k)$ \\[5pt]

The two properties are two of six axioms proposed in \cite{Muller2011} for defining fractional finite sums. Formally speaking, we say that a finite sum \simplefinitesum\  is a binary operator $a \diamond_g n$ that satisfies the two properties above. Of course, using these two properties, we can easily recover the original discrete definition of finite sums as follows:
\begin{align*}
\sum_{k=a}^n g(k) &= \sum_{k=a}^{n-1} g(k) + \sum_{k=n}^n g(k) \quad  &\textrm{(by property 2)} \\
&= \sum_{k=a}^{n-2} g(k) + \sum_{k=n-1}^{n-1} g(k) +\sum_{k=n}^{n} g(k)  &\textrm{(by property 2)}\\
&=\sum_{k=a}^a g(k) + \sum_{k=a+1}^{a+1} g(k) + \dotsc + \sum_{k=n}^{n} g(k) &\textrm{(by property 2)}\\
&=g(a) + g(a+1) + \dotsc + g(n) &\textrm{(by property 1)} 
\end{align*}

In addition, by property 2, we derive the recurrence identity given in Eq \ref{recur}. Such recurrence identity is extremely important in subsequent analysis.
\begin{equation} \label{recur}
f(n)=g(n)+f(n-1)
\end{equation}

Moreover, we immediately note that the two defining properties of finite sums suggest unique \emph{natural} generalization to all $n \in \mathbb{C}$ if the infinite sum converges. This can be seen clearly from both properties 1 and 2 as shown in Eq \ref{unqSmpConvg}. Here, because  $\textstyle \sum_{k=n+1}^\infty g(k)=g(n+1)+g(n+2) + \dots $ is well-defined for all $n \in \mathbb{C}$, by assumption, the finite sum $\textstyle \sum_{k=a}^n g(k)$ can be \emph{naturally} defined for all $n \in \mathbb{C}$, and its value is given by Eq \ref{unqSmpConvg}. 
\begin{equation} \label{unqSmpConvg}
\sum_{k=a}^n g(k) = \sum_{k=a}^\infty g(k) - \sum_{k=n+1}^\infty g(k)
\end{equation}

This might appear trivial to observe but it is crucial to emphasize that such \lq\lq obvious'' generalization is actually \emph{built on an assumption} that does not necessarily follow from the definition of finite sums. Here, it is assumed that since the infinite sum converges if $n\to\infty$ and $n-a \in\mathbb{N}$, then the sum converges to the same limit as $n\to\infty$ in general. This is clearly only an assumption! For example, if one were to define $\sum_{k=0}^n x^k$ by $\frac{1-x^{n+1}}{1-x}+\sin{2\pi n}$, then both initial condition and recurrence identity hold but the limit $\lim_{n\to\infty} \sum_{k=0}^n x^k$ no longer exists even if $|x|<1$. However, it is indeed quite reasonable to use Eq \ref{unqSmpConvg} as a definition of fractional sums for all $n\in\mathbb{C}$ if the infinite sums converge. After all, why would we add any superficial constraints when they are not needed?

Therefore, it seems obvious that a unique most natural generalization can be defined using Eq \ref{unqSmpConvg} if the infinite sum converges. At least, this is what we would expect if the term \lq\lq natural generalization'' has any meaning. What is not obvious, however, is that a \emph{natural} generalization of finite sums is \emph{uniquely} defined for all $n$ and all $g(k)$. In fact, even a finite sum of the, somewhat complex, form \convolutedsum\ has a unique natural generalization that extends its domain to the complex plane $\mathbb{C}$. However, the argument in the latter case is more intricate as will be shown later.

In addition, we will also show that the study of divergent series and \emph{analytic summability theory} is intimately tied to Summability Calculus. Here, we will assume a generalized definition of infinite sums $\mathfrak{T}$ and describe an algebra valid under such generalized definition. Using $\mathfrak{T}$, we will show how the study of oscillating sums is greatly simplified. For example, if a divergent series $\textstyle\sum_a^\infty g(k)$ is defined by a value $V\in\mathbb{C}$ under $\mathfrak{T}$, then, as far as Summability Calculus is concerned, the series behaves \emph{exactly as if it were a convergent sum}! Here, the term \lq\lq summable'' divergent series, loosely speaking, means that a natural value can be assigned to the divergent series following some reasonable arguments. For example, one special case of $\mathfrak{T}$ is \textbf{Abel summability method}, which assigns to a divergent series $\textstyle\sum_a^\infty g(k)$ the value $\textstyle  \lim_{z\to 1^- } \sum_{k=a}^\infty g(k) z^{k-a}$ if the limit exists. Intuitively speaking, this method appeals to continuity as a rational basis for defining divergent sums, which is similar to the arguments that $\sin{z}/z=1$ and $z\log{z} = 0$ at $z=0$. Abel summability method was used prior to Abel; in fact, Euler used it quite extensively and called it the  \lq\lq generating function method'' \cite{Varad2007}. In addition, Poisson also used it frequently in summing Fourier series \cite{HardyDiverg}. 

Using the generalized definition of sums given by $\mathfrak{T}$, we will show that summability theory generalizes the earlier statement that a finite sum is naturally defined by Eq \ref{unqSmpConvg}, which otherwise would only be valid if the infinite sums converge. Using $\mathfrak{T}$, on the other hand, we will show that the latter equation holds, in general, if the infinite sums exist in $\mathfrak{T}$, i.e. even if they are not necessarily convergent in the strict sense of the word. However, not all divergent sums are defined in $\mathfrak{T}$ so a more general statement will be presented.

With this in mind, we are now ready to see what the derivative of a finite sum actually means. In brief, since we are claiming that any finite sum implies a \emph{unique} natural generalization that extends its domain from a discrete set of numbers to the complex plane $\mathbb{C}$, the derivative of a finite sum is simply the derivative of its unique natural generalization. The key advantage of Summability Calculus is that we can find such derivative of a finite sum without having to find an analytic expression to its unique natural generalization. In fact, we can integrate as well. So, notations such as $ \frac{d}{dt} \sum^t $ and even $ \int \sum^t dt$ will become meaningful. 

In summary, this paper presents the foundations of Summability Calculus, which bridges the gap between various well-established results in number theory, summability theory, infinitesimal calculus, and the calculus of finite differences.  In fact, and as will be shown throughout this paper, it contributes to more branches of mathematics such as approximation theory, asymptotic analysis, information theory, and the study of accelerating series convergence. However, before we begin discussing Summability Calculus, we first have to outline some terminologies that will be used throughout this paper. 

\section{Terminology and Notation} \label{Sect1dot2}
\begin{tabbing} \label{simplFin}
\hspace{1cm}\= $f(n)=\sum_{k=a}^n g(k)$ \; \; \quad\=   $f(n)= \prod_{k=a}^{n} g(k)\; \; \; \; \; \; $ \quad\= (SIMPLE)\\[5pt]
\> $f(n)=\sum_{k=a}^n g(k, n)$ \>$f(n)= \prod_{k=a}^{n} g(k, n)$	\> (CONVOLUTED)
\end{tabbing}

\emph{Simple} finite sums and products are defined in this paper to be any finite sum or product given by the general form shown above, where $k$ is an iteration variable that is independent of the bound $n$. In a convoluted sum or product, on the other hand, the iterated function $g(k)$ depends on \emph{both} the iteration variable $k$ and the bound $n$ as well. Clearly, simple sums and products are special cases of convoluted sums and products.

Because the manuscript is quite lengthy, we will strive to make it as readable as possible. For instance, we will typically use $f(n)$ to denote a discrete function and let $f_G(n): \mathbb{C} \to \mathbb{C}$ denote its unique natural generalization in almost every example. Here, the notation  $ \mathbb{C} \to \mathbb{C}$ is meant to imply that both domain and range are simply connected regions, i.e. \emph{subsets}, of the complex plane. In other words, we will not explicitly state the domain of every single function since it can often be inferred without notable efforts from the function's definition. Statements such as  \lq\lq let $K\subset \mathbb{C} - \{0\}$'' and  \lq\lq $Y=\mathbb{R}+\{\pm \infty\}$'' will be avoided unless deemed necessary. Similarly, a statement such as  \lq\lq $f$ converges to $g$'' will be used to imply that $f$ approaches $g$ whenever $g$ is defined and that both share the same domain. Thus, we will avoid statements such as \lq\lq $f$ converges to $g$ in $\mathbb{C}-\{s\}$ and has a simple pole at $s$'', when both $f$ and $g$ have the same pole. Whereas Summability Calculus extends the domain of a finite sum $\sum_{k=a}^n g(k,n)$ from a subset of integers to the complex plane $n\in\mathbb{C}$, we will usually focus on examples for which $n$ is real-valued.

In addition, the following important definition will be used quite frequently. \\ \hrule 
\begin{definit}\label{asymptOrder}
A function $g(x)$ is said to be \emph{asymptotically of a finite differentiation order} if a non-negative integer $m$ exists such that $g^{m+1}(x) \to 0$ as $x \to \infty$. The minimum non-negative integer $m$ that satisfies such condition for a function $g$ will be called its \emph{asymptotic differentiation order}.
\end{definit} \hrule\vspace{12pt}

In other words, if a function $g(x)$ has an asymptotic differentiation order $m$, then \emph{only the function up to its \emph{m}th derivative matter asymptotically}. For example, any polynomial with degree $n$ has the asymptotic differentiation order $n+1$. A second example is the function $g(x)=\log{x}$, which has an asymptotic differentiation order of zero because its 1\textsuperscript{st} derivative vanishes asymptotically. Non-polynomially bounded functions such as $e^x$ and the factorial function $x!$ have an infinite asymptotic differentiation order.

Finally, in addition to the hyperfactorial and superfactorial functions described earlier, the following functions will be used frequently throughout this paper:
\begin{tabbing}
\hspace{1cm}\= \textbf{Generalized Harmonic Numbers:} \; \quad\=   $H_{m,n}=\sum_{k=1}^n k^{-m}$ \\[5pt]
\> \textbf{Gamma Function:} \>$\Gamma(n)=\int_0^\infty e^{-t} t^{n-1} dt$ \\[5pt]
\> \textbf{Gauss PI Function:} \>$\Pi(n)=\Gamma(n+1)=n!$ \\[5pt]
\> \textbf{Log-Factorial Function:} \>$\varpi(n)=\log{\Pi(n)}=\log{n!}$ \\[5pt]
\> \textbf{Digamma Function:} \>$\psi(n)=\frac{d}{dn} \log{\Gamma(n)}$ \\[5pt]
\> \textbf{Riemann Zeta Function:} \>$\zeta(s)=\sum_{k=1}^\infty k^{-s}$, \, for $s>1$ \\[0pt]
\end{tabbing}

Here, it is worth mentioning that the superfactorial function  $S(n)$ is also often defined using the \emph{double gamma function} or \emph{Barnes G-function} \cite{Choi2000, MathWordGFunct}. All of these definitions are essentially equivalent except for minor differences. In this paper, we will exclusively use the superfactorial function as defined earlier in Eq \ref{superf}. Moreover, we will almost always use the log-factorial function $\log{\Gamma(n+1)}$, as opposed to the log-Gamma function. Legendre's normalization $n!=\Gamma(n+1)$ is unwieldy, and will be avoided to simplify mathematics. It is perhaps worthwhile to note that avoiding such normalization of the Gamma function is not a new practice. In fact, Legendre's normalization has been avoided and even harshly criticized by some 20\textsuperscript{th} century mathematicians such as Lanczos, who described it as \lq\lq void of any rationality'' \cite{Lanczos64}. 

\section{Historical Remarks}\label{Sect1dot3}
It is probably reasonable to argue that the subject of finite sums stretches back in time to the very early dawn of mathematics. In 499 A.D., the great Indian mathematician Aryabhata investigated the sum of arithmetic progressions and presented its closed-form formula in his famous book \emph{Aryabhatiya} when he was 23 years old. In this book, he also stated formulas for sums of powers of integers up to the summation of cubes. Unfortunately, mathematical notations were immature during that era and the great mathematician had to state his mathematical equations using plain words: \lq\lq \emph{Diminish the given number of terms by one then divide by two then} \ldots'' \cite{Volo1977}. In addition, the Greeks were similarly interested as Euclid demonstrated in his Elements Book IX Proposition 35, in which he presents the well-known formula for the sum of a finite geometric series. Between then and now, countless mathematicians were fascinated with the subject of finite sums.

Given the great interest mathematicians have placed in summations and knowing that this paper is all about finite sums,  it should not be surprising to see that a large number of the results presented herein have already been discovered at different points in time during the past 2,500 years. In fact, some of what can be rightfully considered as the building blocks of Summability Calculus were deduced 300 years ago, while others were published as recently as 2010.

The earliest work that is directly related to Summability Calculus is Gregory's quadrature formula, whose original intention was in numerical integration \cite{Jordan1965, Sinha2011}. We will show that Gregory's formula indeed corresponds to the unique most natural generalization of simple finite sums. Later around the year 1735, Euler and Maclaurin independently came up with the celebrated Euler-Macluarin summation formula that also extends the domain of simple finite sums to non-integer arguments \cite{Apostol, SandiferBasel}. The Euler-Maclaurin formula has been widely applied, and was described as one of \lq\lq the most remarkable formulas of mathematics'' \cite{LampretEM2001}. Here, it is worth mentioning that neither Euler nor Maclaurin published a formula for the remainder term, which was first developed by Poisson in 1823 \cite{LampretEM2001}. The difference between the Euler-Macluarin summation formula and Gregory's is that finite differences are used in the latter formula as opposed to infinitesimal derivatives. Nevertheless, they are both formally equivalent, and one can be derived from the other \cite{Sinha2011}. In 1870, George Boole introduced his summation formula, which is the analog of the Euler-Maclaurin summation formula for alternating sums \cite{Borwein2009}. Boole summation formula agrees, at least formally, with the Euler-Macluarin summation formula, and there is evidence that suggests it was known to Euler as well \cite{Borwein1989}. 

In this manuscript, we will derive these formulas, generalize them to convoluted sums, prove their uniqueness for being the most natural generalization to finite sums, generalize them to oscillating sums, and present their counterparts using finite differences as opposed to infinitesimal derivatives. We will show that the Euler-Maclaurin summation formula arises out of polynomial fitting, and show that Boole summation formula is intimately tied with the subject of divergent series. Most importantly, we will show that it is more convenient to use them \emph{in conjunction} with the foundational rules of Summability Calculus.

The second building block of Summability Calculus is asymptotic analysis. Ever since Stirling and Euler presented their famous asymptotic expressions to the factorial and harmonic number respectively, a deep interest in the asymptotic behavior of special functions such as finite sums was forever instilled in the hearts of mathematicians. This includes Glaisher's work on the hyperfactorial function and Barnes work on the superfactorial function, to name a few. Here, Gregory quadrature formula, Euler-Maclaurin summation formula, and Boole summation formula proved indispensable. For example, a very nice application of such asymptotic analysis in explaining a curious observation of approximation errors is illustrated in \cite{Borwein1989}. In this paper, we generalize the Euler-like family of summation formulas to the case of oscillating sums, which makes the task of deducing asymptotic expressions to such finite sums nearly elementary. We will also prove equivalence of all asymptotic formulas.

The third building block of Summability Calculus is summability theory. Perhaps, the most famous summability theorist was again Euler who did not hesitate to use divergent series. According to Euler, the sum of an infinite series should carry a more general definition. In particular, if a sequence of algebraic operations eventually arrive at a divergent series, the value of the divergent series should assume the value of the algebraic expression from which it was derived \cite{Kline1983}. However, Euler's approach was ad-hoc based, and the first systematic and coherent theory of divergent sums was initiated by Cesaro who introduced what is now referred to as the Cesaro mean \cite{HardyDiverg}. Cesaro summability method, although weak, enjoys the advantage of being stable, regular, and linear, which will prove to be crucial for mathematical consistency. In the 20th century, the most well renowned summability theorist was Hardy whose classic book \emph{Divergent Series} remains authoritative.

Earlier, we stated that summable divergent series behave \lq\lq as if they were convergent''. So, in principle, we would expect a finite sum to satisfy Eq \ref{unqSmpConvg} if infinite sums are summable. Ramanujan realized that summability of divergent series might be linked to infinitesimal calculus using such equation \cite{Berndt6}. However, his statements were often imprecisely stated and his conclusions were occasionally incorrect for reasons that will become clear in this paper. In this paper, we state results precisely and prove correctness. We will present the generalized definition of infinite sums $\mathfrak{T}$ in Chapter \ref{Chapter4}, which allows us to integrate the subject of summability theory with the study of finite sums into a single coherent framework. As will be shown repeatedly throughout this paper, this generalized definition of sums is indeed one of the cornerstones of Summability Calculus. It will also shed important insights into the subject of divergent series. For example, we will present a method of deducing analytic expressions and \lq\lq accelerating convergence'' of divergent series. In addition, we will also introduce a new summability method $\Xi$, which is weaker than $\mathfrak{T}$ but it is strong enough to \lq\lq correctly'' sum almost all examples of divergent sums that will referred to in this manuscript. So, whereas $\mathfrak{T}$ is a formal construct that is decoupled from any particular method of computation, results will be deduced using $\mathfrak{T}$ and verified numerically using $\Xi$. 

The fourth building block of Summability Calculus is polynomial approximation. Unfortunately, little work has been made to define fractional finite sums using polynomial approximations except notably for the fairly recent work of M\"uller and Schleicher \cite{Mueller2010}. In their work, a fractional finite sum is defined by using its asymptotic behavior. In a nutshell, if a finite sum can be approximated \emph{in a bounded region} by a polynomial and if the error of such approximation vanishes as this bounded region is pushed towards $\infty$, we might then evaluate a fractional sum by evaluating it asymptotically and propagating values backwards using the recursive property in Eq \ref{recur}. In 2011,  moreover, M\"uller and Schleicher provided an \emph{axiomatic} treatment to the subject of finite sums by proposing six axioms that uniquely extend the definition of finite sums to non-integer arguments, two of which are the defining properties of finite sums listed earlier \cite{Muller2011}. In addition to those two properties, they included linearity, holomorphicity of finite sums of polynomials, and translation invariance (the reader is kindly referred to \cite{Muller2011} for details). Perhaps most importantly, they also included the method of polynomial approximation as a sixth axiom; meaning that if a simple finite sum $\sum_{k=a}^n g(k)$ can be approximated by a polynomial asymptotically, then such approximation is assumed to be valid for \emph{fractional} values of $n$. 

In this paper, we do not pursue an axiomatic foundation of finite sums, although the treatment of M\"uller and Schleicher is a special case of a more general definition that is presented in this paper because their work is restricted to simple finite sums $\sum_{k=a}^n g(k)$, in which $g(n)$ is asymptotically of a finite differentiation order (see Definition \ref{asymptOrder}). In this paper, on the other hand, we will present a general statement that is applicable to all finite sums, including those that cannot be approximated by polynomials, and extend it further to oscillating sums and convoluted sums as well. Finally, we will use such results to establish that the Euler-like family of summation formulas arise out of polynomial fitting. 

The fifth building block of Summability Calculus is the Calculus of Finite Differences, which was first systematically developed by Jacob Stirling in 1730 \cite{Jordan1965}, although some of its most basic results can be traced back to the works of Newton such as Newton's interpolation formula. In this paper, we will derive the basic results of the Calculus of Finite Differences from Summability Calculus and show that they are closely related. For example, we will show that the summability method $\Xi$ introduced in Chapter \ref{Chapter4} is intimately tied to Newton's interpolation formula and present a geometric proof to the Sampling Theorem using the generalized definition $\mathfrak{T}$ and the Calculus of Finite Differences. In fact, we will prove a stronger statement, which we will call the \emph{Half Sampling Theorem}. The Sampling Theorem was popularized in the 20th century by Claude Shannon in his seminal paper \lq\lq Communications in the Presence of Noise'' \cite{ShannonNoise}, but its origin date much earlier. A brief excellent introduction to the Sampling Theorem and its origins is given in \cite{Luke99}.

Finally, Summability Calculus naturally yields a rich set of identities related to fundamental constants such as $\lambda$, $\pi$, and $e$, and special functions such as the Riemann zeta function and the Gamma function. Some of those identities shown in this paper appear to be new while others were proved at different periods of time throughout the past three centuries. It is intriguing to note that while many of such identities were proved at different periods of time by different mathematicians using different approaches, Summability Calculus yields a simple set of tools for deducing them immediately.

\section{Outline of Work}\label{Sect1dot4}
The rest of this manuscript is structured as follows. We will first introduce the foundational rules of performing infinitesimal calculus, such as differentiation, integration, and computing series expansion, on simple finite sums and products, and extend the calculus next to address convoluted sums and products. After that, we introduce the generalized definition of infinite sums  $\mathfrak{T}$  and show how central it is to Summability Calculus. Using $\mathfrak{T}$, we derive foundational theorems for oscillating finite sums that simplify their study considerably, and yield important insights to the study of divergent series and series convergence acceleration. Next, we present a simple method for directly \emph{evaluating} finite sums \convolutedsum\ for all $n \in \mathbb{C}$. Throughout these sections, we will repeatedly prove that we are always dealing with the exact same generalized definition of finite sums, meaning that all results are consistent with each other and can be used interchangeably. Using such established results, we finally extend the calculus to arbitrary discrete functions, which leads immediately to some of the most important basic results in the Calculus of Finite Differences among other notable conclusions, and show how useful it is in the study of finite sums and products. 

\chapter{Simple Finite Sums} \label{Chapter2}
\epigraph{\emph{Simplicity is the ultimate sophistication}}{Leonardo da Vinci (1452 -- 1519)}

We will begin our treatment of Summability Calculus on simple finite sums and products. Even though results of this section are merely special cases of the more general results presented later in which we address the more general case of convoluted sums and products, it is still important to start with these basic results since they are encountered more often, and can serve as a good introduction to what Summability Calculus can markedly accomplish. In addition, the results presented herein for simple finite sums and products are themselves used in establishing the more general case for convoluted sums and products in the following chapter. 

\section{Foundations}\label{Sect2dot1}
Suppose we have the simple finite sum \simplefinitesum\, and let us denote its generalized definition $f_G(n) : \mathbb{C}\to\mathbb{C}$\footnote{Kindly refer to Section \ref{Sect1dot2} for a more accurate interpretation of this notation}. As shown in Section \ref{Sect1dot1}, the two defining properties of finite sums imply that Eq \ref{Eq211} always holds if $f_G(n)$ exists. As will be shown later, the function $f_G(n)$ always exists.
\begin{equation}\label{Eq211}
f_G(n)=\begin{cases} g(n)+f_G(n-1) & \text{for all $n$}\\ g(a) & \text{if $n=a$}
\end{cases}
\end{equation}

From the recurrence identity given in Eq \ref{Eq211}, and because $f_G(n) : \mathbb{C}\to\mathbb{C}$ by assumption, it follows by the basic rules of differentiation that the following identity holds: 
\begin{equation}\label{Eq212}
f_G' (n)=g'(n)+f_G'(n-1)
\end{equation}
Unfortunately, such recurrence identity implicitly requires a definition of \emph{fractional} finite sums, which has not been established yet. Luckily, however, a rigorous proof of Eq \ref{Eq212} can be made using the two defining properties of finite sums as follows:
\begin{align*}
f_G'(n)-f_G'(n-1) &= \lim_{h \to 0} \frac{1}{h} \bigl\{ \sum_{k=a}^{n+h} g(k) - \sum_{k=a}^{n} g(k) - \sum_{k=a}^{n-1+h} g(k) + \sum_{k=a}^{n-1} g(k) \bigr\} \\
&=\lim_{h \to 0} \frac{1}{h} \bigl\{ \sum_{k=a}^{n+h} g(k) - \sum_{k=a}^{n-1+h} g(k)  - \sum_{k=a}^{n} g(k) + \sum_{k=a}^{n-1} g(k) \bigr\} \;  &\textrm{} \\
&=\lim_{h \to 0} \frac{1}{h} \bigl\{ \sum_{k=n+h}^{n+h} g(k) - \sum_{k=n}^{n} g(k) \bigr\} \;  \; \; \; \;\;\;\;\;\;\;\; \textrm{(by property 2)} \\
&=\lim_{h \to 0} \frac{1}{h} \bigl\{ g(n+h) - g(n) \bigr\} \; \;\;\;\;\;\;\;\;\;\;\;\;\;\;\;\;\;\;\;\; \textrm{(by property 1)} \\
&=g'(n)
\end{align*}

Upon combining the recurrence identities in Eq \ref{Eq211} and Eq \ref{Eq212}, Eq \ref{Eq213} follows immediately, where $P(n)$ is an arbitrary periodic function with unit period. Since $a$ is constant with respect to $n$, $f'(a-1)$ is constant as well. However, $f'(a-1)$ is \emph{not} an arbitrary constant and its exact formula will be given later. 
\begin{equation}\label{Eq213}
f_G' (n)=\sum_{k=a}^n g'(k)+f_G'(a-1)+P(n)
\end{equation}

Now, in order to work with the unique most natural generalization to simple finite sums, we choose $P(n)=0$ in accordance with \emph{Occam's razor} principle that favors simplicity. Note that setting $P(n)=0$ is also consistent with the use of \emph{bandwidth} as a measure of complexity as discussed earlier in the Chapter \ref{ChaptIntro}. In simple terms, we should select $P(n)=0$ because the finite sum itself \emph{carries no information} about such periodic function. A more precise statement of such conclusion will be made shortly. Therefore, we always have:
\begin{equation}\label{Eq214}
f_G' (n)=\sum_{k=a}^n g'(k)+c, \; \; \; \; \; \; \; c=f_G'(a-1)
\end{equation}

Also, from the recurrence identity in Eq \ref{Eq211} and upon noting that the initial condition always holds by assumption, we must have $\sum_{k=a}^{a-1}=0$. In other words, any consistent generalization of the discrete finite sum function  \simplefinitesum\ must also satisfy the condition $f_G(a-1)=0$. This can be proved from the defining properties of finite sums as follows:
\begin{equation}\label{Eq215}
\sum_{k=a}^{a-1} g(k) + \sum_{k=a}^{n} g(k) = \sum_{k=a}^{n} g(k) \; \; \; \; \; \; \; \; \; \; \; \; \; \; \; \; \; \; \text{(by property 2)}
\end{equation}

Thus, we always have $\sum_{k=a}^{a-1} g(k)=0$ for any function $g(k)$. This last summation is commonly thought of as a special case of the \emph{empty sum} and is usually defined to be zero by convention. However, we note here that the empty sum is zero by \emph{necessity}, i.e. not a mere convention. In addition, while it is true that $\sum_{k=a}^{a-1} g(k)=0$, it does not necessarily follow that $\sum_{k=a}^{a-b} g(k)=0$ for $b>0$. For example, and as will be shown later, $\sum_{k=1}^{0} \frac{1}{k}=0$ but $\sum_{k=1}^{-1} \frac{1}{k}=\infty$. In fact, using Property 1 of finite sums and the empty sum rule, we always have:
\begin{equation}\label{sumToNegative}
\sum_{k=a}^{a-b} g(k) = - \sum_{a-b+1}^{a-1} g(k)
\end{equation}

The empty sum rule and Eq \ref{sumToNegative} were proposed earlier in \cite{Mueller2010}, in which polynomial approximation is used to define a restricted class of simple finite sums as will be discussed later in Chapter \ref{Chapter6}. Finally, the integral rule follows immediately from the differentiation rule given earlier in Eq \ref{Eq214}, and is given by Eq \ref{Eq216}. In fact, if we denote $h(n)=\sum_{k=a}^n \int^k g(t) \,dt$, then $c_1=-h'(a-1)$. Thus, the integration rule in Eq \ref{Eq216} yields a single arbitrary constant only as expected, which is $c_2$.
\begin{equation}\label{Eq216}
\int^n \sum_{k=a}^{t} g(k) \,dt = \sum_{k=a}^{n} \int^k g(t)\,dt +c_1 n + c_2, \;\;\;\;\;\;\;\; c_1=-\frac{d}{dx}\sum_{k=a}^x \int^k g(t)\,dt \; \; \Big|_{x=a-1}
\end{equation}

Using the above rules for simple finite sums, we can quickly deduce similar rules for simple finite products by rewriting finite products as $\prod_{k=a}^n g(k)=\exp\{\sum_{k=a}^n \log{g(k)}\}$ and using the chain rule, which yields the differentiation rule given in Eq \ref{Eq217}.
\begin{equation}\label{Eq217}
\frac{d}{dn}\prod_{k=a}^n g(k)=\prod_{k=a}^n g(k) \Big(\sum_{k=a}^n \frac{g'(k)}{g(k)} +c\Big), \;\;\;\;\;\;\;\;\;\; \;\;\; c=f_G'(a-1)
\end{equation}

Similar to the case of finite sums, we have $\prod_{k=a}^{a-1}g(k)=1$, which again holds \emph{by necessity}. For example, exponentiation, which can be written as $x^n=\prod_{k=a}^n x$  implies that $x^0=\prod_{k=1}^0 x=1$. Similarly, the factorial function, given by $n!=\prod_{k=1}^n k$ implies that $0!=1$. Table \ref{TableRules} summarizes key results. Again, it is crucial to keep in mind that the empty product rule only holds for $\prod_{k=a}^{a-1}$. For example, $\prod_{k=a}^{a-2}$ may or may not be equal to one.
\begin{table}[b]
\centering\begin{tabular} { |l |c|}
\hline 
\small{\textbf{Rule 1}: Derivative rule for simple finite sums}  &  \small{$f_G' (n)=\sum_{k=a}^n g'(k)+c_1$}\\[7pt]
\small{\textbf{Rule 2}: Integral rule for simple finite sums}  &  \small{$\int^n \sum_{k=a}^{t} g(k) \,dt = \sum_{k=a}^{n} \int^k g(t)\,dt +c_1 n + c_2$}\\[7pt]
\small{\textbf{Rule 3}: Empty sum rule}  &  \small{$\sum_{k=a}^{a-1} g(k)=0$} \\[7pt]
\small{\textbf{Rule 4}: Derivative rule for simple finite products}  & \small{$\frac{d}{dn}\prod_{k=a}^n g(k)=\prod_{k=a}^n g(k) \Big(\sum_{k=a}^n \frac{g'(k)}{g(k)} +c_1\Big)$}\\[7pt]
\small{\textbf{Rule 5}: Empty product rule} &  \small{$\prod_{k=a}^{a-1}g(k)=1$}\\[7pt]
\hline
\end{tabular}
\caption {A summary of foundational rules. In each rule, $c_1$ is a non-arbitrary constant.}
\label{TableRules}
\end{table}

Now, looking into Rule 1 in Table \ref{TableRules}, we deduce that the following general condition holds for some constants $c_r$:
\begin{equation}\label{Eq218}
f^{(r)}(n)=\sum_{k=a}^n g^{(r)}(k) +c_r
\end{equation}
Keeping Eq \ref{Eq218} in mind, we begin to see how \emph{unique} generalization of simple finite sums can be \emph{naturally} defined. To see this, we first note from Eq \ref{Eq218} that we always have:
\begin{equation}\label{Eq219}
f^{(r)}(n)-f^{(r)}(n-1)=g^{(r)}(n), \;\;\;\;\;\;\;\;\;\;\;\;\;\;\; \text{for all $r\ge 0$}
\end{equation}
Eq \ref{Eq219} is clearly a stricter condition than the basic recurrence identity Eq \ref{Eq211} that we started with. In fact, we will now establish that the conditions in Eq \ref{Eq219} are required in order for the function $f(n)$ to be smooth, i.e. infinitely differentiable.

First, suppose that we have a simple finite sum $\simplefsum{f}{n}{k}{0}{g}$. If we wish to find a continuous function $f_G(n)$ defined in the interval $[0, 2]$ that is only required to satisfy the recurrence identity Eq \ref{Eq211} and correctly interpolates the discrete finite sum, we can choose any function in the interval $[0, 1]$ such that $f_G(0)=g(0)$ and $f_G(1)=g(0)+g(1)$. Then, we define the function $f_G(n)$ in the interval $[1, 2]$ using the recurrence identity. Let us see what happens if we do this, and let us examine how the function changes as we try to make $f_G(n)$ smoother.

First, we will define $f_G(n)$ as follows:
\begin{equation}\label{Eq21_10}
f_G(n)=\begin{cases} a_0+a_1 n & \text{if $n \in [0, 1]$}\\ g(n)+f_G(n-1) & \text{if $n>1$}
\end{cases}
\end{equation}

The motivation behind choosing a linear function in the interval $[0, 1]$ is because any continuous function can be approximated by polynomials (see Weierstrass Theorem \cite{Ross1980}). So, we will initially choose a linear function and add higher degrees later to make the function smoother. To satisfy the conditions $f_G(0)=g(0)$ and $f_G(1)=g(0)+g(1)$, we must have:
\begin{equation}\label{Eq21_11}
f_G(n)=\begin{cases} g(0)+g(1) \,n & \text{if $n \in [0, 1]$}\\ g(n)+f_G(n-1) & \text{if $n>1$}
\end{cases}
\end{equation}

Clearly, Eq \ref{Eq21_11} is a continuous function that satisfies the required recurrence identity and boundary conditions. However, $f_G'(n)$ is, in general, discontinuous at $n=1$ because $f_G'(1^-)=g(1)$ whereas $f_G'(1^+)=g(1)+g'(1)$. To improve our estimate such that both $f_G(n)$ and $f_G'(n)$  become continuous throughout the interval $[0, 2]$, we make $f_G(n)$ a polynomial of degree 2 in the interval $[0, 1]$. Here, to make $f_G'(n)$ a continuous function, it is straightforward to see that we must satisfy the condition in Eq \ref{Eq219} for $r=1$. The new set of conditions yields the following approximation:
\begin{equation}\label{Eq21_12}
f_G(n)=\begin{cases} \frac{g(0)}{1}+\bigl(\frac{g(1)}{1}-\frac{g'(1)}{2}\bigr) \,n +\frac{g^{(2)}(1)}{2} n^2& \text{if $n \in [0, 1]$}\\ g(n)+f_G(n-1) & \text{if $n>1$}
\end{cases}
\end{equation}

Now,  $f_G(n)$ and $f_G'(n)$ are both continuous throughout the interval $[0, 2]$, and the function satisfies recurrence and boundary conditions. However, the 2\textsuperscript{nd} derivative is now discontinuous. Again, to make it continuous, we improve our estimate by making $f_G(n)$ a polynomial of degree 3 in the interval $[0, 1]$ and enforcing the condition in Eq \ref{Eq219} for all $r \in \{0, 1, 2\}$. This yields:
\small
\begin{equation}\label{Eq21_13}
f_G(n)=\begin{cases} \frac{g(0)}{1}+\bigl(\frac{g(1)}{1}-\frac{g'(1)}{2}+\frac{g^{(2)}(1)}{12}\bigr) \,\frac{n}{1!} +\bigl(\frac{g'(1)}{1}-\frac{g^{(2)}(1)}{2}\bigr)\frac{n^2}{2!}+\frac{g^{(2)}(1)}{1} \frac{n^3}{3!}& \text{if $n \in [0, 1]$}\\ \small{g(n)+f_G(n-1)} & \text{if $n>1$}
\end{cases}
\end{equation}
\normalsize
Now, we begin to see a curious trend. First, it becomes clear that in order for the function to satisfy the recurrence identity and initial condition in Eq \ref{Eq211} and at the same time be infinitely differentiable, it must satisfy Eq \ref{Eq219}. Second, its \emph{m}th derivative  seems to be given by $f_G^{(m)}(0)=\sum_{r=0}^\infty (-1)^r \frac{b_r}{r!} g^{(r+m-1)}(1)$, where $b_r=\{1, \frac{1}{2}, \frac{1}{12}, \ldots\}$. Indeed this result will be established later, where the constants $b_r$ are Bernoulli numbers!

\begin{figure} [h]
\centering
\includegraphics[scale=0.4]{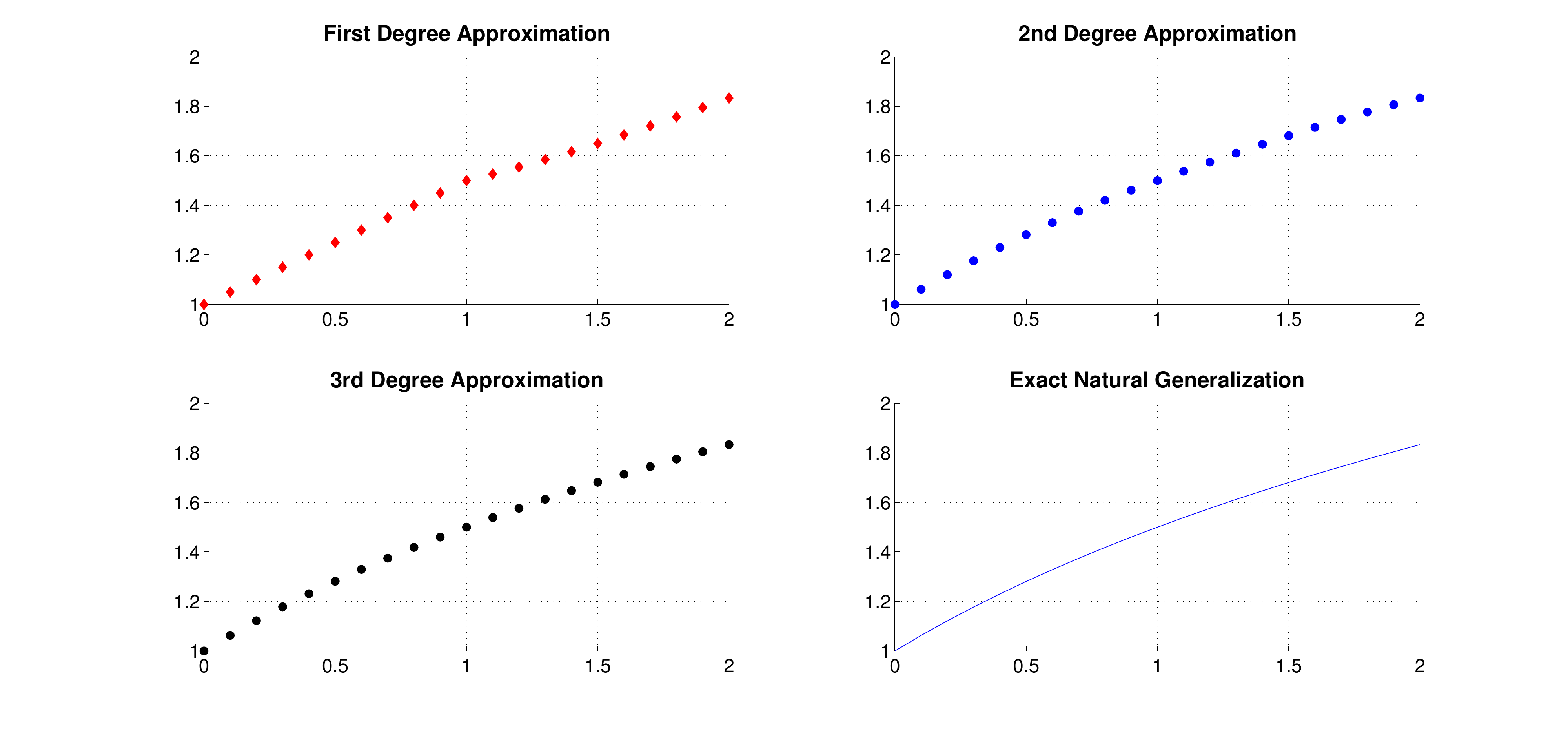}
\caption[The successive polynomial approximation method]{The successive polynomial approximation method applied to $\sum_{k=0}^n \frac{1}{k+1}$. Clearly, the method converges to a unique function. }
\label{figure1}
\end{figure}

Figure \ref{figure1} illustrates the previous process for the simple finite sum $\sum_{k=0}^n \frac{1}{k+1}$. The finite sum enumerates the constants $H_{n+1}$, where $H_n$ is the \emph{n}th harmonic number. One well-known generalization of harmonic numbers to non-integer arguments can be expressed using the well-known digamma function $\frac{d}{dn}\log{\Gamma(n)}$, which is also depicted in the figure. Interestingly, it is clear from the figure that the 2\textsuperscript{nd} degree approximation is already almost indistinguishable from what we would have obtained had we defined $f_G(n)$ in terms of the digamma function. More precisely, the successive approximation method converges the function $\frac{d}{dn} \log{\Gamma(n+2)} - \lambda$, where $\Gamma$ is the Gamma function and $\lambda$ is Euler's constant.

This brings us to the following central statement: \\[6pt]

\hrule
\begin{theorem}[\textbf{Statement of Uniqueness}]\label{StatofUniq}
Given a simple finite sum \simplefinitesum, where $g(k)$ is analytic in the domain $[a, \infty)$, define $p_m(n)$ to be a polynomial of degree $m$, and define $f_{G,m}(n)$ by:\\
\begin{equation*}
f_{G,m}(n)=\begin{cases} p_m(n) \quad &\text{if $n \in [a, a+1]$}\\ g(n)+f_{G,m}(n-1) \quad \quad \quad  &\text{otherwise}
\end{cases}
\end{equation*}
If we require that $f_{G,m+1}(n)$ be \emph{m}-times differentiable in the domain $[a, \infty)$, then the sequence of polynomials $p_m(n)$ is unique. In particular, its limit $f_G(n)=\lim_{m\to\infty} f_{G,m} (n)$ is unique and satisfies both initial condition $f_G(a)=g(a)$ and recurrence identity $f_G^{(r)}(n)=g^{(r)}(n)+f_G^{(r)}(n-1)$. 
\end{theorem}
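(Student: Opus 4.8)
The plan is to make rigorous the successive-approximation procedure illustrated just before the theorem, treating it as an induction on $m$ where at each stage the requirement of $(m-1)$-times differentiability at the gluing point $n=a+1$ (and hence, via the recurrence, at every integer translate) pins down $m$ additional coefficients of the polynomial $p_m(n)$. First I would set up notation: write $p_m(n)=\sum_{j=0}^m c_{m,j}(n-a)^j$ and observe that the function $f_{G,m}$ is automatically analytic on each interval $(a+\ell, a+\ell+1)$ for $\ell\ge 0$ because $g$ is analytic on $[a,\infty)$ and $f_{G,m}$ is built from $g$ and $p_m$ by finitely many applications of the recurrence $f_{G,m}(n)=g(n)+f_{G,m}(n-1)$. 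Hence the only possible failures of smoothness occur at the integer points $n=a+\ell$, $\ell\ge 1$, and by repeatedly differentiating the recurrence it suffices to control matching of one-sided derivatives at the single point $n=a+1$: indeed $f_{G,m}^{(r)}((a+\ell+1)^{\pm}) - f_{G,m}^{(r)}((a+\ell)^{\pm}) = g^{(r)}(a+\ell)$ holds identically on both sides, so continuity of $f_{G,m}^{(r)}$ at $a+1$ propagates to continuity at all $a+\ell$.

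Next I would carry out the induction. For the base case $m=0$: requiring $f_{G,1}$ to be $0$-times differentiable, i.e. continuous, together with the initial condition forces $p_0$ to take the prescribed values, but with a degree-$0$ (constant) polynomial one cannot generally satisfy $f_G(a)=g(a)$ and continuity at $a+1$ simultaneously — this is why the degree must grow with the differentiability order, and it is cleaner to phrase the induction as: $p_m$ is the unique degree-$m$ polynomial such that $f_{G,m+1}$ (equivalently $f_{G,m}$ extended with one more free coefficient, then that coefficient is the new unknown) is $m$-times differentiable on $[a,\infty)$. For the inductive step, assume $p_{m-1}$ is determined. Writing $p_m(n)=p_{m-1}(n)+c_{m,m}(n-a)^m$ is not quite right because all lower coefficients may shift; instead I would argue directly that the $m+1$ linear conditions — the initial condition $p_m(a)=g(a)$, and continuity of $f_{G,m}^{(r)}$ at $n=a+1$ for $r=0,1,\dots,m-1$, which reads
\begin{equation*}
p_m^{(r)}(a+1) = g^{(r)}(a+1) + p_m^{(r)}(a) \quad \text{(using } f_{G,m}(n-1)=p_m(n-1)\text{ for } n\in[a+1,a+2]\text{)},
\end{equation*}
wait, more carefully $f_{G,m}^{(r)}((a+1)^+) = g^{(r)}(a+1) + p_m^{(r)}(a)$ while $f_{G,m}^{(r)}((a+1)^-)=p_m^{(r)}(a+1)$ — constitute $m+1$ linear equations in the $m+1$ coefficients $c_{m,0},\dots,c_{m,m}$, and I would show this linear system has a triangular/invertible structure (the equation involving $p_m^{(r)}$ at $a+1$ versus $a$ essentially expresses a finite difference of the coefficients) so that it has a unique solution. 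This gives uniqueness of each $p_m$.

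Finally, for the convergence and the properties of the limit $f_G$, I would use the hint already given in the text that the emerging coefficients are governed by Bernoulli numbers, but since the theorem only asserts that the limit exists, is unique, and satisfies the initial condition and the differentiated recurrence $f_G^{(r)}(n)=g^{(r)}(n)+f_G^{(r)}(n-1)$, I would argue as follows: uniqueness of the limit is immediate from uniqueness of each $p_m$ together with the fact that $p_{m}$ and $p_{m+1}$ agree up to order $m$ at the relevant points (each refinement only adds a higher-order correction), so the sequence $f_{G,m}$ is coherent; the initial condition $f_G(a)=g(a)$ holds because it is imposed at every stage; and the recurrence for derivatives holds in the limit because $f_{G,m}$ satisfies $f_{G,m}(n)=g(n)+f_{G,m}(n-1)$ by construction, this persists under differentiation on the open intervals, and the limiting function is smooth so the one-sided derivatives agree. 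The main obstacle I anticipate is the convergence claim $f_G=\lim_m f_{G,m}$ itself: establishing that the polynomials $p_m$ converge (on $[a,a+1]$, hence $f_{G,m}$ converges locally uniformly on $[a,\infty)$) really requires identifying the coefficients with a convergent series — this is where the Bernoulli-number Euler–Maclaurin expansion enters — and controlling its tail under the hypothesis that $g$ is analytic on $[a,\infty)$; I expect the cleanest route is to defer the quantitative convergence to the later Euler–Maclaurin development and here prove only the formal/coherence statements, or alternatively to invoke analyticity of $g$ to bound $g^{(r)}$ and show the series for $f_G^{(m)}(a)$ converges, which is the technical heart of the argument.
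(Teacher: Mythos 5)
Your proposal follows essentially the same route as the paper: the paper's entire proof of this theorem is the single line \lq\lq By construction of Eq \ref{Eq21_13}'', i.e.\ an appeal to the successive polynomial approximation worked out just before the statement, and your argument is a more careful elaboration of exactly that construction --- your reduction of smoothness on $[a,\infty)$ to matching one-sided derivatives at the single point $n=a+1$, and your linear system for the coefficients, which is indeed invertible because the condition indexed by $r$ involves only $c_{r+1},\dots,c_m$ with leading coefficient $(r+1)!$, are both correct and go beyond what the paper records. You are also right that the genuine gap is the convergence of the sequence $p_m$: the paper does not close it here either, and only later (in the proof of Theorem \ref{theorem251}) identifies the coefficients $b_r$ with $(-1)^r B_r/r!$ and ties the limit to the Euler--Maclaurin expansion, so the existence of the limit remains a formal statement at this stage; deferring that quantitative step, as you propose, is consistent with what the paper actually does.
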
 
\begin{proof} 
By construction of Eq \ref{Eq21_13}.
\end{proof}

\hrule\vspace{12pt}

Theorem \ref{StatofUniq} shows that a \emph{natural} generalization of simple finite sums to the complex plane $\mathbb{C}$ can be \emph{uniquely} defined despite the fact that infinitely many functions qualify to generalize finite sums. According to Theorem \ref{StatofUniq}, while infinitely many functions exist that can correctly satisfy both initial condition and recurrence identity, there is \emph{one and only one} function that can arise out of the successive polynomial approximations method given in the Theorem. Why is this significant? This is because, in a loose sense, that unique function is the simplest possible generalization; as you incorporate additional information about the original finite sum, you gain additional information about its unique natural generalization. 

Of course, the argument of being a \lq\lq natural'' generalization is intricate. One has to agree on what \lq\lq natural'' means in the first place. However, as shown in Theorem \ref{StatofUniq}, a reasonable statement of natural generalization can be made. In addition, we stated earlier in Eq \ref{Eq213} that periodic functions $P(n)$ will not be artificially added because the original finite sum carries no information about such functions. We will show later that such choice of $P(n)=0$ is exactly what the statement of Theorem \ref{StatofUniq} entails. Furthermore, we will show additional properties of unique natural generalization. For instance, if \simplefinitesum\ tends to a limit $V$ as $n\to\infty $  then its unique natural generalization also tends to same limit $V\in\mathbb{C}$. In the latter case, unique natural generalization is given by the \lq\lq natural'' expression in Eq \ref{unqSmpConvg} that was discussed earlier. Moreover, we will show that if the finite sum  \simplefinitesum\ asymptotically behaves as a polynomial, then its unique natural generalization also converges to the same polynomial asymptotically, and others. 

In the sequel, we will derive a complete set of rules for performing infinitesimal calculus on simple finite sums and products without having to know what the generalized definition $f_G(n)$ actually is. We will also show that the foundational rules given in Table \ref{TableRules} indeed correspond to the unique most natural generalization to simple finite sums and products. However, before this is done, we present a few elementary illustrative examples first.

\section{Examples to Foundational Rules}\label{Sect2dot2}
For our first example, we return to the power sum function presented earlier in Eq \ref{PSDisctrete}. The first point we note about this function is that its $(m+2)$\textsuperscript{th} derivative is \textbf{zero}, which follows immediately from  Rule 1 in Table \ref{TableRules}. Thus, the unique most natural generalized definition of the power sum function has to be a polynomial of degree $(m+1)$. Because there exists sufficiently many sample points to the discrete power sum function, infinite to be more specific, that polynomial has to be unique. Of course, those polynomials are given by the Bernoulli-Faulhaber formula.

Second, assume that $a=1$ and let $f_m(n)=\sum_{k=1}^n k^m$ then we have by Rule 2:
\begin{equation}\label{Eq221}
\int_0^n \sum_{k=1}^t k^m \, dt = \frac{1}{m+1} \sum_{k=1}^n k^{m+1} +c_1 n + c_2
\end{equation}

Using the empty sum rule, i.e. Rule 3 in Table \ref{TableRules}, and after setting $n=0$, we have $c_2=0$.  Now, if we let $f_{G,m}(n)$ denotes the polynomial $\sum_{k=1}^n k^m$, we have by Eq \ref{Eq221} the following simple recursive rule for deducing closed-form expressions of power sums:
\begin{equation}\label{cookbook}
f_{G,m+1}(n)=(m+1)\int_0^n f_{G,m}(t)\,dt -(m+1)\Bigl(\int_0^1f_{G,m}(t)\,dt \Bigr) n
\end{equation}

Third, because we always have $\sum_{k=1}^n k^m = \sum_{k=0}^n k^m$, Rule 3 can be used in either case to deduce that $f_{G,m}(0)=f_{G,m}(-1)=0$. That is, $n(n+1)$ is always a proper factor of power sum polynomials. The fact that $n(n+1)$ is always a proper factor was observed as early as Faulhaber himself in his book \emph{Academia Algebrae} in 1631 \cite{Havil03}. The recursive solution of power sums given in Eq \ref{cookbook} is well known and was used by Bernoulli himself \cite{Spivey2006, Bloom1993}.  Due to its apparent simplicity, it has been called the \lq\lq Cook-Book Recipe'' by some mathematicians \cite{Shirali2007}. Nevertheless, its simple two-line proof herein illustrates efficacy of Summability Calculus in deducing closed-form expressions of finite sums.

Our second example is the function $f(n)$ given in Eq \ref{Eq223}. Before we deduce a closed-form expression $f_G(n)$ of this function using Summability Calculus, we know in advance that $f_G(0)=f_G(-1)=0$ because we could equivalently take the lower bound of $k$ inside the summation to be zero without changing the function itself, which is similar to the power sum function discussed earlier. In other words, since $\sum_{k=1}^n k x^k$ and $\sum_{k=0}^n k x^k$ share the same boundary condition and recurrence identity, they correspond to the exact same function. Consequently, the empty sum rule can be applied in either case. 
\begin{equation}\label{Eq223}
f(n)= \sum_{k=1}^n k \,x^k
\end{equation} 

Using Rule 1, we have:
\begin{equation}\label{Eq224}
f_G'(n)= \sum_{k=1}^n x^k + \log{x} \; f_G(n) + c = \frac{1-x^{n+1}}{1-x} + \log{x} \; f_G(n) + c
\end{equation} 

Eq \ref{Eq224} is a first-order linear differential equation whose solution is available in closed-form \cite{Ten85}. Using initial condition $f_G(1)=x$, and after rearranging the terms, we obtain the closed-form expression given Eq \ref{Eq225}. Note that $f_G(0)=f_G(-1)=0$ holds as expected. 
\begin{equation}\label{Eq225}
f_G(n)=\frac{x}{(1-x)^2} \big(x^n (n(x-1)-1)+1\bigr)
\end{equation} 

Our third example is the power function $x^n=\prod_{k=1}^n x$. Because $g(k)=x$ is independent of $k$, we have $g'(k)=0$. Using Rule 4, we deduce that $\frac{d}{dn} x^n = c x^n$, for some constant $c$, which is indeed an elementary result in calculus. Fourth, if we let $f(n)$ be the discrete factorial function and denote its generalization using the PI symbol $\Pi$, as Gauss did, then, by Rule 4, we have $\Pi'(n)/\Pi(n) = \bigr(\sum_{k=1}^n 1/k + c\bigr)$, which is a well-known result where the quantity $\Pi'(n)/\Pi(n)$ is $\psi(n+1)$  and $\psi$ is the digamma function. Here, in the last example, $c$ is Euler's constant. 

For a fifth example, let us consider the function $\sum_{k=0}^n \sin{k}$. If we denote $\beta_1 = f_G'(0)$ and $\beta_2 = f_G^{(2)}(0)$ and by successive differentiation using Rule 1, we note that the generalized function $f_G(n)$ can be formally expressed using the series expansion in Eq \ref{Eq226}. Thus,  $f_G(n)=\beta_1\sin{n} -\beta_2(1-\cos{n})$, where the constants $\beta_1$ and $\beta_2$ can be found using any two values of $f(n)$. This can be verified readily to be correct.  
\begin{equation}\label{Eq226}
f_G(n)=\frac{\beta_1}{1!} n - \frac{\beta_2}{2!} n^2 - \frac{\beta_1}{3!} n^3 + \frac{\beta_2}{4!} n^4 +\frac{\beta_1}{5!} n^5 \dotsm 
\end{equation}

The previous five examples illustrate why the constant $c$ in the derivative rules, i.e. Rule 1 and Rule 4, is not an arbitrary constant. However, if it is not arbitrary, is there a systematic way of deducing its value? We have previously answered this question in the affirmative and we will present a formula for $c$ later. Nevertheless, it is instructive to continue with one additional example that illustrates, what is perhaps, the most straightforward special case in which we could deduce the value of $c$ immediately without having to resort to its, somewhat, involved expression. This example also illustrates the concept of natural generalization of discrete functions and why a finite sum embodies within its discrete definition a natural analytic continuation to all $n\in\mathbb{C}$.

Our final example in this section is the function $H_n=\sum_{k=1}^n 1/k$, which is also known as the Harmonic number. Its derivative is given by Rule 1 and it is shown in Eq \ref{Eq227}. 
\begin{equation}\label{Eq227}
\frac{d}{dn} H_n=-\sum_{k=1}^n \frac{1}{k^2} + c
\end{equation} 

Now, because $\Delta f(n) \to 0$ as $n \to \infty$, we expect its natural generalization $f_G(n)$ to exhibit the same behavior as well (as will be shown later, this statement is, in fact, always correct). Thus, we expect $f'_G(n) \to 0$ as $n \to \infty$. Plugging this condition into Eq \ref{Eq227} yields the unique value $c=\zeta_2$. Of course, this is indeed the case if we generalize the definition of harmonic numbers using the digamma function $\psi$ and Euler's constant $\lambda$ as shown in Eq \ref{Eq228} (for a brief introduction into the digamma and polygamma functions, see \cite{MathWorldPolygamma, AbraPsi, AbraPoly}). Again, using Eq \ref{Eq228}, $H_0=0$  as expected but $H_{-1}=\infty \neq 0$. In this example, therefore, we used both the recursive property of the discrete function as well as one of its visible natural consequences to determine how its derivative should behave at the limit $n \to \infty$. So, in principle, we had a \emph{macro} look at the asymptotic behavior of the discrete function to deduce its \emph{local} derivative at $n=0$, which is the interesting duality we discussed earlier in Chapter \ref{ChaptIntro}. Validity of this approach will be proved rigorously in the sequel.
\begin{equation}\label{Eq228}
H_n=\psi(n+1)+\lambda
\end{equation}
\section{Semi-Linear Simple Finite Sums}\label{Section_23}
In this section, we present fundamental results in Summability Calculus for an important special class of discrete functions that will be referred to as \emph{semi-linear} simple finite sums and their associated products. The results herein are extended in the next section to the general case of all simple finite sums and products. We start with a few preliminary definitions. \\

\hrule
\begin{definit}{\textbf{(Nearly-Convergent Functions)}}\label{definition1}
A function $g(k)$ is called nearly convergent if $\lim_{k\to\infty} g'(k) = 0$ and one of the following two conditions holds:
\begin{enumerate} 
\item $g(k)$ is asymptotically non-decreasing and concave. More precisely, there exists $k_0$ such that for all $k>k_0$,  $g'(k)\ge 0$ and $g^{(2)}(k)\leq 0$. 
\item $g(k)$ is asymptotically non-increasing and convex. More precisely, there exists $k_0$ such that for all $k>k_0$,  $g'(k)\leq 0$ and $g^{(2)}(k)\ge 0$
\end{enumerate}
\end{definit}
\hrule \vspace{12pt}
\hrule
\begin{definit}{\textbf{(Semi-Linear Simple Finite Sums)}}\label{definition2}
A simple finite sum \simplefinitesum\ is called semi-linear if $g(k)$ is nearly-convergent. 
\end{definit}
\hrule \vspace{12pt}

Informally speaking, a function $g(k)$ is nearly convergent if it is both asymptotically monotonic, well shaped, and its rate of change is asymptotically vanishing. In other words, $g(k)$ \emph{becomes almost constant} in the bounded region $k\in(k_0-W, \,k_0+W)$ for any fixed $W\in\mathbb{R}$ as $k_0\to\infty$. Semi-linear simple finite sums are quite common, e.g. $\sum_{k=a}^n k^m$ for $m<1$ and $\sum_{k=a}^n \log^s{k}$ for $a>0$, and Summability Calculus is quite simple in such important cases. Intuitively speaking, because $g(k)$ is almost constant asymptotically in any bounded region, we expect $f_G'(n)$ to be close to $g(n)$ as $n \to\infty$. This is indeed the case as will be shown later. An illustrative example of a nearly-convergent function is depicted in Figure \ref{figure2}.

\begin{figure} [h]
\centering
\includegraphics[scale=0.4]{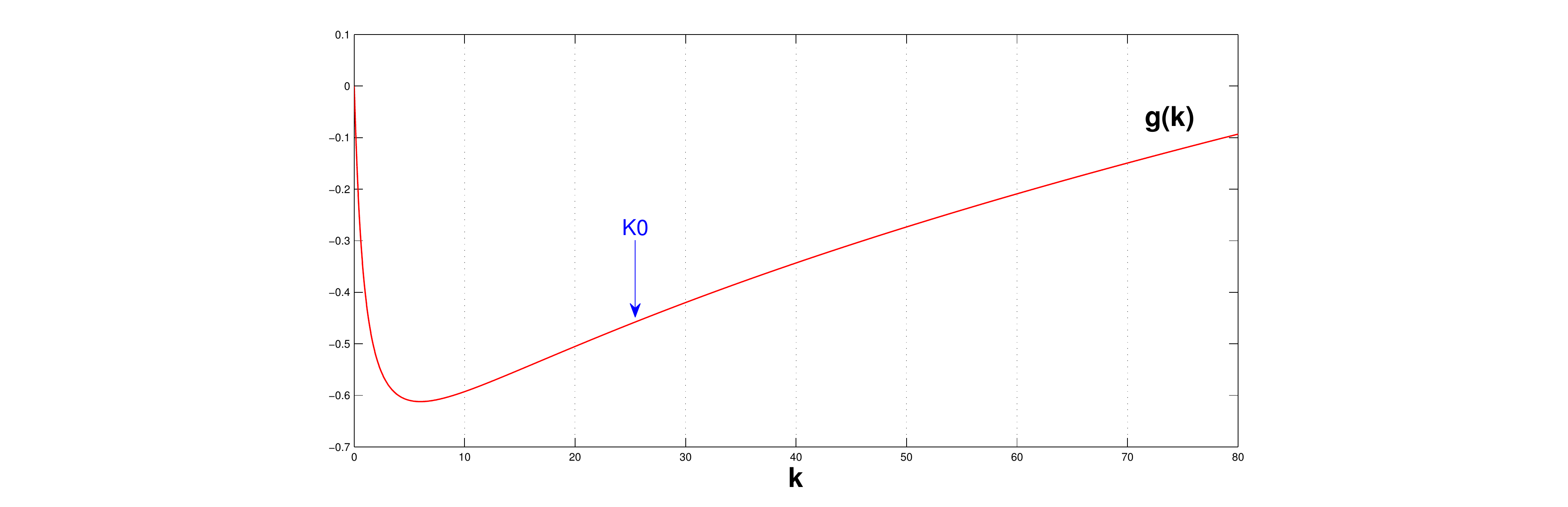}
\caption[Illustration of nearly convergent functions]{Example of a nearly-convergent function}
\label{figure2}
\end{figure}

As stated earlier, Summability Calculus in the case of semi-linear sums is quite simple. Because the function $g(k)$ is nearly-convergent, $g(k)$ becomes by definition almost constant over arbitrary-length intervals. Thus, the simple finite sum \simplefinitesum\ becomes almost a linear function asymptotically over arbitrary-length intervals as well, where the rate of change $f_G'(n)$ approaches $g(n)$; hence the name. Looking into the differentiation rule of simple finite sums, i.e. Rule 1, the non-arbitrary constant $c$ that is \emph{independent of} $n$ and arises out of the differentiation rule should, thus, be given by the limit $\lim_{n\to\infty}\bigl\{g(n)-\sum_{k=a}^n g'(k)\bigr\}$. The following series of lemmas and theorems establish this intuitive reasoning more rigorously.\\ 
\hrule

\begin{lemma}\label{lem21}
If $g(k)$ is nearly convergent, then the limit $\displaystyle\lim_{n\to\infty}\bigl\{g(n)-\sum_{k=a}^n g'(k)\bigr\}$ exists. 
\end{lemma}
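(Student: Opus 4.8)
The plan is to express the quantity $g(n)-\sum_{k=a}^n g'(k)$ in a telescoping form and show it is monotone and bounded, hence convergent. First I would write, using the fundamental theorem of calculus on each unit interval,
\[
\sum_{k=a+1}^n g'(k) = \sum_{k=a+1}^n \Bigl( g(k)-g(k-1) + \int_{k-1}^k \bigl(g'(k)-g'(t)\bigr)\,dt \Bigr) = g(n)-g(a) + \sum_{k=a+1}^n \int_{k-1}^k \bigl(g'(k)-g'(t)\bigr)\,dt .
\]
Therefore
\[
g(n)-\sum_{k=a}^n g'(k) = g(n) - g'(a) - \Bigl( g(n)-g(a) + \sum_{k=a+1}^n \int_{k-1}^k \bigl(g'(k)-g'(t)\bigr)\,dt \Bigr) = g(a) - g'(a) - \sum_{k=a+1}^n \int_{k-1}^k \bigl(g'(k)-g'(t)\bigr)\,dt .
\]
So the existence of the limit is equivalent to the convergence of the series $\sum_{k} \int_{k-1}^k \bigl(g'(k)-g'(t)\bigr)\,dt$, and it suffices to control this series.

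Next I would use the hypothesis that $g$ is nearly convergent. Suppose we are in case (1): beyond some $k_0$, $g'\ge 0$ and $g^{(2)}\le 0$, so $g'$ is nonnegative and non-increasing for large $k$. Then for $t\in[k-1,k]$ we have $0 \le g'(k)-g'(t) \le g'(k-1)-g'(k)$, since $g'$ is non-increasing. Integrating over $[k-1,k]$ gives
\[
0 \le \int_{k-1}^k \bigl(g'(k)-g'(t)\bigr)\,dt \le g'(k-1)-g'(k).
\]
The right-hand side telescopes: $\sum_{k=K+1}^{n} \bigl(g'(k-1)-g'(k)\bigr) = g'(K)-g'(n) \to g'(K)$ as $n\to\infty$, because $g'(k)\to 0$ by the definition of nearly convergent. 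Hence the tail of the series $\sum_k \int_{k-1}^k (g'(k)-g'(t))\,dt$ is a series of nonnegative terms bounded by a convergent telescoping series, so it converges. Case (2) is identical after replacing $g$ by $-g$ (which is again nearly convergent, swapping the two conditions), or equivalently by reversing all inequalities. Combining with the displayed identity above, the limit $\lim_{n\to\infty}\bigl\{g(n)-\sum_{k=a}^n g'(k)\bigr\}$ exists and in fact equals $g(a)-g'(a) - \sum_{k=a+1}^\infty \int_{k-1}^k (g'(k)-g'(t))\,dt$.

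The only mild obstacle is bookkeeping around the finite initial segment $k \le k_0$, where monotonicity/concavity of $g'$ need not hold: this contributes only finitely many terms to the series, each of which is finite because $g$ is analytic (hence $g'$ is bounded on the compact interval $[a,k_0]$), so it does not affect convergence. I would also note in passing that since $g'$ is eventually one-signed and monotone, the comparison above additionally shows the limiting value is approached monotonically from one side — a fact likely to be reused in the subsequent theorems on semi-linear sums, though it is not needed for the statement of this lemma.
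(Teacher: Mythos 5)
Your argument is sound in substance but contains a sign slip in the key inequality. In case (1) the function $g'$ is non-increasing beyond $k_0$, so for $t\in[k-1,k]$ one has $g'(t)\ge g'(k)$ and hence $g'(k)-g'(t)\le 0$; the correct two-sided bound is $-(g'(k-1)-g'(k))\le g'(k)-g'(t)\le 0$, not $0\le g'(k)-g'(t)\le g'(k-1)-g'(k)$ as you wrote. This does not damage the proof: the terms of your series are non-positive with absolute value at most $g'(k-1)-g'(k)$, which still telescopes to a convergent sum, so the series converges absolutely and the limit exists. You should simply state the comparison in terms of absolute values.

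As for the route, it is genuinely different in form from the paper's, though both rest on the same concavity fact. The paper works entirely discretely: it sets $E_n=\sum_{k=k_0}^n g'(k)-g(n)$ and $D_n=E_n-g'(n)$, uses the inequalities $g'(n+1)\le g(n+1)-g(n)\le g'(n)$ to show that $E_n$ is non-increasing while its lower bound $D_n$ is non-decreasing, and concludes by monotone boundedness. You instead apply the fundamental theorem of calculus on each unit interval to rewrite the quantity as a constant minus a series dominated in absolute value by the telescoping series $\sum\bigl(g'(k-1)-g'(k)\bigr)$. Both hinge on concavity trapping $g(k)-g(k-1)$ between $g'(k)$ and $g'(k-1)$, but your version buys slightly more: absolute convergence and an explicit expression for the limit, $g(a)-g'(a)-\sum_{k=a+1}^\infty\int_{k-1}^k\bigl(g'(k)-g'(t)\bigr)\,dt$, whereas the paper's squeeze gives existence only. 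Your handling of the finitely many terms with $k\le k_0$ is correct and mirrors the paper's reduction to sums starting at $k_0$.
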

\begin{proof}
We will prove the lemma here for the case where $g(k)$ is asymptotically non-decreasing and concave. Similar steps can be used in the second case where $g(k)$ is asymptotically non-increasing and convex. First, let $E_n$ and $D_n$ be given by Eq \ref{EqLemma1_1}, where $k_0$ is defined as in Definition \ref{definition1}.
\begin{equation}\label{EqLemma1_1}
E_n=\sum_{k=k_0}^n g'(k) - g(n),\\ D_n=E_n - g'(n)
\end{equation}

We will now show that the limit $\lim_{n\to\infty}E_n$ exists. Clearly, since $k_0<\infty$, Lemma \ref{lem21} follows immediately afterwords. By definition of $E_n$ and $D_n$, we have:
\begin{equation} \label{EqLemma1_2}
E_{n+1}-E_n=g'(n+1)-\bigl(g(n+1)-g(n)\bigr)
\end{equation}
\begin{equation}\label{EqLemma1_3}
D_{n+1}-D_n=g'(n)-\bigl(g(n+1)-g(n)\bigr)
\end{equation}

Because $g'(n)\ge 0$ by assumption, $D_n\leq E_n$. Thus, $D_n$ is a lower bound on $E_n$. However, concavity of $g(n)$ implies that $g'(n+1)\leq g(n+1)-g(n)$ and $g'(n)\ge g(n+1)-g(n)$. Placing these inequalities into Eq \ref{EqLemma1_2} and Eq \ref{EqLemma1_3} implies that $E_n$ is a non-increasing sequence while $D_n$ is a non-decreasing sequence. Since $D_n$ is a lower bound on $E_n$, $E_n$ converges, which completes proof of the lemma. 
\end{proof}
\hrule\vspace{12pt}

Lemma \ref{lem21} immediately proves not only existence of Euler's constant but also \emph{Stieltje's constants} and \emph{Euler's generalized constants} as well (for definition of these families of constants, the reader can refer to \cite{MathWorldStiel,Havil03}). Notice that, so far, we have not made use of the fact that $\lim_{n\to\infty} g'(n) =0$, but this last property will be important for our next main theorem. \\ \hrule

\begin{theorem}\label{theorem231} 
Let g(k) be analytic in an open disc around each point in the domain $[a, n+1]$. Also, let a simple finite sum be given by \simplefinitesum, where $g^{(m)}(k)$ that denotes the \emph{m}th derivative of $g(k)$ is a nearly-convergent function for all $m\ge 0$, and let $f_G(n)$ be given formally by the following series expansion around $n=a-1$:
\begin{equation}\label{EqTheorem231}
f_G(n)=\sum_{m=1}^\infty \frac{c_m}{m!} (n-a+1)^m,\\ \text{where } c_m=\lim_{n\to\infty} \Bigl\{g^{(m-1)}(n)-\sum_{k=a}^n g^{(m)}(k)\Bigr\} 
\end{equation}
Then, $f_G(n)$ satisfies formally the recurrence identity and initial conditions given in Eq \ref{Eq211}. Thus, $f_G(n)$ is a valid generalization of $f(n)$ to the complex plane $\mathbb{C}$.
\end{theorem}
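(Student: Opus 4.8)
The plan is to verify the two requirements directly from the proposed series expansion for $f_G(n)$ in Eq \ref{EqTheorem231}: first that it formally satisfies the recurrence identity $f_G(n) = g(n) + f_G(n-1)$, and second that it satisfies the initial condition $f_G(a) = g(a)$. The key structural observation is that the coefficients $c_m$ are designed precisely so that $f_G^{(m)}(n)$ is itself a series of the same shape with coefficients shifted by one; this is the engine behind both checks. Throughout, I treat the series formally (as the theorem statement permits), so questions of convergence are deferred — Lemma \ref{lem21} already guarantees that each $c_m$ is a well-defined finite number, which is what makes the formal manipulation meaningful in the first place.

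First I would establish the differentiation law for the series: differentiating Eq \ref{EqTheorem231} term by term gives $f_G'(n) = \sum_{m=1}^\infty \frac{c_{m+1}}{m!}(n-a+1)^m + c_1$, which is exactly the series for $f_G$ with $g$ replaced by $g'$ (since $c_{m+1}$ is the analogue of $c_m$ for $g'$), plus the constant $c_1$. Iterating, $f_G^{(r)}(n) = \sum_{m=1}^\infty \frac{c_{m+r}}{m!}(n-a+1)^m + (\text{lower-order constant terms})$, and in particular $f_G^{(r)}(a-1) = c_r$ for $r \ge 1$ while $f_G(a-1) = 0$. Next I would check the recurrence at the level of Taylor coefficients around $n = a-1$: the claim $f_G(n) - f_G(n-1) = g(n)$ is equivalent, after expanding both sides around $n = a-1$ and matching coefficients, to showing that for each $r \ge 1$,
\[
c_r - \bigl[f_G^{(r)}(n-1)\bigr]_{n=a-1} = g^{(r-1)}(a).
\]
Here $[f_G^{(r)}(n-1)]_{n=a-1} = f_G^{(r)}(a-2)$, and using the shifted-series structure this unwinds to a telescoping statement. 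The cleanest route is: define $h(n) := f_G(n) - g(n) - f_G(n-1)$ as a formal power series in $(n-a+1)$; show every derivative of $h$ at $n = a-1$ vanishes. For the $r$-th derivative this amounts to the identity $c_r - f_G^{(r)}(a-2) = g^{(r-1)}(a)$, and since $f_G^{(r)}(a-2)$ expands (via the series for $f_G^{(r)}$) into $\sum_{j\ge 1}\frac{c_{r+j}}{j!}(-1)^j + c_r$-type terms, I would recognize the resulting telescoping/limit identity as a consequence of the definition $c_m = \lim_{n\to\infty}\{g^{(m-1)}(n) - \sum_{k=a}^n g^{(m)}(k)\}$ together with the recurrence $\sum_{k=a}^n g^{(m)}(k) = g^{(m)}(n) + \sum_{k=a}^{n-1} g^{(m)}(k)$, which forces $c_m - c_{m} = \lim[g^{(m-1)}(n) - g^{(m-1)}(n-1) - g^{(m)}(n)] + \dots$ — i.e. the consistency of the $c_m$'s across orders is exactly what the recurrence needs.

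Then I would handle the initial condition: $f_G(a) = g(a)$. Since $f_G(a-1) = 0$ from the series (no constant term), the recurrence at $n = a$ gives $f_G(a) = g(a) + f_G(a-1) = g(a)$, so this is immediate once the recurrence is in hand. Actually it is cleaner to note that $f_G(a-1)=0$ is visible directly (the series has no degree-zero term), and everything else follows. I would organize the write-up so the recurrence is the substantive part and the initial condition is a one-line corollary.

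The main obstacle I anticipate is the telescoping step that identifies $c_r - f_G^{(r)}(a-2)$ with $g^{(r-1)}(a)$. This requires carefully summing the series for $f_G^{(r)}$ evaluated at $a-2$, which brings in an infinite alternating combination of the $c_{r+j}$; one must show this rearranges to match $g^{(r-1)}(a)$ using the defining limits. The delicate point is that each $c_m$ is itself a limit of a difference $g^{(m-1)}(n) - \sum_{k=a}^n g^{(m)}(k)$, so the argument is really about interchanging the "limit over $n$" defining $c_m$ with the "sum over $j$" in the series — at the formal level this is legitimate, but presenting it transparently (perhaps by first proving the derivative identity $f_G'(n) = \sum_{k=a}^n g'(k) + c_1$ as a formal series identity, then integrating/iterating, so that the telescoping happens one derivative-order at a time rather than all at once) is where the real care lies. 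I would lean on the already-established successive-approximation picture from Eq \ref{Eq21_13} and Theorem \ref{StatofUniq} to guide the bookkeeping, since that construction produces exactly these Bernoulli-number coefficients.
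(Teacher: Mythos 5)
Your outline matches the paper's strategy in broad strokes: reduce the recurrence $f_G(n)-f_G(n-1)=g(n)$ to an identity on Taylor coefficients at the base point, exploit the shifted-series structure $f_G^{(r)}(a-1)=c_r$, note that $f_G(a-1)=0$ makes the initial condition a one-line corollary, and recognize that the crux is a telescoping/limit-interchange computation. However, the proposal stops exactly where the proof begins. The coefficient identity you must establish is, concretely, $\sum_{j=1}^{\infty} c_{j+r}/j! = g^{(r)}(a)$ for each $r\ge 0$ (your version has an indexing slip: matching the $r$-th derivative of $g(n)$ at the base point should produce $g^{(r)}$ evaluated there, not $g^{(r-1)}(a)$), and your proposed resolution collapses into the display ``$c_m - c_m = \lim[\cdots]+\dots$'', which asserts $0=\cdots$ and is a consistency check rather than a derivation of the required value. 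The paper's proof actually evaluates the sum: writing $c_j^{n_0}=g^{(j-1)}(n_0)-\sum_{k=a}^{n_0}g^{(j)}(k)$ and summing over $j$ with a regular, linear, stable summation operator for Taylor series, one recognizes $\sum_j g^{(j-1)}(n_0)/j! = \int_{n_0}^{n_0+1}g(t)\,dt$ and $\sum_j \frac{1}{j!}\sum_{k=a}^{n_0}g^{(j)}(k)=g(n_0+1)-g(a)$ (a telescoping sum of $g(k+1)-g(k)$), so that $\sum_{j\ge1}c_j^{n_0}/j! = g(a) - \bigl(g(n_0+1)-\int_{n_0}^{n_0+1}g(t)\,dt\bigr)$; only then does one let $n_0\to\infty$.

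This exposes the genuine gap: your argument never invokes the hypothesis that every $g^{(m)}$ is nearly convergent, yet that hypothesis is indispensable — it is precisely what forces $\lim_{n_0\to\infty}\bigl\{g(n_0+1)-\int_{n_0}^{n_0+1}g(t)\,dt\bigr\}=0$, without which the coefficient identity fails and the theorem is false. Relatedly, declaring the limit/sum interchange ``legitimate at the formal level'' is not enough here, because the conclusion is a specific numerical value for an infinite combination of the $c_{j+r}$, and the series $\sum_j c_{j+r}/j!$ is in general divergent; the paper has to introduce the operator $\mathbb{T}$ (Lindel\"of or Mittag-Leffler) with its linearity and stability properties to make the term-by-term substitution meaningful, and it uses stability to reduce the infinitely many identities (one per $r$) to the single case $r=0$. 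To complete your proof you would need to supply exactly these two ingredients: a summation device justifying the rearrangement, and the near-convergence estimate that evaluates the residual limit to zero.
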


\begin{proof}
As will be described in details in Chapter \ref{Chapter4}, there exists summability methods for Taylor series expansions such as Lindel\"of and Mittag-Leffler's  summability methods. More specifically, define:
\begin{center}$\vec{\textbf{f}}_{x_0, x}=\Bigl(\frac{f^{(0)}(x_0)}{0!}(x-x_0)^0, \frac{f^{(1)}(x_0)}{1!}(x-x_0)^1, \frac{f^{(2)}(x_0)}{2!}(x-x_0)^2, \dotsm\Bigr)$ \end{center}
That is,  $\vec{\textbf{f}}_{x_0, x}\in\mathbb{C}^\infty$ enumerates the terms of the Taylor series expansion of $f$ around the point $x_0$.
Then, there exists an operator $\mathbb{T}: \mathbb{C}^\infty \to \mathbb{C}$ on infinite dimensional vectors $\vec{\textbf{a}}=(a_0, a_1, a_2, \dots)$, given by $\mathbb{T} (\vec{\textbf{a}})=\displaystyle\lim_{\delta\to 0}\Bigl\{\sum_{j=0}^\infty \,\xi_\delta(j)\, a_j\Bigr\}$, for some constants $\xi_\delta(j)$, which satisfies the following properties:
\begin{enumerate} 
\item If $f(x)$ is analytic in the domain $[x_0, x]$, then we have $\mathbb{T}(\vec{\textbf{f}}_{x_0, x})=f(x)$.
\item $\mathbb{T}(\vec{\textbf{a}} + \alpha \vec{\textbf{b}})=\mathbb{T}(\vec{\textbf{a}}) + \alpha \mathbb{T}(\vec{\textbf{b}}) \quad\quad \text{(i.e. the operator is linear)}$
\item $\lim_{\delta\to 0}\Bigl\{\sum_{j=0}^\infty \,\xi_\delta(j)\, a_j\Bigr\} = \lim_{\delta\to 0}\Bigl\{\sum_{j=0}^\infty \,\xi_\delta(j+\kappa)\, a_j\Bigr\} $ for any fixed $\kappa\ge 0$ if $a_k=\frac{f^{(k)}}{k!}(x-x_0)^k$ for some function $f(x)$ (i.e. the operator is stable for Taylor series expansions). 
\end{enumerate} 
The exact value of $\xi_\delta(j)$ is irrelevant here. We only need to assume that they exist. In addition, third property is not fundamental because it actually follows from the first two properties. Two well-known examples of $\mathbb{T}$ summability methods are the methods of Lindel\"of and Mittag-Leffler \cite{HardyDiverg}. For instance, the  Lindel\"of summability method is given by $\xi_\delta(j)=j^{-\delta j}$. Now, let $c_m^{n_0}$ be given by: 
\begin{equation}\label{Theorem231Eq1}
c_m^{n_0}= g^{(m-1)}(n_0)-\sum_{k=a}^{n_0} g^{(m)}(k)
\end{equation} 
By definition, we have $\lim_{n_0\to\infty} c_m^{n_0}=c_m$. So, the argument is that the function $f_{n_0}(n)$ given formally by Eq \ref{Theorem231Eq2} satisfies the required recurrence identity and initial conditions given in Eq \ref{Eq221} at the limit $n_0\to\infty$. 
\begin{equation}\label{Theorem231Eq2}
f_{n_0}(n)=\sum_{m=1}^\infty \frac{c_m^{n_0}}{m!} (n-a+1)^m
\end{equation} 
To prove this last argument, we first note from Eq \ref{Theorem231Eq2} and the summability method $\mathbb{T}$ that the following holds: 
\begin{align*}
f_{n_0}(n)-f_{n_0}(n-1) &= \lim_{\delta \to 0}  \bigl\{ \sum_{m=1}^\infty \xi_\delta(m) \frac{c_m^{n_0}}{m!} (n-a+1)^m - \sum_{m=1}^\infty \xi_\delta(m) \frac{c_m^{n_0}}{m!} (n-a)^m\bigr\} \\
&=\lim_{\delta \to 0} \bigl\{ \sum_{m=0}^\infty \xi_\delta(m) \frac{K_m^{n_0}}{m!} (n-a)^m \bigr\} 
\end{align*}
\quad\quad\,, where $K_m^{n_0}=\sum_{j=1}^\infty \xi_\delta(j+m) \frac{c_{j+m}^{n_0}}{j!}\bigr\} $

Looking into the previous equation and because $\lim_{\delta \to 0} \xi_\delta(x) = 1$ for all $x\ge 0$, we immediately deduce that the Taylor coefficients of $f_{n_0}(n)-f_{n_0}(n-1)$ must be given by :
\begin{equation}
\frac{d^m}{dn^m} \bigl(f_{n_0}(n)-f_{n_0}(n-1)\bigr)\Big|_{n=a} = \lim_{\delta\to 0}K_m^{n_0}= \lim_{\delta\to 0}\sum_{j=1}^\infty \xi_\delta(j+m) \frac{c_{j+m}^{n_0}}{j!} 
\end{equation}
From now on, and with some abuse of notation, we will simply denote $K_m^{n_0} =\lim_{\delta\to 0}K_m^{n_0}$. So, in order for the recurrence identity to hold, we must have $\lim_{n_0\to\infty} \big(f_{n_0}(n)-f_{n_0}(n-1)\big)=g(n)$. Since Taylor series expansions of analytic functions are unique, we must have:
\begin{equation}\label{Theorem231Eq3}
\lim_{n_0\to\infty} K_m^{n_0} = g^{(m)}(a), \\ \text{for all}\ m\ge 0
\end{equation}

Luckily, because of property 3 of the summability method $\mathbb{T}$, the conditions in Eq \ref{Theorem231Eq3} for $m\ge 0$ are all \emph{equivalent} by definition of $c_m^{n_0}$ so we only have to prove that it holds for $m=0$. In other words, the expression Eq \ref{Theorem231Eq3} is a functional identity. If it holds for $m=0$, then it also holds for all $m$. To prove that it holds for $m=0$, we note that:
\begin{align*}
\lim_{n_0\to\infty} K_0^{n_0} &=\lim_{n_0\to\infty} \lim_{\delta \to 0} \bigl\{ \sum_{j=1}^\infty \xi_\delta(j) \frac{c_{j}^{n_0}}{j!}\bigr\}\\
&=\lim_{n_0\to\infty} \lim_{\delta \to 0} \bigl\{ \sum_{j=1}^\infty \xi_\delta(j) \frac{1}{j!} \bigl(g^{(j-1)}(n_0)-\sum_{k=a}^{n_0} g^{(j)}(k)\bigr)\bigr\}\\
&=\lim_{n_0\to\infty} \lim_{\delta \to 0} \bigl\{ \sum_{j=1}^\infty \xi_\delta(j) \frac{1}{j!} g^{(j-1)}(n_0)\bigr\}\\
&\quad\quad\quad\quad-\lim_{n_0\to\infty} \lim_{\delta \to 0} \bigl\{ \sum_{j=1}^\infty \xi_\delta(j) \frac{1}{j!} \sum_{k=a}^{n_0} g^{(j)}(k)\bigr\}
\end{align*}
In the last step, we split the sum because the summability method $\mathbb{T}$ is linear. Now, because $g(k)$ is analytic by assumption in the domain $[a, n+1]$, its anti-derivative exists and it is also analytic over the same domain. Therefore, we have:
\begin{equation}\label{Theorem231Eq4}
\lim_{\delta \to 0} \bigl\{ \sum_{j=1}^\infty \xi_\delta(j) \frac{1}{j!} g^{(j-1)}(n_0)\bigr\}=\int_{n_0}^{n_0+1} g(t)\,dt
\end{equation}
In addition, we have:
\begin{equation}\label{Theorem231Eq5}
\lim_{\delta \to 0} \bigl\{ \sum_{j=1}^\infty \xi_\delta(j) \frac{1}{j!} g^{(j)}(s)\bigr\}=g(s+1)-g(s)
\end{equation}
In both equations, we used the earlier claim that the summability method $\mathbb{T}$ correctly sums Taylor series expansions under stated conditions. From Eq \ref{Theorem231Eq5}, we realize that:
\begin{equation}\label{Theorem231Eq6}
\lim_{\delta \to 0} \bigl\{ \sum_{j=1}^\infty \xi_\delta(j) \frac{1}{j!} \sum_{k=a}^{n_0} g^{(j)}(k)\bigr\} = g(n_0+1)-g(a)
\end{equation}
This is because the left-hand sum is a telescoping sum. 
Plugging both Eq \ref{Theorem231Eq4} and Eq \ref{Theorem231Eq6} into the last expression for $\lim_{n_0\to\infty} K_0^{n_0}$ yields:
\begin{equation}\label{Theorem231Eq7}
\lim_{n_0\to\infty} K_0^{n_0} = g(a)-\lim_{n_0\to\infty} \Bigl\{g(n_0+1)-\int_{n_0}^{n_0+1} g(t)\,dt \Bigr\}
\end{equation}

Now, we need to show that the second term on the right-hand side evaluates to zero. This is easily shown upon noting that the function $g(k)$ is nearly-convergent. As stated earlier, since $g(k)$ is nearly convergent, then $g^{(m)}(n_0)$ vanish asymptotically for all $m\ge 1$. Since $g(k)$ is also asymptotically monotone by Definition \ref{definition1}, then for any $\epsilon>0$, there exists a constant $N$ large enough such that:
\begin{equation}\label{Theorem231Eq8}
\max_{x\in[n_0, n_0+\tau]} g(x) - \min_{x\in[n_0, n_0+\tau]} g(x) <\epsilon, \;\;\text{for all }\ n_0>N \text{ and } 0\leq\tau<\infty
\end{equation}
Consequently, we can find a constant $N$ large enough such that:
\begin{equation}\label{Theorem231Eq9}
\Big| g(n_0+1)-\int_{n_0}^{n_0+1} g(t)\,dt \Bigr|<\epsilon, \;\;\text{for all }\ n_0>N
\end{equation}
Because $\epsilon$ can be made arbitrary close to zero for which $N$ has to be sufficiently large, we must have:
\begin{equation}\label{Theorem231Eq10}
\lim_{n_0\to\infty} \Bigl\{g(n_0+1)-\int_{n_0}^{n_0+1} g(t)\,dt\Bigr\}=0
\end{equation}
Therefore, we indeed have $\lim_{n_0\to\infty} K_0^{n_0} = g(a)$. As argued earlier, this implies that Eq \ref{Theorem231Eq3} indeed holds. 

With this in mind, we have established formally that $\lim_{n_0\to\infty} \big(f_{n_0}(n)-f_{n_0}(n-1)\big)=g(n)$. Therefore, $f_G(n)$ given by Eq \ref{EqTheorem231} satisfies formally the recurrence identity in Eq \ref{Eq221}. Finally, proving that the initial condition also holds follows immediately. By plugging $n=a-1$ into the formal expression for $f_G(n)$ given by Eq \ref{EqTheorem231}, we note that $f_G(a-1)=0$ so $f_G(a)=g(a)$ by the recurrence identity, which completes proof of the theorem.  
\end{proof} 
\hrule
\begin{lemma}\label{theorem231TwoProperties}
The unique natural generalization given by Theorem \ref{theorem231} satisfies the two defining properties of simple finite sums. 
\end{lemma}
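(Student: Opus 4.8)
The plan is to verify the two defining properties one at a time, using only facts that Theorem \ref{theorem231} has already supplied. For a given lower bound $a$, that theorem tells us that the series $f_G(n)=\sum_{m\ge1}\frac{c_m}{m!}(n-a+1)^m$ formally satisfies $f_G(n)=g(n)+f_G(n-1)$ and $f_G(a-1)=0$, and, by reading coefficients off the series, that the $r$-th derivative of $f_G$ at $n=a-1$ equals $c_r$ for every $r\ge1$. Let me write $F_c$ for the function this construction yields when the lower bound is $c$; its hypotheses (analyticity of $g$ near the relevant interval, near-convergence of every $g^{(m)}$) restrict $g$ alone and not the position of the lower bound, so $F_c$ is defined for every admissible $c$, and the same hypotheses are inherited by each derivative $g^{(r)}$. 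Property 1, namely $\sum_{k=x}^x g(k)=g(x)$, is then immediate: it is exactly the initial-condition clause of Theorem \ref{theorem231} applied with lower bound $x$, since $F_x(x-1)=0$ forces $F_x(x)=g(x)+0=g(x)$.

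Property 2, $\sum_{k=a}^b g(k)+\sum_{k=b+1}^x g(k)=\sum_{k=a}^x g(k)$, read through Theorem \ref{theorem231}, is the identity $F_a(x)=F_a(b)+F_{b+1}(x)$, which I would prove when $b-a$ is a nonnegative integer (or $-1$) by comparing the two analytic functions of $x$ on each side through their Taylor expansions at $x=b$. Put $\phi(x)=F_a(x)-F_a(b)-F_{b+1}(x)$. The $g(x)$-terms of the two recurrences cancel and the additive constant drops out, so $\phi(x)=\phi(x-1)$; being analytic and $1$-periodic, $\phi$ vanishes identically as soon as it vanishes to infinite order at a single point. At $x=b$: for $r=0$ we have $\phi(b)=-F_{b+1}(b)=0$ because $F_{b+1}$ vanishes at $(b+1)-1=b$. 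For $r\ge1$, the series for $F_{b+1}$, which is centered at $b$, gives $\big(F_{b+1}\big)^{(r)}(b)=c_r^{(b+1)}$, while differentiating the recurrence for $F_a$ $r$ times gives $\big(F_a\big)^{(r)}(x)=g^{(r)}(x)+\big(F_a\big)^{(r)}(x-1)$, which iterated from $b$ down to $a-1$ over finitely many integer steps, together with $\big(F_a\big)^{(r)}(a-1)=c_r^{(a)}$, yields $\big(F_a\big)^{(r)}(b)=\sum_{k=a}^b g^{(r)}(k)+c_r^{(a)}$; substituting $c_r^{(a)}=\lim_N\big\{g^{(r-1)}(N)-\sum_{k=a}^N g^{(r)}(k)\big\}$ and absorbing the finite sum identifies this with $c_r^{(b+1)}$ as well. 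Hence $\phi^{(r)}(b)=0$ for all $r\ge0$, so $\phi\equiv0$ and Property 2 follows. The same conclusion can be reached more quickly by noting that $\phi$ is a $1$-periodic natural generalization of the finite sum of the zero function, hence constant by the $P(n)=0$ principle behind Theorem \ref{StatofUniq}, with value $\phi(b)=0$.

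I expect the genuine difficulty to be rigor rather than strategy. The series defining $F_c$ converges only in the $\mathbb{T}$-sense used in the proof of Theorem \ref{theorem231}, not in the ordinary sense, so every step above that treats it as an ordinary power series---termwise differentiation, the $r$-fold recurrence on derivatives, the extraction of Taylor coefficients---must be justified through the linearity and stability of the operator $\mathbb{T}$ exactly as in that proof; in particular one should first check that $g'$ and every $g^{(r)}$ again satisfies the hypotheses of Theorem \ref{theorem231}, so that the differentiated objects are legitimate constructions. A smaller issue is scope: the telescoping step requires $b-a$ to be a nonnegative integer (or $-1$), which is precisely the form of Property 2 needed to recover the discrete sum from its generalization; an arbitrary lower bound $b$ would require a separate argument along the same lines, not needed for the purposes of this paper.
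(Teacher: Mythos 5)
Your proposal is correct at the same level of (formal) rigor the paper itself operates at, but it takes a genuinely different route from the paper's. The paper simply defers this lemma to Section \ref{sectionSFSGC}, where both defining properties are obtained as a byproduct of identifying the generalization with the Euler--Maclaurin summation formula (Corollary \ref{Corollaryshiftproperty} and Corollary \ref{UniqGenTwoProperties} are each dispatched with ``by direct application of the Euler--Maclaurin summation formula''), so the semi-linear case is never argued on its own terms. You instead stay entirely inside Theorem \ref{theorem231}: Property 1 is the initial-condition clause read with lower bound $x$, and Property 2 reduces to the additivity of the constants, $c_r^{(a)} = \sum_{k=a}^{b} g^{(r)}(k) + c_r^{(b+1)}$, which is transparent from the explicit limit formula $c_r = \lim_{n\to\infty}\bigl\{g^{(r-1)}(n)-\sum_{k=a}^{n} g^{(r)}(k)\bigr\}$ available only in the semi-linear setting; the difference $\phi$ of the two sides then vanishes to infinite order at $x=b$ and is analytic, hence identically zero (the $1$-periodicity you invoke is not even needed once analyticity on a connected domain is granted). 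What your route buys is a self-contained, more elementary proof that does not wait for the Euler--Maclaurin machinery of Theorem \ref{theorem251}; what the paper's route buys is the general (non-semi-linear) case in one stroke. Your closing caveats are well placed: every termwise differentiation and coefficient extraction must be funneled through the linearity and stability of the operator $\mathbb{T}$ exactly as in the proof of Theorem \ref{theorem231}, and the restriction to $b-a\in\mathbb{N}\cup\{-1\}$ is precisely the form of Property 2 the paper actually uses, so it is not a loss of generality for the lemma's purpose.
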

\begin{proof}
The proof will be deferred until Section \ref{sectionSFSGC}. 
\end{proof}
\hrule\vspace{12pt} 

Theorem \ref{theorem231} provides us with a very convenient method of performing infinitesimal calculus such as differentiation, integration, and computing series expansion. As will be illustrated in Section \ref{SectionExampleSL}, it can even be used to deduce asymptotic expressions such as Stirling's approximation. However, one important concern that should be raised at this point is whether the generalization to non-integer arguments given by Theorem \ref{theorem231} is a \emph{natural} generalization and, if so, whether it is equivalent to the \emph{unique natural generalization} given earlier using the successive polynomial approximation method of Theorem \ref{StatofUniq}. This is, in fact, the case. We will now see why Theorem \ref{theorem231} is a very natural generalization by establishing its direct correspondence with \emph{linear fitting}, and prove that its equivalent to the successive polynomial approximation method given in Theorem \ref{StatofUniq} later in Section \ref{sectionSFSGC}. \\ \hrule

\begin{claim}\label{claimLinearFitting}
If a simple finite sum is given by \simplefinitesum, where $g^{(m)}(k)$ is a nearly-convergent function for all $m\ge 0$, then its generalization $f_G(n)$ given by Theorem \ref{theorem231} is the unique most natural generalization of $f(n)$.
\end{claim}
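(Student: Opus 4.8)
The plan is to exhibit $f_G(n)$ from Theorem~\ref{theorem231} as the output of a canonical, parameter-free procedure of \emph{linear fitting}, mirroring the polynomial fitting behind Theorem~\ref{StatofUniq}. First I would make linear fitting precise. For each integer $n_0 \ge a$ let $\ell_{n_0}$ be the affine function on the unit interval $[n_0-1,\,n_0]$ that agrees there with the discrete values $f(n_0-1)$ and $f(n_0)$, namely $\ell_{n_0}(n) = f(n_0-1) + (n-n_0+1)\,g(n_0)$, and extend $\ell_{n_0}$ to a function $\hat f_{n_0}$ on the whole line by forcing the recurrence $\hat f_{n_0}(n)=g(n)+\hat f_{n_0}(n-1)$ in both directions. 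The two things to prove are: (i) $\hat f_{n_0}\to f_G$ uniformly as $n_0\to\infty$; and (ii) this procedure involves no free choice and is the one forced by near-convergence, so $f_G$ is the unique function it can return.

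For (i) the key remark is that $f_G$ and $\hat f_{n_0}$ obey the \emph{same} recurrence $\varphi(n)=g(n)+\varphi(n-1)$ for every $n$ (for $\hat f_{n_0}$ this holds at the junction $n=n_0$ because $f(n_0)=g(n_0)+f(n_0-1)$, and elsewhere by construction), so their difference $h_{n_0}:=f_G-\hat f_{n_0}$ is periodic with period $1$ and hence $\|h_{n_0}\|_\infty=\sup_{[n_0-1,n_0]}|h_{n_0}|$. On $[n_0-1,n_0]$ the function $\hat f_{n_0}$ is affine while $f_G$ is analytic and matches it at both integer endpoints (since $f_G(m)=f(m)$ for integers $m\ge a$, by $f_G(a-1)=0$ and the recurrence), so the elementary bound for a $C^2$ function vanishing at the ends of a unit interval gives $\|h_{n_0}\|_\infty\le\tfrac18\sup_{[n_0-1,n_0]}|f_G''|$. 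It then remains to show $\sup_{[n_0-1,n_0]}|f_G''|\to 0$. Here I would use Theorem~\ref{theorem231}: since each $g^{(m)}$ is nearly convergent, iterating the vanishing clause of Definition~\ref{definition1} gives $g^{(m)}(k)\to 0$ for all $m\ge 1$, the series $\sum_{k=a}^\infty g^{(m)}(k)$ converge (eventual monotonicity of $g^{(m)}$ together with the telescoped integral bound), and from $f_G^{(m)}(n)=\sum_{k=a}^n g^{(m)}(k)+c_m$ with $c_m=-\sum_{k=a}^\infty g^{(m)}(k)$ for $m\ge 2$ one gets $f_G^{(m)}(n)\to 0$; applied to $m=2$ (using $m=3$ to upgrade pointwise decay to decay uniform over unit intervals) this yields $\sup_{[n_0-1,n_0]}|f_G''|\to 0$, hence $\hat f_{n_0}\to f_G$ uniformly.

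For (ii) I would argue as in the discussion following Theorem~\ref{StatofUniq}: near-convergence says that on any fixed bounded window $g$ — and therefore the partial sum $f$ — differs from an affine function by less than any prescribed $\epsilon$ once the window is pushed far enough to the right, so fitting a \emph{line} asymptotically (rather than a polynomial of higher degree, which would amount to reading information not present in the data) and propagating back through the defining recurrence adds no arbitrary $1$-periodic term; the resulting limit is uniquely determined, and by (i) it equals $f_G$. Combined with the equivalence to successive polynomial approximation (deferred to Section~\ref{sectionSFSGC}), this identifies $f_G$ as the unique most natural generalization.

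The step I expect to be the main obstacle is precisely the uniform-on-unit-intervals decay $\sup_{[n_0-1,n_0]}|f_G''|\to0$: it is not enough that $g'\to0$; one must transfer the full package of near-convergence — in particular the eventual monotonicity and concavity/convexity hypotheses of Definition~\ref{definition1} — from $g$ to the derivatives of the generalized sum $f_G$, so that $f_G''$ is asymptotically small uniformly rather than merely along the integers. The cleanest route I see is to note that $f_G'-c_1$ is itself the natural generalization of the semi-linear sum of $g'$, so that Lemma~\ref{lem21} and Theorem~\ref{theorem231} apply to it directly and express $f_G''(n)$ as a tail $-\sum_{k>n}g''(k)$ which near-convergence of $g''$ controls uniformly; verifying that the generalized (non-integer) partial sums inherit this tail behaviour is the delicate point, and it is exactly the place where the ``macro-to-local'' duality emphasized in Chapter~\ref{ChaptIntro} does the work.
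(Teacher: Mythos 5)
Your argument is correct in substance but proceeds by a genuinely different, more constructive route than the paper's. The paper's own proof never builds an interpolant: it declares asymptotic linear fitting to be the one universally agreeable notion of naturalness, cites Eq \ref{Theorem231Eq8} to conclude that the finite sum grows almost linearly with slope $g(n)$ over arbitrarily long windows, infers that any natural generalization must therefore satisfy $f_G'(n)\to g(n)$, and then observes that Rule 1 of Table \ref{TableRules} forces $c=\lim_{n\to\infty}\bigl\{g(n)-\sum_{k=a}^n g'(k)\bigr\}$ --- precisely the constant of Theorem \ref{theorem231}, whose existence is Lemma \ref{lem21}; uniqueness is then immediate because this pins down $c$ (and, iterating, every $c_m$). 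You instead realize \lq\lq linear fitting'' as an explicit operator --- affine interpolation on $[n_0-1,n_0]$ propagated through the recurrence --- and prove uniform convergence to $f_G$ via the $1$-periodicity of the difference and the $\tfrac{1}{8}\sup|f_G''|$ interpolation bound. That is a quantitative strengthening the paper does not attempt here, and it essentially anticipates Theorem \ref{theorem611} of Section \ref{Section6dot1}, where exactly this fit-linearly-at-infinity-and-propagate-back construction is shown to evaluate semi-linear sums. What the paper's version buys is brevity and a clean identification of the single forced datum (the constant $c$); what yours buys is an actual convergence statement with a rate, at the price of the one step you correctly flag as delicate, namely the uniform decay of $f_G''$ over unit intervals. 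That step does close along the lines you sketch: Lemma \ref{lem21} applied to $g'$ together with $g'(n)\to 0$ gives convergence of $\sum_{k=a}^\infty g''(k)$, Corollary \ref{cor231_1} applied to $\sum_{k=a}^n g''(k)$ writes $f_G''(n)=-\sum_{k=n+1}^\infty g''(k)$ for non-integer $n$ as well, and the eventual monotonicity of $g''$ (available because every $g^{(m)}$ is assumed nearly convergent) sandwiches the non-integer tails between consecutive integer ones; the paper sidesteps this entirely by arguing at the level of $\lim_{n\to\infty} f_G'(n)$ rather than of function values.
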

\begin{proof}
As stated earlier, arguments of unique natural generalization are clearly intricate. Several ad hoc definitions or criteria could be proposed for what natural generalization actually entails. We have already presented one such definition in Theorem \ref{StatofUniq}. There, we showed that out of all possible functions that can correctly satisfy initial condition and recurrence identity, there exists a  function that can be singled out as the \emph{unique} natural generalization to the simple finite sum. We will show later in Section \ref{sectionSFSGC} that such unique function is equivalent to the one given by Theorem \ref{theorem231}. 

Aside from this, however, there is, in fact, a special case in which it is universally agreeable what natural generalization should look like, and that is the case of \emph{linear fitting}. More specifically, if a collection of points in the plane can be perfectly interpolated using a straight line \footnote{or a hyperplane in $\mathbb{R}^n$ in case of $n$-dimensional points}, and given lack of additional special requirements, then it is indeed the unique most natural generalization.

In the previous proof of Theorem \ref{theorem231}, we showed that if $g(k)$ is nearly convergent, and given any $\epsilon>0$, then we can always find a constant $N$ large enough such that Eq \ref{Theorem231Eq8} holds. Eq \ref{Theorem231Eq8} implies that the finite sum \simplefinitesum\ grows \emph{almost linearly} around $n$ for sufficiently large $n$ and its derivative can be made arbitrary close to $g(n)$ as $n\to\infty$. We, therefore, expect its natural generalization to exhibit the same behavior and to have its derivative $f_G'(n)$ to be arbitrary close to $g(n)$ for sufficiently large $n$. 

Now, we know by Rule 1 of Table \ref{TableRules} that the following holds for some non-arbitrary constant $c$:
\begin{equation*}
f_G' (n)=\sum_{k=a}^n g'(k)+c
\end{equation*}
Comparing such equation with the fact that $f_G'(n)\to g(n)$ as $n\to\infty$ implies that the constant $c$ of the unique natural generalization must be given by: 
\begin{equation*}
c= \lim_{n\to\infty} \big\{f_G' (n)-\sum_{k=a}^n g'(k)\bigr\} = \lim_{n\to\infty} \big\{g(n)-\sum_{k=a}^n g'(k)\bigr\}
\end{equation*}
However, this is exactly what Theorem \ref{theorem231} states, and, by Lemma \ref{lem21}, the limit exists so the function $f_G(n)$ given by Theorem \ref{theorem231} is indeed the unique function that satisfies such property of natural generalization. 
\end{proof}
\hrule\vspace{12pt} 

Now, we conclude with one last corollary. Here, we will show that if $\sum_{k=a}^{\infty} g(k)$ exists, then an additional statement of uniqueness of natural generalization can be made, which was touched upon earlier in Chapter \ref{ChaptIntro}. \\ \hrule 
\begin{corollary}\label{cor231_1} 
Let \simplefinitesum\ be semi-linear and suppose that $\sum_{k=a}^{\infty} g(k)$ exists. Also, let $f_G(n)$ be its unique natural generalization as given by Theorem \ref{theorem231}, then we have for all $n\in\mathbb{C}$: 
\begin{equation}\label{Eqcor231_1} 
f_G(n)=\sum_{k=a}^{\infty} g(k) - \sum_{k=n+1}^{\infty} g(k)
\end{equation}
\end{corollary}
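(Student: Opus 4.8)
The plan is to show that the right-hand side of Eq \ref{Eqcor231_1}, call it $h(n) = \sum_{k=a}^{\infty} g(k) - \sum_{k=n+1}^{\infty} g(k)$, coincides with $f_G(n)$ by verifying that $h$ satisfies exactly the characterizing properties that Theorem \ref{theorem231} attaches to $f_G$, namely the recurrence identity $h(n) = g(n) + h(n-1)$, the initial condition $h(a-1)=0$ (equivalently $h(a)=g(a)$), and — the crucial part — the correct value of the non-arbitrary constant $c = f_G'(a-1)$ arising in Rule 1. Since $\sum_{k=a}^{\infty} g(k)$ converges by hypothesis and $g$ is nearly-convergent, the tail $T(n) := \sum_{k=n+1}^{\infty} g(k) = \sum_{j=0}^\infty g(n+1+j)$ is well-defined for all $n\in\mathbb{C}$ (each term $g(n+1+j)$ makes sense since $g$ is analytic on $[a,\infty)$ and, by near-convergence, the tail decays), so $h$ is a genuine function $\mathbb{C}\to\mathbb{C}$.

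First I would check the recurrence: $h(n) - h(n-1) = T(n-1) - T(n) = \sum_{j\ge 0} g(n+j) - \sum_{j\ge 0} g(n+1+j) = g(n)$, which telescopes provided the tails converge — and they do, by the assumed convergence of $\sum_{k=a}^\infty g(k)$ together with the monotonicity/shape hypotheses on $g$ (near-convergence), which guarantee absolute convergence of all these tails and hence legitimate rearrangement. Next, the initial condition: $h(a-1) = \sum_{k=a}^{\infty} g(k) - \sum_{k=a}^{\infty} g(k) = 0$, and then $h(a) = g(a)$ follows from the recurrence. So $h$ is at least \emph{a} valid generalization in the sense of Eq \ref{Eq211}. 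The remaining task — and the main obstacle — is to pin down that $h$ is \emph{the} unique natural generalization of Theorem \ref{theorem231}, not merely some generalization; this is where one must show $h'(a-1)$ equals the specific constant $c_1 = \lim_{n\to\infty}\{g(n) - \sum_{k=a}^n g'(k)\}$ dictated by Claim \ref{claimLinearFitting}.

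The clean way to close this gap is to differentiate $h$ directly. Since $h(n) = \sum_{k=a}^{\infty} g(k) - \sum_{j=0}^\infty g(n+1+j)$, we get $h'(n) = -\sum_{j=0}^\infty g'(n+1+j) = -\sum_{k=n+1}^\infty g'(k)$, where term-by-term differentiation is justified because $g^{(m)}$ is nearly-convergent for every $m$ (Theorem \ref{theorem231}'s hypothesis), so $\sum g'(n+1+j)$ converges uniformly on compact sets. Hence $h'(n) = -\sum_{k=n+1}^\infty g'(k) = \big(g(n) - \sum_{k=a}^n g'(k)\big) - \lim_{N\to\infty}\big(g(N) - \sum_{k=a}^N g'(k)\big) + $ (telescoping bookkeeping); more simply, writing $g(n) - g(N) = -\sum_{k=n+1}^{N} g'(k)$ by the fundamental theorem of calculus and letting $N\to\infty$ using $g'(N)\to 0$ and convergence of $\sum g'(k)$ (Lemma \ref{lem21} gives the latter), one obtains $h'(n) = \lim_{N\to\infty}\{g(N)-g(n)\} \cdot(\text{rearranged}) = \lim_{N\to\infty}\{g(N) - \sum_{k=a}^N g'(k)\} + \sum_{k=a}^n g'(k) - g(n) + g(n)$, i.e. after cancellation $h'(n) = \sum_{k=a}^n g'(k) + c_1$ with precisely $c_1 = \lim_{N\to\infty}\{g(N) - \sum_{k=a}^N g'(k)\}$. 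That is exactly Rule 1 with the exact constant of Theorem \ref{theorem231}, so $h$ and $f_G$ have identical derivatives and identical values at $n=a$, forcing $h\equiv f_G$. I expect the only delicate points to be the uniform-convergence justifications for differentiating the tail termwise and the careful handling of the two limits ($N\to\infty$ inside, and the rearrangement of the doubly-indexed sum), both of which are controlled entirely by the near-convergence hypothesis on $g$ and all its derivatives; no genuinely new idea beyond Lemma \ref{lem21} and Theorem \ref{theorem231} should be needed.
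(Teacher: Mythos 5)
Your overall strategy---writing the right-hand side as $h(n)=\sum_{k=a}^{\infty} g(k)-\sum_{k=n+1}^{\infty}g(k)$ and differentiating the tail term by term---is the same as the paper's, and your verification of the recurrence and of $h(a-1)=0$ is fine. The gap is in the final identification step. You compute $h'(n)=-\sum_{k=n+1}^{\infty}g'(k)$ and then rewrite this as $\sum_{k=a}^{n}g'(k)+c_1$, concluding that $h$ and $f_G$ ``have identical derivatives.'' But the symbol $\sum_{k=a}^{n}g'(k)$ in Rule 1 denotes the \emph{unique natural generalization} of the finite sum of $g'$, whereas what your computation actually produces is its \emph{tail-difference} generalization $\sum_{k=a}^{\infty}g'(k)-\sum_{k=n+1}^{\infty}g'(k)$. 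Asserting that these two coincide for non-integer $n$ is precisely Corollary \ref{cor231_1} applied to $g'$ in place of $g$---so the argument as written is circular. If instead you retreat to what you have actually proved at a single point, namely $h'(a-1)=c_1$ together with the recurrence and $h(a-1)=0$, that is not enough to force $h\equiv f_G$: the difference $h-f_G$ is then only known to be a unit-periodic function vanishing to first order at $n=a-1$, and $\sin^2\bigl(\pi(n-a+1)\bigr)$ shows such a function need not be zero.

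The repair is to push your own computation to all orders rather than stopping at the first derivative, which is exactly what the paper does. Repeated term-by-term differentiation (justified, as you note, by near-convergence of every $g^{(m)}$) gives $h^{(r)}(n)=-\sum_{k=n+1}^{\infty}g^{(r)}(k)$ for every $r\ge 1$, hence $h^{(r)}(a-1)=-\sum_{k=a}^{\infty}g^{(r)}(k)=\lim_{N\to\infty}\bigl\{g^{(r-1)}(N)-\sum_{k=a}^{N}g^{(r)}(k)\bigr\}=c_r$, where $g^{(r-1)}(N)\to 0$ holds for $r\ge 2$ by near-convergence and for $r=1$ because $\sum_{k=a}^{\infty}g(k)$ converges. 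Thus \emph{every} Taylor coefficient of $h$ at $n=a-1$ equals the corresponding coefficient $c_r$ in Theorem \ref{theorem231}, and uniqueness of Taylor expansions of analytic functions gives $h\equiv f_G$ without any circularity.
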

\begin{proof}
The expression in Eq \ref{Eqcor231_1} is what we would expect from a unique natural generalization to simple finite sums if $\sum_{k=a}^{\infty} g(k)$ exists. This was discussed earlier in the Chapter \ref{ChaptIntro}. To prove that Eq \ref{Eqcor231_1} holds, define: 
\begin{equation}\label{Eqcor231_2} 
f^*(n)=\sum_{k=a}^{\infty} g(k) - \sum_{k=n+1}^{\infty} g(k)
\end{equation}
If we employ the two defining properties of finite sums, we can easily deduce that: 
\begin{equation}\label{Eqcor231_3} 
\sum_{k=n+1}^{\infty} g(k) = g(n+1)+g(n+2)+g(n+3) \dotsm
\end{equation}
Taking derivatives with respect to $n$ of $f^*(n)$ in Eq \ref{Eqcor231_2} reveals that all higher derivatives of $f^*(n)$ agree with the ones given by Theorem \ref{theorem231}. In particular, we have: 
\begin{equation}\label{Eqcor231_4}
\frac{d^r}{dn^r} f^*(n) = \sum_{k=a}^n g^{(r)}(k) - \sum_{k=a}^\infty g^{(r)}(k)  = \frac{d^r}{dn^r} f_G(n)
\end{equation} 
Since both functions share the same higher order derivatives $\frac{d^r}{dn^r} f(n)$ for $r\ge 0$, they have the same Taylor series expansion. By \emph{uniqueness} of Taylor series expansion, the two functions must be identical. 
\end{proof}
\hrule\vspace{12pt} 

\section{Examples to Semi-Linear Simple Finite Sums}\label{SectionExampleSL}
In this section we present many examples that illustrate efficacy of Summability Calculus in handling semi-linear simple finite sums. 
\subsection{Example I: Telescoping Sums}\label{exampleTelescopSum}
We will first start with the elementary simple finite product given by: 
\begin{equation}\label{Eq241_1}
f(n)=\prod_{k=2}^n (1-1/k)=\exp{\bigl\{\sum_{k=2}^n \log{(1-1/k)}}\bigr\}
\end{equation}

Now, we will use Summability Calculus to find an exact expression of the simple finite product. First, if we use Rule 4 for the derivative of simple finite products, we deduce that the derivative of the unique natural generalization $f_G(n)$ is given by Eq \ref{Eq241_2}. To find the exact value of $c$, we rewrite $f(n)$ as given in Eq \ref{Eq241_1}, i.e. we express $f(n)$ as a composition of the exponential function with a semi-linear simple finite sum. So, we can now employ Theorem \ref{theorem231} and the chain rule to find the exact value of $c$ as illustrated in Eq \ref{Eq241_3}. Thus, we conclude that $c = -1$.
\begin{equation}\label{Eq241_2}
f_G'(n)=f_G(n) \Big(\sum_{k=2}^n \frac{1}{k(k-1)} + c\Big)
\end{equation}
\begin{equation}\label{Eq241_3}
c=\lim_{n\to\infty} \Big\{\log{(1-1/n)} - \sum_{k=2}^n \frac{1}{k(k-1)}\Big\} = - \sum_{k=2}^\infty \frac{1}{k(k-1)}=-1
\end{equation}
Plugging $c=-1$ into Eq \ref{Eq241_2} yields the desired expression for $f'_G(n)$. Therefore, we have:
\begin{equation}\label{Eq241_4}
\frac{f_G'(n)}{f_G(n)}=-1+\sum_{k=2}^n \frac{1}{k(k-1)} =- \sum_{k=n+1}^{\infty} \frac{1}{k(k-1)} = -\frac{1}{n}
\end{equation} 
Eq \ref{Eq241_4} is a first-order linear differential equation whose solution is $f_G(n)=1/n$. Indeed, such generalization of the simple finite product $f(n)$ is quite natural because the product in Eq \ref{Eq241_1} is nothing but a telescoping product that evaluates to $1/n$.

Is it always the case that the unique natural generalization of telescoping sums of the form $f(n)=\sum_{k=a}^n g(k)-g(k+1)$ is simply given by $f_G(n)=g(a)-g(n+1)$? In other words, while we know that $\sum_{k=a}^n g(k)-g(k+1) = g(a)-g(n+1)$ holds if $n-a \in \mathbb{N}$, how do we know that, for example, $\sum_{k=1}^{1/2} g(k)-g(k+1)$  is indeed equal to $g(1)-g(\frac{3}{2})$? This can be heuristically argued using the two defining properties of finite sums as follows: 
\begin{align*}
\sum_{k=a}^n g(k)-g(k+1) &= \sum_{k=a}^{n} g(k) - \sum_{k=a}^n g(k+1) \quad  &\textrm{} \\
&= g(a)+\sum_{k=a+1}^{n} g(k) - \sum_{k=a+1}^{n+1} g(k) &\textrm{(by properties 1 and 2)}\\
&=g(a)-\sum_{k=n+1}^{n+1} g(k) &\textrm{(by properties 2)}\\
&=g(a) - g(n+1) &\textrm{(by property 1)} 
\end{align*}

However, we note here that the above argument does not follow from the two defining properties of simple finite sums because we have assumed that $\sum_{k=a}^n g(k+s) = \sum_{k=a+s}^{n+s} g(k)$. This last property, which has been called the \emph{translation invariance} property \cite{Muller2011}, will be indeed established for the unique most natural generalization of simple finite sums later in Section \ref{sectionSFSGC}. So, indeed the unique natural generalization of telescoping sums of the form $\sum_{k=a}^n g(k)-g(k+1)$ is always given by $g(a)-g(n+1)$ even if $n-a \not\in\mathbb{N}$. Fortunately, this is what we would expect if we were indeed dealing with the unique most natural generalization to simple finite sums.  
\subsection{Example II: The Factorial Function}\label{ExampleFactorial}
For a more contrived example, let us consider the log-factorial function given by $\varpi(n)=\sum_{k=1}^n \log{k}$. We can use Summability Calculus to derive a series expansion of $\varpi(n)$ quite easily as follows. 
First, by direct application of Theorem \ref{theorem231}, we find that the derivative at $n=0$ is given by $\lim_{n\to\infty} \{\log{n}-H_n\} = \lambda$, where $H_n$ is the \emph{n}th harmonic number and $\lambda$ is Euler's constant. Thus, we have by Rule 1:
\begin{equation}\label{Eq242_1}
\varpi'(n)=\psi(n+1)=-\lambda + \sum_{k=1}^n \frac{1}{k}
\end{equation}
Because $1/k$ is also nearly convergent by Definition \ref{definition1} and because it converges to zero, we have by Theorem \ref{theorem231}: 
\begin{equation}\label{Eq242_2}
\varpi^{(2)}(n)=\zeta_2 - \sum_{k=1}^n \frac{1}{k^2}
\end{equation}
Here, $\zeta$ is the Riemann zeta function. Continuing in this manner yields the following series expansion:
\begin{equation}\label{Eq242_3}
\varpi(n)=-\lambda n + \sum_{k=2}^\infty (-1)^k \frac{\zeta_k}{k} n^k
\end{equation}
In particular, since $\log{1}=0$, we have the following well-known identity that was first proved by Euler \cite{Sebha2002}:
\begin{equation}\label{Eq242_4}
\lambda = \sum_{k=2}^\infty (-1)^k \frac{\zeta_k}{k}
\end{equation}

In this example, it is important to keep a few points in mind. First, the series expansion in Eq \ref{Eq242_3} is the series expansion of the unique most natural generalization of the log-factorial function, which turned out to be the series expansion of the log-Gamma function $\log{\Gamma{(n+1)}}$. However, our \emph{apriori} assumption of the generalized definition $f_G(n)$ may or may not be equivalent to what Theorem \ref{theorem231} implies. In other words, it was possible, at least in principle, that Theorem \ref{theorem231} would yield a generalized definition of the log-factorial function that is different from the log-Gamma function so we need to exercise caution before equating the two functions. In this particular example, nevertheless, they both turned out to be the same. Second, using the discussion in Claim \ref{claimLinearFitting}, we deduce from this example that the log-gamma function $\log{\Gamma{(n+1)}}$ is the unique smooth function that satisfies recurrence and initial conditions of the log-factorial function and \emph{its higher derivatives do not alternate in sign infinitely many times}. Lemma \ref{factorial_gamma} presents a more precise statement of the latter conclusion. \\ \hrule

\begin{lemma}\label{factorial_gamma} 
Let $f_G(n) : \mathbb{C} \to\mathbb{C}$ be a function that satisfies the following three properties:
\begin{enumerate}
\item $f_G(n) = \log{n} + f_G(n-1)$,  for all $n$ 
\item $f_G(1)=1$ 
\item  For every higher derivative $f_G^{(r)}(n)$ where $r\ge 0$, there exists a constant $n_r$ such that $f_G^{(r)}(n)$ is monotonic for all $n>n_r$. In other words, $f_G^{(r)}(n)$ does not alternate in sign infinitely many times. 
\end{enumerate}
Then, $f_G(n)=\Gamma{(n+1)}$, where $\Gamma$ is the Gamma function. 
\end{lemma}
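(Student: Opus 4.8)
The plan is to invoke Theorem~\ref{theorem231} together with Claim~\ref{claimLinearFitting} to reduce the statement to the uniqueness of the natural generalization, rather than appealing directly to the Bohr--Mollerup theorem. First I would observe that the log-factorial finite sum $\varpi(n)=\sum_{k=1}^n \log k$ has iterand $g(k)=\log k$, and that $g^{(m)}(k)$ is nearly-convergent for every $m\ge 0$: indeed $g'(k)=1/k\to 0$ is positive, decreasing and convex, and each higher derivative is a constant multiple of $k^{-m}$, hence eventually monotone, of constant sign, and vanishing at infinity. Therefore Theorem~\ref{theorem231} applies and produces a generalization $\varpi_G(n)$ whose Taylor expansion about $n=0$ is exactly Eq~\ref{Eq242_3}, and by the computation already carried out in Example~II this is the Taylor series of $\log\Gamma(n+1)$; so $\varpi_G(n)=\log\Gamma(n+1)$. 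The remaining content of the lemma is that \emph{any} $f_G$ satisfying properties (1)--(3) must coincide with this particular $\varpi_G$.

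Next I would argue the uniqueness. Given any $f_G$ with properties (1)--(3), Rule~1 of Table~\ref{TableRules} (which follows from property (1) alone, differentiated repeatedly) gives $f_G^{(r)}(n)=\sum_{k=1}^n g^{(r)}(k)+c_r$ for constants $c_r$, where $g=\log$. Property (3) says each $f_G^{(r)}$ is eventually monotone, i.e. does not alternate in sign infinitely often. I would then show this forces $c_r$ to equal the specific value $\lim_{n\to\infty}\{g^{(r-1)}(n)-\sum_{k=1}^n g^{(r)}(k)\}$ dictated by Theorem~\ref{theorem231}. The mechanism is the one behind Claim~\ref{claimLinearFitting}: since $g^{(r-1)}(n)$ is nearly-convergent, $\sum_{k=1}^n g^{(r)}(k)$ grows almost linearly with slope tending to $g^{(r)}(n)\to 0$ for $r\ge 1$ (and for $r=1$ with slope tending to $0$ while the sum itself behaves like $\log n$); if $c_r$ were anything other than the critical value, $f_G^{(r)}$ would be $f_G^{(r+1)}$ with a nonzero limiting slope pushing $f_G^{(r+1)}$ monotonically to $\pm\infty$ or else oscillating, and propagating this back through the recurrence $f_G^{(r)}(n)=g^{(r)}(n)+f_G^{(r)}(n-1)$ one contradicts eventual monotonicity of some lower derivative. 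Once all the $c_r$ are pinned down, $f_G$ has the same Taylor coefficients at $n=0$ as $\varpi_G$; by analyticity (property (3) with the recurrence forces $f_G$ analytic, or one restricts to the real line and uses that two functions with identical derivatives of all orders at a point and satisfying the same recurrence agree) the two functions coincide, giving $f_G(n)=\log\Gamma(n+1)$. Finally I would note the mild typo that the conclusion should read $f_G(n)=\log\Gamma(n+1)$ (equivalently $e^{f_G(n)}=\Gamma(n+1)=n!$), consistent with property (2) since $f_G(1)=\sum_{k=1}^1\log k=0$, not $1$ — so property (2) as stated is really $f_G(1)=0$.

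The main obstacle I anticipate is making rigorous the claim that eventual monotonicity of \emph{every} derivative forces the \emph{single} correct choice of each integration constant $c_r$. The heuristic "wrong constant $\Rightarrow$ a derivative runs off to infinity or oscillates" needs a clean inductive argument: one should probably start from the highest-order behavior and work downward, or use that nearly-convergence of $g^{(r)}$ makes $\sum_{k=1}^n g^{(r)}(k)-g^{(r-1)}(n)$ genuinely convergent (Lemma~\ref{lem21}) so that $f_G^{(r)}(n)-g^{(r-1)}(n)\to c_r-c_r^{\*}$, and then observe that a nonzero constant difference in $f_G^{(r)}$ translates, upon integrating via the recurrence for $f_G^{(r-1)}$, into a term growing like a nonzero multiple of $n$, eventually dominating and — combined with the sign-changing structure one gets from iterating — violating the eventual monotonicity of $f_G^{(r-1)}$. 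Handling the borderline cases ($r=1$, where $g^{(0)}=\log$ itself is unbounded, and $r=2$) carefully is where the real work lies; everything else is bookkeeping with Theorem~\ref{theorem231} and uniqueness of Taylor expansions.
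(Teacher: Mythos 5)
Your proposal follows essentially the same route as the paper's own proof: use near-convergence of $\log$ and its derivatives together with the eventual-monotonicity hypothesis to force each constant $c_r$ (equivalently each derivative at the base point) to equal the value dictated by Theorem~\ref{theorem231}, and then conclude by uniqueness of the Taylor expansion that $f_G$ must be the series of Eq~\ref{Eq242_3}. Your side remark is also correct: the lemma's conclusion should read $f_G(n)=\log\Gamma(n+1)$ and property (2) is consistent only as $f_G(1)=0$, since property (1) is the additive recurrence of the log-factorial.
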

\begin{proof}
Similar to the proof of Claim \ref{claimLinearFitting}, let $f_G(n)$ be a function that satisfies the recursive property and initial condition stated in the lemma. Because $\log{n}$ is nearly convergent and because $f_G(n)$ is monotonic for $n>n_0$, then its derivative $f_G'(n)$ converges to $\log{n}$ as $n\to\infty$ (see proof of Claim \ref{claimLinearFitting}). However, this implies that its derivative  $f_G'(0)$, given by the constant $c$ in Rule 1, is uniquely determined by Theorem \ref{theorem231} and it is equal to $-\lambda$. Similarly, since $f_G'(n)=-\lambda + H_n$ as stated in Rule 1 of Table \ref{TableRules}, where the summation is now the Harmonic number and it is nearly convergent, and because $f_G'(n)$ is monotonic for all $n>n_1$, $f_G^{(2)}(0)$ is also uniquely determined and is given by $\zeta_2$, and so on. Thus, all higher derivatives at $n=0$ are uniquely determined and are given by Theorem \ref{theorem231}. Therefore, the only possible series expansion of $f_G(n)$ is the one given by Eq \ref{Eq242_3}, which is the series expansion of $\Gamma{(n+1)}$. 
\end{proof}
\hrule\vspace{12pt} 
Lemma \ref{factorial_gamma} presents an alternative statement, different from the \emph{Bohr-Miller Theorem}, as to why the Gamma function is the unique most natural generalization of the factorial function. According to the lemma, the log-Gamma function is the only possible generalization of the log-factorial function if we require that higher derivatives be \emph{monotonic asymptotically}. However, by requiring such smooth asymptotic behavior of the function, its behavior \emph{for all} $n$ is given by $\log{\Gamma{(n+1)}}$.
Historically, there have been many attempts to extend the factorial function into an entire function to avoid Gamma's singularities at negative integers. One well-known example is Hadamard's factorial function, which correctly interpolates the discrete factorial function. However, Hadamard's function does not satisfy the required recurrence identity for all complex values of $n$ \cite{Davis1959}. In fact, it is straightforward to observe that a function that satisfies the recurrence identity $f_G(n)=n \,f_G(n-1)$ for all $n$ has to have singularities at negative integers, since we must have $1!=1\times 0!$ and $0!=0 \times (-1)!$ by recurrence identity, and so on.

\subsection{Example III: Euler's Constant}\label{ExampleEuler}
Suppose that $f(n)$ is given by Eq \ref{Eq243_1} below, where $\Pi{(n)}$ is again Gauss's PI function, which is equal to $\Gamma{(n+1)}$. In Eq \ref{Eq243_1}, let $x$ be independent of $n$. Then, the derivative $f_G'(0)$ is given in Eq \ref{Eq243_2}, where $\psi$ is again the digamma function.
\begin{equation}\label{Eq243_1}
f(n)=\Pi{(n+x)}/\Pi{(x)} = \prod_{k=1}^n (k+x)
\end{equation}
\begin{equation}\label{Eq243_2}
f_G'(0)=\Pi'{(x)}/\Pi{(x)} =\psi(x+1)
\end{equation}
Upon using both Eq \ref{Eq243_2} and the differentiation rule of simple finite products, i.e. Rule 4, we arrive at Eq \ref{Eq243_3}, which can be rearranged as given in Eq \ref{Eq243_4} 
\begin{equation}\label{Eq243_3}
f_G'(n)=f_G(n) \Big(\sum_{k=1}^n \frac{1}{k+x} + \psi{(x+1)} \Big)
\end{equation}
\begin{equation}\label{Eq243_4}
\frac{d}{dn} \log{f_G(n)}=\sum_{k=1}^n \frac{1}{k+x} + \psi{(x+1)}
\end{equation}
Now, we employ Theorem \ref{theorem231} to deduce Taylor series expansion of $\frac{d}{dn} \log{f_G(n)}$, which yields: \footnote{As defined in the Chapter \ref{ChaptIntro}, $H_{m,n}=\sum_{j=1}^n j^{-m}$}
\begin{equation}\label{Eq243_5}
\frac{d}{dn} \log{f_G(n)}=\psi{(x+1)} + \sum_{k=2}^\infty (-1)^k \big(\zeta_k - H_{k, x}\big) \,n^{k-1}
\end{equation}
Now, we integrate both sides with respect to $n$, which yields: 
\begin{equation}\label{Eq243_6}
\log{f_G(n)}=\psi{(x+1)}\, n + \sum_{k=2}^\infty \frac{(-1)^k}{k} \big(\zeta_k - H_{k, x}\big) \,n^k
\end{equation}
Since we derived earlier in Eq \ref{Eq242_1} the identity $\psi(x+1)=-\lambda + H_x$, we plug it into previous equation and set $n=1$ to deduce that: 
\begin{equation}\label{Eq243_7}
\log{(1+x)}=-\big(\lambda - H_x\big) +\frac{\zeta_2 - H_{2,x}}{2} -\frac{\zeta_3 - H_{3,x}}{3} \dotsm  
\end{equation}
Thus, by taking the limit as $x\to\infty$, we recover Euler's famous result again:
\begin{equation}\label{Eq243_8}
\lim_{x\to\infty} \Big\{\log{(1+x)} - \lambda - H_x\Big\} = 0
\end{equation}
Of course, we could alternatively arrive at the same result by direct application of Lemma \ref{lem21}. In addition, if we set $x=0$ in Eq \ref{Eq243_7} and use the empty sum rule, i.e. Rule 3, we arrive at Eq \ref{Eq242_3} again. However, Eq \ref{Eq243_7} is obviously a more general statement. For instance, by setting $x=1$, we arrive at the identity in Eq \ref{Eq243_9}. Additionally, by plugging Eq \ref{Eq242_3} into Eq \ref{Eq243_7}, we arrive at Eq \ref{Eq243_10}. Again, the empty sum rule yields consistent results as expected. 
\begin{equation}\label{Eq243_9}
\log{2} = 1-\lambda +\sum_{k=2}^\infty (-1)^k \frac{\zeta_k-1}{k}
\end{equation}
\begin{equation}\label{Eq243_10}
\log{(1+x)} = \sum_{k=1}^\infty (-1)^{k+1} \frac{H_{k, x}}{k}
\end{equation}
Moreover, we rewrite Eq \ref{Eq243_7} to have: 
\begin{equation}\label{Eq243_11}
\log{(1+x)} =-\lambda + H_x + \frac{1}{2} \sum_{k=x+1}^\infty 1/k^2 - \frac{1}{3} \sum_{k=x+1}^\infty 1/k^3 +\dotsm
\end{equation}
Using the series expansion of $\log{(1+x)}$ for $|x|<1$, we note that Eq \ref{Eq243_11} can be rearranged as follows: 
\begin{equation}\label{Eq243_12}
\log{(1+x)} =-\lambda + H_x + \sum_{k=x+1}^\infty \Big(\frac{1}{k} - \log{(1+\frac{1}{k})}\Big)
\end{equation}
Of course, setting $x=0$ yields Euler's famous identity \cite{MathWorldEulerConstant, Sebha2002}: 
\begin{equation}\label{Eq243_13}
\lambda = \sum_{k=1}^\infty \Big(\frac{1}{k} - \log{(1+\frac{1}{k})}\Big)
\end{equation}

Finally, starting from the Taylor series expansion in Eq \ref{Eq243_6} and setting $x=0$, we have the following identity: 
\begin{equation} \label{Eq243_14}
-\log{\Pi(1/2)} +\log{\Pi(-1/2)}  = \lambda + \sum_{k=1}^\infty \frac{\zeta_{2k+1}}{4^k (2k+1)} = \log{2}
\end{equation}

In this example, we have reproduced some of the most important results related to Euler's constant. The fact that all of these results were proven in only a few pages is an example of how convenient Summability Calculus really is.

\subsection{Example IV: More into the Zeta Function} \label{ExampleZeta}
In this example, we continue our quest to reproduce some of the basic identities related to the Riemann zeta function. For a starting point, we compute the series expansion of the digamma function $\varpi'(n)$, but this time we take the series expansion around $n=1$. To do this, we note from Eq \ref{Eq242_1} that $\varpi'(1)=1-\lambda$. Using Rule 1 and upon successive differentiation of the digamma function using Theorem \ref{theorem231} we have: 
\begin{equation}\label{Eq244_1} 
\varpi'(n) = (1-\lambda)+ \sum_{k=1}^\infty (-1)^{k+1} \big(\zeta_{k+1}-1\big) (n-1)^k 
\end{equation}
Now, if we set $n=2$ in the previous equation, we obtain:
\begin{equation}\label{Eq244_2} 
\sum_{k=2}^\infty (-1)^{k} \big(\zeta_{k}-1\big) =\frac{1}{2} 
\end{equation}
Additionally, if we set $n=0$ in Eq \ref{Eq244_1}, we obtain:
\begin{equation}\label{Eq244_3} 
\sum_{k=2}^\infty \big(\zeta_{k}-1\big) =1
\end{equation}
Of course, we could combine the previous three equations in various ways to deduce additional identities such as:
\begin{equation}\label{Eq244_4} 
\sum_{k=1}^\infty \big(\zeta_{2k}-1\big) =1-\frac{\lambda}{2}
\end{equation}
\begin{equation}\label{Eq244_5} 
\sum_{k=1}^\infty \big(\zeta_{2k+1}-1\big) =\frac{\lambda}{2}
\end{equation}

On the other hand, if we integrate both sides of Eq \ref{Eq244_1} with respect to $n$ and use the fact that $\log{1}=0$, we have:
\begin{equation}\label{Eq244_6} 
\varpi(n)=\log{n!} = (1-\lambda)(n-1)+ \sum_{k=2}^\infty (-1)^k \frac{\zeta_k-1}{k} (n-1)^k
\end{equation}
Setting $n=0$ yields: 
\begin{equation}\label{Eq244_7} 
 \sum_{k=2}^\infty  \frac{\zeta_k-1}{k} =1-\lambda
\end{equation}
On the other hand, if set $n=2$ in Eq \ref{Eq244_6}, we arrive at Eq \ref{Eq243_9} again. Finally, similar techniques can be used to show numerous other results including:
\begin{equation}\label{Eq244_8}
\log{2} = \sum_{k=1}^\infty \frac{\zeta_{2k}-1}{k}
\end{equation}
\begin{equation}\label{Eq244_9}
1-\lambda -\frac{\log{2}}{2} =  \sum_{k=1}^\infty \frac{\zeta_{2k+1}-1}{2k+1}
\end{equation}

It is worth noting that while the method presented herein provides a simple venue for deducing such rich set of identities, some of these identities were proven in the past at different periods of time by different mathematicians using different approaches (see for instance the surveys in \cite{Gourdon2004, Sebha2002}).

\subsection{Example V: One More Function}\label{ExampleOneMoreFunction}
In this example, we use Theorem \ref{theorem231} to derive the Taylor series expansion of the function $f(n)=\sum_{k=1}^n \frac{\log{k}}{k}$. First, we note that the derivative is given by \footnote{Here, $\zeta'_s=-\sum_{k=1}^\infty \frac{\log{k}}{k^s}$.}:
\begin{equation}\label{Eq245_1}
f_G'(n) = -\zeta_2 -\zeta'_2 + \sum_{k=1}^n \frac{1-\log{k}}{k^2}
\end{equation}
Thus, we know that $f_G'(1) = 1-\zeta_2 -\zeta'_2$. Similarly, we have by Theorem \ref{theorem231}: 
\begin{equation}\label{Eq245_2}
f_G^{(2)}(n) = 3\zeta_3 +2\zeta'_3 - \sum_{k=1}^n \frac{3-2\log{k}}{k^2}
\end{equation}
Thus, we have $f_G^{(2)}(1) = 3(\zeta_3-1) +2\zeta'_3$. Continuing in this manner, it is straightforward to prove by induction that $f_G^{(r)}(1) = (-1)^r r!\big(H_r(\zeta_{r+1}-1) +\zeta'_{r+1}\big)$. Therefore, the desired Taylor series expansion is given by: 
\begin{equation}\label{Eq245_3} 
\sum_{k=1}^n \frac{\log{k}}{k}=\sum_{r=1}^\infty (-1)^r \big(H_r(\zeta_{r+1}-1) +\zeta'_{r+1}\big) \big(n-1\big)^r
\end{equation}
Setting $n=0$ and $n=2$ yields the two aesthetic identities Eq \ref{Eq245_4} and Eq \ref{Eq245_5} respectively. 
\begin{equation}\label{Eq245_4} 
\sum_{r=1}^\infty \big(H_r(\zeta_{r+1}-1) +\zeta'_{r+1}\big) = 0
\end{equation}
\begin{equation}\label{Eq245_5} 
\sum_{r=1}^\infty (-1)^r \big(H_r(\zeta_{r+1}-1) +\zeta'_{r+1}\big) = \frac{\log{2}}{2}
\end{equation}

\subsection{Example VI: Asymptotic Expressions} \label{ExampleAsymptotic}
Finally, we conclude this section with two simple proofs to the asymptotic behaviors of the factorial and the hyperfactorial functions, known as Stirling and Glaisher approximations respectively that were mentioned earlier in Eq \ref{StirlingAppx} and Eq \ref{hyperfAppx}. Our starting point will be Eq \ref{Eq243_7}. Replacing $x$ with $n$ and integrating both sides using Rule 2 yields Eq \ref{Eq246_1}. Note that in this particular step, we have employed Summability Calculus \emph{in integrating} with respect to the bounds of finite sums.
\begin{equation}\label{Eq246_1}
(1+n)\log{(1+n)} = c_1 + c_2 n + \log{n!} +\frac{H_n}{2} -\frac{H_{2,n}}{6} +\frac{H_{3,n}}{12} \dotsm   
\end{equation}
Setting $n=0$ and $n=1$ yields $c_1=0$ and $c_2=1$ respectively. Thus, we have: 
\begin{equation}\label{Eq246_2}
(1+n)\log{(1+n)} - n - \log{n!} = \frac{H_n}{2} -\frac{H_{2,n}}{6} +\frac{H_{3,n}}{12} \dotsm   
\end{equation}
Now, knowing that $H_n\sim\log{n} +\lambda$, we have the asymptotic expression for the log-factorial function given in Eq \ref{Eq246_3}, where the error term goes to zero asymptotically. 
\begin{equation}\label{Eq246_3}
 \log{n!}\sim (1+n)\log{(1+n)} - n -\frac{\log{n}}{2} -\frac{\lambda}{2} +\frac{\zeta_2}{6} -\frac{\zeta_3}{12} + \dotsm   
\end{equation}
However, upon using the well-known identity in Eq \ref{Eq246_4}, we arrive at the asymptotic formula for the factorial function given in Eq \ref{Eq246_5}. Clearly, it is straightforward to arrive at Stirling approximation from  Eq \ref{Eq246_5} but the asymptotic expression in  Eq \ref{Eq246_5} is more accurate.
\begin{equation}\label{Eq246_4}
-\frac{\lambda}{2} +\frac{\zeta_2}{6} -\frac{\zeta_3}{12} + \dotsm = \frac{\log{2\pi}}{2} -1   
\end{equation}
\begin{equation}\label{Eq246_5}
n!\sim \frac{1}{e} \sqrt{2\pi(n+1)} \,\Big(\frac{n+1}{e}\Big)^n
\end{equation}

On the other hand, if we start with the log-hyperfactorial function $f(n)=\log{H(n)}=\sum_{k=1}^n k\log{k}$, and differentiate using Rule 1, we obtain: 
\begin{equation}\label{Eq246_6}
\frac{H'(n)}{H(n)} = n + \log{n!} + c
\end{equation}
Unfortunately, $\log{H(n)}$ is not semi-linear so we cannot use Theorem \ref{theorem231} to find the exact value of $c$. These general cases will be addressed in the following section. Nevertheless, since we have the series expansion of the log-factorial function, we can substitute it in the previous equation and integrate with respect to $n$ to arrive at: 
\begin{equation}\label{Eq246_7}
\log{H(n)}= \frac{1}{4} + \frac{(1+n)^2 \big(2\log{(1+n)} -1\big)}{2} - \frac{n+\log{n!}}{2} - \frac{H_n}{6} + \frac{H_{2,n}}{24} - \frac{H_{3,n}}{60} \dotsm  
\end{equation}

Here, we used the facts that $\log{H(1)}=0$ and $\log{H(0)}=0$ by definition of the hyperfactorial function and empty product rule respectively. Thus, an asymptotic expression for the hyperfactorial function is given by: 
\begin{equation}\label{Eq246_8}
H(n)\sim \frac{(1+n)^{(1+n)^2/2} \,\, e^K} {e^{n+n^2/4} \,\, n^{1/6} \sqrt{n!}},\\ K=-\frac{\lambda}{6} +\frac{\zeta_2}{24} -\frac{\zeta_3}{60} +\frac{\zeta_4}{120} -\dotsm
\end{equation}
Again, it is straightforward to proceed from the asymptotic expression in Eq \ref{Eq246_8} to arrive at Glaisher's approximation formula Eq \ref{hyperfAppx} through simple algebraic manipulations but Eq \ref{Eq246_8} is much more accurate.

\section{Simple Finite Sums: The General Case} \label{sectionSFSGC}
In Section \ref{Section_23}, we established Summability Calculus for semi-linear simple finite sums. Needless to mention, many important finite sums and products that arise frequently are not semi-linear including, for example, the log-hyperfactorial function discussed earlier. As illustrated in Section \ref{ExampleAsymptotic}, it is possible to perform calculus on simple finite sums that are not semi-linear by reducing analysis to the case of semi-linear simple finite sums using a few tricks. For example, the log-hyperfactorial function is not semi-linear but its derivative is so we could compute the series expansion of its derivative and integrate subsequently using the rules of Summability Calculus. In principle, therefore, as long as some higher derivative of a simple finite sum is semi-linear, we could perform infinitesimal calculus, albeit using quite an involved approach. Obviously, this is tedious and far from being insightful. 

In this section we extend the results of Section \ref{Section_23} to the general case of simple finite sums and products, which brings to light the celebrated \emph{Euler-Maclurin summation formula}. We will show how Summability Calculus yields unique natural generalizations and present a simple proof of the Euler-Maclurin summation formula by construction. 

Our starting point will be Theorem \ref{StatofUniq}. Here, the successive polynomial approximation method suggests that there exists a sequence of constants $a_r$ such that Eq \ref{Eq25_0} holds formally for all functions $g(k)$. We will assume that this holds for some constants $a_r$ and our first goal is to determine what those constants must be. 
\begin{equation}\label{Eq25_0}
\frac{d}{dn} \sum_{k=a}^n g(k) =\sum_{r=0}^\infty a_r g^{(r)}(n)
\end{equation}
Looking into Table \ref{TableRules}, then Eq \ref{Eq25_0} and Rule 1 both imply that: 
\begin{equation}\label{Eq25_1}
\frac{d^m}{dn^m} \sum_{k=a}^n g(k) =\sum_{r=0}^\infty a_r g^{(r+m-1)}(n)
\end{equation}
However, we know that the following holds for some constants $c_m$.  
\begin{equation}\label{Eq25_2}
\frac{d^m}{dn^m} \sum_{k=a}^n g(k) = \sum_{k=a}^n g^{(m)}(k) + c_m
\end{equation}

Therefore, we can determine value of the constants $c_m$ using: 
\begin{equation}\label{Eq25_3}
c_m = \sum_{r=0}^\infty a_r g^{(r+m-1)}(n) - \sum_{k=a}^n g^{(m)}(k)
\end{equation}

Of course, Eq \ref{Eq25_3} is only a formal expression; the infinite sum $ \sum_{r=0}^\infty a_r g^{(r+m-1)}(n)$ may or may not converge. However, the issue of convergence is not important at this stage because there exists a \emph{unique} sequence $a_r$ that makes the expression in Eq \ref{Eq25_3} hold formally if recurrence identity is to be maintained. In addition, Theorem \ref{StatofUniq} already presents a statement on how to interpret the infinite sum if it diverges, which will be discussed later. \\ \hrule

\begin{theorem} \label{theorem251} 
Let a simple finite sum be given by \simplefinitesum, where $g(k)$ is analytic in an open disc around each point in the domain $[a, \infty)$, and let $f_G(n)$ be given formally by the following series expansion around $n=a-1$, where $B_k$ are Bernoulli numbers:
\begin{equation}\label{theorem251Eq}
f_G(n) = \sum_{k=m}^\infty \frac{c_m}{m!} (n-a+1)^m, \\c_m = \sum_{r=0}^\infty \frac{B_r}{r!} g^{(r+m-1)}(n) - \sum_{k=a}^n g^{(m)}(k)
\end{equation}
Here, $c_m$ are constants that are independent of $n$. Then, $f_G(n)$ satisfies formally the recurrence identity and initial conditions given by Eq \ref{Eq211}. Thus, $f_G(n)$ is a valid generalization of $f(n)$. In fact, $f_G(n)$ is the unique most natural generalization to the simple finite sum.
\end{theorem}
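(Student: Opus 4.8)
The plan is to follow the same three-part template used in the semi-linear case (Theorem~\ref{theorem231}), with the role previously played by near-convergence now taken over by the formal recurrence that defines the Bernoulli numbers, and with the summability operator $\mathbb{T}$ of that proof used to make sense of the Taylor series that are now generically divergent. Since the text has already posited that $\frac{d}{dn}\sum_{k=a}^n g(k)=\sum_{r\ge 0}a_r\,g^{(r)}(n)$ holds formally for some constants $a_r$, the first step is to pin the $a_r$ down. I would substitute this ansatz into the differentiated recurrence $f_G'(n)-f_G'(n-1)=g'(n)$ of Eq~\ref{Eq212}, expand $g^{(r)}(n-1)$ in its Taylor series about $n$, and equate the coefficient of each ``symbol'' $g^{(s)}(n)$ on the two sides (legitimate because the identity must hold for every analytic $g$). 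For each $s$ this produces a single linear relation among $a_0,\dots,a_s$, and that relation is exactly the classical recurrence for the Bernoulli numbers; it has the unique solution $a_0=1$, $a_r=B_r/r!$ thereafter (in the $B_1=+\tfrac12$ normalization). Plugging $a_r=B_r/r!$ into Rule~1 then gives the formula Eq~\ref{Eq25_3}, and applying the same ansatz to $g',g'',\dots$ in place of $g$ shows that the right-hand side of the expression for $c_m$ in Eq~\ref{theorem251Eq} has vanishing $n$-derivative, i.e.\ that the $c_m$ really are constants.

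With the $c_m$ identified, the second step is to verify that $f_G(n)$ as defined by Eq~\ref{theorem251Eq} formally satisfies both the recurrence identity and the initial condition. I would copy the operator argument from the proof of Theorem~\ref{theorem231}: regard $f_G(n)$ as $\mathbb{T}$ applied to the formal Taylor vector with entries $c_m/m!$, compute $f_G(n)-f_G(n-1)$ term by term using the linearity and stability of $\mathbb{T}$, and read off its Taylor coefficients at $n=a$. By stability (property~3 of $\mathbb{T}$) it suffices to check that the constant coefficient equals $g(a)$, and then the full identity $f_G(n)-f_G(n-1)=g(n)$ follows. That check reduces, after resumming, to a Bernoulli-number identity equivalent to the recurrence of the first step, combined with the fact --- already used in Theorem~\ref{theorem231}, Eqs~\ref{Theorem231Eq4}--\ref{Theorem231Eq5} --- that $\mathbb{T}$ correctly resums the Taylor expansions of $g$ and of its antiderivative. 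The initial condition is then immediate: setting $n=a-1$ in Eq~\ref{theorem251Eq} gives $f_G(a-1)=0$, whence $f_G(a)=g(a)$ by the recurrence, so $f_G$ is a valid generalization.

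The third step is to justify the word ``unique'' by matching $f_G$ against the function manufactured by the successive polynomial approximation of Theorem~\ref{StatofUniq}. At stage $m$ that construction pins down a degree-$m$ polynomial on $[a,a+1]$ by demanding that $f_{G,m+1}$ be $m$-times differentiable across $n=a+1$; unwinding those differentiability demands --- exactly as was done through Eq~\ref{Eq21_13} --- reproduces term by term the partial sums of the Bernoulli series in Eq~\ref{theorem251Eq}, so $\lim_{m\to\infty}f_{G,m}(n)=f_G(n)$. Since Theorem~\ref{StatofUniq} already isolates that limit as the one function the construction can yield, $f_G$ is the unique most natural generalization, and in particular this retroactively confirms the choice $P(n)=0$ made after Eq~\ref{Eq213}.

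I expect the genuinely delicate step to be the second one. Because $g$ need not be asymptotically of finite differentiation order, the series $\sum_r\frac{B_r}{r!}g^{(r)}(n)$ is in general divergent, so every reindexing, rearrangement and term-by-term differentiation has to be executed inside $\mathbb{T}$, and one must confirm that the linearity and stability axioms actually license those moves when the summand is $B_j/j!$ times a derivative of $g$ rather than a pure Taylor monomial --- in particular, that the analogues of the constants $K_m^{n_0}$ from the proof of Theorem~\ref{theorem231} are well defined and have the claimed values. A smaller but real nuisance is keeping the Bernoulli sign convention consistent between the introduction (where $B_1=-\tfrac12$), Eq~\ref{Eq213}, and the statement of the theorem.
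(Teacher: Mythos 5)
Your proposal follows essentially the same route as the paper: determine $a_r=B_r/r!$ by forcing the differentiated recurrence to hold for every analytic $g$ (the paper extracts the same Bernoulli recurrence from the condition $K_0=g(a-1)$ via Eq~\ref{theorem251_7}), verify the recurrence and initial condition formally through the summability operator $\mathbb{T}$ exactly as in Theorem~\ref{theorem231}, and establish uniqueness by matching against the successive polynomial approximation of Theorem~\ref{StatofUniq} via the identity $\sum_r(-1)^r\frac{B_r}{r!}g^{(r)}(n+1)=\sum_r\frac{B_r}{r!}g^{(r)}(n)$. The delicate points you flag (divergence of the Bernoulli series requiring all manipulations to live inside $\mathbb{T}$, and the $B_1$ sign convention) are precisely the ones the paper's proof has to negotiate.
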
 
\begin{proof}
Again, our first point of departure is to question what natural generalization means in the context of extending domains of discrete functions. In the previous case where we had nearly convergent functions, such question was answered by reducing analysis to the case of \emph{linear fitting} since all semi-linear finite sums are asymptotically linear over arbitrary interval lengths as shown in Eq \ref{Theorem231Eq8}. However, this is no longer valid in the general case of simple finite sums and products. So, how do we define it? 

We will establish in Section \ref{Section6dot2} that the Euler-Maclaurin summation formula is the unique natural generalization by showing that it arises out of polynomial fitting. Similar to the argument of linear fitting for semi-linear simple finite sums, polynomial fitting is arguably the unique most natural generalization if it can correctly interpolate data points and given lack of additional special requirements because polynomials are the simplest of all possible functions. In addition, we will show that the Euler-Maclaurin summation formula is indeed the unique natural generalization as dictated by the successive polynomial approximation method of Theorem \ref{StatofUniq}.

Similar to the proof of Theorem \ref{theorem231}, let us express $f_G(n)-f_G(n-1)$ as a Taylor series expansion. However, this time we will take the series expansion around $n=a-1$ starting from Eq \ref{theorem251Eq}. Because this is identical to the proof steps of Theorem \ref{theorem231}, we will omit some details and jump to the important result that Eq \ref{theorem251_1} must hold for all $m\ge 0$ in order to satisfy required recurrence identity.
\begin{equation}\label{theorem251_1} 
K_m= g^{(m)}(a-1), \\ \text{where}\ K_m =\lim_{\delta \to 0} \bigl\{ \sum_{j=1}^\infty \xi_\delta(j) (-1)^{(j+1)}\frac{c_{j+m}}{j!}\bigr\}\\
\end{equation}
Similar to the proof of Theorem \ref{theorem231}, Eq \ref{theorem251_1} is a functional identity; if it holds for $m=0$, then it holds for all $m\ge 0$. So, we will only prove that it holds for $m=0$. As stated in Eq \ref{Eq25_3}, we have for some constants $a_r$: 
\begin{equation}\label{theorem251_2}
c_x = \sum_{r=0}^\infty a_r g^{(r+x-1)}(n) - \sum_{k=a}^n g^{(x)}(k)
\end{equation}
Now, we will prove that $a_r=\frac{B_r}{r!}$ by construction. Plugging Eq \ref{theorem251_2} into Eq \ref{theorem251_1} for $m=0$ yields:
\begin{equation}\label{theorem251_3}
g(a-1)=\lim_{\delta \to 0} \bigl\{ \sum_{j=1}^\infty \xi_\delta(j) \frac{(-1)^{(j+1)}}{j!}\big(  \sum_{r=0}^\infty a_r g^{(r+j-1)}(n) - \sum_{k=a}^n g^{(j)}(k)\big)\bigr\}\\
\end{equation}
By linearity of the summability method $\mathbb{T}$, we have: 
\begin{align*}
g(a-1)=&\lim_{\delta \to 0} \Big\{\frac{\xi_\delta(1)}{1!} \sum_{k=0}^\infty a_k g^{(k)}(n) - \frac{\xi_\delta(2)}{2!} \sum_{k=0}^\infty a_k g^{(k+1)}(n) + \dotsm\Big\}\\
-&\lim_{\delta \to 0} \Big\{\frac{\xi_\delta(1)}{1!}\sum_{k=a}^n g'(k) - \frac{\xi_\delta(2)}{2!}\sum_{k=a}^n g^{(2)}(k) +\frac{\xi_\delta(3)}{3!}\sum_{k=a}^n g^{(3)}(k) - \dotsm\Big\} 
\end{align*}
Because the summability method correctly evaluates Taylor series expansions under stated conditions (see Theorem \ref{theorem231}), we have:
\begin{equation}\label{theorem251_4}
\lim_{\delta \to 0} \Big\{\frac{\xi_\delta(1)}{1!} g'(s) - \frac{\xi_\delta(2)}{2!} g^{(2)}(s) +\frac{\xi_\delta(3)}{3!}g^{(3)}(s) - \dotsm\Big\} =  g(s-1)-g(s)
\end{equation}
Therefore, 
\begin{equation}\label{theorem251_5}
\lim_{\delta \to 0} \Big\{\frac{\xi_\delta(1)}{1!}\sum_{k=a}^n g'(k) - \frac{\xi_\delta(2)}{2!}\sum_{k=a}^n g^{(2)}(k) +\frac{\xi_\delta(3)}{3!}\sum_{k=a}^n g^{(3)}(k) - \dotsm\Big\} = g(a-1)-g(n) 
\end{equation}
Consequently, we have: 
\begin{equation}\label{theorem251_6}
g(n)=\lim_{\delta \to 0} \Big\{\frac{\xi_\delta(1)}{1!} \sum_{k=0}^\infty a_k g^{(k)}(n) - \frac{\xi_\delta(2)}{2!} \sum_{k=0}^\infty a_k g^{(k+1)}(n) + \dotsm\Big\}
\end{equation}
Because $\mathbb{T}$ correctly sums Taylor series expansions under stated conditions, we must have $\lim_{\delta\to 0} \xi_\delta{(j)} = 1$ for any fixed $j<\infty$. For example, the Lindel\"of summation method has $\xi_\delta(j) = j^{-\delta j}$ so the statement is satisfied. Therefore, we arrive at the following  expression: 
\begin{equation}\label{theorem251_7}
g(n)=\frac{a_0}{1!} g(n) + \big(\frac{a_1}{1!}-\frac{a_0}{2!}\big) g'(n) + \big(\frac{a_2}{1!}-\frac{a_1}{2!}+\frac{a_0}{3!}\big) g^{(2)}(n) + \dotsm
\end{equation}
Consequently, the constants $a_r$ satisfy the following conditions if and only if the recurrence identity holds:

$a_0$=1 \quad\quad\quad and \quad \quad $\frac{a_n}{1!}-\frac{a_{n-1}}{2!}+\frac{a_{n-3}}{3!} -\dotsm + (-1)^{(n+1)} \frac{a_0}{(n+1)!} = 0$

The above conditions yield $a_r=\frac{B_r}{r!}$. Thus, the generalized function $f_G(n)$ given by Theorem \ref{theorem251} does indeed satisfy recurrence and initial conditions. The argument that it arises out of polynomial fitting has to await development of Chapter \ref{Chapter6}. In addition, the argument that it is identical to the unique natural generalization given by the successive polynomial approximation method of Theorem \ref{StatofUniq} is easy to establish. First, we note in Eq \ref{Eq21_13} that the constants $b_r=\{1, -1/2, 1/12 , \ldots\}$ in the successive polynomial approximation method must satisfy $\sum_{r=0}^n \frac{b_{n-r}}{r!} = 0$ and $b_0=1$ in order to fulfill boundary conditions. However, these conditions have the unique solution given by $b_r=(-1)^r \frac{B_r}{r!}$. Therefore, the unique natural generalization given by the successive polynomial approximation method of Theorem \ref{StatofUniq} satisfies: 
\begin{equation}\label{theorem251_final} 
f_G'(n) = \sum_{r=0}^\infty (-1)^r \frac{B_r}{r!} g^{(r)}(n+1) 
\end{equation}
Now, we will show that this is identical to the statement of Theorem \ref{theorem251}. Using the Euler-Maclaurin summation formula (see Eq \ref{corollary251_2}), it can be easily shown that the following holds formally:
\begin{equation}\label{theorem251_final_2} 
\sum_{r=0}^\infty (-1)^r \frac{B_r}{r!} g^{(r)}(n+1) = \sum_{r=0}^\infty \frac{B_r}{r!} g^{(r)}(n)
\end{equation}
Therefore, the unique natural generalization given by Theorem \ref{theorem251} and the unique natural generalization of Theorem \ref{StatofUniq} are both identical, which completes proof of this theorem. Because earlier results for the case of semi-linear simple finite sums are just special cases of the more general statement in Theorem \ref{theorem251}, the unique natural generalization of Theorem \ref{theorem231} is also identical to both Theorem \ref{StatofUniq} and Theorem \ref{theorem251}. 

Finally, the proof assumes that $g(k)$ is analytic in the domain $[a-1, \infty)$. This is merely a technical detail in the above proof. Alternative proofs can be made using properties of Bernoulli polynomials, which require that $g(k)$ be analytic in the domain $[a, \infty)$ only (see for instance an elegant proof to the Euler-Maclaurin summation formula in \cite{Apostol}). 
\end{proof}  \hrule\vspace{6pt} 
\begin{corollary}\label{corollary251_1} 
Let $f(n)$ be a simple finite sum given by \simplefinitesum, where $g(k)$ is analytic at an open disc around the point $k=n$, and let $f_G(n)$ be its unique natural generalization, then higher derivatives $f_G^{(m)}(n)$ for $m>0$ are given formally by:
\begin{equation}\label{corollary251_1Eq}
f_G^{(m)}(n) = \sum_{r=0}^\infty \frac{B_r}{r!} g^{(r+m-1)} (n)
\end{equation}
\end{corollary}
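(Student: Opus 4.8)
The plan is to read this off directly from Theorem~\ref{theorem251}, since the corollary is essentially a bookkeeping rearrangement of the Taylor coefficients produced there. The two ingredients are: (i) the iterated form of Rule~1 of Table~\ref{TableRules}, namely Eq~\ref{Eq218}, which says that for every $m\ge 1$ there is a constant $c_m$ (independent of $n$) with $f_G^{(m)}(n) = \sum_{k=a}^n g^{(m)}(k) + c_m$; and (ii) the explicit evaluation of that constant supplied by Theorem~\ref{theorem251}, namely $c_m = \sum_{r=0}^\infty \frac{B_r}{r!}\, g^{(r+m-1)}(n) - \sum_{k=a}^n g^{(m)}(k)$. Substituting (ii) into (i), the two copies of the finite sum $\sum_{k=a}^n g^{(m)}(k)$ cancel and one is left with $f_G^{(m)}(n) = \sum_{r=0}^\infty \frac{B_r}{r!}\, g^{(r+m-1)}(n)$, which is exactly Eq~\ref{corollary251_1Eq}.

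First I would check that step (i) is legitimate under the stated hypothesis that $g$ is analytic in an open disc about $k=n$: Rule~1 was derived for a finite sum whose summand is analytic near the bound, and $g^{(m)}$ inherits analyticity from $g$, so the rule may be applied once more at each stage, giving the recursion $f_G^{(m+1)}(n) = \frac{d}{dn}\big(\sum_{k=a}^n g^{(m)}(k) + c_m\big) = \sum_{k=a}^n g^{(m+1)}(k) + c_{m+1}$, which is precisely Eq~\ref{Eq218}. Then I would invoke Theorem~\ref{theorem251} for the value of $c_m$; note that the apparent paradox that the right-hand side of the formula for $c_m$ displays an $n$ while $c_m$ is a constant is resolved exactly by the cancellation above, since the $n$-dependence of $\sum_{r} \frac{B_r}{r!}\, g^{(r+m-1)}(n)$ is by construction the same as that of $\sum_{k=a}^n g^{(m)}(k)$. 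An equivalent route, which I would mention as a cross-check, is to apply the $m=1$ case $f_G'(n) = \sum_{r\ge 0} \frac{B_r}{r!}\, g^{(r)}(n)$ (Eq~\ref{theorem251_final} together with Eq~\ref{theorem251_final_2}) to the auxiliary finite sum $\sum_{k=a}^n g^{(m-1)}(k)$, whose unique natural generalization has derivative $\sum_{r\ge 0} \frac{B_r}{r!}\, g^{(r+m-1)}(n)$, and then to identify this with $f_G^{(m)}(n)$ using Eq~\ref{Eq219}: the two differ by a $1$-periodic function, and agreement at $n=a-1$ (via the empty-sum rule, Rule~3) pins the periodic function to $0$.

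I do not expect a genuine obstacle here beyond what was already handled in Theorem~\ref{theorem251}: the series over $r$ is to be understood formally, i.e.\ summed by the operator $\mathbb{T}$ as in the proof of that theorem, and all the manipulations used — termwise differentiation of the Taylor expansion of $f_G$, and the splitting and cancelling of the two finite sums — are licensed by linearity and stability of $\mathbb{T}$; the only point that deserves an explicit sentence is that the two occurrences of $\sum_{k=a}^n g^{(m)}(k)$ being cancelled are literally the same object (the unique natural generalization of that finite sum), not merely formally similar expressions. A useful sanity check to include in the write-up: evaluating Eq~\ref{corollary251_1Eq} at $n=a-1$ and using Rule~3 gives $f_G^{(m)}(a-1) = \sum_{r\ge 0}\frac{B_r}{r!}g^{(r+m-1)}(a-1) = c_m$, which matches the coefficient of $(n-a+1)^m$ in the expansion of Theorem~\ref{theorem251}. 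Finally I would flag the mild scope restriction that $m>0$ is assumed — for $m=0$ the formula would involve the antiderivative $g^{(-1)}$ and would be the Euler--Maclaurin expansion of the sum itself (with its additive constant fixed by the initial condition rather than by $\tfrac{B_0}{0!}$), which is outside the statement being proved.
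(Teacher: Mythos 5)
Your proposal is correct and follows essentially the same route as the paper, whose entire proof is the citation \lq\lq by Theorem \ref{theorem251} and Rule 1 of Table \ref{TableRules}'': you have simply made explicit the substitution of the expression for $c_m$ from Eq \ref{theorem251Eq} into Eq \ref{Eq218} and the resulting cancellation of the two copies of $\sum_{k=a}^n g^{(m)}(k)$. The additional cross-checks (agreement at $n=a-1$ with the Taylor coefficients of Theorem \ref{theorem251}, and the exclusion of $m=0$) are consistent with the paper and do not change the argument.
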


\begin{proof}
By Theorem \ref{theorem251} and the differentiation rule of simple finite sums, i.e. Rule 1 in Table \ref{TableRules}. 
\end{proof}  \hrule\vspace{6pt}

\begin{corollary}{\textbf{(The Euler-Maclaurin Summation Formula)}}\label{corollary251_2} 
Let $f(n)$ be a simple finite sum given by \simplefinitesum, where $g(k)$ is analytic in the domain $[a, n]$, and let $f_G(n)$ be its unique natural generalization, then $f_G(n)$ is given formally by:
\begin{equation}\label{corollary251_2Eq}
f_G(n) =\int_a^n g(t)\,dt  + \frac{g(n)+g(a)}{2} + \sum_{r=2}^\infty \frac{B_r}{r!} \big(g^{(r-1)} (n) -g^{(r-1)} (a)\big) 
\end{equation}
\end{corollary}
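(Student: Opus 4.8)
The plan is to obtain \eqref{corollary251_2Eq} by integrating, with respect to the bound, the closed form for the first derivative of $f_G$ that is already available from Corollary \ref{corollary251_1}, and then to fix the single constant of integration using the initial condition $f_G(a)=g(a)$.

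First I would specialize Corollary \ref{corollary251_1} to $m=1$, which gives, formally in the sense of Theorem \ref{theorem251},
\[
f_G'(n) \;=\; \sum_{r=0}^\infty \frac{B_r}{r!}\, g^{(r)}(n)
\;=\; g(n) + \tfrac12 g'(n) + \sum_{r=2}^\infty \frac{B_r}{r!} g^{(r)}(n).
\]
Next, write $R(n)$ for the right-hand side of \eqref{corollary251_2Eq} and differentiate it term by term with respect to $n$: the integral term contributes $g(n)$, the term $\tfrac12\bigl(g(n)+g(a)\bigr)$ contributes $\tfrac12 g'(n)$, and for each $r\ge 2$ the summand $\tfrac{B_r}{r!}\bigl(g^{(r-1)}(n)-g^{(r-1)}(a)\bigr)$ contributes $\tfrac{B_r}{r!}g^{(r)}(n)$. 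Collecting these terms gives $R'(n)=\sum_{r=0}^\infty \tfrac{B_r}{r!}g^{(r)}(n)=f_G'(n)$. Hence $f_G(n)-R(n)$ has vanishing derivative and is therefore constant on the connected domain $[a,n]$.

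To evaluate that constant I would put $n=a$. Then $\int_a^a g(t)\,dt=0$ and every difference $g^{(r-1)}(a)-g^{(r-1)}(a)$ is $0$, so $R(a)=\tfrac12\bigl(g(a)+g(a)\bigr)=g(a)$; on the other hand property~1 of finite sums (equivalently the initial condition in Eq \ref{Eq211}) gives $f_G(a)=g(a)$. Thus $f_G(a)-R(a)=0$, the constant vanishes, and $f_G(n)=R(n)$, which is precisely \eqref{corollary251_2Eq}. Note that the bookkeeping of the low-order terms is exactly what converts the $r=1$ contribution $\tfrac12\bigl(g(n)-g(a)\bigr)$ of $\int_a^n f_G'$, together with the boundary value $g(a)$, into the symmetric endpoint term $\tfrac{g(n)+g(a)}{2}$.

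The one genuine subtlety — the step I expect to need the most care — is that the Bernoulli series above generally diverges, so the term-by-term differentiation and the passage ``zero derivative $\Rightarrow$ constant'' must be read in the same formal / $\mathbb{T}$-summable sense already adopted throughout Theorem \ref{theorem251}. Equivalently, one compares the formal Taylor expansions of $R$ and $f_G$ about $n=a$ coefficient by coefficient: the identity $R'=f_G'$ forces $R^{(k)}(a)=f_G^{(k)}(a)$ for all $k\ge 1$, while $R(a)=f_G(a)$ handles $k=0$, so the expansions coincide. If instead one wants a statement valid merely under the hypothesis that $g$ is analytic on $[a,n]$, the cleanest alternative is the classical derivation by repeated integration by parts against Bernoulli polynomials, which reproduces \eqref{corollary251_2Eq} together with an explicit remainder term (see, e.g., \cite{Apostol}) and shows that the two formulations agree.
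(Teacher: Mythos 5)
Your proposal is correct and takes essentially the same route as the paper: both integrate the derivative formula of Corollary \ref{corollary251_1} (the $m=1$ case) term by term and then fix the single constant of integration via the initial condition $f_G(a)=g(a)$, which converts the $\tfrac12\bigl(g(n)-g(a)\bigr)$ term into the symmetric endpoint term $\tfrac{g(n)+g(a)}{2}$. Your added caveat that the manipulation must be read formally (or term by term on Taylor coefficients) is a fair point the paper leaves implicit, but it does not change the argument.
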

 
\begin{proof}
By integrating both sides of Eq \ref{corollary251_1Eq}, we have: 
\begin{align}
f_G(n)-f_G(a) &= \int_a^n   \sum_{r=0}^\infty \frac{B_r}{r!} g^{(r)} (n)\\
&=\int_a^n g(t)\,dt  + \frac{g(n)-g(a)}{2} + \sum_{r=2}^\infty \frac{B_r}{r!} \big(g^{(r-1)} (n) -g^{(r-1)} (a)\big) 
\end{align}
Substituting $f_G(a)=g(a)$ yields the desired result. 
\end{proof}  \hrule\vspace{6pt} 

\begin{corollary}{\textbf{(The Translation Invariance Property)}}\label{Corollaryshiftproperty} 
If g(k) is analytic in an open disc around each point in the domain $[a, \infty)$, then for any constant $\mu$, we have:
\begin{equation}\label{Corollaryshiftproperty_1}
\sum_{k=a}^n g(k+\mu) = \sum_{k+\mu}^{n+\mu} g(k)
\end{equation}
\end{corollary}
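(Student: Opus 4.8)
The plan is to reduce the identity to a term-by-term comparison of two Euler--Maclaurin expansions. Read as functions of $n$, both sides of Eq \ref{Corollaryshiftproperty_1} are simple finite sums in the sense of Section \ref{sectionSFSGC}: the left-hand side $\sum_{k=a}^{n} g(k+\mu)$ has summand $h(k)=g(k+\mu)$ with lower bound $a$, while the right-hand side $\sum_{k=a+\mu}^{n+\mu} g(k)$ is $F(n+\mu)$, where $F(x)=\sum_{k=a+\mu}^{x} g(k)$ is the unique natural generalization of the simple finite sum with summand $g$ and lower bound $a+\mu$. Since $g$ is analytic in an open disc around every point of $[a,\infty)$ (and, when $\mu\ge 0$, around every point of $[a+\mu,\infty)$ as well), the Euler--Maclaurin formula of Corollary \ref{corollary251_2} applies to each. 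First I would write $\sum_{k=a}^{n} g(k+\mu)$ out using Eq \ref{corollary251_2Eq} with summand $g(\cdot+\mu)$ and bounds $a,n$, noting $h^{(j)}(\cdot)=g^{(j)}(\cdot+\mu)$ by the chain rule, and separately write $\sum_{k=a+\mu}^{n+\mu} g(k)$ using Eq \ref{corollary251_2Eq} with summand $g$ and bounds $a+\mu,n+\mu$.

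The key step is the change of variables $u=t+\mu$ in the integral term of the first expansion, which gives $\int_a^{n} g(t+\mu)\,dt=\int_{a+\mu}^{n+\mu} g(u)\,du$. After this substitution the two expansions are literally the same expression: both equal $\int_{a+\mu}^{n+\mu} g(u)\,du + \tfrac{1}{2}(g(n+\mu)+g(a+\mu)) + \sum_{r\ge 2}\tfrac{B_r}{r!}(g^{(r-1)}(n+\mu)-g^{(r-1)}(a+\mu))$. Hence the two generalized finite sums have identical series expansions, and by the uniqueness built into Theorem \ref{theorem251} they define the same function, which is the claim.

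As a cross-check that bypasses Euler--Maclaurin, I would also verify the identity at the level of derivatives: by Corollary \ref{corollary251_1}, $\frac{d^m}{dn^m}\sum_{k=a}^n g(k+\mu)=\sum_{r\ge 0}\tfrac{B_r}{r!}g^{(r+m-1)}(n+\mu)$ for $m\ge 1$, while $\frac{d^m}{dn^m}F(n+\mu)=F^{(m)}(n+\mu)=\sum_{r\ge 0}\tfrac{B_r}{r!}g^{(r+m-1)}(n+\mu)$ for $m\ge 1$ as well; so all derivatives of order $\ge 1$ agree. Both sides also vanish at $n=a-1$ by the empty-sum rule (Rule 3 of Table \ref{TableRules}), applied once with lower bound $a$ and once with lower bound $a+\mu$. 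Since two analytic functions with the same value at a point and the same derivatives of all positive orders coincide, we again get equality.

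The hard part is not the algebra, which collapses to an identity, but being honest about what "equality" means here: the Euler--Maclaurin and Taylor series involved need not converge, so the statement is really that the two sides possess the same formal, $\mathbb{T}$-summable expansion around $n=a-1$ — exactly the sense in which Theorem \ref{theorem251} and Corollary \ref{corollary251_2} were established, so no new machinery is needed. A secondary technical caveat concerns $\mu<0$: then $[a+\mu,\infty)$ protrudes beyond the region where $g$ is assumed analytic, so strictly one should either restrict to $\mu\ge 0$ (with integer shifts also covered directly by the discrete definition together with the recurrence identity Eq \ref{Eq211}) or strengthen the analyticity hypothesis to a large enough half-line, the same kind of technical point noted at the end of the proof of Theorem \ref{theorem251}.
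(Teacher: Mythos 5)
Your proof is correct and is essentially the paper's own argument: the paper disposes of this corollary with the single line \lq\lq by direct application of the Euler-Maclaurin summation formula,'' and your expansion of both sides via Corollary \ref{corollary251_2} followed by the substitution $u=t+\mu$ in the integral term is precisely what that application amounts to. The extra derivative-matching cross-check and the caveat about $\mu<0$ are sensible but not needed beyond what the paper already assumes.
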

 
\begin{proof}
By direct application of the Euler-Maclaurin summation formula. 
\end{proof}  \hrule\vspace{6pt} 

\begin{corollary}\label{UniqGenTwoProperties} 
The unique natural generalization of simple finite sums $\sum_{k=a}^n g(k)$ satisfies the two defining properties of simple finite sums.
\end{corollary}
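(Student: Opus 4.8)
The plan is to deduce both properties directly from the Euler--Maclaurin closed form of the unique natural generalization established in Corollary \ref{corollary251_2}. To keep the dependence on the lower bound visible, I would write the generalized value of $\sum_{k=a}^n g(k)$ as
\[
S_g(a,n) \;=\; \int_a^n g(t)\,dt \;+\; \frac{g(n)+g(a)}{2} \;+\; \sum_{r=2}^\infty \frac{B_r}{r!}\bigl(g^{(r-1)}(n)-g^{(r-1)}(a)\bigr),
\]
read formally in the sense of the summability method $\mathbb{T}$ used throughout this chapter. Property $1$ is then essentially free: it is already the initial condition $f_G(a)=g(a)$ of Theorem \ref{theorem251}, and it also drops out of $S_g$ on taking lower and upper bounds both equal to $x$, where the integral vanishes, the derivative terms cancel in pairs, and the remaining term is $\tfrac12\bigl(g(x)+g(x)\bigr)=g(x)$.

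For Property $2$ I would substitute this form into each of $\sum_{k=a}^b g(k)$, $\sum_{k=b+1}^x g(k)$ and $\sum_{k=a}^x g(k)$ and simplify $S_g(a,b)+S_g(b+1,x)-S_g(a,x)$ term by term. Additivity of the integral gives $\int_a^b+\int_{b+1}^x-\int_a^x=-\int_b^{b+1}$; the half-sum terms collapse to $\tfrac12\bigl(g(b)+g(b+1)\bigr)$; and the Bernoulli series telescopes down to $\sum_{r\ge 2}\tfrac{B_r}{r!}\bigl(g^{(r-1)}(b)-g^{(r-1)}(b+1)\bigr)$. Thus Property $2$ reduces to the single unit-length identity
\[
\int_b^{b+1} g(t)\,dt \;=\; \frac{g(b)+g(b+1)}{2} \;+\; \sum_{r=2}^\infty \frac{B_r}{r!}\bigl(g^{(r-1)}(b)-g^{(r-1)}(b+1)\bigr),
\]
which is precisely the statement $S_g(b,b+1)=g(b)+g(b+1)$ after transposing the integral.

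The last step is to close this unit-length identity, and here I would invoke Theorem \ref{theorem251} once more: $S_g(b,n)$, as a function of $n$, is the unique natural generalization of $\sum_{k=b}^n g(k)$, hence satisfies the initial condition $S_g(b,b)=g(b)$ together with the recurrence $S_g(b,n)=g(n)+S_g(b,n-1)$ of Eq \ref{Eq211}; evaluating the recurrence at $n=b+1$ yields $S_g(b,b+1)=g(b+1)+g(b)$, exactly as required, so Property $2$ follows. The empty-sum rule $\sum_{k=a}^{a-1}g(k)=0$ reappears along the way, as the recurrence evaluated at $n=a-1$ together with $f_G(a)=g(a)$.

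The part I expect to be delicate is not any single computation but the bookkeeping of the formal series: every manipulation above (regrouping $S_g(a,b)+S_g(b+1,x)$, telescoping the $B_r$-series, moving the integral across) has to be justified by the linearity and stability of $\mathbb{T}$, exactly as in the proofs of Theorems \ref{theorem231} and \ref{theorem251}, and under the standing assumption that $g$ is analytic on the relevant closed intervals so that each Euler--Maclaurin expansion is legitimate. A cleaner alternative worth pursuing would bypass the three-term expansion entirely: from the closed form, $S_g(a,n)-S_g(b+1,n)$ is visibly independent of $n$, so evaluating it at $n=b$, where $S_g(b+1,b)=0$ by the empty-sum rule, gives Property $2$ in one line, with the translation invariance of Corollary \ref{Corollaryshiftproperty} available as an independent consistency check.
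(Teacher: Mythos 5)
Your proposal is correct and follows exactly the route the paper takes: the paper's entire proof of Corollary \ref{UniqGenTwoProperties} is the single line \lq\lq by direct application of the Euler-Maclaurin summation formula,'' and your argument is precisely that application carried out in detail (additivity of the integral, cancellation of the half-sum and Bernoulli terms, and closure via the unit-length identity $S_g(b,b+1)=g(b)+g(b+1)$ from the recurrence of Theorem \ref{theorem251}). Your one-line variant---observing that $S_g(a,n)-S_g(b+1,n)$ is independent of $n$ and evaluating at $n=b$ using the empty-sum rule---is a clean tightening of the same computation, not a different method.
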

 
\begin{proof}
By direct application of the Euler-Maclaurin summation formula. 
\end{proof}  \hrule\vspace{6pt} 

\begin{corollary}{\textbf{(Evaluating the Euler-Maclaurin Summation Formula)}}\label{corollarySumEMSum} 
Let $f(n)$ be a simple finite sum given by \simplefinitesum, where g(k) is analytic in an open disc around each point in the domain $[a, \infty)$, and let $f_G(n)$ be its unique natural generalization, then $f_G(n)$ can be evaluated using:
\begin{equation}\label{corollarySumEMSum_1}
f_G(n) = g(a)+\lim_{z\to\infty} \sum_{k=1}^z \frac{(n-a)^k}{k!} \sum_{r=0}^{z-k} (-1)^r B_r \frac{g^{(k+r-1)}(a+1)}{r!} 
\end{equation}
In particular, if $g(k)$ is analytic at an open disc around $n$, then $f_G'(n)$ can be evaluated using:
\begin{equation}\label{corollarySumEMSum_2}
f_G'(n) = \lim_{z\to\infty} \sum_{k=1}^z \frac{(n-a)^{(k-1)}}{(k-1)!} \sum_{r=0}^{z-k} (-1)^r B_r \frac{g^{(k+r-1)}(a+1)}{r!} 
\end{equation}
\end{corollary}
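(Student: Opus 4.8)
The plan is to recognize Eq~\ref{corollarySumEMSum_1} as nothing more than the Taylor expansion of $f_G$ about the point $n=a$, assembled from two facts already available: the initial condition $f_G(a)=g(a)$, and the (formal) closed form for the higher derivatives of $f_G$ furnished by Theorem~\ref{theorem251}. Since $g$ is analytic in an open disc around each point of $[a,\infty)$, the construction underlying Theorem~\ref{theorem251} yields an $f_G$ that is itself analytic there, so it agrees with its Taylor series $f_G(n)=\sum_{k\ge 0}\frac{f_G^{(k)}(a)}{k!}(n-a)^k$ in a neighbourhood of $a$, and, via the recurrence identity together with analytic continuation, for every admissible $n$.

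First I would pin down $f_G^{(k)}(a)$ for $k\ge 1$. Equation~\ref{theorem251_final}, obtained in the proof of Theorem~\ref{theorem251}, gives $f_G'(n)=\sum_{r=0}^\infty(-1)^r\frac{B_r}{r!}g^{(r)}(n+1)$; differentiating this $k-1$ times term by term yields $f_G^{(k)}(n)=\sum_{r=0}^\infty(-1)^r\frac{B_r}{r!}g^{(r+k-1)}(n+1)$, so that $f_G^{(k)}(a)=\sum_{r=0}^\infty(-1)^rB_r\frac{g^{(k+r-1)}(a+1)}{r!}$. Substituting $f_G(a)=g(a)$ and these derivative values into the Taylor series, and reading the resulting double series in the triangular order $k\le z$, $r\le z-k$ encoded by the nested partial sums, reproduces Eq~\ref{corollarySumEMSum_1}. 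For Eq~\ref{corollarySumEMSum_2} one differentiates the Taylor series of $f_G$ about $a$ term by term (legitimate inside the disc of convergence), so the $k$-th term acquires the factor $(n-a)^{k-1}/(k-1)!$, and then inserts the same values of $f_G^{(k)}(a)$; no new input is required.

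The one genuine difficulty, as throughout this chapter, is that the inner series over $r$ need not converge: the Bernoulli numbers grow like $r!/(2\pi)^r$, so for a generic analytic $g$ the sum $\sum_{r}(-1)^rB_r g^{(k+r-1)}(a+1)/r!$ diverges and the identity must be read formally in the sense of Theorems~\ref{theorem231} and~\ref{theorem251}. What must be argued is that the \emph{specific} regularization built into the statement --- truncate at total order $k+r\le z$ and let $z\to\infty$ --- is the right one. I would handle this by re-indexing the double sum along diagonals $j=k+r$, which turns the $z$-th partial sum into $g(a)+\sum_{j=1}^{z}g^{(j-1)}(a+1)\,\gamma_j(n)$, where $\gamma_j(n)$ is the order-$j$ Cauchy coefficient of $(e^{(n-a)x}-1)\cdot\frac{x}{1-e^{-x}}$, and then appeal to the summability operator $\mathbb{T}$ from the proof of Theorem~\ref{theorem231}: because $\mathbb{T}$ is linear, regular, and stable for Taylor expansions, this diagonal limit coincides with the value $\mathbb{T}$ assigns to the formal Taylor series of $f_G$ about $a$, which by the functional-identity argument of Theorem~\ref{theorem251} is exactly $f_G(n)$. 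In the common special case where $g$ has finite asymptotic differentiation order (Definition~\ref{asymptOrder}) the inner sums terminate, all limits are classical, and the computation is elementary --- which is the cleanest way to present the result before invoking the formal convention in full generality.
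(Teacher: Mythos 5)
Your proposal is correct but reaches the formula by a different road than the paper. The paper's own proof is essentially two lines: it observes that for $a=0$ the right-hand side of Eq \ref{corollarySumEMSum_1} is \emph{literally} the limit of the successive polynomial approximations $p_z(n)$ constructed in Theorem \ref{StatofUniq} (once the coefficients $b_r$ there have been identified as $(-1)^r B_r/r!$ in the proof of Theorem \ref{theorem251}), and then transports the result to general $a$ via the translation invariance property of Corollary \ref{Corollaryshiftproperty}. In that reading the triangular truncation $k+r\le z$ needs no separate justification: it is simply what the degree-$z$ polynomial of the construction looks like, since its Taylor coefficients are themselves partial sums. You instead rebuild the formula as the Taylor expansion of $f_G$ about $n=a$, feeding in $f_G(a)=g(a)$ and the derivative values $f_G^{(k)}(a)=\sum_{r}(-1)^rB_r\,g^{(k+r-1)}(a+1)/r!$ obtained by differentiating Eq \ref{theorem251_final}; this is equivalent, bypasses the explicit use of translation invariance, and has the merit of making explicit \emph{why} the nested partial sums are the right regularization --- your diagonal re-indexing, with $\gamma_j(n)$ the $j$-th Taylor coefficient of $(e^{(n-a)x}-1)\,x/(1-e^{-x})$, is a genuinely useful observation the paper omits, and it also explains the paper's closing remark that the formula is computable when $|n-a|<1$. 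The one step you should present more carefully is the final appeal to $\mathbb{T}$: the limit in Eq \ref{corollarySumEMSum_1} is an ordinary limit of the diagonal partial sums, not a $\mathbb{T}$-regularized value, so strictly you need the diagonal series to converge (which is what $|n-a|<1$ buys, after which the recurrence identity propagates the value elsewhere) rather than merely to be $\mathbb{T}$-summable; the paper's own treatment is no more rigorous on this point, so this is a matter of presentation rather than a gap.
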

 
\begin{proof}
In the proof of Theorem \ref{StatofUniq}, we showed that if $f(n)=\sum_{k=0}^n g(k)$, then: 
\begin{equation}\label{corollarySumEMSum_3}
f_G(n) = g(0)+\lim_{z\to\infty} \sum_{k=0}^z \frac{n^k}{k!} \sum_{r=0}^{z-k} (-1)^{r} B_r \frac{g^{(k+r-1)}(1)}{r!} 
\end{equation}
This last expression was the unique natural generalization given by Theorem \ref{StatofUniq}. However, by the translation invariance property (Corollary \ref{Corollaryshiftproperty}), we have: 
\begin{equation}\label{corollarySumEMSum_4}
\sum_{k=a}^n g(k) = \sum_{k=0}^{n-a} g(k+a) 
\end{equation}
Applying Eq \ref{corollarySumEMSum_3} to the last equation yields the desired result. Note that, unlike the classical Euler-Maclaurin summation formula which diverges almost all the time, the method presented in Eq \ref{corollarySumEMSum_3} is computable if $|n-a|<1$.  
\end{proof}  \hrule\vspace{6pt} 

It is straightforward to observe that Theorem \ref{theorem251} generalizes Theorem \ref{theorem231} that was proved earlier for semi-linear finite sums. Its proof resembles the approach used in proving Theorem \ref{theorem231} and Theorem \ref{theorem231} is clearly a special case of Theorem \ref{theorem251}. So, what does Theorem \ref{theorem251} imply? 

If we look into the differentiation rule of simple finite sums, i.e. Rule 1 in Table \ref{TableRules}, we realize that the constant $c$ that is independent of $n$ can be determined once we know, at least, one value of the derivative of the generalized definition $f_G(n)$. This is because we always have $c = f'_G(n)-\sum_{k=a}^n g'(k)$ for all $n$. In the case of semi-linear simple finite sums, in which $g'(k)$ vanishes asymptotically, the rate of change of the simple finite sum becomes almost linear asymptotically, so we know that the derivative had to get arbitrarily close to $g(n)$ as $n\to\infty$. This was, in fact, the value we were looking for so we used it to determine what $c$ was. Theorem \ref{theorem231} establishes that such approach is correct, meaning that the resulting function exists and satisfies required conditions.

In Theorem \ref{theorem251}, on the other hand, we ask about the general case of simple finite sums in which $g(k)$ is not necessarily semi-linear. In such case, we are still interested in finding at least one value of each higher order derivative $g^{(m)}$ so that the constants $c_m$ in Eq \ref{Eq25_2} can be determined. Corollary \ref{corollary251_1} provides us with a formal answer. However, because the Euler-Maclaurin summation formula typically diverges, we still need a way of interpreting such divergent series. In Chapters \ref{Chapter4} and \ref{Chapter6}, we will present a new summability method $\Xi$, which works reasonably well in estimating the Euler-Maclaurin summation formula even if it diverges. Alternatively, the evaluation method of Corollary \ref{corollarySumEMSum} can be used as well, especially if $|n-a|<1$. For example, if we wish to find the derivative of the log-factorial function $\varpi(n)=\log{n!}$ at $n=0$, results of the evaluation method in Corollary \ref{corollarySumEMSum} for different values of $z$ are depicted in Figure \ref{figureEMEval}. Clearly, it approaches $-\lambda$ as expected. 
\begin{figure} [h]
\centering
\includegraphics[scale=0.4]{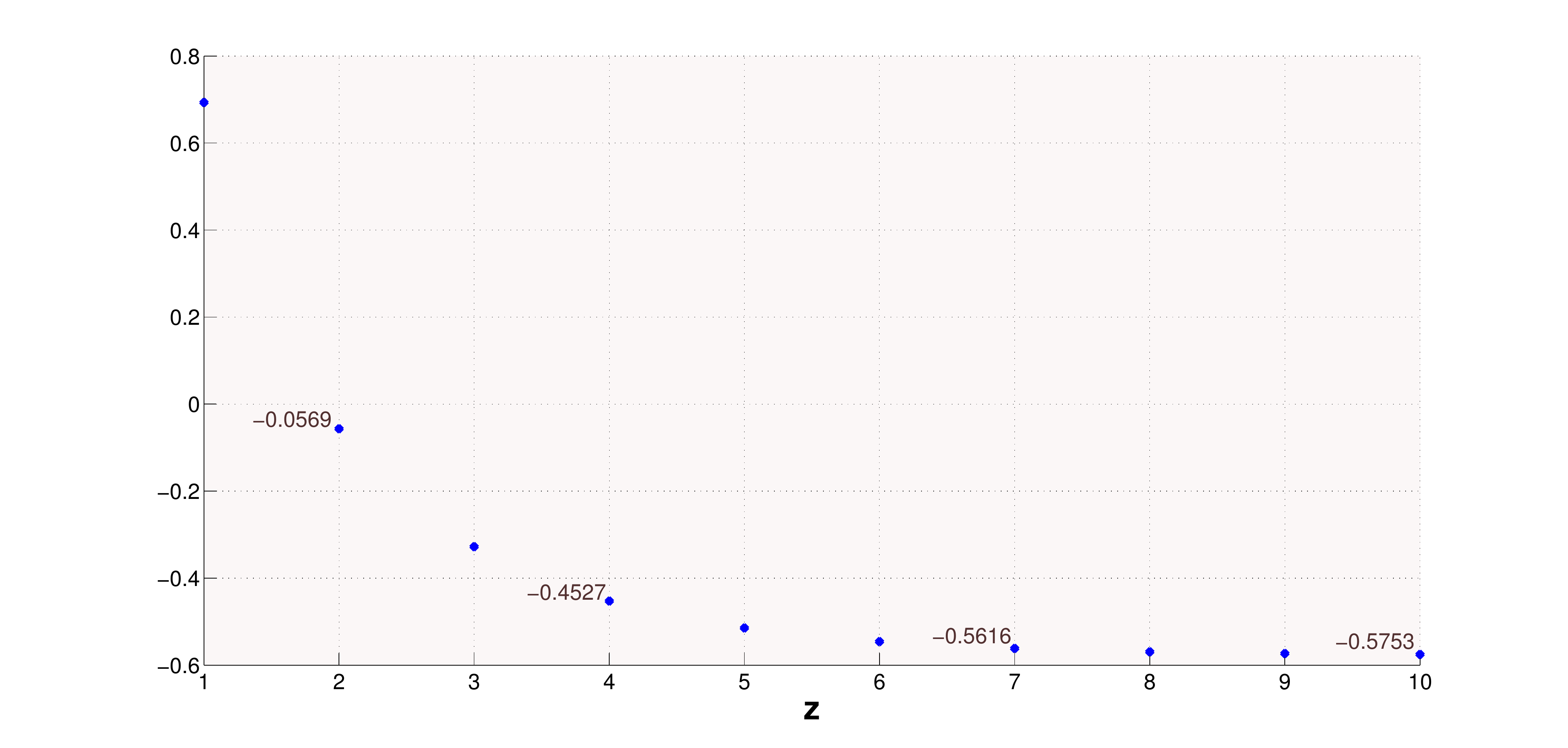}
\caption[Interpreting the divergent Euler-Maclaurin summation formula]{Numerical result of computing $\frac{d}{dn}\log{n!}\big|_{n=0}$ using Corollary \ref{corollarySumEMSum}.}
\label{figureEMEval}
\end{figure}

In the meantime, we note that the Euler-Maclaurin formula is an asymptotic expression; we can clip the infinite sum if $g(k)$ is asymptotically of a finite differentiation order. 
For example, suppose we have \simplefinitesum\ and that some higher derivative $g^{(m+1)}(n)$ vanishes asymptotically as $n\to\infty$, i.e. not necessarily 1\textsuperscript{st} derivative as in the case of semi-linear finite sums. With this in mind, can we deduce an asymptotic expression for the derivative of such simple finite sum $f'_G(n)$? The answer is given by Corollary \ref{corollary251_1}, which states that the derivative approaches $ \sum_{r=0}^m \frac{B_r}{r!} g^{(r)} (n)$. Note that because $g^{(m+1)}(n)$ vanishes asymptotically, we do \emph{not} have to evaluate the entire Euler-Maclaurin summation formula. The last finite sum is the expression we needed in order to compute the value of $c$, in a manner that is similar to what was done earlier for semi-linear simple finite sums\footnote{This will be established in Chapter \ref{Chapter6}.}. Thus, we have: 
\begin{equation}\label{corollary251_3Eq}
f^{(l)}_G(a-1) =\lim_{n\to\infty} \big\{\sum_{r=0}^m \frac{B_r}{r!} g^{(r+l-1)} (n) - \sum_{k=a}^n g^{(l)}(k)\big\}
\end{equation}
Of course, one notable difference between Theorem \ref{theorem251} and Theorem \ref{theorem231}, however, is that we could find the constants $c_m$ in Theorem \ref{theorem251} by choosing \emph{any appropriate value of} $n$ whereas we had to take the limit as $n\to\infty$ in Theorem \ref{theorem231} and \ref{theorem251}. Nevertheless, Eq \ref{corollary251_3Eq} is usually more useful in practice. We will illustrate both approaches in Section \ref{Section2dot6}.

Finally, we conclude this section with a sanity check. Earlier in the foundational rules, we stated that if \simplefinitesum, then we have by Rule 1: $f'_G(n) = \sum_{k=a}^n g'(k) + f_G'(a-1)$. To verify that results so far are consistent with such claim, we note that Corollary \ref{corollary251_1} implies that Eq \ref{bernoulliNumbers_7}  holds if $g^{(r)}(a-1)$ is defined for all $r \ge 0$. 
\begin{equation}\label{bernoulliNumbers_7}
\sum_{k=a}^n g'(k) = \sum_{r=0}^\infty \frac{B_r}{r!} \big(g^{(r)}(n)-g^{(r)}(a-1)\big) 
\end{equation}
Rearranging the terms in Eq \ref{bernoulliNumbers_7}, we have:
\begin{equation}\label{bernoulliNumbers_8} 
 \sum_{r=0}^\infty \frac{B_r}{r!} g^{(r)}(n) = \sum_{k=a}^n g'(k)  +  \sum_{r=0}^\infty \frac{B_r}{r!} g^{(r)}(a-1)
\end{equation}

However, applying Corollary \ref{corollary251_1} to Eq \ref{bernoulliNumbers_8} indeed recovers the statement that $f'_G(n) = \sum_{k=a}^n g'(k) + f_G'(a-1)$.

\section{Examples to the General Case of Simple Finite Sums} \label{Section2dot6}
In this section, we present a few examples for performing infinitesimal calculus on simple finite sums and products using Theorem \ref{theorem251} and its corollaries. 
\subsection{Example I: Extending the Bernoulli-Faulhaber formula} \label{Example261}
In this example, we return to the power sum function given by Eq \ref{PSDisctrete}. If $m$ is a positive integer, we can use Theorem \ref{theorem251} to deduce immediately the Bernoulli-Faulhaber formula for power sums, which is the original proof that Euler provided. However, the Bernoulli-Faulhaber formula does not provide us with a series expansion if $m$ is an arbitrary positive real number. Using Theorem \ref{theorem251}, on the other hand, we can quite easily derive a series expansion of the power sum function $f(n)=\sum_{k=1}^n k^m$ around $n=0$ that generalizes the above formula.

First, we note that $f(0)=0$ by the empty sum rule, and $g^{(r)}(n)=\frac{m!}{(m-r)!} n^{m-r}$. Since $\lim_{n\to\infty} g^{(r)}(n) = 0$ for $r > m$, we can choose $n\to\infty$ in Theorem \ref{theorem251} to find the value of derivatives $c_m = f^{(m)}_G(0)$. In this example, we will derive the series expansion if $m=1/2$. First, we note:
\begin{equation}\label{Example_261_1} 
c_1 = \lim_{n\to\infty} \Big\{\sqrt{n} - \sum_{k=1}^n \frac{1}{2\sqrt{k}}\Big\} \approx 0.7302
\end{equation}
Later in Chapter \ref{Chapter5}, it will be clear that the constant $c_1$ is in fact given by $-\frac{\zeta_{1/2}}{2}$, using Riemann's analytic continuation of the zeta function. Using the differentiation rule:
\begin{equation}\label{Example_261_2} 
f_G'(n) = -\frac{\zeta_{1/2}}{2} + \sum_{k=1}^n \frac{1}{2\sqrt{k}} 
\end{equation}

In addition: 
\begin{equation}\label{Example_261_3} 
c_2= \sum_{k=1}^\infty \frac{1}{4 k^{3/2}} = \frac{\zeta_{3/2}}{4}
\end{equation}

Continuing in this manner, we note:
\begin{equation}\label{Example_261_4} 
c_m= (-1)^m \frac{(2m-3)!}{4^{m-1} (m-2)!} \zeta_{m-1/2} , \\ m\ge0
\end{equation}

Thus, the series expansion is given by: 
\begin{equation}\label{Example_261_5} 
f_G(n) = \sum_{k=1}^n \sqrt{k} = -\frac{\zeta_{1/2}}{2} n + \sum_{m=2}^\infty (-1)^m \frac{(2m-3)!}{4^{m-1} \,m!\, (m-2)!} \zeta_{m-1/2} n^m 
\end{equation}

Unfortunately, the radius of converges of the series in Eq \ref{Example_261_5} is $|n|<1$, which can be deduced using Stirling's approximation. However, we can still perform a sanity check on Eq \ref{Example_261_5} by plotting it for $-1\leq n \leq 1$, which is shown in Figure \ref{FigEx261}. As shown in the figure, $f(0)=0$ and $f(1)=1$ as expected. In addition, since we could take the lower bound $a$ to be zero, we know that $f(-1)=0$ as well by Rule 3, which is indeed the case as shown in the figure. Thus, the series expansion in Eq \ref{Example_261_5}  is indeed reasonable. In fact, we will show later in Chapter \ref{Chapter4} that Eq \ref{Example_261_5}  is correct when we introduce the summability method $\Xi$, which will show, for instance, that $f_G(n)$ given by  Eq \ref{Example_261_5} satisfies  $f_G(2)=\sqrt{1}+\sqrt{2}$ and   $f_G(3)=\sqrt{1}+\sqrt{2}+\sqrt{3}$  thus confirming our earlier analysis. Moreover, using $f(-1)=0$, we have the identity shown in Eq \ref{Example_261_6}, which can be verified numerically quite readily.
\begin{equation}\label{Example_261_6} 
-\frac{1}{2} \zeta_{1/2}= \sum_{m=2}^\infty  \frac{(2m-3)!}{4^{m-1}\,m!\, (m-2)!} \zeta_{m-1/2} 
\end{equation}

\begin{figure} [h] 
\centering
\includegraphics[scale=0.4]{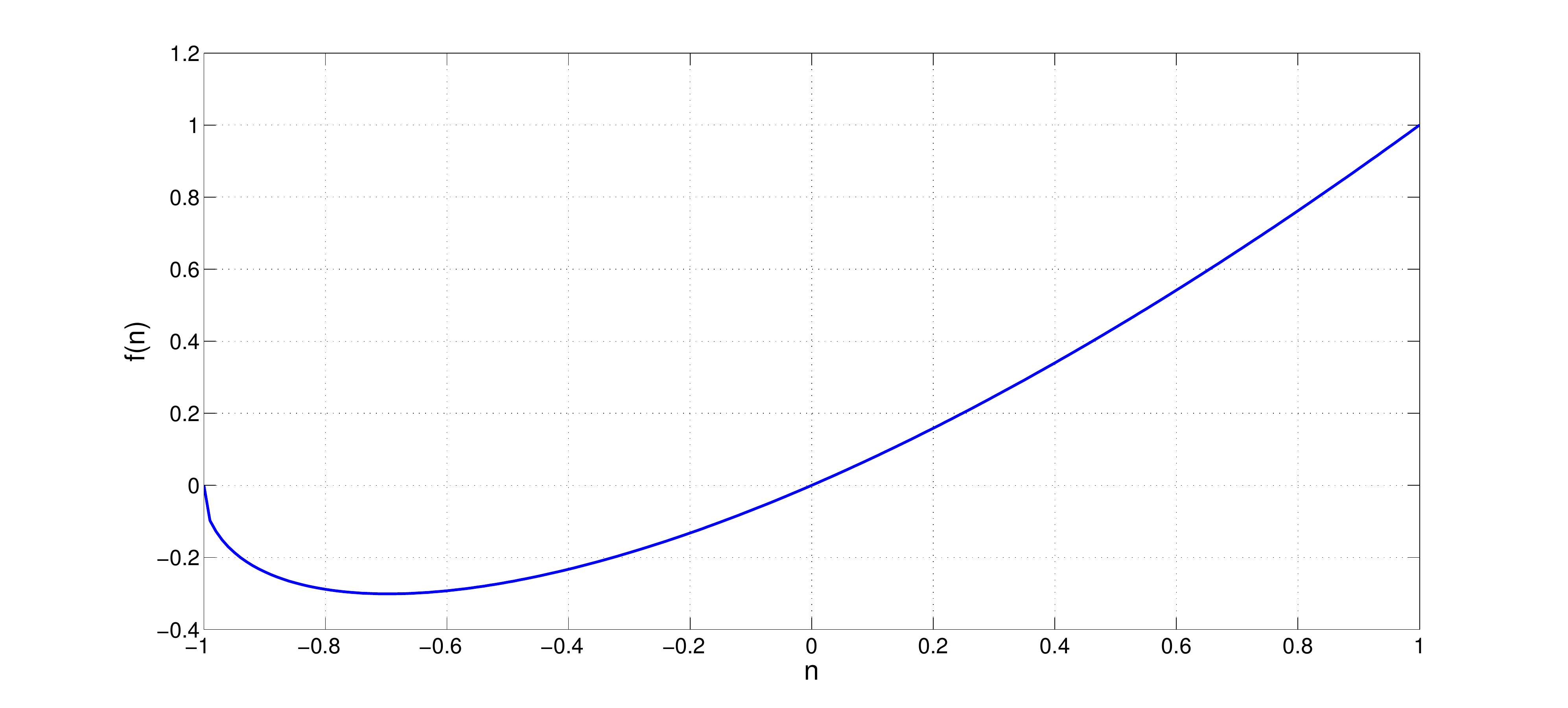}
\caption{The function $\sum_{k=1}^n \sqrt{k}$ \,plotted in the interval $-1\leq n \leq 1$}
\label{FigEx261}
\end{figure}

Earlier, we stated that Theorem \ref{theorem251} provides a more convenient approach for performing infinitesimal calculus on simple finite sums than the classical Euler-Maclaurin summation formula. To see this in the previous example, it is worthwhile to note that the Euler-Maclaurin summation formula does not readily extend the domain of $\sum_{k=1}^n \sqrt{k}$ to negative values of $n$ nor can it be used in deducing Taylor series expansion due to the presence of square roots and divergence, whereas Theorem \ref{theorem251} easily does. Finally, similar techniques can be used to obtain series expansion for different values of $m$.

\subsection{Example II: The Hyperfactorial Function} 
In this example, we return to the log-hyperfactorial function given by $f(n)=\log{H(n)}=\sum_{k=1}^n k \log{k}$. By Theorem \ref{theorem251}, we have:
\begin{equation}\label{Eq262_1} 
f_G'(n)= n + \log{n!} + c, \\ c=\lim_{n\to\infty} \big\{ n \log{n} + \frac{1+\log{n}}{2} - n - \log{n!}\big\}
\end{equation}
Using Stirling's approximation, we have $c=\frac{1-\log{2\pi}}{2}$, which is the missing value of the constant $c$ in Eq \ref{Eq246_6}. Using this value, we have by Theorem \ref{theorem251} the following series expansion: 
\begin{equation}\label{Eq262_2} 
\log{H(n)}= \big(\frac{1-\log{2\pi}}{2}\big) n + \frac{1-\lambda}{2} n^2 + \sum_{k=2}^\infty (-1)^k \frac{\zeta_k}{k(k+1)} n^{k+1}
\end{equation}

\subsection{Example III: The Superfactorial Function} 
In this example, we look into the simple finite sum function $f(n)$ given by $f(n)=\sum_{k=0}^n \varpi'(k)$, where $\varpi(n)$ is the log-factorial function and $\varpi'(n)=\psi(n+1)$. The finite sum $f(n)$ is semi-linear so we can use Theorem \ref{theorem231} directly. Using Rule 1, we have:
\begin{equation}\label{Eq263_1} 
f_G'(n) = c + \sum_{k=0}^n \varpi^{(2)}(k)
\end{equation}
Using Theorem \ref{theorem231}, the value of $c$ is given by: 
\begin{equation}\label{Eq263_2}
c=\lim_{n\to\infty} \Big\{ -\lambda + H_n - \sum_{k=0}^n (\zeta_2-H_{2,k}) = - (1+\lambda)
\end{equation}
Continuing in this manner yields the following series expansion for $f_G(n)$:
\begin{equation}\label{Eq263_3}
f_G(n)=-(1+\lambda) (n+1) + \sum_{k=2}^\infty (-1)^k \zeta_k (n+1)^k 
\end{equation}
By comparing the series expansion in Eq \ref{Eq263_3} with the series expansion of $\varpi'(n)$, we deduce that:
\begin{equation}\label{Eq263_4}
\sum_{k=0}^n \varpi'(k) = (n+1)\big(\varpi'(n+1) -1\big) \Rightarrow \sum_{k=0}^n H_k = (n+1)\big(H_{n+1} -1\big) 
\end{equation}

Moreover, if we take the lower bound of the iteration variable to be $a=1$ in the definition of $f(n)$ and after computing the series expansion around $n=0$ using Theorem \ref{theorem231}, we have the following expression:
\begin{equation}\label{Eq263_5}
\sum_{k=0}^n \varpi'(k) = \big(\zeta_2-1-\lambda\big) n + \sum_{k=2}^\infty (-1)^k \big(\zeta_k-\zeta_{k+1}\big) n^k 
\end{equation}

For instance, setting $n=1$ yields: 
\begin{equation}\label{Eq263_6}
2-\zeta_2 =  \sum_{k=2}^\infty (-1)^k \big(\zeta_k-\zeta_{k+1}\big)
\end{equation}

Using Eq \ref{Eq263_4}, Eq \ref{Eq263_5} and Theorem \ref{theorem251}, we are now ready to find the series expansion of the log-superfactorial function $\log{S(n)}$ given by  $ \sum_{k=1}^n \log{k!}$\;. By integrating both sides of Eq \ref{Eq263_5} using Rule 2, we have: 
\begin{equation}\label{Eq263_7}
\log{S(n)}= c_0 + c_1 n + \frac{\zeta_2-1-\lambda}{2} n^2 + \sum_{k=2}^\infty (-1)^k \frac{\zeta_k-\zeta_{k+1}}{k+1} n^{k+1} 
\end{equation}
Using $n=0$ and Rule 3 yields $c_0=0$. Using $n=1$, on the other hand, yields:
\begin{equation}\label{Eq263_8}
c_1= -\frac{\zeta_2-1-\lambda}{2} + \sum_{k=2}^\infty (-1)^k \frac{\zeta_k-\zeta_{k+1}}{k+1}
\end{equation}

To find a simple expression of $c_1$, we employ Theorem \ref{theorem251} this time, which states that:
\begin{align*}
c_1&= \lim_{n\to\infty} \Big\{ \log{n!} + \frac{\varpi'(n)}{2} - \sum_{k=1}^n \varpi'(k)\Big\}\\
&=\lim_{n\to\infty} \Big\{ \frac{\log{2\pi n}}{2} + n(\log{n} -1) + \frac{H_n -\lambda}{2} - \sum_{k=1}^n \varpi'(k)\Big\}
\end{align*}
Here, we have used Stirling's approximation. Upon using Eq \ref{Eq263_4}, we have: 
\begin{equation}\label{Eq263_9}
c_1= \frac{\log{2\pi}}{2} -\lambda+ \lim_{n\to\infty} \Big\{ (n+1)\big(\log{n}+\lambda-H_n\big) \Big\}
\end{equation}
We know, however, using the Euler-Maclaurin summation formula that the following asymptotic relation holds:
\begin{equation}\label{Eq263_10}
\log{n} + \lambda - H_n \sim -\frac{1}{2n}
\end{equation}

Plugging Eq \ref{Eq263_10} into Eq \ref{Eq263_9} yields the desired value: 
\begin{equation}\label{Eq263_11}
c_1 = \frac{\log{2\pi} -1}{2} - \lambda
\end{equation}

Equating Eq \ref{Eq263_10} with Eq \ref{Eq263_8}, we arrive at the following identity:
\begin{equation}\label{Eq263_11_1}
\frac{\log{2\pi} -1}{2} - \lambda= -\frac{\zeta_2-1-\lambda}{2} + \sum_{k=2}^\infty (-1)^k \frac{\zeta_k-\zeta_{k+1}}{k+1}
\end{equation}

Finally, by plugging  Eq \ref{Eq263_10} into  Eq \ref{Eq263_7}, we arrive at the desired series expansion of the log-superfactorial function:
\begin{equation}\label{Eq263_12}
\log{S(n)}=  \big(\frac{\log{2\pi} -1}{2} - \lambda\big) n + \frac{\zeta_2-1-\lambda}{2} n^2 + \sum_{k=2}^\infty (-1)^k \frac{\zeta_k-\zeta_{k+1}}{k+1} n^{k+1} 
\end{equation}

\subsection{Example IV: Alternating Sums} \label{ExAlterSum}
In this example, we address the case of alternating simple finite sums, and illustrate the general principles using the example $f(n)=\sum_{k=1}^n (-1)^k = \sum_{k=1}^n e^{j\pi k}$. We will first rely solely on the foundational rules given earlier in Table \ref{TableRules}. Using Rule 1, we have:
\begin{equation}\label{Eq264_1} 
f_G'(n) = j\pi f_G(n) + f_G'(0) 
\end{equation} 
Upon successive differentiation of Eq \ref{Eq264_1}, we have: 
\begin{equation}\label{Eq264_2} 
f_G^{(r)}(n) = (j\pi)^r f_G(n) + (j\pi)^{r-1} f_G'(0) 
\end{equation} 
Since $f_G(0)=0$ by Rule 3, we have: 
\begin{equation}\label{Eq264_3} 
f_G^{(r)}(0) = (j\pi)^{r-1} f_G'(0) 
\end{equation} 
Putting this into a series expansion around $n=0$ yields: 
\begin{equation}\label{Eq264_4} 
f_G(n) = f_G'(0) \sum_{k=0}^\infty \frac{(j\pi)^{k-1}}{k!} n^k = \frac{f_G'(0)}{j\pi} (e^{j\pi n} -1) 
\end{equation} 
Using initial condition $f_G(1)=-1$ yields: 
\begin{equation}\label{Eq264_5} 
f_G(n) = \frac{e^{j\pi n} -1}{2},  \\ \text{and } f_G'(0)=\frac{j\pi}{2} 
\end{equation} 

Of course, it is straightforward to show that the closed-form expression in Eq \ref{Eq264_5} is indeed correct in the sense that it satisfies recurrence identity and initial condition of $f(n)$. However, we could have also used Theorem \ref{theorem251} to deduce the same expression. First, we note using Theorem \ref{theorem251} that:

\begin{equation}\label{Eq264_6} 
f_G'(0) = e^{j\pi n} \sum_{r=0}^\infty \frac{B_r}{r!} (j\pi)^r - j\pi \sum_{k=1}^n e^{j\pi k}, \\ \text{ for any value } n
\end{equation} 

We choose $n=1$ and use the formal substitution $\frac{t}{1-e^{-t}} = \sum_{r=0}^\infty \frac{B_r}{r!} t^r$ to obtain:
\begin{equation}\label{Eq264_7} 
f'_G(0) = \frac{j\pi}{2} 
\end{equation} 

Plugging Eq \ref{Eq264_7} into Eq \ref{Eq264_4} yields the same closed-form expression. Of course, such approach for alternating sums is not very insightful. For instance, it is difficult to use the Euler-Maclaurin summation formula to deduce asymptotic expressions for alternating sums, due to the presence of complex values, despite the fact that alternating finite sums are purely real-valued for all integer arguments. We will resolve such difficulty later when we derive the analog of the Euler-Maclaurin summation formula for alternating finite sums, in Chapter \ref{Chapter5}, which will allow us to perform many deeds with ease. For instance, we can easily obtain simple asymptotic expressions for alternating sums and accelerate convergence of alternating series quite arbitrarily. Moreover, we will generalize Summability Calculus to arbitrary oscillating finite sums including, obviously, the special case of alternating finite sums.
\section{Summary of Results} 
In this chapter, a few foundational rules for performing infinitesimal calculus on simple finite sums are deduced using elementary calculus and the two defining properties of simple finite sums, which are given in Table \ref{TableRules}. Such simple rules illustrate how special functions that are defined using simple finite sums are, in fact, as effortless to analyze as elementary functions. Moreover, finite products can be transformed, with aid of exponentiation and the logarithmic function, into a composition of functions that involve simple finite sums; hence the same rules can be employed accordingly. Nevertheless, the foundational rules are insufficient by themselves to perform infinitesimal calculus since they involve non-arbitrary constants. To evaluate such constants, Theorem \ref{theorem231} can be used if the finite sum is semi-linear; otherwise the general approach is to employ Theorem \ref{theorem251}. Of course, both Theorem \ref{theorem231} and Theorem \ref{theorem251} are equivalent for semi-linear finite sums. 

One particular limitation of the approach presented so far, however, is that it does not address the general case of convoluted sums and products. Furthermore, the resulting series expansions that can be deduced using Summability Calculus may have limited radii of convergence, which limits their applicability in \emph{evaluating} finite sums and products for non-integer arguments as illustrated in Example \ref{Example261} earlier. Moreover, the approach presented so far does not yield a convenient method of handling oscillating sums, nor can it provide us with a simple direct method for evaluating finite sums for all complex arguments without having to derive Taylor series expansion. These drawbacks will be resolved in the remaining chapters.
 
\chapter{Convoluted Finite Sums} \label{Chapter3}
\epigraph{\emph{A complex system that works is invariably found to have evolved from a simple system that worked.}}{John Gall in \emph{Systemantics}}
In this chapter, we extend previous results of Chapter 2 to the case of convoluted finite sums. 
\section{Infinitesimal Calculus on Convoluted Sums}
As discussed earlier, convoluted sums and products is a generalization to simple finite sums and products in which the iterated function $g$ is itself a function of the bound $n$. Thus, instead of dealing with finite sums of the form \simplefinitesum, we now have to deal with finite sums of the form \convolutedsum. Such convoluted sums are markedly different. For instance, the recurrence identity is now given by Eq \ref{Eq31_1}, which is clearly more difficult to handle than the recurrence identities in simple finite sums and products. In fact, such complex recurrence property does not offer much insight as to how to define a natural generalization of convoluted sums and products into non-integer arguments in the first place, needless to mention that the two defining properties of simple finite sums no longer hold. However, it is perhaps one remarkable result of Summability Calculus that convoluted sums and products are, in fact, as straightforward to analyze as simple finite sums and products. This includes, for instance, performing infinitesimal calculus, deducing asymptotic behavior, as well as direct evaluation for fractional values of $n$.
\begin{equation}\label{Eq31_1}
f(n)=f(n-1) + g(n,n) + \sum_{k=a}^{n-1} \big(g(k,n)-g(k, n-1)\big)
\end{equation}

The simplicity in analyzing convoluted finite sums comes with the aid of the generic differentiation rule, which is commonly referred to as the Chain Rule in the literature \cite{Hughes98}. However, we will call it here the generic differentiation rule because it readily provides derivatives of any generic binary mathematical operator. Examples related to complex binary operators such as those that involve fractional functional iteration are presented in \cite{IBO1}. In addition, classical differentiation rules, such as the basic sum and product rules, are easily derived from the generic differentiation rule. \\ \hrule
\begin{lemma}{\textbf{(The Generic Differentiation Rule)}}\label{lemmaGeneric} 
Given an arbitrary binary mathematical operator $\diamond$, then: 
\begin{equation}\label{lemmaGenericEq}
\frac{d}{dx} \big( h_1(x) \diamond h_2(x)\big) = \frac{d}{dx} \big( h_1(x) \diamond h_2\big) + \frac{d}{dx} \big( h_1 \diamond h_2(x)\big)
\end{equation}
Here, $h_1(x)$ and $h_2(x)$ are treated as functions of the variable $x$ during differentiation while $h_1$ and $h_2$ are treated as \emph{constants} during differentiation.
\end{lemma}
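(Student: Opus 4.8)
The plan is to reduce the statement to the ordinary chain rule for functions of several variables. First I would associate to the binary operator $\diamond$ the two-place function $F(u,v) = u \diamond v$, so that $h_1(x)\diamond h_2(x) = F\big(h_1(x),h_2(x)\big)$. Under the standing assumption that $\diamond$ is differentiable — i.e. that $F$ is differentiable as a function of its two arguments in a neighbourhood of the point in question — the classical multivariable chain rule gives
\[
\frac{d}{dx} F\big(h_1(x),h_2(x)\big) = \frac{\partial F}{\partial u}\Big|_{(h_1(x),h_2(x))} h_1'(x) + \frac{\partial F}{\partial v}\Big|_{(h_1(x),h_2(x))} h_2'(x).
\]

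Next I would unwind the notation on the right-hand side of the lemma. The expression $\frac{d}{dx}\big(h_1(x)\diamond h_2\big)$ means: differentiate $x \mapsto F\big(h_1(x),v\big)$ with $v$ held fixed, and then evaluate at $v = h_2(x)$; by the single-variable chain rule this is exactly $\frac{\partial F}{\partial u}\big|_{(h_1(x),h_2(x))}\, h_1'(x)$. Symmetrically, $\frac{d}{dx}\big(h_1 \diamond h_2(x)\big)$ equals $\frac{\partial F}{\partial v}\big|_{(h_1(x),h_2(x))}\, h_2'(x)$. Adding the two recovers precisely the right-hand side of the chain-rule identity above, which establishes the claim. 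A useful sanity check is that the familiar sum and product rules fall out as the special cases $\diamond = {+}$ and $\diamond = {\times}$.

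The only subtlety — a matter of care rather than of genuine difficulty — is the order of operations implicit in the phrase ``$h_1$ treated as a constant during differentiation'': the argument that is frozen must be frozen at its value $h_1(x)$ at the point of differentiation, so that the partial derivative is evaluated along the curve $\big(h_1(x),h_2(x)\big)$ and not at some detached numeric constant. Once this convention is stated explicitly there is nothing further to prove beyond invoking the multivariable chain rule; the lemma is essentially a restatement of the total-derivative formula in a notation tailored to finite sums, where $\diamond$ will later be specialized to the operator $a \diamond_g n$.
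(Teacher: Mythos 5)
Your proof is correct and follows the same route the paper itself indicates: the paper's proof of this lemma is a one-line remark that it "follows by the well-known chain rule" (its other option being a direct $\epsilon$--$\delta$ argument deferred to a reference), and you have simply written out that chain-rule argument in full, including the needed smoothness hypothesis on $F(u,v)=u\diamond v$ and the clarification about where the frozen argument is evaluated. Nothing further is required.
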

\begin{proof} 
One direct proof using the definition of differentiation is given in \cite{IBO1}. Alternatively, it follows by the well-known chain rule. 
\end{proof} \hrule\vspace{12pt} 

As stated in \cite{IBO1}, the generic differentiation rule is a very insightful tool. It implies that the derivative of a function $f(x)$, in which the independent variable $x$ appears more than once, can be computed by partitioning all appearances of the independent variable into two groups such that one group is assumed constant during differentiation in one time and the other group is assumed constant during differentiation in a second time. The overall derivative $f'(x)$ is, then, the sum of the two results.

In the context of convoluted sums, the generic differentiation rule, with aid of Summability Calculus on simple finite sums, makes the task of computing derivatives at an arbitrary point $n=n_0$ an elementary task. Here, given that \convolutedsum, we have two appearances of the independent variable $n$: (1) as an upper bound to the finite sum, and (2) as an argument to the function $g$. Using the generic differentiation rule, we can compute the derivative at an arbitrary point $n=n_0$ as follows. First, we treat the upper bound $n$ as a constant $n_0$ and differentiate accordingly, which yields $\sum_{k=a}^n \frac{\partial}{\partial n} g(k, n)$. Second, we treat the second argument of $g$ as constant and differentiate accordingly using Rule 1, which can be done using the earlier results of Chapter \ref{Chapter2} because the sum \emph{is now essentially reduced to a simple finite sum}.

Let us illustrate how above procedure works on the convoluted finite sum $f(n)=\sum_{k=1}^n \frac{1}{k+n}$. In this example, we will also show how the aforementioned method is indeed consistent with the results of Summability Calculus for simple finite sums. Now, given the definition of $f(n)$, suppose we would like to find its derivative at $n=1$. To do this, we first fix the upper bound at $n=1$, which gives us:
\begin{equation*}
\frac{d}{dn} \sum_{k=1}^1 \frac{1}{k+n} = \frac{d}{dn} \frac{1}{1+n} = - \frac{1}{(1+n)^2} \Big|_{n=1} = -\frac{1}{4}
\end{equation*}

Second, we fix the second appearance of $n$ at $n=1$, which yields the simple finite sum $\sum_{k=1}^n \frac{1}{1+k}$. Clearly, the derivative in the latter case is given by:
\begin{equation*}
\frac{d}{dn} \sum_{k=1}^n \frac{1}{1+k} = \sum_{k=n+1}^\infty \frac{1}{(k+1)^2} \Big|_{n=1} = \zeta_2 - \frac{5}{4}
\end{equation*}
Here, we used the rules of Summability Calculus for simple finite sums. So, by the generic differentiation rule, we sum the two last results to deduce that:
\begin{equation*}
\frac{d}{dn} \sum_{k=1}^n \frac{1}{n+k} \Big|_{n=1}=  \zeta_2 - \frac{3}{2}
\end{equation*}

How do we know that this is indeed correct? Or to put it more accurately, why do we know that this corresponds to the derivative of the unique most natural generalization to the convoluted finite sum? To answer this question, we note in this example that we have $f(n)=H_{2n}-H_n$. Therefore, if we were indeed dealing with the unique most natural generalization to such convoluted sum, then such generalization should be consistent with the unique most natural generalization of the Harmonic numbers. To test this hypothesis, we differentiate the expression $f(n)=H_{2n}-H_n$ at $n=1$, which yields by the results of Section \ref{Section_23}: 
\begin{equation*}
\frac{d}{dn} H_{2n}-H_n \Big|_{n=1}=  2\big(\zeta_2-\sum_{k=1}^{2n} \frac{1}{k^2}\big) - \big(\zeta_2-\sum_{k=1}^{n} \frac{1}{k^2}\big) \Big|_{n=1} = \zeta_2 -\frac{3}{2}
\end{equation*}

Thus, the two results agree as desired. We will now prove the general statements related to convoluted finite sums. \\ \hrule 
\begin{lemma}{\textbf{(The Differentiation Rule of Convoluted Sums)}}\label{diffConvoluted} 
Let $f(n)$ be a convoluted sum given by \convolutedsum, and let its unique natural generalization to non-integer arguments be denoted $f_G(n)$, then:
\begin{equation}\label{DiffConvEq}
f_G'(n) = \sum_{k=a}^n \frac{\partial}{\partial n} g(k, n) + \sum_{r=0}^\infty \frac{B_r}{r!} \frac{\partial^r}{\partial m^r} g(m, n) \Big|_{m=n}
\end{equation}
\end{lemma}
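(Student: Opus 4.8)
The plan is to obtain the formula directly from the generic differentiation rule (Lemma \ref{lemmaGeneric}) combined with the differentiation rule for \emph{simple} finite sums established in Chapter \ref{Chapter2}. In the convoluted sum $f(n)=\sum_{k=a}^n g(k,n)$ the bound variable $n$ occurs in exactly two places: once as the upper limit of summation and once as the second argument of the iterated function $g$. By Lemma \ref{lemmaGeneric}, the derivative $f_G'(n)$ therefore splits into two pieces — in the first we freeze the upper limit at the evaluation point and differentiate only through the second argument of $g$; in the second we freeze the second argument of $g$ at the evaluation point and differentiate only through the upper limit.

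For the first piece, freezing the upper limit reduces the expression to a sum of a fixed, finite number of ordinary terms, so differentiation is carried out term by term and yields $\sum_{k=a}^n \frac{\partial}{\partial n} g(k,n)$. For the second piece, freezing the second argument of $g$ at the value $n$ turns the expression (viewed as a function of a free upper limit $t$) into a genuine \emph{simple} finite sum whose iterated function is $k \mapsto g(k,n)$ with $n$ held constant. Corollary \ref{corollary251_1} — equivalently Rule 1 of Table \ref{TableRules} together with Theorem \ref{theorem251} — then gives the derivative of that simple sum with respect to its upper limit as $\sum_{r=0}^\infty \frac{B_r}{r!}\,\frac{\partial^r}{\partial m^r} g(m,n)\big|_{m=t}$; setting $t=n$ produces the second term in the claimed formula. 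Adding the two pieces gives the stated identity.

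The step requiring the most care — and the one I expect to be the main obstacle — is justifying that the generic differentiation rule may legitimately be applied to $f_G$ in the first place: namely, that the \emph{unique natural generalization} of the convoluted sum exists, is differentiable (this will rest on analyticity of $g$ in both arguments), and that the operation of freezing its second argument produces precisely the unique natural generalization of the associated simple finite sum, so that the Chapter \ref{Chapter2} machinery is entitled to compute the second piece. This is essentially a consistency statement about the very definition of the natural generalization of convoluted sums — the recurrence identity in Eq \ref{Eq31_1} alone does not obviously pin it down — and I would argue it either from an explicit construction of $f_G$ or from the same successive-polynomial-approximation / uniqueness principle used in Theorem \ref{StatofUniq}. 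Once existence, differentiability, and this compatibility are secured, the chain-rule decomposition and the remaining manipulations are routine.
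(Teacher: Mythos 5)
Your proposal is correct and follows essentially the same route as the paper, which proves the lemma by direct application of the generic differentiation rule (Lemma \ref{lemmaGeneric}) together with Corollary \ref{corollary251_1}, splitting the derivative into the partial-derivative sum and the Bernoulli-series term exactly as you describe. The consistency concern you raise at the end is also the one the paper itself addresses in the discussion immediately following the lemma, where naturalness is reduced to the already-established case of simple finite sums.
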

\begin{proof} By direct application of the generic differentiation rule in Lemma \ref{lemmaGeneric} and Corollary \ref{corollary251_1}. 
\end{proof} \hrule\vspace{12pt} 

While it is true that Lemma \ref{diffConvoluted} follows directly from the generic differentiation rule and Corollary \ref{corollary251_1}, how do we know that it corresponds to the derivative of the unique most natural generalization of $f(n)$? The answer to this question rests on the earlier claims for simple finite sums. In particular, the generic differentiation rule implies that $f_G'(n)$ is a sum of two terms: (1) the sum of the partial derivatives with respect to $n$, and (2) the derivative rule for simple finite sums. Because the second term used the differentiation rule for simple finite sums given in Theorem \ref{theorem251} that was shown earlier to correspond to the unique most natural generalization, it follows, therefore, that Lemma \ref{diffConvoluted} yields the derivative of the most natural generalization for convoluted sums as well. In other words, we reduced the definition of natural generalization for convoluted sums into the definition of natural generalization for simple finite sums using the generic differentiation rule. Unfortunately, however, Lemma \ref{diffConvoluted} does not permit us to deduce asymptotic behavior of the convoluted sums  . This can be deduced as shown in our next lemma.\\
\hrule 
\begin{lemma}{\textbf{(The  Euler-Maclaurin Summation Formula for Convoluted Sums)}}\label{convolutedEMFormula} 
Let $f(n)$ be a convoluted finite sum given by \convolutedsum, then the following function $f_G(n)$ correctly interpolates the discrete values of $f(n)$.
\begin{align*}\label{convolutedEMFormulaEq}
f_G(n) = &\int_a^n g(t, n) \, dt + \frac{g(n, n)+ g(a, n)}{2} \\
&+ \sum_{r=1}^\infty \frac{B_{r+1}}{(r+1)!} \Big(\frac{\partial^r}{\partial m^r} g(m, n) \Big|_{m=n} - \frac{\partial^r}{\partial m^r} g(m, n) \Big|_{m=a} \Big)  
\end{align*}
\end{lemma}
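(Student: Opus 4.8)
The plan is to reduce the statement to the ordinary Euler--Maclaurin summation formula for simple finite sums (Corollary~\ref{corollary251_2}) by freezing the second argument. Fix an arbitrary integer $N$ with $N-a\in\mathbb{N}$ and introduce the auxiliary one-variable function $h_N(t)=g(t,N)$, so that $h_N^{(r)}(t)=\frac{\partial^r}{\partial m^r}g(m,N)\big|_{m=t}$. By the very definition of a convoluted sum, $f(N)=\sum_{k=a}^{N}g(k,N)=\sum_{k=a}^{N}h_N(k)$, which is a \emph{simple} finite sum in the iteration variable $k$ with $h_N$ as its summand; all of Chapter~\ref{Chapter2} therefore applies to it verbatim, with $N$ carried along as an inert parameter.

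First I would invoke Corollary~\ref{corollary251_2} applied to $\sum_{k=a}^{N}h_N(k)$. Assuming $h_N$ is analytic on $[a,N]$ — a hypothesis inherited from analyticity of $g$ in its first slot — this yields
\[
\sum_{k=a}^{N}h_N(k)=\int_a^N h_N(t)\,dt+\frac{h_N(N)+h_N(a)}{2}+\sum_{r=2}^\infty\frac{B_r}{r!}\big(h_N^{(r-1)}(N)-h_N^{(r-1)}(a)\big).
\]
Then I would substitute back $h_N(t)=g(t,N)$ and $h_N^{(r-1)}(t)=\frac{\partial^{r-1}}{\partial m^{r-1}}g(m,N)\big|_{m=t}$, and reindex the tail by $r\mapsto r+1$, turning $\sum_{r\ge 2}\frac{B_r}{r!}$ into $\sum_{r\ge 1}\frac{B_{r+1}}{(r+1)!}$. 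The outcome is precisely the claimed expression for $f_G(n)$ evaluated at $n=N$: the integral term $\int_a^N g(t,N)\,dt$, the boundary term $\tfrac12\big(g(N,N)+g(a,N)\big)$, and the Bernoulli tail $\sum_{r\ge1}\frac{B_{r+1}}{(r+1)!}\big(\frac{\partial^r}{\partial m^r}g(m,N)\big|_{m=N}-\frac{\partial^r}{\partial m^r}g(m,N)\big|_{m=a}\big)$. Since $N$ was an arbitrary admissible integer, $f_G(n)$ agrees with $f(n)$ at all integers, i.e.\ it correctly interpolates the discrete values, as claimed.

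The step needing the most care is bookkeeping rather than a genuine obstacle: one must be explicit that every derivative appearing in Corollary~\ref{corollary251_2} is a derivative in the \emph{iteration} variable, i.e.\ in the first slot of $g$, and that the boundary quantity $g(n,n)$ is obtained by first evaluating in that slot and only afterwards identifying the two arguments — exactly what $\frac{\partial^r}{\partial m^r}g(m,n)\big|_{m=n}$ encodes. A secondary caveat worth stating explicitly is that, just as for the classical Euler--Maclaurin formula, the right-hand series is in general only a formal (asymptotic) expansion, so ``correctly interpolates'' is meant in the same formal sense as in Corollary~\ref{corollary251_2}; when $g(\cdot,n)$ is asymptotically of finite differentiation order the series terminates and the interpolation is literal. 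Finally, I would remark that this lemma claims interpolation only; the stronger assertion that $f_G$ is the \emph{unique natural} generalization of the convoluted sum is the content of Lemma~\ref{diffConvoluted} via the generic differentiation rule (Lemma~\ref{lemmaGeneric}), and need not be revisited here.
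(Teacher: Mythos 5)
Your proposal is correct and is essentially the paper's own argument: the paper likewise freezes the second argument (writing $\sum_{k=a}^n g(k,n_0)$ for fixed $n_0$), applies the Euler--Maclaurin formula of Corollary~\ref{corollary251_2} to the resulting simple finite sum, and then sets $n_0=n$. Your version merely adds the (welcome) bookkeeping about which slot the derivatives act on and the reindexing $r\mapsto r+1$, both of which match the paper.
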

\begin{proof}
Suppose that $f(n)$ was instead given by $\sum_{k=a}^n g(k, n_0)$ for a fixed $n_0$, then $f(n)$ is a simple finite sum and its generalization is given by the Euler-Maclaurin summation formula, which holds for any value $n_0$ including $n_0 = n$. Plugging $n_0 = n$ into the Euler-Maclaurin  summation formula yields the desired result.
\end{proof} \hrule\vspace{12pt} 

Of course, Lemma \ref{convolutedEMFormula} can be used to deduce asymptotic behavior of convoluted finite sums. However, this does not imply that Lemma \ref{convolutedEMFormula} gives the anti-derivative of the function $f_G'(n)$ as dictated by Lemma \ref{diffConvoluted}. In other words, while Lemma \ref{diffConvoluted} gives the derivative of the unique most natural generalization and Lemma \ref{convolutedEMFormula} yields a potential definition for such generalization, are we indeed dealing with the same function in both cases? The answer is indeed in the affirmative as our next main theorem shows. \\ \hrule
\begin{theorem}\label{theorem311} 
Let $f(n)$ be a convoluted sum given by \convolutedsum, and let its generalization to non-integer arguments be denoted $f_G(n)$, whose value is given by Lemma \ref{convolutedEMFormula}, then $f_G(n)$ is the unique most natural generalization to non-integer arguments of $f(n)$. More precisely, $f_G'(n)$ is the one given by Lemma \ref{diffConvoluted}. 
\end{theorem}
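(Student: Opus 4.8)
\section*{Proof proposal for Theorem \ref{theorem311}}

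The plan is to exhibit the single function $f_G(n)$ of Lemma \ref{convolutedEMFormula} as simultaneously (i) an interpolant of the discrete data (already granted by that lemma), and (ii) a function whose derivative is \emph{exactly} the expression of Lemma \ref{diffConvoluted}; the ``unique most natural'' claim then follows because the generic differentiation rule reduces everything, slot by slot, to simple finite sums, for which uniqueness and naturalness were already settled in Theorem \ref{StatofUniq} and Theorem \ref{theorem251}. First I would fix the right viewpoint: write $h(x,s)$ for the Euler--Maclaurin generalization of the \emph{simple} finite sum $\sum_{k=a}^x g(k,s)$ with the parameter $s$ held fixed, i.e. $h(x,s)=\int_a^x g(t,s)\,dt+\tfrac{g(x,s)+g(a,s)}{2}+\sum_{r\ge 1}\tfrac{B_{r+1}}{(r+1)!}\bigl(\partial_m^r g(m,s)\big|_{m=x}-\partial_m^r g(m,s)\big|_{m=a}\bigr)$. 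By the proof of Lemma \ref{convolutedEMFormula} we have $f_G(n)=h(n,n)$.

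Next I would apply the generic differentiation rule (Lemma \ref{lemmaGeneric}) to the two occurrences of $n$ in $h(n,n)$ --- the upper bound slot and the parameter slot --- so that $f_G'(n)=\partial_1 h(n,n)+\partial_2 h(n,n)$, where $\partial_1$ differentiates the bound slot and $\partial_2$ the parameter slot. The first piece $\partial_1 h(x,s)\big|_{x=s=n}$ is the derivative of a simple-finite-sum generalization, so Corollary \ref{corollary251_1} gives $\partial_1 h(x,s)\big|_{x=s=n}=\sum_{r\ge 0}\tfrac{B_r}{r!}\,\partial_m^r g(m,n)\big|_{m=n}$, which is the second summand of Lemma \ref{diffConvoluted}; equivalently, one differentiates the explicit formula for $h$ term by term and resums via $\tfrac{t}{1-e^{-t}}=\sum_{r}\tfrac{B_r}{r!}t^r$, reproducing the computation in the proof of Corollary \ref{corollary251_2} (this is where care with the $B_1=-\tfrac12$ bookkeeping is needed). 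For the second piece, the Euler--Maclaurin map $g(\cdot,s)\mapsto h(\cdot,s)$ is linear and differentiation in $s$ passes through the integral and through the series, so $\partial_2 h(x,s)$ is the Euler--Maclaurin generalization of $\sum_{k=a}^x \partial_s g(k,s)$; setting $x=s=n$ and invoking Corollary \ref{corollary251_2} identifies it with the unique natural generalization of $\sum_{k=a}^n \tfrac{\partial}{\partial n} g(k,n)$, the first summand of Lemma \ref{diffConvoluted}. Adding the two pieces yields $f_G'(n)$ precisely as stated in Lemma \ref{diffConvoluted}.

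For the uniqueness and ``most natural'' claim I would observe that the same decomposition propagates to every higher derivative: iterating the generic differentiation rule writes $f_G^{(m)}(n)$ as a finite combination of derivatives of simple-finite-sum generalizations, each of which is the \emph{unique} natural generalization of a simple finite sum by Theorem \ref{theorem251} (and Theorem \ref{StatofUniq}). Hence the whole Taylor expansion of $f_G$ about a base point is forced, and by uniqueness of Taylor series $f_G$ is the only smooth interpolant obtained by the generic-differentiation reduction to simple sums --- which is exactly the sense in which it is the unique most natural generalization, with derivative the one in Lemma \ref{diffConvoluted}. The main obstacle I anticipate is the middle step: justifying rigorously that differentiating the (generally divergent, merely formal) convoluted Euler--Maclaurin expression in the bound slot returns \emph{exactly} the Bernoulli series of Corollary \ref{corollary251_1} with no stray periodic term, and that $\partial_s$ legitimately commutes with the Euler--Maclaurin operator --- i.e. reconciling the two a priori different objects, the interpolant of Lemma \ref{convolutedEMFormula} and any antiderivative of the formula in Lemma \ref{diffConvoluted}, so that they genuinely coincide rather than differ by a function vanishing on the integers.
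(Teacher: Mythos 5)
Your proposal is correct at the same level of formal rigor as the paper, but it runs the argument in the opposite direction. The paper starts from the derivative formula of Lemma \ref{diffConvoluted}, integrates it with respect to $n$, notices that the resulting integrand $\sum_{k=a}^{t}\partial_t g(k,t)$ is itself a convoluted sum to which the same formula applies, substitutes recursively, collapses the resulting iterated integrals via Cauchy's formula for repeated integration, and finally invokes Taylor's theorem formally to recover the closed form of Lemma \ref{convolutedEMFormula}; i.e.\ it shows the derivative formula \emph{forces} the closed form. You instead start from the closed form, write it as $h(n,n)$ with $h(x,s)$ the Euler--Maclaurin generalization in the bound slot, and differentiate once with the generic differentiation rule (Lemma \ref{lemmaGeneric}), identifying $\partial_1 h$ with the Bernoulli series of Corollary \ref{corollary251_1} and $\partial_2 h$ with the generalization of $\sum_{k=a}^{n}\partial_n g(k,n)$. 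Your route is shorter and makes transparent why the two summands of Lemma \ref{diffConvoluted} correspond exactly to the two occurrences of $n$; the paper's route makes the necessity of the closed form (given the derivative rule and the initial condition $f_G(a)=g(a,a)$) more explicit, which is closer in spirit to the uniqueness claim. The obstacle you flag --- term-by-term differentiation of a divergent series in the bound slot without a stray periodic term, and commutation of $\partial_s$ with the Euler--Maclaurin operator --- is genuine, but it is no worse than the formal manipulations the paper itself performs (recursive substitution inside a divergent series, exchanging infinite sums with integrals, and a purely formal application of Taylor's theorem), so it does not disadvantage your version. Your closing step, iterating the generic differentiation rule to pin down all higher derivatives and invoking uniqueness of Taylor expansions, is a slight strengthening of the paper's remark that uniqueness reduces to Theorem \ref{theorem251} via the derivative plus the initial condition.
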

\begin{proof} 
As discussed earlier, we have shown in the previous section that Summability Calculus yields unique natural generalizations in the case of simple finite sums. Using the generic differentiation rule and the earlier results of Summability Calculus on simple finite sums, the derivative of the natural generalization of convoluted sums is unique and it is given by Lemma \ref{diffConvoluted}. Here, we have used the term \lq\lq natural generalization'' for convoluted sums because the generic differentiation rule allows us to reduce analysis of convoluted sums to the case of simple finite sums, in which Summability Calculus yields unique natural generalization. 

Knowing that the derivative $f_G'(n)$ is uniquely determined for the natural generalization of convoluted sums and that $f_G(a)=g(a, a)$ by definition, we have that Lemma \ref{diffConvoluted} implies a unique natural generalization for convoluted sums as well. Now, we need to show that such unique generalization is indeed the one given by Lemma \ref{convolutedEMFormula}. In other words, we need to establish that the differentiation rule given by Lemma \ref{diffConvoluted} implies necessarily that the generalized definition of $f_G(n)$ is the one given by Lemma \ref{convolutedEMFormula}. 

To show this, we first start from Lemma \ref{diffConvoluted} and integrate both sides with respect to $n$ as follows:
\begin{equation}\label{theorem311_1} 
f_G(n) = g(a, a) + \sum_{r=0}^\infty \frac{B_r}{r!} \int_a^n \frac{\partial}{\partial m} g(m, t)    \Big|_{m=t} \,\, dt + \int_a^n \sum_{k=a}^t \frac{\partial}{\partial t} g(k, t)\, dt
\end{equation}

However, $\sum_{k=a}^t \frac{\partial}{\partial t} g(k, t)$ can also be expressed using Eq \ref{theorem311_1} as follows: 
\small
\begin{equation}\label{theorem311_2} 
\sum_{k=a}^t \frac{\partial}{\partial t} g(k, t) = \frac{\partial}{\partial \eta} g(a, \eta) \Big|_{\eta=a} + \sum_{r=0}^\infty \frac{B_r}{r!} \int_a^t \frac{\partial^2}{\partial m\, \partial t_2} g(m, t_2)    \Big|_{m=t_2} \,\, dt_2 + \int_a^t \sum_{k=a}^{t_2} \frac{\partial^2}{\partial t_2^2} g(k, t_2)\, dt_2
\end{equation}
\normalsize

We now plug Eq \ref{theorem311_2} into Eq \ref{theorem311_1}. Repeating the same process indefinitely and by using Cauchy's formula for repeated integration \cite{MathWordCauchyInt}, we have: 
\small
\begin{equation}\label{theorem311_3} 
f_G(n) = \sum_{r=0}^\infty \frac{(n-a)^r}{r!} \frac{\partial^r}{\partial \eta^r} g(a, \eta) \Big|_{\eta=a} + \sum_{r=0}^\infty \frac{B_r}{r!} \frac{\partial^r}{\partial m^r} \int_a^n \sum_{b=0}^\infty \frac{(n-t)^b}{b!} \frac{\partial^b}{\partial t^b} g(m, t) \Big|_{m=t} \, dt  
\end{equation}
\normalsize

Now, we use Taylor's theorem, which formally states that: 
\begin{equation}\label{theorem311_4} 
\sum_{r=0}^\infty \frac{(n-a)^r}{r!} \frac{\partial^r}{\partial \eta^r} g(a, \eta) \Big|_{\eta=a} = g(a, n)
\end{equation}

Here, the last identity holds because the summation is a formal series expansion around $\eta=a$. Similarly, we have:
\begin{equation}\label{theorem311_5} 
\sum_{b=0}^\infty \frac{(n-t)^b}{b!} \frac{\partial^b}{\partial t^b} g(m, t) = g(m, n)
\end{equation}

Plugging both expressions in Eq \ref{theorem311_4}  and Eq \ref{theorem311_5} into Eq \ref{theorem311_3} yields the Euler-Maclaurin summation formula for convoluted sum in Lemma \ref{convolutedEMFormula}, which is the desired result. Therefore, the differentiation rule in Lemma \ref{diffConvoluted} implies necessarily that the natural generalization $f_G(n)$ is the one given by Lemma \ref{convolutedEMFormula}, which completes the proof.
\end{proof} \hrule\vspace{12pt} 

Theorem \ref{theorem311} along with the two lemmas provide us with a complete set of rules for performing infinitesimal calculus on convoluted finite sums that operates implicitly on the unique most natural generalization of those sums. Most importantly, we know by Theorem \ref{theorem311} that whenever a convoluted finite sum can be expressed algebraicly using simple finite sums, then we will still arrive at identical results by working with either expression. This was illustrated earlier on the convoluted finite sum $\sum_{k=a}^n \frac{1}{k+n}$. Here because the finite sum is convoluted, we can use results of this section to perform infinitesimal calculus and deduce asymptotic behavior. However, since the convoluted finite sum can alternatively be expressed as $H_{2n}-H_n$, where the latter expression only involves simple finite sums, we could also use results of the previous section to perform the same operations. Importantly, both approaches are always consistent with each other yielding identical results. More examples are presented in the following section.  
\section{Examples to Convoluted Finite Sums} 
\subsection{Example I: The Factorial Function Revisited} 
Let $f(n)$ be a convoluted finite sum given by $f(n)=\sum_{k=1}^n \log{(1-\frac{k-1}{n})}$. Using Lemma \ref{diffConvoluted}, its derivative is given by: 
\begin{equation}\label{Ex321_1} 
f'_G(n) = \frac{1}{n} \sum_{k=1}^n \frac{k-1}{n-k+1} -\log{n} - \sum_{r=1}^\infty \frac{B_r}{r} 
\end{equation} 
Of course, the infinite sum $\sum_{r=1}^\infty \frac{B_r}{r}$ diverges but it is formally defined by Euler's constant $\lambda$ as discussed earlier in Figure \ref{figureEMEval}. Thus, we have:
\begin{equation}\label{Ex321_2} 
f'_G(n) = \frac{1}{n} \sum_{k=1}^n \frac{k-1}{n-k+1} -\log{n} - \lambda = H_n -\log{n} -(1+\lambda) 
\end{equation}

Eq \ref{Ex321_2} gives a closed-form expression of the derivative of the convoluted sum. We now note, by definition, that $f(n) = \log{n!} - n \log{n}$. By differentiating both sides of the latter expression we arrive at the same result, which illustrates how the differentiation rule in Lemma \ref{diffConvoluted} indeed yields the derivative of the unique most natural generalization to convoluted sums, and that it is always consistent with results of Summability Calculus on simple finite sums.

Now, we use Theorem \ref{theorem311} to deduce that
\begin{equation}\label{Ex321_3} 
f_G(n) = -n + 1 + \frac{\log{n}}{2} - \sum_{r=2}^\infty \frac{B_r}{r(r-1)} \big(1-\frac{1}{n^{r-1}}\big)
\end{equation} 
Taking the derivative of both sides in Eq \ref{Ex321_3} with respect to $n$ and equating it with Eq \ref{Ex321_2} yields:
\begin{equation}\label{Ex321_4} 
H_n - \log{n} -\lambda = \frac{1}{2n} - \sum_{r=2}^\infty \frac{B_r}{r} \frac{1}{n^r} 
\end{equation} 
Of course, last expression could be derived alternatively by direct application of the original Euler-Maclaurin summation formula.

\subsection{Example II: Convoluted Zeta Function}  
In this section, we present another example on how to use the rules of Summability Calculus for simple finite sums in performing infinitesimal calculus for convoluted sums. Our example is the function $\sum_{k=1}^n \frac{1}{k^n}$ and our objective is to find an analytic expression of its derivative for all $n>0$. Using the generic differentiation rule, we first assume that the upper bound $n$ is fixed at $n=n_0$ and differentiate accordingly. This yields $\frac{d}{dn} \sum_{k=1}^{n_0} \frac{1}{k^n} = - \sum_{k=1}^{n_0} \frac{\log{k}}{k^n}$. Second, we assume that the second appearance of $n$ is fixed at $n=n_0$ and differentiate. However, for $n_0 >0$, this is the derivative of a semi-linear finite sum given by Theorem \ref{theorem231}, and it is given by $\frac{d}{dn} \sum_{k=1}^n \frac{1}{k^{n_0}} = n_0 \big(\zeta_{n_0+1} - \sum_{k=1}^n \frac{1}{k^{n_0+1}}\big)$. Therefore, by the generic differentiation rule, the overall derivative $f_G'(n)$ is given by:
\begin{equation}\label{Ex232_1} 
\frac{d}{dn} \sum_{k=1}^n \frac{1}{k^n} = - \sum_{k=1}^{n} \frac{\log{k}}{k^n} + n \big(\zeta_{n+1} - \sum_{k=1}^n \frac{1}{k^{n+1}}\big)
\end{equation}

\subsection{Example III: Numerical Integration}  
The classical approach for approximating definite integrals uses Riemann sums. In this example, we show how Theorem \ref{theorem311} yields higher-order approximations, which, in turn, is used to deduce one of the well-known recursive definitions of Bernoulli numbers. First, we start with the following equation that follows by direct application of Lemma \ref{convolutedEMFormula}: 
\begin{align*}\label{Ex323_1} 
\frac{m}{n} \sum_{k=0}^n f(x_0+\frac{m}{n} k) &= \int_{x_0}^{x_0+m} f(t)\, dt + \frac{m}{n}\Big(\frac{f(x_0)+f(x_0+m)}{2}\Big) \\
&+ \sum_{r=2}^\infty \frac{B_r}{r!} (\frac{m}{n})^{r} \Big(f^{(r-1)}(x_0+m) - f^{(r-1)}(x_0)\Big) 
\end{align*}

Now, we let $n=\frac{x-x_0}{\Delta x}$ and let $m=x-x_0$, and after plugging these two expressions into last equation and rearranging the terms, we arrive at: 
\small
\begin{equation}\label{Ex323_2}
\int_{x_0}^x f(t)\, dt = \sum_{k=0}^{\frac{x-x_0}{\Delta x}} f(x_0 + k \Delta x) - \frac{f(x_0)+f(x)}{2} \Delta x -\sum_{r=2}^\infty \frac{B_r}{r!}\Big(f^{(r-1)}(x) - f^{(r-1)}(x_0)\Big) (\Delta x)^r
\end{equation} 
\normalsize

Clearly, the first term in the right-hand side is the classical approximation of definite integrals but adding additional terms yields higher-order approximations. In particular, if we let $\Delta x = x-x_0$, we, then, have:
\begin{equation}\label{Ex323_3}
\int_{x_0}^x f(t)\, dt =  \frac{f(x_0)+f(x)}{2} (x-x_0) -\sum_{r=2}^\infty \frac{B_r}{r!}\Big(f^{(r-1)}(x) - f^{(r-1)}(x_0)\Big) (x-x_0)^r
\end{equation} 

Using last equation, we can equate coefficients of the Taylor series expansion in both sides, which yields a property of Bernoulli numbers given by: 
\begin{equation}\label{Ex323_4}
\frac{1}{s+1} = \frac{1}{2} - \sum_{r=2}^s \binom{s}{r} \frac{B_r}{s-r+1}
\end{equation} 
Eq \ref{Ex323_4} can, in turn, be rearranged to yield the following well-known recursive definition of Bernoulli numbers:
\begin{equation}\label{Ex323_5}
B_m = 1 - \sum_{r=0}^{m-1} \binom{m}{r} \frac{B_r}{m-r+1} 
\end{equation} 

\subsection{Exmaple IV: Alternating Euler-Maclaurin Sum} 
In Corollary \ref{corollary251_1}, we stated that if \simplefinitesum, then one way of computing $f_G'(a-1)$ is given by: 
\begin{equation}\label{Ex324_1} 
f_G'(a-1)=\sum_{r=0}^\infty \frac{B_r}{r!} g^{(r)}(a-1) 
\end{equation} 
However, it is often the case that $g(a-1)$ is not defined, for which Eq \ref{Ex324_1} becomes meaningless, and this can happen even if $f_G'(a-1)$ is itself well-defined. Of course, we showed earlier that it is easier to compute $f_G'(a-1)$ using Theorem \ref{theorem251} but an alternative method is also possible, which was previously stated in Eq \ref{theorem251_final_2}. In this example, we provide a simple proof to Eq \ref{theorem251_final_2} using Summability Calculus on convoluted sums. First, we note that $\sum_{k=a}^n g(k) = \sum_{k=a}^n g(n-k+a)$ by the \emph{translation invariance property} of Corollary \ref{Corollaryshiftproperty}. That is, we can always convert a simple finite sum into a convoluted finite sum. Then, we can use the differentiation rule of convoluted sums given in Lemma \ref{diffConvoluted}, which yields:
\begin{equation}\label{Ex324_2}
f_G'(n)=\sum_{k=a}^n g'(n-k+a) + \sum_{r=0}^\infty (-1)^r \frac{B_r}{r!} g^{(r)}(a) = \sum_{k=a}^n g'(k) +  \sum_{r=0}^\infty (-1)^r \frac{B_r}{r!} g^{(r)}(a)
\end{equation}
However, upon using Rule 1 of the foundational rules in Table \ref{TableRules}, Eq \ref{Ex324_2} implies that:
\begin{equation}\label{Ex324_3} 
f_G'(a-1)=  \sum_{r=0}^\infty (-1)^r \frac{B_r}{r!} g^{(r)}(a)
\end{equation}

Thus, even for cases in which $g(a-1)$ is not defined, we can use \emph{an alternating Euler-Maclaurin sum} to compute the derivative $f_G'(a-1)$, which is the exact statement of Eq \ref{theorem251_final_2}. To be more precise, we say that if $g^{(r)}(a-1)$ is defined for all $r\ge 0$, then both sides of Eq \ref{theorem251_final_2} are formally equivalent. 

\subsection{Exmaple V: An identity of Ramanujan} 
In one of his earliest works, Ramanujan was interested in simple finite sums of the form $\phi(x, n)= \sum_{k=1}^n \frac{1}{(xk)^3 -xk}$. There, Ramanujan showed that many convoluted sums can be converted to simple finite sums using $\phi(x, n)$ \cite{Berndt2}. For example, he showed that the convoluted finite sum $\sum_{k=1}^n \frac{1}{k+n}$ that was discussed earlier can also be expressed as: 
\begin{equation}\label{Ex325_1}
\sum_{k=1}^n \frac{1}{k+n} = \frac{n}{2n+1} + \sum_{k-1}^n \frac{1}{8k^3-2k}
\end{equation}

With aid of Summability Calculus on convoluted finite sums, we can use Eq \ref{Ex325_1} as a starting point to derive many interesting identities. For instance, if we differentiate both sides with respect to $n$, we obtain:
\begin{equation}\label{Ex325_2}
\sum_{k=1}^n \frac{1}{k^2} - 2 \sum_{k=1}^{2n} \frac{1}{k^2} + \zeta_2 - \frac{n+1}{(2n+1)^2} = \sum_{n+1}^\infty \frac{24k^2-2}{(8k^3-2k)^2}
\end{equation}
Selecting $n=0$, then we have, by the empty sum rule, the following fast-converging series for $\zeta_2$: 
\begin{equation}\label{Ex325_3}
\zeta_2= 1+ \sum_{1}^\infty \frac{24k^2-2}{(8k^3-2k)^2}
\end{equation}
On the other hand, if we integrate both sides of Eq \ref{Ex325_1} and rearrange the terms, we arrive at:
\begin{equation}\label{Ex325_4}
\sum_{k=1}^n \log{(1-\frac{1}{4k^2})} = 2\log{(2n)!} - 4\log{n!} + \log{(2n+1)} - n \log{8}
\end{equation}
Taking the limit as $n\to\infty$ yields by Stirling's approximation: 
\begin{equation}\label{Ex325_5}
\sum_{k=1}^\infty \log{(1-\frac{1}{4k^2})} = -\log{\frac{\pi}{2}}
\end{equation}
Thus, we arrive at a restatement of Wallis formula \cite{SWWallis}: 
\begin{equation}\label{Ex325_6}
\prod_{k=1}^\infty (1-\frac{1}{4k^2}) = \frac{2}{\pi}
\end{equation}

\subsection{Exmaple VI: Limiting Behavior} 
In this example, we look into using the Euler-Maclaurin summation formula for convoluted sums to deduce limiting behavior. Our example will be the function $f(n)=\sum_{k=0}^n (\frac{k}{n})^n$. As shown in \cite{Spivey2006}, we have: 
\begin{equation} \label{EMLB_1}
\lim_{n\to\infty} f(n) = \frac{e}{e-1}
\end{equation} 
To prove this, we note by Lemma \ref{convolutedEMFormula} that: 
\begin{equation} 
f(n) = \frac{n^{n+1}}{n+1} + n^n \sum_{r=0}^n \frac{B_{r+1}}{(r+1)!} \chi_n(r), \\ \text{ where } \chi_n(r) = \prod_{k=1}^r \big(1-\frac{k-1}{n}\big)
\end{equation}
Importantly, we know that the above expression is \emph{exact}. Because $\sum_{r=0}^n \frac{B_r}{r!} = \frac{e}{e-1}$, and $\chi_n(r)=1+O(\frac{1}{n})$, we deduce that Eq \ref{EMLB_1} indeed holds. In the following chapter, the function $\chi_n(r)$ will come up again, in which we show that it defines a simple, yet powerful, summability method for Taylor series expansions. 

\section{Summary of Results}
In this chapter, the earlier results of Summability Calculus on simple finite sums have been extended to address the general case of convoluted sums. Such generalization became possible with aid of the generic differentiation that yields unique derivatives of convoluted sums and products. Because the resulting differentiation rule for convoluted sums uses Summability Calculus on simple finite sums, which was argued earlier to operate on unique most natural generalizations, we deduce uniqueness of natural generalization for convoluted sums and products as well. 

Aside from dealing with derivatives, one potential closed-form expression of the natural generalization itself can be deduced immediately using Euler-Maclaurin summation formula. However, as it turns out, such closed-form solution is indeed more than a mere potential solution. It is shown that the differentiation rule for convoluted sums, which yields the derivative of the unique most natural generalization, is, in fact, the derivative of the Euler-Maclaurin summation formula for convoluted sums, thus bridging the two main results and proving that the Euler-Maclaurin summation formula for convoluted sums is indeed the unique most natural generalization of convoluted sums. 

\chapter{Analytic Summability Theory}\label{Chapter4}
\epigraph{\emph{Errors like straws, upon the surface flow. \\He who would search for pearls, must dive below\ldots}}{John Dryden (1631 -- 1700)}
Infinitesimal calculus is built on the assumption that infinitesimally small errors are simply zero. Euler, for example, is often quoted saying: \lq\lq \emph{To those who ask what the infinitely small quantity in mathematics is, we answer that it is actually zero.}'' Such view appears everywhere in Calculus including in fundamental definitions such as derivatives and integrals. For example, the Cauchy definition of a function derivative is given by $f'(x)=\lim_{h\to\ 0}\big(f(x+h)-f(x)\big)/h$, which implicitly assumes that the error term $O(h)$ is zero because it is infinitesimal at the limit $h\to 0$. 

Clearly, equating infinitesimal quantities with zero is valid if the number of such errors remains finite. However, it turns out that many important results in Calculus are actually derived by repeated application of a specific process \emph{infinitely many times}. For example, the Taylor series expansion and the Euler-Maclaurin summation formula are typically derived by repeated application of integration by parts (see for instance \cite{Sikic90, Apostol, LampretEM2001}). In such cases, if integration by parts induces an infinitesimally small error, then a repeated application of such process \emph{infinitely many times} can lead to incorrect results. We are all very well familiar with such phenomenon. In fact, it is one reason why algebraic derivation of Taylor series  expansion using real analysis fails to warn against series divergence!

Where exactly would infinitesimal calculus fail? To see why calculus sometimes fails because it hides infinitesimal errors, suppose hypothetically that we have an identity, which looks like the following:
\begin{equation}\label{chap4_1}
\lim_{n\to\infty} \sum_{k=0}^n g(k\Delta x)\, \Delta x = F_g + \lim_{n\to\infty} \sum_{k=1}^n h(k\Delta x) \,g(k\Delta x)\, \Delta x
\end{equation} 
Here, if $\Delta x = \frac{x}{n}$, then both sums are the classical Riemann sums that approximate integrals. Now, suppose that $F_g$ is some functional of $g$, and $h$ is independent of $g$. Then, knowing that Riemann sums converge to integrals at the limit $n\to\infty$, one can immediately write: 
\begin{equation}\label{chap4_2}
\int_0^x g(t)\,dt = F_g + \int_0^x h(t)\,g(t)\,dt
\end{equation} 
Starting with the last equation, we can apply it repeatedly infinitely many times, which would yield: 
\begin{equation}\label{chap4_3}
\int_0^x g(t)\,dt = F_g + F_{g\cdot h} + F_{g\cdot h^2} + F_{g\cdot h^3} + \dotsm = \sum_{k=0}^\infty F_{g\cdot h^k}
\end{equation} 

We arrive at a pitfall! The right-hand side might diverge even though the left-hand side is well defined. However, if we look deeper into Eq \ref{chap4_1}, we note that moving from Eq \ref{chap4_1} to Eq \ref{chap4_2} induces an infinitesimal error because the summation in the right-hand side of Eq \ref{chap4_1} actually starts with $k=1$, not $k=0$. In other words, the lower bound of the summation is always \emph{incremented} each time the process is applied. Thus, if, on the other hand, one were to repeatedly apply Eq \ref{chap4_1}, the process will always terminate even for arbitrary large $n$. In infinitesimal calculus, the error is infinitesimal, hence treated as if it were \emph{exactly} zero, and the process is repeated indefinitely leading eventually to incorrect results! We will show that divergence of Taylor series expansion as well as the Euler-Maclaurin summation formula both follow from a very similar process. In fact, the statement of Corollary \ref{corollarySumEMSum} already reveals such phenomenon for the Euler-Maclaurin summation formula.

If infinitesimal errors can cause incorrect results, is there a way to correct them? Luckily, the answer is yes and this can often be done using any of the analytic summability methods such as Abel summation method, Euler summation method, Cesaro means, the Mittag-Leffler summability method, and Lindel\"of summation method. In fact, we will also introduce a new summability method, denoted $\Xi$, that is particularly suited for oscillating sums. Most importantly, we will assume a formal general method of summation, denoted $\mathfrak{T}$, that generalizes the definition of infinite sums and is not tied to any particular method of computation, and show how many popular summability methods fall within $\mathfrak{T}$.

As will be shown repeatedly in the sequel, the formal method of summation $\mathfrak{T}$ is not merely an artificial construct. Using this generalized definition of infinite sums, we will derive the analog of the Euler-Maclaurin summation formula for oscillating sums in Chapter \ref{Chapter5}, as promised earlier, which will allow us to perform many remarkable deeds with ease. In the final section, where we extend Summability Calculus to arbitrary discrete functions, this generalized definition of sums will be used to prove a stronger statement than the Shannon-Nyquist Sampling Theorem, and answers an important question in the calculus of finite differences. Indeed, it will be shown how summability of divergent series is a  fundamental concept in Summability Calculus.

\section{The $\mathfrak{T}$ Definition of Infinite Sums}\label{Section4dot1}

In this section, we present the generalized definition of infinite sums $\mathfrak{T}$. We will establish conditions for which popular summability methods can be used to \emph{compute} the $\mathfrak{T}$ value of divergent series, but it is important to keep in mind that the definition $\mathfrak{T}$ is not restricted to any particular method of computation. \\ \hrule
\begin{definit}{\textbf{(The $\mathfrak{T}$ Definition of Sums)}} \label{TDefinition}
Given an infinite sum $\sum_{k=0}^\infty a_k$, define:
\begin{equation}\label{TdefEq}
h(z)=\sum_{k=0}^\infty a_k\,z^k
\end{equation} 
If $h(z)$ is analytic in the domain $z\in[0,1]$, then the $\mathfrak{T}$ value of the sum $\sum_{k=0}^\infty a_k$ is defined by $h(1)$. 
\end{definit} \hrule\vspace{12pt} 

Note that the formal method of summation $\mathfrak{T}$ not only requires that the function $h(z)$ be analytic at $z=0$ and $z=1$, but it also requires that the function be analytic throughout the line segment $[0, 1]$. In other words, we require that the function $h(z)$ be regular at the origin and that the point $z=1$ falls within the \emph{Mittag-Leffler} star of the function $h(z)$\footnote{The Mittag-Leffler star of a function $f(x)$ is the set of all points $z\in\mathbb{C}$ such that the line segment [0, z] does not pass through a singularity point of $f(x)$ \cite{HardyDiverg}}. Therefore, the $\mathfrak{T}$ value of infinite sums can be immediately computed using \emph{rigidity} of analytic functions. That is, it can be computed by evaluating higher derivatives $f^{(r)}(z_j)$ at a finite sequence of values $\{z_0, z_1, \ldots, z_{n-1}, z_n\}$ such that $z_j$ falls within the radius of convergence of the Taylor series expansion at $z_{j-1}$, and $z_0=0$ and $z_n=1$. In addition, we have the following alternative method of computing the $\mathfrak{T}$ value of infinite sums. \\ \hrule 
\begin{lemma}\label{computingT}
If an infinite sum $\sum_{k=0}^\infty a_k$ has a value in $\mathfrak{T}$ given by $V\in\mathbb{C}$, then $V$ can be computed using either the Mittag-Leffler summability method or the Lindel\"of summation method.
\end{lemma}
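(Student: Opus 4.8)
The plan is to recognize Lemma \ref{computingT} as essentially a restatement of the classical star theorems of Mittag-Leffler and Lindel\"of, so that the proof reduces to checking that the hypotheses of Definition \ref{TDefinition} match exactly the hypotheses of those theorems. First I would observe that if $h(z)=\sum_{k=0}^\infty a_k z^k$ is analytic on the segment $[0,1]$ in the sense of Section \ref{Sect1dot2}, then in particular $h$ is regular at $z=0$; by uniqueness of power series expansions the series $\sum_{k=0}^\infty a_k z^k$ therefore has a positive radius of convergence and represents $h$ near the origin. Moreover, \lq\lq analytic along $[0,1]$'' means precisely that the line segment $[0,1]$ contains no singular point of the analytic continuation of $h$, which is exactly the statement that $z=1$ lies in the Mittag-Leffler star of $h$ (the star being star-shaped with respect to the origin by its very definition, recalled in the footnote above). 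So the task reduces to: given a power series with positive radius of convergence whose Mittag-Leffler star contains the point $1$, show that the Mittag-Leffler and the Lindel\"of methods both return the value $h(1)$ of the analytic continuation.

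Next I would invoke the two classical summability theorems, both of which are stated and proved in Hardy's \emph{Divergent Series} \cite{HardyDiverg}. For the Lindel\"of method, set $\xi_\delta(k)=k^{-\delta k}$ for $k\ge 1$ and $\xi_\delta(0)=1$; Lindel\"of's theorem asserts that $\lim_{\delta\to 0^+}\sum_{k=0}^\infty \xi_\delta(k)\, a_k\, z^k = h(z)$ uniformly on compact subsets of the Mittag-Leffler star, and in particular, evaluated at $z=1$, this yields $\lim_{\delta\to 0^+}\sum_{k=0}^\infty a_k\, k^{-\delta k} = h(1)=V$, which is by definition the Lindel\"of sum of $\sum_{k=0}^\infty a_k$. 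For the Mittag-Leffler method one argues identically with the kernel $1/\Gamma(1+\delta k)$ (equivalently, the Mittag-Leffler function $E_\delta$) in place of $k^{-\delta k}$: the corresponding star theorem again forces convergence to $h(z)$ throughout the star, hence to $V$ at $z=1$. In fact, both of these methods are precisely the operators $\mathbb{T}$ whose existence and Property 1 were already used in the proof of Theorem \ref{theorem231}; applying that Property 1 to the function $h$ with $x_0=0$ and $x=1$ (legitimate because $h$ is analytic on $[0,1]$) gives $\mathbb{T}(\vec{\textbf{h}}_{0,1})=h(1)=V$ directly, so the statement is almost a corollary of material already assumed.

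Finally, I would tie up the two remaining loose ends. One is the bookkeeping around the $k=0$ term and the convention $0^0=1$ in $k^{-\delta k}$; this is the standard convention and causes no difficulty, since the $k=0$ term contributes $a_0$ to every transformed partial sum regardless of $\delta$. The other, and the only place where genuine work resides, is the justification of the star theorems themselves \textemdash\ namely that a Lindel\"of- or Borel-type kernel damps the tail of $\sum_{k=0}^\infty a_k z^k$ enough to force convergence to the analytic continuation along every ray from the origin that avoids singularities. I expect this to be the main obstacle only in the sense that it is the nontrivial classical ingredient; but since these theorems are explicitly available (they are already quoted in the proof of Theorem \ref{theorem231}), the proof of Lemma \ref{computingT} proper is short: it is the verification that Definition \ref{TDefinition} places $z=1$ in the Mittag-Leffler star of $h$, after which Mittag-Leffler's and Lindel\"of's theorems deliver the conclusion verbatim.
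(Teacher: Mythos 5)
Your proposal is correct and follows essentially the same route as the paper, which disposes of the lemma in one line by citing the fact that the Mittag-Leffler and Lindel\"of methods correctly sum any Taylor series in the Mittag-Leffler star \cite{HardyDiverg}. Your additional bookkeeping \textemdash\ checking that analyticity of $h$ on $[0,1]$ is exactly the condition that $z=1$ lies in the star, and noting the connection to the operator $\mathbb{T}$ used in Theorem \ref{theorem231} \textemdash\ is a faithful elaboration of the same argument rather than a different one.
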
 
\begin{proof} 
Because the two methods correctly sum any Taylor series expansion in the Mittag-Leffler star \cite{HardyDiverg}. 
\end{proof}
\hrule\vspace{12pt} 

Note that Lemma \ref{computingT} implies that $\mathfrak{T}$ is consistent with but weaker than both the Mittag-Leffler summability method and the Lindel\"of summation method. However, it is important to decouple the generalized definition of infinite sums $\mathfrak{T}$ from the method of computation because we are not really concerned about any peculiar properties of summability methods. Now, we have the following immediate results. \\ \hrule

\begin{lemma}\label{Tproperties}
The $\mathfrak{T}$ definition of infinite sums is regular, linear, and stable. Moreover, if two infinite sums $\sum_{k=0}^\infty a_k$ and $\sum_{k=0}^\infty b_k$ are both defined in $\mathfrak{T}$ by values $V_1,V_2 \in\mathbb{C}$, then the $\mathfrak{T}$ value of their Cauchy product is given by the $V_1\times V_2$. 
\end{lemma}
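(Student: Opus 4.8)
The plan is to reduce all four claims to a single principle: by Definition~\ref{TDefinition}, the $\mathfrak{T}$-value of a series $\sum_{k\ge 0}c_k$ is the value at $z=1$ of the (necessarily unique) analytic continuation along $[0,1]$ of the power series $h_c(z)=\sum_{k\ge 0}c_k z^k$. So for linearity, stability, and the Cauchy product I would first write down an algebraic identity between the generating function of the combined series and the corresponding combination of $h_a(z)=\sum a_k z^k$ and $h_b(z)=\sum b_k z^k$, valid on a small disc about the origin where every series in sight converges absolutely, and then push that identity out to $z=1$ using uniqueness of analytic continuation; for regularity I would lean on Abel's limit theorem.

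Carrying this out: for \emph{linearity}, on the intersection of the two discs of convergence one has $\sum_k(\alpha a_k+\beta b_k)z^k=\alpha h_a(z)+\beta h_b(z)$ for scalars $\alpha,\beta\in\mathbb{C}$; the right-hand side is analytic on an open neighbourhood of $[0,1]$ since each of $h_a,h_b$ is, so the left-hand power series continues analytically along $[0,1]$ with value $\alpha h_a(1)+\beta h_b(1)=\alpha V_1+\beta V_2$ at $z=1$, which is therefore its $\mathfrak{T}$-value. For \emph{stability}, writing $\tilde h(z)=\sum_{k\ge 0}a_{k+1}z^k$ one has $h_a(z)=a_0+z\,\tilde h(z)$, so $\tilde h(z)=(h_a(z)-a_0)/z$ has a removable singularity at $z=0$ and is analytic exactly where $h_a$ is; hence $\sum_k a_k$ has $\mathfrak{T}$-value $V$ if and only if $\sum_k a_{k+1}$ has $\mathfrak{T}$-value $h_a(1)-a_0=V-a_0$, which is translativity. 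For the \emph{Cauchy product} $c_n=\sum_{k=0}^n a_k b_{n-k}$, inside the smaller of the two radii of convergence both series converge absolutely, so the classical Cauchy-product theorem for power series gives $\sum_n c_n z^n=h_a(z)h_b(z)$ there; since a product of two functions each analytic on a neighbourhood of $[0,1]$ has no singularity on $[0,1]$, the power series $h_c(z)=\sum_n c_n z^n$ continues along $[0,1]$ and $h_c(1)=h_a(1)h_b(1)=V_1V_2$.

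Regularity I would obtain from Abel's theorem: if $\sum_{k\ge 0}a_k$ converges to $S$, then $h_a$ has radius of convergence at least $1$; when it is strictly larger, $h_a$ is analytic at $z=1$ and $h_a(1)=S$ outright, and when it equals $1$, Abel gives $\lim_{x\to1^-}h_a(x)=S$, so whenever $h_a$ moreover continues analytically (hence continuously) across $z=1$ --- precisely the condition for a $\mathfrak{T}$-value to exist --- continuity forces $h_a(1)=S$. The main obstacle will not be any individual computation but the analytic-continuation bookkeeping common to the three algebraic cases: I must argue that the combined power series, a priori only known to converge near the origin, really does admit a continuation along the whole of $[0,1]$ and that this continuation coincides with the stated combination of $h_a,h_b$, which is where uniqueness of analytic continuation, together with the fact that finite sums and products of functions analytic near $[0,1]$ stay singularity-free on $[0,1]$, does the work. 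It is also worth flagging that ``regular'' can only be meant in the consistency sense here: not every ordinarily convergent series has a $\mathfrak{T}$-value --- for instance $\sum_{n\ge 1}1/n^2$ converges but its generating function $\mathrm{Li}_2(z)$ has a branch point at $z=1$ --- so the precise statement is that any convergent series which \emph{does} possess a $\mathfrak{T}$-value is assigned its ordinary sum.
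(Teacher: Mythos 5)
Your proof is correct and is exactly the argument the paper intends: the paper's own proof of this lemma is the single line ``Follows immediately by Definition \ref{TDefinition},'' and your reduction of linearity, stability, and the Cauchy product to algebraic identities between the generating functions near the origin, propagated to $z=1$ by uniqueness of analytic continuation (plus Abel's theorem for regularity), is the standard way to fill that in. Your closing caveat is also well taken---and sharper than the paper itself---since, e.g., $\sum_{n\ge 1}n^{-2}$ converges yet $\mathrm{Li}_2(z)$ fails to be analytic at $z=1$, so ``regular'' can only mean that any convergent series which \emph{does} possess a $\mathfrak{T}$-value is assigned its ordinary sum.
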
 
\begin{proof}
Follows immediately by Definition \ref{TDefinition}. 
\end{proof}
\hrule\vspace{12pt} 

The motivation behind the use of $\mathfrak{T}$ as a generalized definition of infinite sums is threefold. First, it is inline with Euler's original reasoning that a mathematical expression should assume the value of the algebraic process that led to it, which is the point of view discussed earlier in Chapter \ref{ChaptIntro}. In principle, we opt to select a \emph{natural} assignment of divergent series by using $\mathfrak{T}$ as a definition. 

Second, we know by \emph{uniqueness of Taylor series expansion of analytic functions} that $\mathfrak{T}$ is well-defined. In fact, if there exists two different functions $f_1(x)$ and $f_2(x)$ such that the Taylor series expansion of both functions produces the exact same series at two different points $x_1$ and $x_2$ respectively, i.e. $a_k = \frac{f_1^{(k)}(0)}{k!}x_1^k = \frac{f_2^{(k)}(0)}{k!}x_2^k$, then we can always make the transformation $h_!(z)=f_1(x_1\,z)$ and $h_2(z)=f_2(x_2\,z)$. Doing this yields $h_1(1)=f_1(x_1)$ and $h_2(1)=f_2(x_2)$. However, the Taylor series expansion of both $h_1(z)$ and $h_2(z)$ are identical so the two functions must, in turn, be identical. In particular, we must have $h_1(1)=h_2(1)$ and consequently $f_1(x_1)=f_2(x_2)$. Therefore, the $\mathfrak{T}$ definition is indeed consistent.  

However, in order to guarantee consistency, the conditions of $\mathfrak{T}$ must be valid. For example, the Taylor series expansion of the function $e^{-\frac{1}{x^2}}$ is the zero function, which might suggest that the sum $(0+0+0+\ldots)$ can be assigned the value $e^{-1}$! However, because the function is not analytic at the origin, this assignment is invalid. Similarly, ambiguity can arise if a discontinuity exists in the line segment $[0, 1]$. For example, the infinite sum $1+\frac{2^2}{2}+\frac{2^3}{3} +\ldots$ arises out of the series expansion of $\log(1+x)$ at $x=-2$. However, both $i\pi$ and $-i\pi$ are equally valid substitutes for $\log{(-1)}$ and there is no reason to prefer one over the other. Here, we know that the conditions of $\mathfrak{T}$ are not satisfied so the infinite sum is simply undefined in $\mathfrak{T}$. These phenomena never occur if the conditions of Definition \ref{TDefinition} are satisfied. 

Third, because the $\mathfrak{T}$ definition of infinite sums is regular, linear, and stable, then arithmetic operations remain consistent even if such operations ultimately lead to convergent sums. It might seem implausible at first, but it is even occasionally easier to compute convergent sums by expressing them using divergent sums, where the latter are defined under $\mathfrak{T}$! For example, we will derive an exact value of the convergent sum $\sum_{k=1}^\infty (-1)^k (H_k+\lambda-\log{k})$ in Chapter \ref{Chapter5}, by deriving analytic values of each of the divergent sums $\sum_{k=1}^\infty (-1)^k H_k$, $\sum_{k=1}^\infty (-1)^k \lambda$, and $\sum_{k=1}^\infty (-1)^k \log{k}$.

The generalized definition of sums yields an immediate generalized definition of limits that is interpreted for the space of \emph{sequences} $S=(s_0, s_1, s_2, \ldots)$. Here, whereas $\mathfrak{T}$ is a generalized definition that assigns values to \emph{series}, we can reformulate it such that it is interpreted as a method of assigning limits to infinite \emph{sequences}. Clearly, both are essentially equivalent since assigning a value to an infinite sum $\sum_{k=0}^\infty a_k$ can be thought of as assigning a limit to the infinite sequence $(\sum_{k=0}^0 a_k, \sum_{k=0}^1 a_k, \sum_{k=0}^2 a_k, \ldots)$. \\ \hrule
\begin{definit}{\textbf{(The $\mathfrak{T}$ Sequence Limit)}} \label{TSeqLimit}
Given an infinite sequence $S=(s_0, s_1, s_2, \ldots)$, then the $\mathfrak{T}$ sequence limit is defined by the $\mathfrak{T}$-value of the infinite sum $s_0 + \sum_{k=0}^\infty \Delta s_k$. 
\end{definit}
\hrule\vspace{12pt} 
The $\mathfrak{T}$ definition of sequence limits appears at first to be trivial but it does indeed have interesting consequences. To be more specific, if a finite sum $\sum_{k=a}^n g(k)$ can be expressed in closed-form $f_G(n)$, then the $\mathfrak{T}$ value of the infinite sum $\sum_{k=a}^\infty g(k)$ can be deduced systematically by taking the $\mathfrak{T}$ limit of $f_G(n)$ as $n\to\infty$. The most important special case of $\mathfrak{T}$ sequence limits is stated in the following lemma. \\ \hrule

\begin{lemma}\label{TlimitZero}
If the value of an infinite sum $\sum_{k=0}^\infty a_k$ exists in $\mathfrak{T}$, then the $\mathfrak{T}$ sequence limit of  $\{a_n\}$ must be zero. 
\end{lemma}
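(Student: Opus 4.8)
The plan is to exploit the defining analyticity condition of $\mathfrak{T}$ together with the stability of the $\mathfrak{T}$ definition established in Lemma \ref{Tproperties}. Suppose $\sum_{k=0}^\infty a_k$ is defined in $\mathfrak{T}$ with value $V$. By Definition \ref{TDefinition}, the generating function $h(z)=\sum_{k=0}^\infty a_k z^k$ is analytic on the line segment $[0,1]$, and in particular it is analytic in a neighbourhood of $z=1$ with $h(1)=V$. The idea is that the tail sequence $\{a_n\}$ is, up to a shift, the coefficient sequence of a closely related analytic generating function, and analyticity near $z=1$ forces those coefficients (summed as a series) to vanish.

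First I would form the shifted series $\sum_{k=0}^\infty a_{k+1}$, whose generating function is $g(z)=\sum_{k=0}^\infty a_{k+1}z^k$. Wherever it converges we have the algebraic identity $z\,g(z) = h(z) - a_0$, hence $g(z) = (h(z)-a_0)/z$, which is analytic at $z=1$ (the apparent singularity at $z=0$ is removable since $h$ is analytic there, but we do not even need that — we only need analyticity on a neighbourhood of $z=1$ inside $[0,1]$, which follows because $h$ is). Therefore $\sum_{k=0}^\infty a_{k+1}$ is itself defined in $\mathfrak{T}$, with value $g(1) = h(1) - a_0 = V - a_0$. This is exactly the stability property of $\mathfrak{T}$ (Lemma \ref{Tproperties}): deleting the first term subtracts $a_0$ from the $\mathfrak{T}$-value. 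Iterating, $\sum_{k=0}^\infty a_{k+m}$ is defined in $\mathfrak{T}$ with value $V - \sum_{k=0}^{m-1} a_k$, i.e. the sequence of $\mathfrak{T}$-values of the tails is precisely the ordinary partial-sum-remainder sequence $R_m := V - \sum_{k=0}^{m-1}a_k$.

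Now I would translate this into a statement about the $\mathfrak{T}$ sequence limit of $\{a_n\}$. By Definition \ref{TSeqLimit}, the $\mathfrak{T}$ limit of the sequence $(a_0, a_1, a_2, \dots)$ is the $\mathfrak{T}$-value of $a_0 + \sum_{k=0}^\infty \Delta a_k = a_0 + \sum_{k=0}^\infty (a_{k+1}-a_k)$. Using linearity of $\mathfrak{T}$ (Lemma \ref{Tproperties}) and the fact just established that $\sum a_{k+1}$ and $\sum a_k$ are both defined in $\mathfrak{T}$ with values $V-a_0$ and $V$ respectively, the telescoped series $\sum_{k=0}^\infty(a_{k+1}-a_k)$ has $\mathfrak{T}$-value $(V-a_0) - V = -a_0$; adding back $a_0$ gives $\mathfrak{T}$-limit $0$. (Equivalently and more transparently: the sequence of partial sums $s_n = \sum_{k=0}^n a_k$ has $s_n \to V$ in the sense that $s_0 + \sum \Delta s_k = \sum_{k=0}^\infty a_k = V$, so $\{a_n\} = \{\Delta s_{n-1}\}$ up to indexing has a telescoping generating function and the $\mathfrak{T}$-limit of $\{a_n\}$ reads off the difference of consecutive $\mathfrak{T}$-limits of $\{s_n\}$, which is $0$.)

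The main obstacle I anticipate is bookkeeping around the index shift and the removable singularity at $z=0$: one must be careful that "analytic on $[0,1]$" is preserved under the operations $h \mapsto (h-a_0)/z$ and under the passage to difference sequences, and that the linearity invoked in Lemma \ref{Tproperties} genuinely applies (it does, since all the relevant generating functions are analytic on $[0,1]$, so their sums and scalar multiples are too, and $\mathfrak{T}$ respects this). A cleaner packaging, which I would probably adopt in the write-up, is to observe directly that $\{a_n\}$ has generating function $h$ itself under a one-step reindexing, so its $\mathfrak{T}$ sequence limit equals $\lim$-type difference $h(1) - (\text{value of }\sum a_{k+1}) = V - (V - a_0) - a_0 = 0$ — no deep analysis, just the three structural properties (regular, linear, stable) plus Definition \ref{TSeqLimit}. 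I do not expect any genuinely hard estimate here; the content is entirely formal.
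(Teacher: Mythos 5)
Your proposal is correct and follows essentially the same route as the paper: stability of $\mathfrak{T}$ gives the shifted sum the value $V-a_0$, linearity telescopes $\sum_{k=0}^\infty(a_{k+1}-a_k)$ to $-a_0$, and adding back $a_0$ identifies the $\mathfrak{T}$ sequence limit of $\{a_n\}$ as $0$ via Definition \ref{TSeqLimit}. The only difference is that you unpack the stability step through the generating function $(h(z)-a_0)/z$, which the paper simply cites from Lemma \ref{Tproperties}.
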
 
\begin{proof}
By stability of the $\mathfrak{T}$ definition of sums, we have: 
\begin{equation} 
\sum_{k=0}^\infty a_k = a_0 + \sum_{k=1}^\infty a_k
\end{equation}
Because the generalized definition $\mathfrak{T}$ is linear, we have: 
\begin{equation} 
a_0 + \sum_{k=1}^\infty a_k - \sum_{k=0}^\infty a_k = 0 \quad\quad \Rightarrow a_0 + \sum_{k=0}^\infty \Delta a_k = 0 
\end{equation}
However, last equation is exactly the $\mathfrak{T}$ definition of sequence limits so the statement of the lemma follows. 
\end{proof} \hrule\vspace{12pt} 

Lemma \ref{TlimitZero} presents an interesting generalization to the case of the ordinary convergent sums. It will be shown repeatedly throughout the sequel that results are always consistent with such generalized definition of limits. For example, if we look into the Grandi series $\sum_{k=0}^\infty (-1)^k$, we note that $\sum_{k=0}^n (-1)^k = \frac{1}{2} + (-1)^n \frac{1}{2}$. However, because the $\mathfrak{T}$-value of $\sum_{k=0}^\infty (-1)^k$ is $\frac{1}{2}$, since it arises out of the Taylor series expansion of the function $f(x)= (1+x)^{-1}$ at $x=1$, then the $\Xi$  sequence limit of $(-1)^n$ must be zero by Lemma \ref{TlimitZero}. Thus: 
\begin{equation}\label{zerolimitSeq2_1}
\lim_{n\to\infty} \sum_{k=0}^n (-1)^k = \frac{1}{2} + \frac{1}{2} \lim_{n\to\infty} (-1)^n = \frac{1}{2}
\end{equation}

Of course, having a generalized definition of infinite sums and limits is crucial, but coming up with a method of computing such generalized values is also equally important. Luckily, there exists many methods of computing the generalized $\mathfrak{T}$ value of infinite sums, two of which were already listed in Lemma \ref{computingT}. In addition, the $\mathfrak{T}$ value of infinite sums can often be deduced by algebraic construction.  For example, if we return to the Grandi series discussed earlier, we know by stability and linearity that:
\begin{equation}\label{Eq42Discussion_1}
\sum_{k=0}^\infty (-1)^k = 1-\sum_{k=0}^\infty (-1)^k \Rightarrow \sum_{k=0}^\infty (-1)^k=\frac{1}{2}
\end{equation}
Also, performing the Cauchy product of the Grandi series, we have the well-known result:
\begin{equation}\label{Eq42Discussion_2}
\sum_{k=0}^\infty (-1)^k \times \sum_{k=0}^\infty (-1)^k= \sum_{k=1}^\infty (-1)^{k+1} k =\frac{1}{2}\times\frac{1}{2} = \frac{1}{4}
\end{equation}
Eq \ref{Eq42Discussion_2} could equivalently be deduced using the series expansion of $(1+x)^{-2}$ and setting $x=1$ in its Taylor series because the function $(1+x)^{-2}$ is analytic over the line segment $[0, 1]$. Even more, we could have performed the algebraic manipulations in Eq \ref{Eq42Discussion_3} to deduce the same value. It is important to note that all of these different approaches yield consistent results as expected.

\begin{equation}\label{Eq42Discussion_3}
\sum_{k=0}^\infty (-1)^k - \sum_{k=1}^\infty (-1)^{k+1} k= \sum_{k=1}^\infty (-1)^{k+1} k
\end{equation}

However, caution should always be exercised when using algebraic derivation. For instance, one can easily show by stability and linearity that $\sum_{r=0}^\infty r^k=\frac{1}{1-r}$. However, this latter expression is invalid if $r\in[1, \infty)$, because the ray $[1, \infty)$ is not in the Mittag-Leffler star of the function $\frac{1}{1-r}$.

In the following lemma, we show that if an infinite sum is summable to some value $V\in\mathbb{C}$ using Abel summability method or any of the N\"orlund means, including Cesaro summation methods, then the $\mathfrak{T}$ value of the infinite sum is indeed $V$. \\ \hrule 

\begin{lemma}\label{LemmaAbelNorlund}
The $\mathfrak{T}$ definition of infinite sums is consistent with, but more powerful than,  Abel summation and all N\"orlund means including Cesaro summation methods\footnote{The N\"orlund means is a method of assigning limits to infinite sequences. Here, suppose $p_j$ is a sequence of positive terms that satisfies $\frac{p_n}{\sum_{k=0}^n p_k}\to 0$. Then, the N\"orlund mean of a sequence $(s_0, s_1, \ldots)$ is given by $\lim_{n\to\infty} \frac{p_n s_0 + p_{n-1} s_1+\dotsm +p_0 s_n}{\sum_{k=0}^n p_k}$. The limit of an infinite sequence $(s_0, s_1, \ldots)$ is defined by its N\"orlund mean. Therefore, the N\"orlund mean interprets the limit $\lim_{n\to\infty} s_n$ using an \emph{averaging} method.}.
\end{lemma}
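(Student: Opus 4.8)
The plan is to establish the two halves of the statement separately: (a) \emph{consistency}, i.e.\ whenever a series is summable both in $\mathfrak{T}$ and by Abel's method (resp.\ by a regular Nörlund mean), the two assigned values agree; and (b) \emph{strict dominance}, i.e.\ there is a series summable in $\mathfrak{T}$ but summable by neither Abel's method nor any regular Nörlund mean. Throughout I take ``regular Nörlund mean'' to be the averaging method of the footnote, with weights $p_j\ge 0$, $p_0>0$, and $p_n/P_n\to 0$, where $P_n=\sum_{k\le n}p_k$, and (as is standard) $P_n\to\infty$.

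For the Abel case of (a), suppose $\sum_{k=0}^\infty a_k$ has $\mathfrak{T}$-value $W$ and Abel-value $V$. By Definition \ref{TDefinition}, the power series $h(z)=\sum_k a_k z^k$ extends to a function analytic on an open neighbourhood of $[0,1]$ with $h(1)=W$. Abel-summability presupposes that $f(x)=\sum_k a_k x^k$ converges for every $x\in[0,1)$, so the radius of convergence of $\sum_k a_k z^k$ is at least $1$; hence that power series coincides with $h$ on the open unit disc, and in particular $f(x)=h(x)$ for $x\in[0,1)$. Since $h$ is continuous at $z=1$, this gives $V=\lim_{x\to 1^-}f(x)=\lim_{x\to 1^-}h(x)=h(1)=W$. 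For the Nörlund part (with the Cesàro methods as a special case) I would reduce to the Abel case via the classical theorem (see \cite{HardyDiverg}) that a regular Nörlund mean with non-negative weights is included in Abel summation: if $\sum_k a_k$ is summable by such a mean to $V$, then it is Abel-summable to the same value $V$, and the preceding argument applies. Linearity, regularity, and stability of $\mathfrak{T}$ (Lemma \ref{Tproperties}) ensure the $\mathfrak{T}$-value on the other side is unambiguous, so consistency follows.

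For (b), take $\sum_{k=0}^\infty(-2)^k$. Here $h(z)=\sum_k(-2z)^k=\frac{1}{1+2z}$ is analytic on a neighbourhood of $[0,1]$, its only singularity being the pole at $z=-\frac12$, which does not meet the segment; hence the series is $\mathfrak{T}$-summable to $h(1)=\frac13$. On the other hand $\sum_k(-2)^k z^k$ has radius of convergence $\frac12$ and diverges for every $z\in(\frac12,1)$, so $\sum_k(-2)^k$ is not Abel-summable, and since every regular Nörlund mean is included in Abel it is summable by no such mean either (one can also see this directly: the partial sums $s_n=\frac{1-(-2)^{n+1}}{3}$ grow geometrically, and no averaging of them with non-negative weights can converge). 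This yields strict dominance. I expect the main obstacle to lie in the Nörlund portion of (a): the clean route leans on the classical inclusion of regular Nörlund means in Abel summability, so that theorem must be invoked with its hypotheses matched carefully to the definition actually in force; a genuinely self-contained proof would instead require bounding the Nörlund averages of the partial sums $s_n=\sum_{k\le n}a_k$ of an arbitrary $\mathfrak{T}$-summable series directly, which is delicate since $\mathfrak{T}$-summability controls the analytic behaviour of $h(z)$ only near $z=1$ and gives little a priori control over the growth of the $s_n$ themselves.
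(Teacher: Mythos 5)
Your consistency argument is essentially the paper's own proof: both rest on the observation that Abel summability forces the power series to converge on $[0,1)$, where it must agree with the function $h(z)$ of Definition \ref{TDefinition}, so continuity of $h$ at $z=1$ (Abel's theorem) gives $V=h(1)$; and both then dispose of the N\"orlund and Ces\`aro cases by citing the classical inclusion of regular non-negative N\"orlund means in Abel summation from \cite{HardyDiverg}. The one substantive difference is that the paper's proof addresses only the consistency half and silently leaves the \lq\lq more powerful than'' half to the reader, whereas you supply an explicit witness, $\sum_{k}(-2)^k$, which is $\mathfrak{T}$-summable to $\tfrac{1}{3}$ but Abel- and N\"orlund-divergent; this is a genuine (if small) improvement in completeness, and your closing caution about matching the hypotheses of the N\"orlund-in-Abel inclusion theorem is well placed, since that citation is doing all the work in the non-Abel cases for both proofs.
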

\begin{proof}
Suppose we have an infinite sum $\sum_{k=0}^\infty a_k$, define $h(z)=\sum_{k=0}^\infty a_k z^k$. If $\lim_{z\to 1^-} h(z)$ exists, then $h(z)$ is analytic in the domain $[0, 1)$. By Abel's theorem on power series, we have  $\lim_{z\to 1^-} h(z) = h(1)$, if $h(z)$ is analytic at $z=1$. However, assigning a value to the infinite sum $\sum_{k=0}^\infty a_k$ using such limiting expression is the definition of Abel summability method. If such limit exists, i.e. if a series is Abel summable, then its value coincides with the $\mathfrak{T}$ definition of the infinite sum $\sum_{k=0}^\infty a_k$.

Finally, because Abel summation is consistent with and more powerful than all N\"orlund means \cite{HardyDiverg}, the statement of the lemma follows for all N\"orlund means as well including Cesaro summability method.
\end{proof} \hrule\vspace{12pt} 
Lemma \ref{LemmaAbelNorlund} shows that many methods can be used to compute the $\mathfrak{T}$ value of infinite sums. For example, one of the simplest of all N\"orlund means is to look into the average of $q$ consecutive partial sums and see if such $q$-average converges. For example, if $q=1$, then we have ordinary summation. If, on the other hand, $q=2$, we look into the average of each consecutive two partial sums,  a summability method that was proposed by Hutton in 1812 \cite{HardyDiverg}. Applying this to the Grandi series yields the answer $\sum_{k=0}^\infty (-1)^k = \frac{1}{2}$, as expected. Moreover, the case of $q=\infty$, when properly interpreted, leads to Cesaro summation method. Most importantly, Lemma \ref{LemmaAbelNorlund} states that if any of those methods yields a value $V\in\mathbb{C}$, then $V$ is the $\mathfrak{T}$ value of the infinite sum. 

In addition to evaluating sums, we can often conclude that a series is not defined under $\mathfrak{T}$ as the following lemma shows. \\ \hrule 

\begin{lemma}\label{totalRegularity} 
If a divergent sum $\sum_{k=0}^\infty a_k$, where $a_k\in\mathbb{R}$, is defined  in $\mathfrak{T}$ by a value $V\in\mathbb{C}$, then the sum must be oscillating. In other words, the $\mathfrak{T}$ definition of infinite sums is \emph{totally regular}\footnote{Consult the classic book "Divergent Series" by Hardy \cite{HardyDiverg} for a definition of this term.}. 
\end{lemma}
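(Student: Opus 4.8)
The plan is to prove the contrapositive in spirit: if $\sum_{k=0}^\infty a_k$ with real terms $a_k$ is defined in $\mathfrak{T}$ but the partial sums are \emph{not} oscillating — i.e. the partial sums $s_n=\sum_{k=0}^n a_k$ eventually become monotone and diverge to $+\infty$ or $-\infty$ — then we reach a contradiction. Equivalently, I want to show that a genuinely divergent real series that $\mathfrak{T}$ sums must have partial sums that change direction infinitely often. The natural vehicle is the power series $h(z)=\sum_{k=0}^\infty a_k z^k$, which by Definition \ref{TDefinition} is analytic on the whole segment $[0,1]$, together with Abel's theorem (as invoked in the proof of Lemma \ref{LemmaAbelNorlund}): since $h$ is analytic at $z=1$, the radial limit satisfies $\lim_{z\to 1^-} h(z)=h(1)=V\in\mathbb{C}$.

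First I would reduce to the case of monotone partial sums. If the series is real, divergent, and non-oscillating in the sense of the lemma, then there is an index $N$ beyond which all $a_k$ have the same sign (say $a_k\ge 0$ for $k>N$, the other case being symmetric), so the tail partial sums are nondecreasing and, by divergence, $s_n\to +\infty$. Splitting off the finite initial segment $a_0+\dots+a_N$ (which only shifts $h$ by a polynomial, hence does not affect analyticity at $z=1$ nor finiteness of the limit, by linearity and stability from Lemma \ref{Tproperties}), I may assume $a_k\ge 0$ for all $k$ and $\sum a_k=+\infty$. Then $h(z)=\sum a_k z^k$ is a power series with nonnegative coefficients, so for $z\in(0,1)$ it is real, nonnegative, and nondecreasing in $z$. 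The key step is to show $\lim_{z\to 1^-} h(z)=+\infty$: for any $M$, pick $K$ with $\sum_{k=0}^K a_k>M$; since $\sum_{k=0}^K a_k z^k \to \sum_{k=0}^K a_k$ as $z\to 1^-$, we get $h(z)\ge \sum_{k=0}^K a_k z^k > M$ for $z$ close enough to $1$. Hence the radial limit is $+\infty$, contradicting $\lim_{z\to 1^-}h(z)=V\in\mathbb{C}$ obtained from analyticity at $z=1$ via Abel's theorem. Therefore the series cannot be defined in $\mathfrak{T}$, proving the lemma; and the remark that $\mathfrak{T}$ is \emph{totally regular} follows because a totally regular method is precisely one that assigns $+\infty$ (equivalently, fails to assign a finite value) to every real series whose partial sums tend to $+\infty$, which is what we have just shown.

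The main obstacle, and the place to be careful, is the logical structure around "non-oscillating": I should make explicit that "not oscillating" for a real series means the partial sums are eventually monotone, so that divergence forces $s_n\to\pm\infty$ and, after removing finitely many terms, the coefficients $a_k$ themselves are eventually single-signed — this is what licenses the nonnegative-power-series argument. A secondary subtlety is justifying $h(z)\to+\infty$ rather than merely being unbounded: monotonicity of $h$ on $(0,1)$ (immediate from nonnegativity of coefficients) upgrades "unbounded" to "has limit $+\infty$", and then analyticity of $h$ at $z=1$ would force a finite radial limit, the contradiction. One should also note the trivial edge case where the series is convergent in the ordinary sense: then it is not divergent, so the hypothesis of the lemma is vacuous there, and no oscillation is claimed.
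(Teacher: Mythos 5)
Your argument is correct in substance but takes a different route from the paper. The paper's proof goes through Lemma \ref{computingT}: if the series is defined in $\mathfrak{T}$ then it is Lindel\"of summable to the same value, and since the Lindel\"of weights satisfy $0<k^{-\delta k}\le 1$ and tend to $1$, the regularized sums $\sum_k k^{-\delta k}a_k$ dominate every partial sum in the limit $\delta\to 0$, so a divergent series of non-negative terms cannot be assigned a finite value. You instead work directly with the generating function $h(z)=\sum a_k z^k$ of Definition \ref{TDefinition} and use the radial limit at $z=1$: eventually non-negative coefficients plus divergence force $h(z)\to+\infty$ as $z\to 1^-$, contradicting analyticity (hence continuity) at $z=1$. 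The two arguments are structurally parallel — both exhibit a one-parameter family of regularized sums that must blow up when the terms are non-negative — but yours is more self-contained, since it does not rely on the (unproved in the paper) assertion that $\mathfrak{T}$-summability implies Lindel\"of summability, and it makes the reduction to eventually single-signed terms explicit via stability and linearity, which the paper glosses over. One small point to patch: writing $h(z)\ge\sum_{k=0}^{K}a_kz^k$ for $z$ near $1$ tacitly assumes the power series converges there, i.e.\ that its radius of convergence is at least $1$; if the radius $R$ were less than $1$, you should note that Pringsheim's theorem (a power series with non-negative coefficients is singular at $z=R$) already rules out analyticity on $[0,1]$, so the hypothesis of the lemma fails and only the case $R=1$ — where your Abel-type argument applies verbatim — remains.
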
 
\begin{proof}
By Lemma \ref{computingT}, if $\sum_{k=0}^\infty a_k$ is defined under $\mathfrak{T}$ to some value $V\in\mathbb{C}$, then it must be summable using Lindel\"of summability method to the same value V. However, Lindel\"of summability method assigns to an infinite sum the value $\lim_{\delta\to 0} \sum_{k=0}^\infty k^{-\delta k} a_k$. Because $\lim_{\delta\to 0} k^{-\delta k} = 1$ and $0<k^{-\delta k}\le 1$, Lindel\"of summability method cannot sum a divergent series that solely consists of  non-negative terms so the statement of the lemma follows.  
\end{proof} \hrule\vspace{12pt}  

Lemma \ref{totalRegularity} does not necessarily limit applicability of the generalized definition $\mathfrak{T}$. For example, whereas $\zeta(s)$ is not directly defined under $\mathfrak{T}$ for $s<1$, it can be \emph{defined} using Eq \ref{zeta_1}, which is known to hold in the half-plane $\mathcal{R}(s)<-1$. Therefore, the definition in Eq \ref{zeta_1} is valid by analytic continuation. 
\begin{equation}\label{zeta_1}
\sum_{k=1}^\infty k^s = \frac{1}{1-2^{1+s}} \sum_{k=1}^\infty (-1)^{k+1} k^s\\ \text{(by definition)}
\end{equation} 

Now, the right-hand side is well-defined under $\mathfrak{T}$. In fact, we can immediately derive a closed-form expression for it. First, define $N_s(x)$ as: 
\begin{equation}\label{zeta_2}
N_s(x) = \sum_{k=1}^\infty (-1)^k\, k^s\, x^k
\end{equation}
Then, $N_s(x)$ satisfies the following recurrence relationship: 
\begin{equation}\label{zeta_3}
N_s(x) = x\, N_{s-1}'(x) 
\end{equation}
Differentiating both sides of Eq \ref{zeta_3} with respect to $x$ yields:
\begin{equation}\label{zeta_4}
N_s'(x) =  N_{s-1}'(x) + x\,  N_{s-1}^{(2)}(x)
\end{equation}
Therefore, upon repeated application of Eq \ref{zeta_3} and Eq \ref{zeta_4}, we have: 
\begin{equation}\label{zeta_5}
N_s(x) =  \sum_{k=0}^s S(s, k)\, x^k\, N_0^{(k)}(x)
\end{equation}
Here, $S(s, k)$ are Stirling Numbers of the Second Kind. A simple proof for Eq \ref{zeta_5} follows by induction upon using the characteristic property of Stirling Numbers of the Second Kind given by $S(n+1, k) = k\,S(n,k)+S(n, k-1)$. Now, knowing that the series expansion of $N_0(x)$ is given by Eq \ref{zeta_6}, and upon using Definition \ref{TDefinition}, we arrive at Eq \ref{zeta_7}. 
\begin{equation}\label{zeta_6}
N_0(x) = -\frac{x}{1+x} = -\frac{1}{2} -\frac{x-1}{4}+\frac{(x-1)^2}{8} -\frac{(x-1)^3}{16} + \dotsm
\end{equation}
\begin{equation}\label{zeta_7}
N_s(1) = \sum_{k=1}^\infty (-1)^k k^s = \sum_{k=0}^s (-1)^k \, S(s,k)\,\frac{k!}{2^{k+1}}
\end{equation}
Note here that we have appealed to the original statement of $\mathfrak{T}$ to define such divergent sums. Therefore, we have: 
\begin{equation}\label{zeta_8}
\sum_{k=1}^\infty  k^s = - \frac{1}{1-2^{1+s}} \sum_{k=0}^s (-1)^{k+1} \, S(s,k)\,\frac{k!}{2^{k+1}}
\end{equation}
Using the identity $B_{1+s} = -(1+s)\zeta(-s)$, where $B_k$ is the \emph{k}th Bernoulli number, we have the following closed-form expression for Bernoulli numbers:
\begin{equation}\label{zeta_9}
B_s = \frac{s}{1-2^{s}} \sum_{k=0}^{s-1} \frac{1}{2^{k+1}} \sum_{j=0}^k (-1)^{j} \binom{k}{j} j^{s-1}
\end{equation}

Now, we show that Euler summation method almost always agrees with the $\mathfrak{T}$ definition of infinite sums. \\ \hrule
 
\begin{lemma}{\textbf{(The Euler Sum) }}\label{EulerSum} 
Given an infinite sum $\sum_{k=a}^\infty (-1)^k\,g(k)$, define the Euler sum by $ (-1)^a \sum_{k=a}^\infty \frac{(-1)^k}{2^{k+1}}  \Delta^k g(k)$. \\
Then, the Euler sum is equivalent to the $\mathfrak{T}$ definition of infinite sums if the following three conditions hold:
\begin{enumerate}
\item The sum exists in $\mathfrak{T}$.
\item The infinite sum is Euler summable
\item The $\mathfrak{T}$ value of the infinite sum $\sum_{k=a}^\infty (-1)^k \Delta^m g(k)$ is $o(2^m)$
\end{enumerate} 
\end{lemma}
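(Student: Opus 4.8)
The plan is to exhibit the Euler sum as the value at $z=1$ of the generating function $h(z)=\sum_{k\ge a}(-1)^k g(k)\,z^{k-a}$, read off from a telescoping expansion in terms of forward differences, with the $\mathfrak{T}$-definition supplying the meaning of $h(1)$ and condition~3 controlling the tail. First I would normalize to $a=0$: by stability of $\mathfrak{T}$ (Lemma~\ref{Tproperties}) we may re-index, replacing $g(k)$ by $\tilde g(j)=(-1)^a g(j+a)$, which turns $\sum_{k\ge a}(-1)^k g(k)$ into $\sum_{j\ge0}(-1)^j\tilde g(j)$, sends $\Delta^k g(a)$ to $\Delta^k\tilde g(0)$ up to the factor $(-1)^a$, and leaves the hypothesis of condition~3 unchanged; the prefactor $(-1)^a$ in the definition of the Euler sum is exactly the bookkeeping for this shift. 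So assume $a=0$, let $V$ be the $\mathfrak{T}$-value of $\sum_{k\ge0}(-1)^k g(k)$ (it exists by condition~1), and note that $h(z)=\sum_{k\ge0}(-1)^k g(k)z^k$ is then holomorphic on some connected open neighborhood $U$ of $[0,1]$ with $-1\notin U$, and $h(1)=V$.

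Next I would introduce the difference generating functions $h^{(m)}(z)=\sum_{k\ge0}(-1)^k\Delta^m g(k)\,z^k$ and derive, from $g(k)-g(k-1)=\Delta g(k-1)$, the basic recursion $(1+z)\,h^{(m)}(z)=\Delta^m g(0)-z\,h^{(m+1)}(z)$, valid near $z=0$ as an identity of convergent power series. Rewriting it as $h^{(m+1)}(z)=\bigl(\Delta^m g(0)-(1+z)h^{(m)}(z)\bigr)/z$ and observing that the numerator vanishes at $z=0$ (since $h^{(m)}(0)=\Delta^m g(0)$), the apparent pole is removable, so by induction every $h^{(m)}$ extends holomorphically to the \emph{same} neighborhood $U$. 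Iterating the recursion $m$ times yields, for each $m\ge1$, the exact identity
\[
h(z)=\sum_{k=0}^{m-1}\frac{(-1)^k z^k}{(1+z)^{k+1}}\,\Delta^k g(0)+\frac{(-1)^m z^m}{(1+z)^m}\,h^{(m)}(z),
\]
which holds near $0$ as convergent power series and then, both sides being holomorphic on the connected set $U$, throughout $U$ by uniqueness of analytic continuation. Setting $z=1$ gives the finite exact formula
\[
V=\sum_{k=0}^{m-1}\frac{(-1)^k}{2^{k+1}}\,\Delta^k g(0)+\frac{(-1)^m}{2^m}\,h^{(m)}(1).
\]

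Now condition~3 says precisely that the $\mathfrak{T}$-value $h^{(m)}(1)$ of $\sum_{k\ge0}(-1)^k\Delta^m g(k)$ is $o(2^m)$, so the remainder $\tfrac{(-1)^m}{2^m}h^{(m)}(1)$ tends to $0$; hence the partial sums $\sum_{k=0}^{m-1}\frac{(-1)^k}{2^{k+1}}\Delta^k g(0)$ converge to $V$. Condition~2 asserts that these partial sums converge, i.e. that the Euler sum $\sum_{k\ge0}\frac{(-1)^k}{2^{k+1}}\Delta^k g(0)$ exists; unwinding the shift this is $(-1)^a\sum_{k\ge a}\frac{(-1)^k}{2^{k+1}}\Delta^k g(a)$, and the computation just made identifies it with $V$, proving the lemma. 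In fact the same computation shows condition~2 is automatic once conditions~1 and~3 hold, and that the Euler sum is then equal to, not merely consistent with, the $\mathfrak{T}$-value.

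I expect the \textbf{main obstacle} to be the analytic-continuation bookkeeping in the middle step: one must verify that all the difference generating functions $h^{(m)}$ are holomorphic on one fixed neighborhood of $[0,1]$ — so that the $\mathfrak{T}$-values appearing in condition~3 are even well-posed — and that the truncated telescoping identity, which is only manufactured as a power-series identity near the origin, genuinely propagates out to $z=1$. The removable-singularity induction settles the former, and connectedness of $U$ together with the identity theorem settles the latter; beyond that, the recursion itself, the shift reduction, and the limit $m\to\infty$ are routine.
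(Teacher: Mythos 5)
Your proof is correct, and it rests on the same finite telescoping identity that drives the paper's argument: your
$V=\sum_{k=0}^{m-1}\frac{(-1)^k}{2^{k+1}}\,\Delta^k g(0)+\frac{(-1)^m}{2^m}\,h^{(m)}(1)$
is exactly Eq \ref{EulerSum_4} with $a=0$. The difference is in how the identity is justified. The paper works entirely at the level of abstract $\mathfrak{T}$-values: one application of stability and linearity gives Eq \ref{EulerSum_3}, namely $\sum_{k=a}^\infty(-1)^k g(k)=\tfrac{(-1)^a g(a)}{2}-\tfrac12\sum_{k=a}^\infty(-1)^k\Delta g(k)$, and the formula follows by iteration; this is shorter, keeps the base point $a$ general without re-indexing, and needs no identity theorem. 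You instead realize $\mathfrak{T}$ concretely through the generating functions $h^{(m)}(z)$, prove the recursion $(1+z)h^{(m)}(z)=\Delta^m g(0)-z\,h^{(m+1)}(z)$ as a power-series identity near the origin, continue it analytically to a neighborhood of $[0,1]$, and set $z=1$. What your route buys is precision on a point the paper leaves implicit: the removable-singularity induction actually verifies that every $\sum_{k=a}^\infty(-1)^k\Delta^m g(k)$ is well defined in $\mathfrak{T}$, which the paper's repeated use of stability and linearity on these sums tacitly presupposes. Your closing observation that condition 2 is then redundant given conditions 1 and 3 is also correct: once the remainder $2^{-m}h^{(m)}(1)\to 0$, the partial Euler sums converge to $V$ automatically. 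One cosmetic remark: the lemma's displayed Euler sum $(-1)^a\sum_{k=a}^\infty\frac{(-1)^k}{2^{k+1}}\Delta^k g(k)$ is loosely typeset; your reading, with every difference evaluated at the starting index (i.e.\ $\Delta^k g(a)$ and the index running from $0$), is the one the paper's own derivation in Eq \ref{EulerSum_4} actually uses.
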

\begin{proof}
If the infinite sum $\sum_{k=a}^\infty g(k)$ exists in $\mathfrak{T}$, then it must satisfy the linearity and stability properties. Thus, we always have:
\begin{equation}\label{EulerSum_1} 
\sum_{k=a}^\infty (-1)^k\,g(k) = \sum_{k=a}^\infty (-1)^k \, g(k+1) - \sum_{k=a}^\infty (-1)^k \Delta g(k)
\end{equation}
Because the $\mathfrak{T}$ definition is stable, we have: 
\begin{equation}\label{EulerSum_2} 
\sum_{k=a}^\infty (-1)^k\,g(k) = (-1)^a g(a) -  \sum_{k=a}^\infty (-1)^k \, g(k) - \sum_{k=a}^\infty (-1)^k \Delta g(k)
\end{equation}
Rearranging the terms yields: 
\begin{equation}\label{EulerSum_3} 
\sum_{k=a}^\infty (-1)^k\,g(k) = \frac{(-1)^a g(a)}{2} - \frac{1}{2} \sum_{k=a}^\infty (-1)^k \Delta g(k)
\end{equation}
Here, both divergent sums are interpreted using the definition $\mathfrak{T}$. By repeated application of Eq \ref{EulerSum_3}, we have:
\begin{equation}\label{EulerSum_4} 
\sum_{k=a}^\infty (-1)^k\,g(k) = (-1)^a \sum_{p=0}^m \frac{(-1)^p}{2^{p+1}} \Delta^p g + \frac{(-1)^{m+1}}{2^{m+1}} \sum_{k=a}^\infty (-1)^k \Delta^{m+1}g(k)
\end{equation}
Last equation gives the error term of using Euler summability method, which is given by $\frac{(-1)^{m+1}}{2^{m+1}} \sum_{k=a}^\infty (-1)^k \Delta^{m+1}g(k)$. Therefore, if the error term goes to zero, i.e. the $3^{rd}$ condition in the lemma is satisfied, and the sum $(-1)^a \sum_{p=0}^\infty \frac{(-1)^p}{2^{p+1}} \Delta^p g$ is convergent, i.e. the 2\textsuperscript{nd} condition is satisfied, then the statement of the lemma follows.
\end{proof} \hrule\vspace{12pt} 

Lemma \ref{EulerSum} states that Euler summation method can indeed be often used to compute divergent sums instead of applying the definition $\mathfrak{T}$ directly. For instance, if we return to the Riemann zeta function, then the conditions of Lemma \ref{EulerSum} hold so we always have: 
\begin{equation}\label{zetaEulerSum_1} 
\sum_{k=1}^\infty (-1)^{k+1}\, k^s = \sum_{k=0}^\infty \frac{1}{2^{k+1}} \,\sum_{j=0}^k (-1)^{j} \binom{k}{j} (j+1)^s
\end{equation} 

Using Eq \ref{zetaEulerSum_1}, we deduce a \emph{globally} valid expression for the Riemann zeta function given in Eq \ref{zetaEulerSum_2}. The expression in Eq \ref{zetaEulerSum_2} was proved by Helmut Hasse in 1930 and rediscovered by Sondow in \cite{Sondow94, MathWorldRiemannZeta}. We will provide an alternative globally convergent expression for the Riemann zeta function later in Chapter \ref{Chapter7}.
\begin{equation}\label{zetaEulerSum_2} 
\zeta(s) = \frac{1}{1-2^{1-s}} \sum_{k=0}^\infty \frac{1}{2^{k+1}} \,\sum_{j=0}^k (-1)^{j} \binom{k}{j} \frac{1}{(j+1)^s}
\end{equation}

Consequently, we indeed have a rich collection of methods to compute the $\mathfrak{T}$ value of infinite sums. Unfortunately, most of these methods have serious limitations. For instance, all N\"orlund means are weak, i.e. are limited in their ability to sum divergent series, and Abel summability method alone outperforms them all.  However,  Abel summability method itself is usually insufficient. For instance, it cannot sum any oscillating series that grows exponentially large such as the Taylor series expansion of the logarithmic function and the sum of Bernoulli numbers. On the other hand, the Mittag-Leffler summability method and Lindel\"of summation method are both powerful but they are computationally demanding. They are often extremely slow in convergence and require high-precision arithmetic. 

To circumvent such limitations, we will introduce a new summability methods $\Xi$ that is especially suited to oscillating sums in the following section. The method $\Xi$ is easy to implement in practice and can converge reasonably fast. In addition, it is also powerful and can sum a large number of divergent series. In fact, all of the examples presented in the sequel are $\Xi$ summable. 

\section{The Summability Method $\Xi$}
In this section, we present a new summability method $\Xi$ for Taylor series expansions that is consistent with the $\mathfrak{T}$ definition of infinite sums. The method $\Xi$ is simple to implement in practice. The error term in the approximation method given by $\Xi$ is $O(\frac{1}{n})$; thus it is unfortunately slowly converging. Nevertheless, we will present in Chapter \ref{Chapter5} a method of accelerating convergence to an arbitrary speed. We start with an argument by construction as to why $\Xi$ is a natural method of summing divergent series.

\subsection{A Statement of Summability} 
\begin{claim} \label{claim411}
Given a point $x_0$ and a function $f(x)$ that is $n$-times differentiable throughout the interval $[x_0 , x)$, then the function $\hat f_n(x)$ defined by Eq \ref{EqClaim411}  is a valid approximation to the function $f(x)$. In addition, the approximation error typically improves as $n\to\infty$.
\begin{equation}\label{EqClaim411}
\hat f_n(x) = \sum_{j=0}^n \chi_n(j) \frac{f^{(j)}(x_0)}{j!} (x-x_0)^j, \quad \quad \\ \text{where } \chi_n(j) = \frac{n!}{n^j (n-j)!} = \prod_{k=1}^j \big(1-\frac{k-1}{n}\big)
\end{equation}
\end{claim}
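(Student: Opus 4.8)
The plan is to show that the finite sum $\hat f_n(x)$ in Eq \ref{EqClaim411} converges to $f(x)$ as $n\to\infty$ whenever $f$ is analytic on $[x_0,x]$, and more generally that it is a sensible approximant when $f$ is only $n$-times differentiable. The starting observation is that the damping coefficients $\chi_n(j)=\prod_{k=1}^{j}\bigl(1-\tfrac{k-1}{n}\bigr)$ satisfy $0\le\chi_n(j)\le 1$, $\chi_n(0)=\chi_n(1)=1$, $\chi_n(j)=0$ for $j>n$, and most importantly $\lim_{n\to\infty}\chi_n(j)=1$ for every fixed $j$. Thus $\Xi$ is a regular triangular summability matrix applied to the partial sums of the Taylor series, so on any disc of convergence it reproduces the analytic function; the real content is that it also sums the series on the whole Mittag-Leffler star, i.e. it is consistent with $\mathfrak{T}$.

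First I would make the link with the Euler-Maclaurin material already in hand. Note that $\chi_n(j)$ is exactly the quantity $\chi_n(r)$ that appeared in Example VI of Chapter \ref{Chapter3}, where we wrote, via Lemma \ref{convolutedEMFormula},
\begin{equation}\label{xiplan1}
\sum_{k=0}^{n}\Bigl(\frac{k}{n}\Bigr)^{n}=\frac{n^{n+1}}{n+1}+n^{n}\sum_{r=0}^{n}\frac{B_{r+1}}{(r+1)!}\,\chi_n(r).
\end{equation}
This tells us that $\chi_n(j)$ arises naturally from the convoluted sum $\sum_{k=0}^{n}(k/n)^{n}$. Exploiting this, I would write $\hat f_n(x)$ as a single convoluted finite sum in $n$ by setting, with $x-x_0=t$,
\begin{equation}\label{xiplan2}
\hat f_n(x)=\sum_{j=0}^{n}\frac{n!}{n^{j}(n-j)!}\,\frac{f^{(j)}(x_0)}{j!}\,t^{j}
=\sum_{j=0}^{n}\binom{n}{j}\Bigl(\frac{t}{n}\Bigr)^{j}\frac{f^{(j)}(x_0)}{1}\cdot\frac{1}{\,\cdot\,},
\end{equation}
and recognize the binomial structure: $\hat f_n(x)$ is a finite-difference / Newton-forward expansion. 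Concretely, using $\chi_n(j)/j!=\binom{n}{j}/n^{j}$ one gets the representation
\begin{equation}\label{xiplan3}
\hat f_n(x)=\sum_{j=0}^{n}\binom{n}{j}\Bigl(\frac{t}{n}\Bigr)^{j}\Bigl(1-\frac{t}{n}\Bigr)^{0}\cdot\frac{f^{(j)}(x_0)\,j!}{j!\,}\Big/\text{(collect)},
\end{equation}
which after cleanup is a Bernstein-type polynomial sampling of the Borel/Lindel\"of transform of the series. The cleanest route, I expect, is to identify $\chi_n(j)$ with the moments $\int_0^1 \binom{n}{j} s^j(1-s)^{n-j}\,\cdot$ of a Beta kernel and thereby rewrite $\hat f_n(x)$ as an integral average of Taylor polynomials evaluated at $x_0+s\,t$, $s\in[0,1]$; regularity of $f$ on $[x_0,x]$ then gives convergence by dominated convergence, and the error estimate $O(1/n)$ drops out of the second-order term of the kernel's variance, which is $\tfrac{1}{n}s(1-s)$.

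The main obstacle will be making the "valid approximation" claim precise and uniform: showing that the integral/kernel representation actually holds on the whole Mittag-Leffler star (not just inside the radius of convergence), which requires justifying an interchange of summation and integration where the underlying Taylor series diverges. The way around this is to go through the $\mathfrak{T}$ machinery rather than term-by-term manipulation: since $\Xi$ is regular and, by the Beta-kernel representation, agrees with Lindel\"of's method on every analytic continuation along $[0,1]$, Lemma \ref{computingT} gives consistency with $\mathfrak{T}$; then Claim \ref{claim411} follows because $\hat f_n(x)\to f(x)$ whenever $f(x)$ is defined in $\mathfrak{T}$ at the relevant point. The residual $O(1/n)$ rate is then established separately, on the disc of convergence, by a direct estimate $|f(x)-\hat f_n(x)|\le \sum_{j\ge 2}\bigl(1-\chi_n(j)\bigr)\bigl|\tfrac{f^{(j)}(x_0)}{j!}t^{j}\bigr|$ together with the bound $1-\chi_n(j)\le \binom{j}{2}/n$, which is the one genuinely computational step and I would not grind through it here.
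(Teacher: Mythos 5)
Your route is genuinely different from the paper's, and two of its load-bearing steps do not hold up. First, the Beta-kernel representation: you want a kernel $K_n$ on $[0,1]$ with $\int_0^1 s^j K_n(s)\,ds=\chi_n(j)$, so that $\hat f_n$ becomes an integral average of Taylor polynomials and dominated convergence applies. No such positive kernel exists: since $\chi_n(0)=\chi_n(1)=1$, a probability measure on $[0,1]$ with first moment $1$ must be the point mass at $s=1$, whose second moment is $1$, not $\chi_n(2)=1-\tfrac{1}{n}$. A signed kernel could match finitely many moments, but then dominated convergence and the ``variance of the kernel'' heuristic lose their force. (A Beta-type density $(1+k)(1-z)^k$ does appear in the paper, but only in the asymptotic error analysis of Theorem \ref{theorem441}, not as a representation of $\hat f_n$ itself.) Second, your assertion that $\Xi$ agrees with Lindel\"of's method along every analytic continuation on $[0,1]$, i.e.\ that it sums on the whole Mittag-Leffler star, is false: applying $\Xi$ to the geometric series (Eq.\ \ref{geoSeries_1}) shows divergence for $|z|>\kappa\approx 3.5911$, which is precisely why Claim \ref{claim412} carries the hypothesis $a_n=o(\kappa^n)$. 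What survives of your argument is the regularity observation and the estimate $1-\chi_n(j)\le\binom{j}{2}/n$, which does give the $O(1/n)$ rate \emph{inside} the radius of convergence --- correct, and arguably cleaner than anything in the paper for that regime --- but it only covers the case where the Taylor series already converges, which is not where the content of the claim lies.

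The paper's own proof is constructive rather than representational: it derives $\hat f_n(x)$ as the result of iterating the first-order linear approximation $f^{(k)}(y+\Delta x)\approx f^{(k)}(y)+f^{(k+1)}(y)\,\Delta x$ exactly $n$ times with step $\Delta x=(x-x_0)/n$, proving by induction that $f(x_0+j\Delta x)\approx\sum_{k=0}^{j}\binom{j}{k}f^{(k)}(x_0)\,\Delta x^k$; the claimed validity of the approximation is then inherited from the validity of Euler's method, i.e.\ the operator identity $f(x+h)=\lim_{n\to\infty}(I+hD_x/n)^n$. If you want to salvage an integral representation, the workable one is the Laplace-type identity of Eq.\ \ref{geoSeries_1} (a Borel-sum analogue), not a Hausdorff moment kernel on $[0,1]$.
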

\begin{proof} 
Before we present an argument for Claim \ref{claim411}, three points are worth mentioning. First, Claim \ref{claim411} states that the error term in the summability method will typically vanish for most functions as $n\to\infty$, which is clearly an improvement over classical Taylor series approximation. However, this implies that the approximation in Eq \ref{EqClaim411} may or may not converge to the function $f(x)$ but it does typically converge as will be shown in the proof. Later in Claim \ref{claim412}, we will look into the topic of convergence. We will also deduce a simple asymptotic expression for the error term later in Theorem \ref{theorem441}  

Second, if the Taylor series converges, then $n$ can be taken to be literally infinite, in which case we arrive at the famous Taylor series expansion for analytic functions. This follows in the latter case because convergence becomes essentially independent of $n$, as long as $n$ is sufficiently large. 

Third, because $\chi_n(j) \to 1$ as $n\to\infty$ for  fixed  $j$, the first terms of the polynomial expansion given in Eq \ref{EqClaim411} approach terms of the classical Taylor series expansion as $n$ increases. Such phenomenon is desirable because it implies \emph{regularity}\footnote{As stated earlier, a summability method $\Xi$ is called \emph{regular} if it is consistent with ordinary convergent sums.} of the summability method  but it raises two important observations. First, the partial sum of the polynomial expansion given in Claim \ref{claim411} may increase beyond numerical accuracy if $x$ lies outside the radius of convergence of the Taylor series before the sum converges back again as will be depicted later. Hence, while validity of the new approximation method is established from a mathematical point of view, i.e. in an ideal Platonic sense, the method itself may not be immediately useful in \emph{implementing} numerical computation in some cases due solely to numerical precision limitations.  Also, if the original function $f(x)$ was itself a polynomial with degree $n$, it is true that the \emph{n}th degree polynomial expansion given in Claim \ref{claim411} of $f(x)$ is not equal to $f(x)$, a fact that seems at first sight to be paradoxical, but because as $\chi_n(j) \to 1$ as $n\to\infty$, the polynomial expansion does indeed converge to $f(x)$ as $n$ increases, which is the original assertion of Claim \ref{claim411}.

The argument for Claim \ref{claim411} rests on its derivation by construction. As a starting point, suppose we have a function $f(x)$ that is \emph{n}-time differentiable at a particular point $x_0$ and let $\Delta x$ be a chosen step size such that we wish to approximate the value of the function $f(x_0 + n \Delta x)$ for an arbitrary value of $n$ using \emph{solely} local information about the behavior of the function $f(x)$ at $x_0$. Using the notation $f_j = f(x_0 + j \Delta x)$ and $f_j^{(k)} = f^{(k)}(x_0+j \Delta x)$, we know immediately from the very basic definition of differentiation that the following recurrence holds:
\begin{equation}\label{Claim411_1} 
f_j^{(k)} \approx f_{j-1}^{(k)} + f_{j-1}^{(k+1)} \Delta x
\end{equation}
Furthermore, we know that the approximation error strictly improves as we decrease the step size $\Delta x$. Thus, we obtain the following approximations that can be held with arbitrary accuracy for sufficiently small step size $\Delta x$: 
\begin{equation}\label{Claim411_2} 
f_1 = f_0 + f_0^{(1)} \Delta x
\end{equation} 
\begin{equation}\label{Claim411_3} 
f_3 = f_1 + f_1^{(1)} \Delta x = f_0 + 2 f_0^{(1)} \Delta x + f_0^{(2)} \Delta x^2 
\end{equation}
In Eq \ref{Claim411_3}, we have substituted Eq \ref{Claim411_2} and approximated the derivative $f_1^{(1)}$ using the recurrence in Eq \ref{Claim411_1}. In general, we can show by induction that the following general formula holds:
\begin{equation}\label{Claim411_4}
f_j = \sum_{k=0}^ j \binom{j}{k} f_0^{(k)} \Delta x^k
\end{equation}

To prove that Eq \ref{Claim411_4} holds, we first note that a base case is established for $j=1$ in Eq \ref{Claim411_2}. Suppose that it holds for $j < m$, we will show that such inductive hypothesis implies that Eq \ref{Claim411_4} also holds for $j = m$. First, we note that if Eq \ref{Claim411_4}  holds for $j<m$, then we have:
\begin{equation}\label{Claim411_5}
f_m = f_{m-1} + f_{m-1}^{(1)} \Delta x = \sum_{k=0}^{m-1} \binom{m-1}{k} f_0^{(k)} \Delta x^k + \sum_{k=0}^{m-1} \binom{m-1}{k} f_0^{(k+1)} \Delta x^{k+1}
\end{equation}
In Eq \ref{Claim411_5}, the second substitution for $f_{m-1}^{(1)}$ follows from the same inductive hypothesis because $f_{m-1}^{(1)}$ is simply another function that can be approximated using the same inductive hypothesis. Eq \ref{Claim411_5} can, in turn, be rewritten as:
\begin{equation}\label{Claim411_6}
f_m = \sum_{k=0}^{m-1} \Big[\binom{m-1}{k} + \binom{m-1}{k-1}\Big] f_0^{(k)} \Delta x^k +f_0^{(m)} \Delta x^m
\end{equation}

Upon using the well-known recurrence relation for binomial coefficients, i.e. Pascal's rule, we obtain Eq \ref{Claim411_4}, which is exactly what is needed in order to complete the proof by induction of that equation. In addition, Eq \ref{Claim411_4}  can be rewritten as given in Eq \ref{Claim411_7}  below using the substitution $x=x_0 + n \Delta x$.
\begin{equation}\label{Claim411_7}
f(x) \approx \hat f_n(x) = \sum_{j=0}^n \binom{n}{j} f^{(j)}(x_0) \frac{(x-x_0)^j}{n^j}
\end{equation}
However, the binomial coefficient can be expanded, which yields Eq \ref{EqClaim411}. 

Since $x=x_0 + n \Delta x$, it follows that increasing $n$ while holding $x$ fixed is equivalent to choosing a smaller step size $\Delta x$. Because the entire proof is based solely on the linear approximation recurrence given in Eq \ref{Claim411_1}, the basic definition of differentiation implies that the expected approximation error in Eq \ref{Claim411_1} will typically vanish as $n\to\infty$. This follows because the summability method essentially simulates walking over the function in the domain $[x_0, x]$, which is similar to Euler's approximation method. Furthermore, it follows by construction of the proof, that $\hat f_n(x)$ is indeed a valid approximation to the function $f(x)$ as stated in the claim. 
\end{proof} \hrule\vspace{12pt} 

In simple terms, Claim \ref{claim411} states that the \emph{n}th-order approximation of a function $f(x)$ is given by:
\small
\begin{align*}
f(x)\approx &f(x_0) + \frac{f'(x_0)}{1!}(x-x_0) + \big(1-\frac{1}{n}\big)\frac{f^{(2)}(x_0)}{2!}(x-x_0)^2\\&+ \big(1-\frac{1}{n}\big) \big(1-\frac{2}{n}\big)\frac{f^{(3)}(x_0)}{3!}(x-x_0)^3 \dotsm
\end{align*}
\normalsize
Here, the \emph{n}th-order approximation typically converges to the function $f(x)$ as $n$ tends to infinity if $f(x)$ is analytic in the domain $[x_0, x]$. Comparing the above expression with the classical Taylor series expansion illustrates how infinitesimal errors arise and how they accumulate as the process is repeated infinitely many times. In particular, the use of Taylor series expansion corresponds to the \emph{erroneous} assertion that $\lim_{n\to\infty} \sum_{j=0}^n \chi_n(j) a_j = \sum_{j=0}^\infty \lim_{n\to\infty} \chi_n(j) a_j$. 

Curiously, the summability method can be stated succinctly using the Calculus of Finite Differences. Here, if we let $D_x$ be the \emph{differential operator}, then Claim \ref{claim411} states that:
\begin{equation}\label{summMethodCFD}
f(x+h)=\lim_{n\to\infty} \Big(1+h \frac{D_x}{n}\Big)^n 
\end{equation}
On other other hand, Taylor series expansion states that $f(x+h)=e^{hD_x}$. Of course, both expressions are equivalent for ordinary numbers but they differ for general symbolic operators as illustrated here.

Looking into the proof of Claim \ref{claim411}, it is clear that the $\Xi$ summability method is inline with the $\mathfrak{T}$ definition of infinite sums because it is a method that is built to sum Taylor series expansions in the first place. To show that $\Xi$ is linear and regular, we start with the following lemma.  \\ \hrule 

\begin{lemma}\label{lemmaSumXin}
For all $n\ge 0$, the following equality holds: 
\begin{equation}\label{lemmaSumXinEq}
\sum_{j=0}^n j\, \chi_n(j) = n
\end{equation} 
\end{lemma}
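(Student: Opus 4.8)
The plan is to prove the identity by a one-step recurrence for $\chi_n(j)$ followed by a telescoping sum, with no analysis required.

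First I would record the elementary recurrence $\chi_n(j+1) = \bigl(1-\tfrac{j}{n}\bigr)\,\chi_n(j)$, which is immediate from the product representation $\chi_n(j) = \prod_{k=1}^{j}\bigl(1-\tfrac{k-1}{n}\bigr)$ given in Claim \ref{claim411}, since the single extra factor appearing in $\chi_n(j+1)$ is exactly $1-\tfrac{j}{n}$ (the term $k=j+1$). Rearranging this recurrence gives $\tfrac{j}{n}\,\chi_n(j) = \chi_n(j)-\chi_n(j+1)$, hence
\[ j\,\chi_n(j) = n\bigl(\chi_n(j)-\chi_n(j+1)\bigr) \qquad\text{for all } j\ge 0, \]
where the case $j=0$ holds trivially because both sides vanish.

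Next I would sum this identity over $j=0,1,\dots,n$. The right-hand side telescopes, leaving $n\bigl(\chi_n(0)-\chi_n(n+1)\bigr)$, so it only remains to evaluate the two boundary terms. Here $\chi_n(0)=1$ is the empty product, while $\chi_n(n+1)=0$ because the product $\prod_{k=1}^{n+1}\bigl(1-\tfrac{k-1}{n}\bigr)$ contains the vanishing factor $1-\tfrac{n}{n}=0$ at $k=n+1$. Therefore $\sum_{j=0}^{n} j\,\chi_n(j) = n$, which is the assertion; the degenerate case $n=0$ is covered directly, since the sum then has its single term equal to $0$.

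There is essentially no obstacle in this argument — it is a one-line recurrence plus a telescoping sum — so the only thing to watch is the bookkeeping at the endpoints $j=0$ and $j=n+1$ and the harmless convention $0^0=1$ implicit in $\chi_n(0)=1$. If a more conceptual phrasing were wanted, one could note that $\chi_n(j)-\chi_n(j+1)$ is the probability that the first repetition among i.i.d.\ uniform draws from an $n$-element set occurs on draw $j+1$, so that $\sum_{j=0}^{n}\bigl(\chi_n(j)-\chi_n(j+1)\bigr)=1$ because a repetition is certain by draw $n+1$; but the direct telescoping computation is shorter and self-contained, so I would present that version.
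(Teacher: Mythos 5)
Your proof is correct, and it takes a different route from the paper's. The paper works from the factorial form $\chi_n(j)=\tfrac{n!}{n^j\,(n-j)!}$, reindexes the sum by $j\mapsto n-j$, and lets the two resulting pieces cancel, leaving $\tfrac{n!}{(n-1)!}=n$; you instead read off the one-step recurrence $\chi_n(j+1)=\bigl(1-\tfrac{j}{n}\bigr)\chi_n(j)$ from the product representation and telescope $j\,\chi_n(j)=n\bigl(\chi_n(j)-\chi_n(j+1)\bigr)$. The two arguments are equally elementary, but yours is arguably more transparent: it isolates \emph{why} the identity holds (the differences $\chi_n(j)-\chi_n(j+1)$ are nonnegative and sum to $1$, so $j\,\chi_n(j)/n$ is a probability mass function --- the birthday-problem interpretation you mention), whereas the paper's cancellation is a purely formal manipulation. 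Your version also makes the boundary facts $\chi_n(0)=1$ and $\chi_n(n+1)=0$ explicit, which the paper's computation uses only implicitly. Both proofs cover the degenerate case $n=0$ trivially. No gaps.
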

\begin{proof}
By definition: 
\begin{align*}
\sum_{j=0}^n j\, \chi_n(j) &= n! \sum_{j=0}^n \frac{j}{(n-j)!\, n^j} = \frac{n!}{n^n} \sum_{j=0}^n \frac{n-j}{j!} n^j \\
&=n! \big(\frac{n}{n^n} \sum_{j=0}^n \frac{n^j}{j!} -\frac{n}{n^n}  \sum_{j=0}^n \frac{n^j}{j!} + \frac{1}{(n-1)!}\big)\\
&=\frac{n!}{(n-1)!} = n
\end{align*}
\end{proof}\hrule\vspace{12pt} 

\begin{lemma}{\textbf{(The $\Xi$ Sequence Limit)}}\label{XiLimitSeq}
Define $S$ to be the sequence of partial sums given by $S=(\sum_{k=0}^0 a_k, \sum_{k=0}^1 a_k, \sum_{k=0}^2 a_k, \ldots)$. Then the $\Xi$ sequence limit of $S$ given by Eq \ref{XiLimitSeqEq} agrees with the $\Xi$ sum of $\sum_{k=0}^\infty a_k$. 
\begin{equation}\label{XiLimitSeqEq}
\lim_{n\to\infty} \frac{p_n(0) s_0 + p_n(1) s_1+\dotsm + p_n(n) s_n}{\sum_{k=0}^n p_n(k)}, \\ \text{ where } p_n(j)=j\,\chi_n(j)
\end{equation} 
\end{lemma}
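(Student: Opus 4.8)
The plan is to prove something slightly stronger than asserted: that the weighted partial-sum average in Eq \ref{XiLimitSeqEq} equals the $\Xi$-transform of $\sum_{k=0}^\infty a_k$ \emph{term by term in $n$}, not merely in the limit. The lemma then follows at once by letting $n\to\infty$, and in fact one limit exists precisely when the other does.

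First I would dispose of the normalizing denominator. By Lemma \ref{lemmaSumXin} we have $\sum_{k=0}^n p_n(k)=\sum_{k=0}^n k\,\chi_n(k)=n$, so, writing $s_j=\sum_{k=0}^j a_k$ and noting $p_n(0)=0$, the quantity in Eq \ref{XiLimitSeqEq} is just $\lim_{n\to\infty}\frac1n\sum_{j=0}^n j\,\chi_n(j)\,s_j$. The crucial ingredient is a one-line telescoping identity for the weights: from the product form $\chi_n(j)=\prod_{k=1}^j\!\big(1-\tfrac{k-1}{n}\big)$ one reads off $\chi_n(j+1)=\chi_n(j)\big(1-\tfrac{j}{n}\big)$, hence
\[
j\,\chi_n(j)=n\big(\chi_n(j)-\chi_n(j+1)\big),
\]
and moreover $\chi_n(m)=0$ for every $m>n$, since the factor $\big(1-\tfrac{n}{n}\big)$ then appears in the product.

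Next I would interchange the (finite) order of summation and telescope. Using $s_j=\sum_{k=0}^j a_k$,
\[
\sum_{j=0}^n j\,\chi_n(j)\,s_j=\sum_{k=0}^n a_k\sum_{j=k}^n j\,\chi_n(j)=n\sum_{k=0}^n a_k\sum_{j=k}^n\big(\chi_n(j)-\chi_n(j+1)\big)=n\sum_{k=0}^n a_k\big(\chi_n(k)-\chi_n(n+1)\big)=n\sum_{k=0}^n \chi_n(k)\,a_k,
\]
the last step using $\chi_n(n+1)=0$. Dividing by $n$ gives, for every $n\ge1$,
\[
\frac{p_n(0)s_0+p_n(1)s_1+\dotsm+p_n(n)s_n}{\sum_{k=0}^n p_n(k)}=\sum_{k=0}^n \chi_n(k)\,a_k,
\]
whose right-hand side is exactly the partial $\Xi$-sum of $\sum_{k=0}^\infty a_k$ in Claim \ref{claim411} (with $a_k=\frac{f^{(k)}(x_0)}{k!}(x-x_0)^k$). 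Passing to $n\to\infty$ yields the claimed equality of the $\Xi$ sequence limit and the $\Xi$ sum. (As a sanity check, $n=2$ gives $\tfrac{s_1+s_2}{2}=a_0+a_1+\tfrac12 a_2=\chi_2(0)a_0+\chi_2(1)a_1+\chi_2(2)a_2$.)

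I do not expect a real obstacle here: there are no convergence subtleties because every sum is finite for fixed $n$, so the only thing to be careful about is bookkeeping — verifying that $\sum_{j=k}^n(\chi_n(j)-\chi_n(j+1))$ telescopes to $\chi_n(k)-\chi_n(n+1)$ and that $\chi_n(n+1)=0$ — together with stating precisely in what sense the right-hand side above \emph{is} the $\Xi$ sum of $\sum_{k=0}^\infty a_k$, namely the $n\to\infty$ limit of the weighted polynomial partial sums of Claim \ref{claim411}.
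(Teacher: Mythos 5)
Your proposal is correct and follows essentially the same route as the paper: both reduce the claim to the exact finite-$n$ identity $\sum_{j=k}^{n} p_n(j) = n\,\chi_n(k)$, which turns the weighted average of partial sums into $\sum_{k=0}^{n}\chi_n(k)\,a_k$ term by term. The only difference is that you verify that identity by the explicit telescoping $j\,\chi_n(j)=n\bigl(\chi_n(j)-\chi_n(j+1)\bigr)$ together with $\chi_n(n+1)=0$, whereas the paper establishes the same coefficients $c_n(k)=\chi_n(k)$ by induction on $k$; your version is arguably the cleaner bookkeeping.
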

\begin{proof} 
We will prove the theorem by induction. First, let us denote $c_n(k)$ to be the sequence of terms that satisfies: 
\begin{equation}\label{XiLimitSeq_1} 
\frac{p_n(0) s_0 + p_n(1) s_1+\dotsm + p_n(n) s_n}{\sum_{k=0}^n p_k} = \sum_{k=0}^n c_n(k) a_k
\end{equation}
Here, $s_j = \sum_{k=0}^j a_k$. Our objective is to prove that $c_n(k)=\chi_n(k)$. To prove this by induction, we first note that $\sum_{k=0}^n p_n(k) = n$ as proved in Lemma \ref{lemmaSumXin}, and a base case is already established since $c_n(0)=1=\chi_n(0)$. Now, we note that: 
\begin{equation}\label{XiLimitSeq_2}
c_n(k)=\frac{p_n(k)+p_n(k+1) + \dotsm + p_n(n)}{n}
\end{equation}
For the inductive hypothesis, we assume that $c_n(k)=\chi_n(k)$ for $k<m$. To prove that such inductive hypothesis implies that $c_n(m)=\chi_n(m)$, we note by Eq \ref{XiLimitSeq_2} that the following holds: 
\begin{equation}\label{XiLimitSeq_3}
c_n(m)=c_n(m-1)-\frac{p_n(m-1)}{n} =(1-\frac{m-1}{n}) \chi_n(m-1)=\chi_n(m)
\end{equation}
Here, we have used the inductive hypothesis and the original definition of $\chi_n(m)$ given in Claim \ref{claim411}. Therefore, we indeed have: 
\begin{equation}\label{XiLimitSeq_4}
\frac{p_n(0) s_0 + p_n(1) s_1+\dotsm + p_n(n) s_n}{\sum_{k=0}^n p_n(k)} = \sum_{k=0}^n \chi_n(k) a_k
\end{equation}
The statement of the theorem follows immediately. 
\end{proof} \hrule\vspace{12pt} 

Lemma \ref{XiLimitSeq} shows that the summability method $\Xi$ is indeed \emph{an averaging method} but it is important to note that it is different from the N\"orland means because the sequence $p_n(k)$ are not independent of the number of terms $n$. The reformulation of the summability method $\Xi$ given by Lemma \ref{XiLimitSeq} allows us to prove the following important statement. \\ \hrule

\begin{corollary}\label{XiregStabLinear}
The summability method $\Xi$ given by Claim \ref{claim411} is linear and regular. 
\end{corollary}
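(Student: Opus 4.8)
The plan is to reduce everything to the closed form of the $\Xi$ value established in Lemma~\ref{XiLimitSeq}: the $\Xi$ value of $\sum_{k=0}^\infty a_k$ is by construction $\lim_{n\to\infty}\sum_{k=0}^n \chi_n(k)\,a_k$ (Claim~\ref{claim411}), which by Lemma~\ref{XiLimitSeq} also equals the averaged limit $\lim_{n\to\infty}\frac{1}{n}\sum_{k=0}^n p_n(k)\,s_k$ with $p_n(k)=k\,\chi_n(k)$ and $s_k=\sum_{j=0}^k a_j$. Linearity is then essentially formal, and regularity will follow from a Toeplitz-type estimate on the averaging weights $w_n(k)=p_n(k)/n$.

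For linearity, suppose $\sum_{k=0}^\infty a_k$ and $\sum_{k=0}^\infty b_k$ have $\Xi$ values $A$ and $B$. For each fixed $n$ the map $(c_k)\mapsto\sum_{k=0}^n \chi_n(k)\,c_k$ is a finite linear combination, so $\sum_{k=0}^n \chi_n(k)\,(a_k+\alpha b_k)=\sum_{k=0}^n \chi_n(k)\,a_k+\alpha\sum_{k=0}^n \chi_n(k)\,b_k$; letting $n\to\infty$ and using ordinary linearity of limits shows that $\sum_{k=0}^\infty (a_k+\alpha b_k)$ has $\Xi$ value $A+\alpha B$.

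For regularity, assume $\sum_{k=0}^\infty a_k$ converges in the ordinary sense to $A$, so the partial sums satisfy $s_k\to A$; in particular $M:=\sup_k|s_k-A|<\infty$. By Lemma~\ref{XiLimitSeq} the $\Xi$ value is $\lim_{n\to\infty}\sum_{k=0}^n w_n(k)\,s_k$ with $w_n(k)=k\,\chi_n(k)/n$. First I would record three facts about these weights: (i) $w_n(k)\ge 0$, since for $1\le j\le k\le n$ each factor $1-\tfrac{j-1}{n}$ of $\chi_n(k)=\prod_{j=1}^k(1-\tfrac{j-1}{n})$ is positive; (ii) $\sum_{k=0}^n w_n(k)=1$, which is exactly Lemma~\ref{lemmaSumXin} divided by $n$; and (iii) for every fixed $k$, $w_n(k)=\tfrac{k}{n}\,\chi_n(k)\to 0$ as $n\to\infty$, because $\chi_n(k)\to 1$ for fixed $k$. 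Then the standard splitting argument finishes it: given $\epsilon>0$ pick $K$ with $|s_k-A|<\epsilon$ for $k>K$; writing $\sum_{k=0}^n w_n(k)\,s_k-A=\sum_{k=0}^n w_n(k)(s_k-A)$ and using (i)--(ii),
\[
\Big|\sum_{k=0}^n w_n(k)\,s_k-A\Big|\le M\sum_{k=0}^K w_n(k)+\epsilon\sum_{k=K+1}^n w_n(k)\le M\sum_{k=0}^K w_n(k)+\epsilon .
\]
By (iii) the finite sum $\sum_{k=0}^K w_n(k)\to 0$ as $n\to\infty$, so $\limsup_{n\to\infty}\big|\sum_{k=0}^n w_n(k)\,s_k-A\big|\le\epsilon$, and since $\epsilon$ is arbitrary the averaged limit is $A$.

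The only point that needs care — and the reason one cannot simply quote a classical matrix-regularity theorem — is that the weights $w_n(k)$ depend on the truncation level $n$, unlike the N\"orlund means; this is precisely why I verify the three Toeplitz conditions (i)--(iii) by hand. Apart from that there is no real obstacle: linearity is formal and regularity is the split-sum estimate above, both resting only on Lemmas~\ref{lemmaSumXin} and~\ref{XiLimitSeq}, already proved.
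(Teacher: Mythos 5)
Your proof is correct and follows essentially the same route as the paper: both reduce regularity to the matrix form $A_{n,k}=\tfrac{k}{n}\chi_n(k)$ via Lemmas \ref{lemmaSumXin} and \ref{XiLimitSeq} and verify nonnegativity, unit row sums, and vanishing columns, the only difference being that you inline the standard $\epsilon$-splitting argument where the paper cites the Toeplitz--Schur theorem. One small correction: your closing remark that one \emph{cannot} quote a classical matrix-regularity theorem because the weights depend on $n$ is mistaken --- the Toeplitz--Schur (Silverman--Toeplitz) theorem applies precisely to matrix methods $t_n=\sum_k A_{n,k}s_k$ with row-dependent weights, and that is exactly what the paper invokes; it is the N\"orlund structure, not the general matrix framework, that your weights fail to fit.
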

\begin{proof} 
To show that the summability method is linear, we note that: 
\begin{equation}\label{XiregStabLinear_1} 
\sum_{j=0}^n \chi_n \big(\alpha\, a_j + \beta\, b_j\big) = \alpha \sum_{j=0}^n \chi_n(j)\, a_j + \beta \sum_{j=0}^n \chi_n(j)\, b_j
\end{equation}
Because linearity holds for all $n$, it holds at the limit $n\to\infty$. To show regularity, we use the Toeplitz-Schur Theorem. The Toeplitz-Schur Theorem states that any matrix summability method $t_n=\sum_{k=0}^\infty A_{n,k} s_k \,(n=0,1,2,\ldots)$ in which $\lim_{n\to\infty} s_n$ is defined by $\lim_{n\to\infty} t_n$ is regular \emph{if and only if} the following three conditions hold \cite{HardyDiverg}: 
\begin{enumerate}
\item $\sum_{k=0}^\infty |A_{n, k}| < H$, for all $n$ and some constant $H$ that is independent of $n$. 
\item $\lim_{n\to\infty} A_{n, k} = 0$ for each $k$. 
\item $\lim_{n\to\infty} \sum_{k=0}^\infty A_{n,k} = 1$. 
\end{enumerate}

Using Theorem \ref{XiLimitSeq}, we see that the summability method $\Xi$ in Claim \ref{claim411}  is a matrix summability method characterized by $A_{n,k} = \frac{k}{n}\,\chi_n(k)$. Because $A_{n,k}\ge 0$ and $\sum_{k=0}^\infty A_{n,k} = \sum_{k=0}^n A_{n,k} = 1$, both conditions 1 and 3 are immediately satisfied. In addition, for each fixed $k$,  $\lim_{n\to\infty} A_{n, k} = 0$, thus condition 2 is also satisfied. Therefore, the summability method $\Xi$ is regular. In fact, because  $A_{n,k}\ge 0$, $\Xi$ is totally regular. 
\end{proof} \hrule\vspace{12pt} 

\subsection{Convergence}\label{Section4dot2_2}
In this section, we show that the summability method $\Xi$ correctly sums Taylor series expansions if the function $f(x)$ is analytic in the domain $[x_0, x)$ and if the Taylor series expansion is not \lq\lq too rapidly'' diverging. \\ \hrule 

\begin{claim} \label{claim412}
Let $a_k=\frac{f^{(k)}(x_0)}{k!}(x-x_0)^k$. If a function $f(x)$ is analytic in an open disc around each point in the domain $[x_0, x]$ and $a_n=o(\kappa^n)$, where $\kappa \approx 3.5911$ is the solution to the equation $\log{\kappa} - \frac{1}{\kappa} = 1$, then we almost always have $\lim_{n\to\infty} \{f(x) - \hat f_n(x)\} = 0$, where $\hat f_n(x)$ is as defined by Claim \ref{claim411}.
\end{claim}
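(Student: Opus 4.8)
The plan is to push everything onto the polynomials $P_n(u):=\sum_{j=0}^{n}\chi_n(j)\,u^{j}$, which encode the whole method. The identity I would establish first is the Laplace representation
\begin{equation*}
P_n(u)\;=\;\sum_{j=0}^{n}\binom{n}{j}\frac{j!}{n^{j}}\,u^{j}\;=\;\int_0^{\infty}e^{-s}\Bigl(1+\tfrac{us}{n}\Bigr)^{n}\,ds ,
\end{equation*}
which follows from $\chi_n(j)=\binom{n}{j}j!/n^{j}$ and $j!=\int_0^{\infty}e^{-s}s^{j}\,ds$. An equivalent, suggestive reading is probabilistic: $\hat f_n(x)=\mathbb{E}\bigl[S_{T_n-1}\bigr]$, where $S_k=\sum_{j=0}^{k}a_j$ and $T_n$ is the first-collision time in the birthday problem on $n$ equally likely days, since $\chi_n(j)=\Pr[T_n>j]$. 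Because $T_n/\sqrt n$ converges in law to a Rayleigh variable, $T_n\to\infty$ almost surely, which already settles the convergent case by dominated convergence; the real content of Claim~\ref{claim412} is the divergent case.

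For the divergent case I would move to a contour integral. Since $f$ is analytic in an open disc about each point of $[x_0,x]$, the power series $h(z):=\sum_{j\ge0}a_j z^{j}$ is exactly $z\mapsto f\bigl(x_0+(x-x_0)z\bigr)$; it therefore continues analytically to a neighbourhood $U$ of $[0,1]$ with $h(1)=f(x)$, while the hypothesis $a_n=o(\kappa^{n})$ makes $h$ analytic on $\{|z|<1/\kappa\}$. Writing $a_j=\frac{1}{2\pi i}\oint_{|w|=r}h(w)\,w^{-j-1}\,dw$ with $r<1/\kappa$ and summing against $\chi_n(j)$ gives
\begin{equation*}
\hat f_n(x)\;=\;\frac{1}{2\pi i}\oint_{|w|=r}\frac{h(w)}{w}\,P_n(1/w)\,dw .
\end{equation*}
For $|w|$ large one has $P_n(1/w)\to\frac{w}{w-1}$ uniformly, so the aim is to deform the small circle $|w|=r$ into a contour that winds once around $w=1$: the deformation then extracts the term $h(1)=f(x)$, and what remains is to show the residual integral over the deformed contour is $o(1)$.

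The obstacle, and the origin of the constant, sits on the disc $D:=\{\,|w-\tfrac12|\le\tfrac12\,\}=\{\Re(1/w)\ge 1\}$, through which any loop around $w=1$ must pass. A Laplace/saddle-point evaluation of the representation above (saddle at $v^{\ast}=1-1/u$ with $u=1/w$) yields $|P_n(1/w)|=\exp\{n\bigl(\log|1/w|+\Re w-1\bigr)+O(\log n)\}$, so $P_n$ is exponentially large on most of $D$. The decisive estimate is the companion one for $\mathbb{E}[z^{T_n}]=\tfrac{z}{n}P_n'(z)$ on the circle $|z|=\kappa$: its exponential rate at $z=\kappa e^{i\theta}$ is $\log\kappa+\tfrac{\cos\theta}{\kappa}-1$. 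Total regularity of $\mathfrak{T}$ forces the dominant singularity of $h$ off the positive real ray, so $\theta$ is bounded away from $0$, and this rate is negative for every admissible singularity location precisely when $\log\kappa-\tfrac1\kappa\le 1$, the extremal case being a singularity at $z=-1/\kappa$. One would then choose the deformed contour so that it meets $D$ only where $\log|1/w|+\Re w-1<0$ and is pinched against $w=1$ only at the single point where this quantity vanishes, so that the residual integral is $O(e^{-cn})$.

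The hard part is exactly this last step: exhibiting such a contour and controlling $P_n$ uniformly along it, especially near the pinch at $w=1$, where the saddle sits at the boundary of the integration range and $P_n$ grows only like $\sqrt n$. I also expect — and would state plainly — that no admissible contour exists once $h$'s dominant singularity lies too close to the positive real axis, so the limit can genuinely fail there; this is what the hedge ``almost always'' in the statement is protecting, and a rigorous version of the claim should be read with that caveat in force.
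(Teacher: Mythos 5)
Your route is genuinely different from the paper's, though both funnel into the same critical computation. The paper's proof has two parts: an informal ``propagation of information'' argument, tracking the error terms $E_j^k$ through intermediate points $z\in[x_0,x]$, to justify convergence when $f$ is analytic along the segment; and then a reduction of the rapid-divergence question to the alternating geometric series via the Borel--Okada principle, for which it writes $\sum_{k=0}^n \chi_n(k)z^k = -\tfrac{1}{z}\int_0^n(1-\tfrac{t}{n})^n e^{t/z}\,dt + \chi_n(n)z^n e^{n/z}$ and reads off the convergence condition from the boundary term $\chi_n(n)z^n e^{n/z}\sim\sqrt{2\pi n}\,\exp\{n(\log z-1+1/z)\}$; the constant $\kappa$ is where this rate vanishes on the negative real axis. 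Your Laplace representation $P_n(u)=\int_0^\infty e^{-s}(1+us/n)^n\,ds$ is the same identity in different clothing, and your saddle value $\log|1/w|+\Re w-1$ is the same exponent after the substitution $u=1/w$, so the origin of $\kappa$ is identical in both treatments. What you add is the birthday-problem reading $\hat f_n(x)=\mathbb{E}\bigl[S_{T_n-1}\bigr]$ with $\chi_n(j)=\Pr[T_n>j]$, which settles the convergent case cleanly and is absent from the paper, and the attempt to treat the divergent case by one global contour deformation of $\oint h(w)w^{-1}P_n(1/w)\,dw$ around $w=1$, where the paper instead chains local Taylor data through overlapping discs.

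The step you flag as hard --- exhibiting a contour through $\{\Re(1/w)\ge 1\}$ on which $P_n$ stays controlled, and handling the pinch at $w=1$ --- is precisely the step the paper does not attempt either: it bypasses it by citing the Borel--Okada principle, which reduces convergence of a regular analytic method on its star to its behaviour on the geometric series. If you accept that principle as a black box, your argument closes at the same point the paper's does; if not, the contour construction remains the genuinely open part of your write-up. Your closing caveat, that the limit can fail when the dominant singularity of $h$ lies near the positive real ray inside the unit disc, is exactly the failure mode the hedge ``almost always'' protects in the statement, and it is consistent with the paper's own acknowledgment that the claim is not unconditional. One small correction: total regularity of $\mathfrak{T}$ plays no role in excluding singularities from the positive ray --- what excludes them on $[0,1]$ is the analyticity hypothesis itself, and singularities beyond $z=1$ on that ray are harmless because the series then converges absolutely at $z=1$; the dangerous configurations are singularities at small angle just outside the disc of radius $1/\kappa$.
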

\begin{proof}
First, it is easy to come up with examples where violating any of the two conditions makes $\Xi$ fail in correctly summing Taylor series expansions. For instance, the Taylor series of $e^{-\frac{1}{x^2}}$ is not regular at the origin so the first condition is violated. Here, because the Taylor series expansion is exactly the zero function, applying $\Xi$ will not yield correct results. Also, applying $\Xi$ to the alternating geometric series $1-x+x^2-x^3+\ldots$ shows that $\Xi$ converges if $0\le x\le \kappa$ and diverges if $x>\kappa$. 

Second, let us define the error terms $E_j^k$ using Eq \ref{theorem411_1}. That is, $E_j^k$ is the error in our approximation of the \emph{k}th derivative of $f(x)$ at the point $x_j=x_0+ j \Delta x$. 
\begin{equation}\label{theorem411_1} 
E_j^k = f_j^k - \hat f_j^k 
\end{equation}
We also note in the argument of Claim \ref{claim411} that $E_n^k$ is alternatively given by: 
\begin{equation}\label{theorem411_2} 
E_n^k = f_n^k - \hat f_{n-1}^k - \hat f_{n-1}^{k+1} \Delta x = f_n^k -f_{n-1}^k - f_{n-1}^{k+1} \Delta x + E_{n-1}^k + E_{n-1}^{k+1} \Delta x
\end{equation}
Applying Eq \ref{theorem411_2} repeatedly $m$ times yields: 
\begin{equation}\label{theorem411_3}
f_n - \hat f_n = E_n^0 = f_n - \sum_{k=0}^m \binom{m}{k} f_{n-m}^k \Delta x^k + \sum_{k=0}^m \binom{m}{k} E_{n-m}^k \Delta x^k 
\end{equation}

Eq \ref{theorem411_3} is intuitive. Here, if we denote  $z=x_0 + (n-m) \Delta x$, then Eq \ref{theorem411_3} basically states that computing $f(x)$ using the Taylor series expansion around $x_0$ is equivalent to computing $f(x)$ using the Taylor series expansion around an intermediate point $z\in[x_0, x]$, except for the fact that our estimates of $f^{(r)}(z)$ are themselves approximated. For example, if $z=x_0$, then $E_0^k=0$ and $m=n$, and we recover the original definition $E_n^0 = f_n-\hat f_n$. 

Now, if we fix our choice of the intermediate point $z$, chosen such that $x$ is \emph{inside the analytic disc of $f$ around the point $z$}, then $m\to\infty$ as $n\to\infty$. Most importantly, $f_n-\sum_{k=0}^m \binom{m}{k} f_{n-m}^k \Delta x^k$ goes to zero as $n\to\infty$, because the summation converges to the classical Taylor series expansion around $z$, and because $x$ is within the analytic disc of $f$ around $z$ by assumption. Consequently, we have the following expression:
\begin{equation}\label{theorem411_4} 
f_n-\hat f_n \sim \sum_{k=0}^m \binom{m}{k} E_{n-m}^k \Delta x^k 
\end{equation}

As stated earlier, Eq \ref{theorem411_4} has a simple intuitive interpretation. If $f(x)$ is computed using the summability method in Claim \ref{claim411} where series expansion is taken around $z$ instead of $x_0$, and if $x$ is within the analytic disc of $f$ around $z$, then overall error asymptotically comes \emph{solely} from errors in our approximation of higher order derivatives $f^{(k)}(z)$. Now, if $z$ itself was within the analytic disc of $f$ around $x_0$, then its higher order derivatives $f^{(k)}(z)$ can be approximated with arbitrary precision using higher order derivatives $f^{(k)}(x_0)$. The process for doing this is exactly what the construction of the summability method in Claim \ref{claim411} performs. Therefore, the error in the approximation of $f(x)$ using the summability method in Claim \ref{claim411} similarly goes to zero in such case. Loosely speaking, we will say that local behavior of $f$ at $x_0$ is used to compute local behavior of $f$ at $z$. In turn, local behavior of $f$ at $z$ is used to compute local behavior of $f$ at $x$. The summability method $\Xi$ becomes, therefore, a method of propagating information about $f$ from $x_0$ to $x$ through the intermediary point $z$.

Such reasoning holds true even if $z$ is not within the analytic disc of $f$ around $x_0$. More specifically, if there exists a sequence of \emph{collinear} points $(x_0, z_1, z_2, \ldots, x)$, such that each point $z_k$ is within the analytic disc of $f$ around $z_{k-1}$, then the summability method propagates information about the local behavior of $f$ around $x_0$ to the neighborhood of $x$ by passing it through the analytic region $(x_0, z_1, z_2, \ldots, x)$, and the overall error term in our approximation of $f(x)$ goes to zero.
 
Therefore, we expect $\Xi$ to correctly sum Taylor series expansions that are not too rapidly diverging. To assess the impact of rapid divergence, we look into the alternating geometric series $\sum_{k=0}^\infty (-z)^k$. The motivation behind the use of the geometric series as a reference is the \emph{Borel-Okada} principle, which essentially states that the geometric series is sufficient in quantifying conditions of convergences of analytic summability methods \cite{tauberianTheory}. In particular, if a summability method can correctly sum the geometric series in its star, then it can sum any Taylor series in its star as well \cite{HardyDiverg}. 

However, for the geometric series, we have the following identity: 
\begin{equation}\label{geoSeries_1}
\sum_{k=0}^n \chi_n(k)\, z^k = - \frac{1}{z} \int_0^n \big(1-\frac{t}{n}\big)^n\, e^{\frac{t}{z}}\,dt + \chi_n(n)\,z^n\,e^{\frac{n}{z}}
\end{equation}

If $\mathcal{R}(z)<0$, the first term in the right hand side converges to $(1-z)^{-1}$. So, in order for the summability method $\Xi$ to correctly sum the alternating geometric series $\sum_{k=0}^\infty (-z)^k$ when $n\to\infty$, we must have $\chi_n(n)\,z^n\,e^{\frac{n}{z}} \to 0$. However, this happens only if $|z|<\kappa$, where $\kappa$ is as defined in the claim. Therefore, we indeed have that some too rapidly diverging Taylor series expansions are not $\Xi$ summable, but most divergent series of interest in this manuscript are $\Xi$ summable as will be shown throughout the sequel.
\end{proof} \hrule\vspace{12pt} 

Before we analyze the asymptotic behavior of the error term of the summability method $\Xi$, we will illustrate how the approximation method works. Our first example is the logarithmic function $f(x)=\log{x}$ expanded around $x_0=1$ whose Taylor radius of convergence is $|x|<1$. In Figure \ref{Figure4_2_1}, the behavior of the approximation $\hat f_n(x)$  is depicted for $n=100$ and $x=3$, which falls outside the radius of convergence of the Taylor expansion of the logarithmic function. The horizontal axis in the plot corresponds to the index $0\leq k \leq n$ while the vertical axis corresponds to the the absolute value of the associated partial sum of the summability method, given by $|\sum_{j=1}^k (-1)^{j+1}\chi_n(j) \frac{(x-x_0)^j}{j}|$. As shown in the figure, the partial sum initially grows very rapidly because the Taylor series diverges; in fact, it grows up to an order of $10^6$. Nevertheless, it converges eventually to the value of the original function, where the relative approximation error in this particular example is less than 0.2\%, which is accurate to a remarkable degree given the fact that we used a polynomial of degree 100 for a point $x=3$ that lies outside the radius of convergence of the corresponding Taylor's series.
\begin{figure} 
\centering
\includegraphics[scale=0.4]{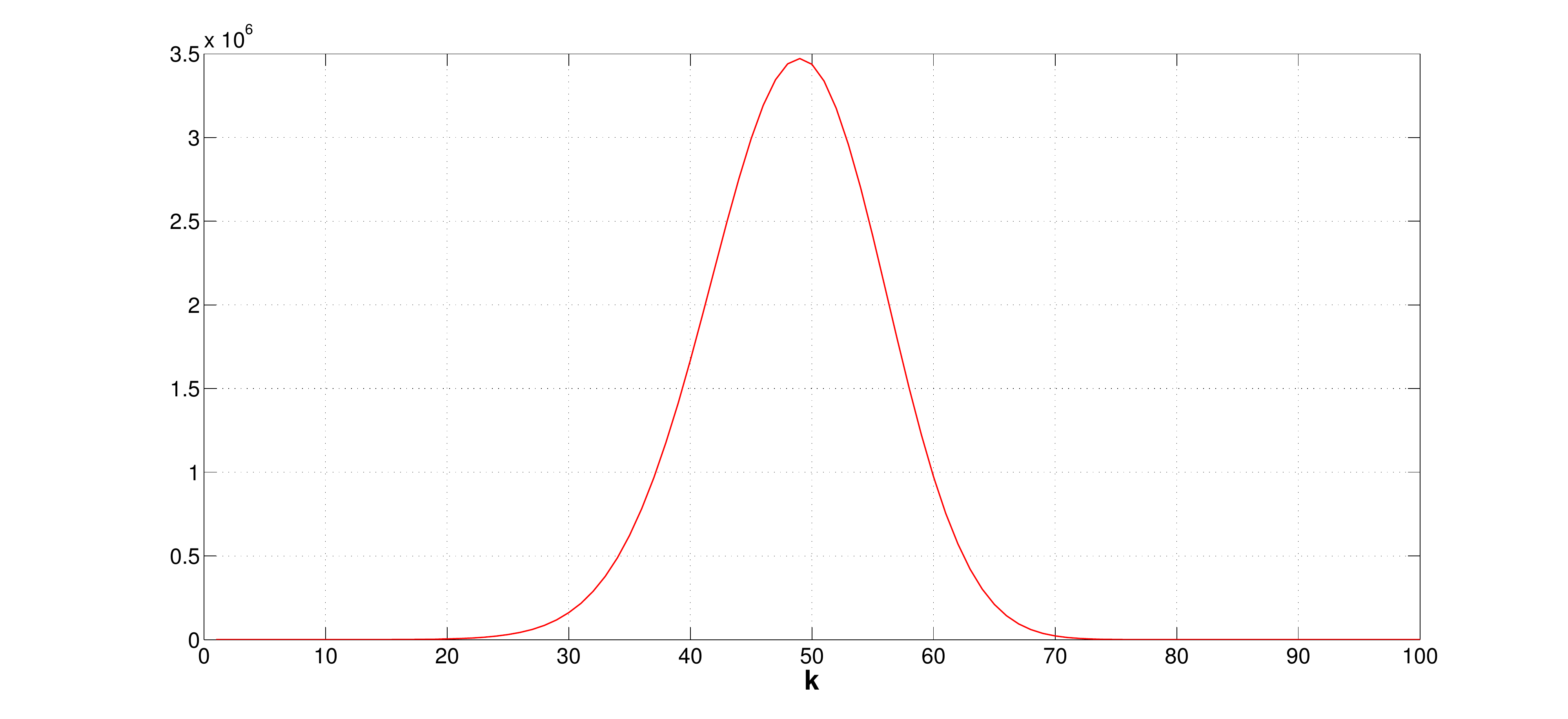}
\caption[The summability method $\Xi$ applied to the logarithmic function]{$|\sum_{j=1}^k (-1)^{j+1}\chi_n(j) \frac{(x-x_0)^j}{j}|$ plotted against $k$ for $n=100$ and $x-x_0=2$}
\label{Figure4_2_1}
\end{figure}

Because a complex-valued function $f(x)$ satisfies the Cauchy-Riemann equations, which essentially state that higher derivatives $f^{(m)}(x)$ are the same regardless of the direction of the step size $\Delta x$ over the complex plane $\mathbb{C}$, Claim \ref{claim411} and Claim \ref{claim412} also hold for complex-valued functions. For example, if $f(x)=(1-x)^{-1}$, and if we use its well-known series expansion around $x_0=1$, then the approximation $\hat f_n(x)$ converges to $f(x)$ as $n\to\infty$ in a larger disc than the radius of convergence of the classical Taylor series. For example, if $x=-1+i$, where $i=\sqrt{-1}$, we have $\hat f_{30}(x)=0.3941+i\, 0.2010$ and $\hat f_{50}(x)=0.3965 + i\, 0.2006$. By contrast, we know that the exact value is given by $f(x)=0.4+i \,0.2$. Clearly, $\hat f_n(x)$ indeed approaches $f(x)$ as $n\to\infty$ even though $x=-1+j$ falls outside the radius of convergence of the classical Taylor series.

Moreover, we can immediately deduce using Theorem \ref{theorem231} that the following series expansion holds for the derivative of the log-factorial function $\varpi'(n)$:
\begin{equation}\label{Eq42_1}
\varpi'(n)=\psi(n+1)=-\lambda + \sum_{k=2}^\infty (-1)^k \zeta_k n^{k-1}
\end{equation}
Plugging $n=1$ into both sides of the last equation yields:
\begin{equation}\label{Eq42_2}
\sum_{k=2}^\infty (-1)^k \zeta_k =1
\end{equation}
However, the left-hand sum diverges. Using $\Xi$, on the other hand, the left-hand sum is summable and its sum indeed converges to 1. Moreover, if we take the Grandi series in Eq \ref{GrandiSeries}, which arises in the series expansion of $(1+x)^{-1}$ using $x=1$, the summability method $\Xi$ assigns a value of $\frac{1}{2}$, which is what we would expect by plugging $x=1$ into the original function.
\begin{equation}\label{GrandiSeries}
\sum_{k=0}^\infty (-1)^k =\frac{1}{2}
\end{equation}
Becauase the summability method $\Xi$ is consistent under linear combinations, we can combine both results in Eq \ref{Eq42_2} and Eq \ref{GrandiSeries} to deduce that Eq \ref{Eq42_4} holds; a result that was proved earlier in Eq \ref{Eq244_2}. 
\begin{equation}\label{Eq42_4}
\sum_{k=2}^\infty (-1)^k (\zeta_k-1)=\frac{1}{2}
\end{equation}

Interestingly, the summability method $\Xi$ is not restricted to Taylor series expansions. For example, let us consider the divergent sum $\sum_{k=1}^\infty \sin{(k\theta)}$. As Hardy showed, the divergent sum should be naturally assigned the value given in Eq \ref{Eq42Discussion_4}. Simple numerical checks using the summability method of Claim \ref{claim411} confirm that this is indeed correct. 
\begin{equation}\label{Eq42Discussion_4}
\sum_{k=1}^\infty \sin{(k\theta)} = \frac{1}{2}\cot{\frac{\theta}{2}}
\end{equation}

Moreover, one illustrative example of regularity of the summability method is the series $\sum_{k=0}^\infty 1/k!$, which is equal to the natural logarithmic base $e$. Here, applying the summability method yields:
\begin{equation}\label{Eq42Discussion_5}
\sum_{k=0}^\infty \frac{1}{k!} = \lim_{n\to\infty} \Big\{\sum_{k=0}^n \chi_n(k) \frac{1}{k!}\Big\} = \lim_{n\to\infty} \sum_{k=0}^n \binom{n}{k}\, n^{-k}
\end{equation}
Using the Binomial Theorem, we recover the original definition of $e$: 
\begin{equation}\label{Eq42Discussion_6}
e = \lim_{n\to\infty} (1+\frac{1}{n})^n 
\end{equation}

Of course, the summability method itself cannot be immediately used to deduce closed-form expressions of divergent sums. For example, the sum $\sum_{k=0}^\infty (-1)^k \log{k!}$ is summable to -0.1129 through direct numerical computation but the summability method itself cannot be used to determine an analytic expression of the latter constant. Later, we will provide an answer to this question by introducing the analog of the Euler-Maclaurin summation formula for alternating sums, which shows that   $\sum_{k=0}^\infty (-1)^k \log{k!}$ is summable to $\frac{1}{4} \log{\frac{2}{\pi}}$. 

Finally, we conclude this section with two examples: one in summing Taylor series expansion of simple finite sums, and one in summing the Euler-Maclaurin summation formula. Many other examples will be presented throughout rest of the paper. To mitigate the impact of numerical precision errors, calculations in the following examples were carried out using the \emph{Multi-Precision Math} package (\textbf{mpmath}). \textbf{mpmath} is a Python library for arbitrary-precision floating-point arithmetic and includes built-in support for many special functions such as the Factorial and the Riemann zeta function \cite{mpmath}. 

For our first example, let us return to the sum of square roots function $f(x)= \sum_{k=1}^x \sqrt{k}$ whose series expansion was derived earlier in Eq \ref{Example_261_5}, also rewritten here in Eq \ref{Eq43Sqrt_1} below, and employ the summability method of Claim \ref{claim411} to compute its values outside the radius of convergence. Here, we note that we can alternatively compute the function for $0\leq x < 1$ using its original Taylor series and employ the recursive property $f(x)=\sqrt{x}+f(x-1)$ to compute the function for different values of $x$ that fall outside the radius of convergence. However, our intention here is to verify validity/correctness of the series expansion in Eq \ref{Eq43Sqrt_1}. Figure \ref{figureSSqrt} displays results for $x=2$ and $x=3$. Here, by definition, we expect the series expansion to converge to $1+\sqrt{2}$ if $x=2$ and to $1+\sqrt{2}+\sqrt{3}$ if $x=3$. As shown in the figure, this is indeed the case. 
\begin{equation}\label{Eq43Sqrt_1} 
f_G(x) = \sum_{k=1}^x \sqrt{k} = -\frac{\zeta_{1/2}}{2} x + \sum_{m=2}^\infty (-1)^m \frac{(2m-3)!}{4^{m-1} \,m!\, (m-2)!} \zeta_{m-1/2} \,x^m 
\end{equation}

\begin{figure} [h] 
\centering
\includegraphics[scale=0.4]{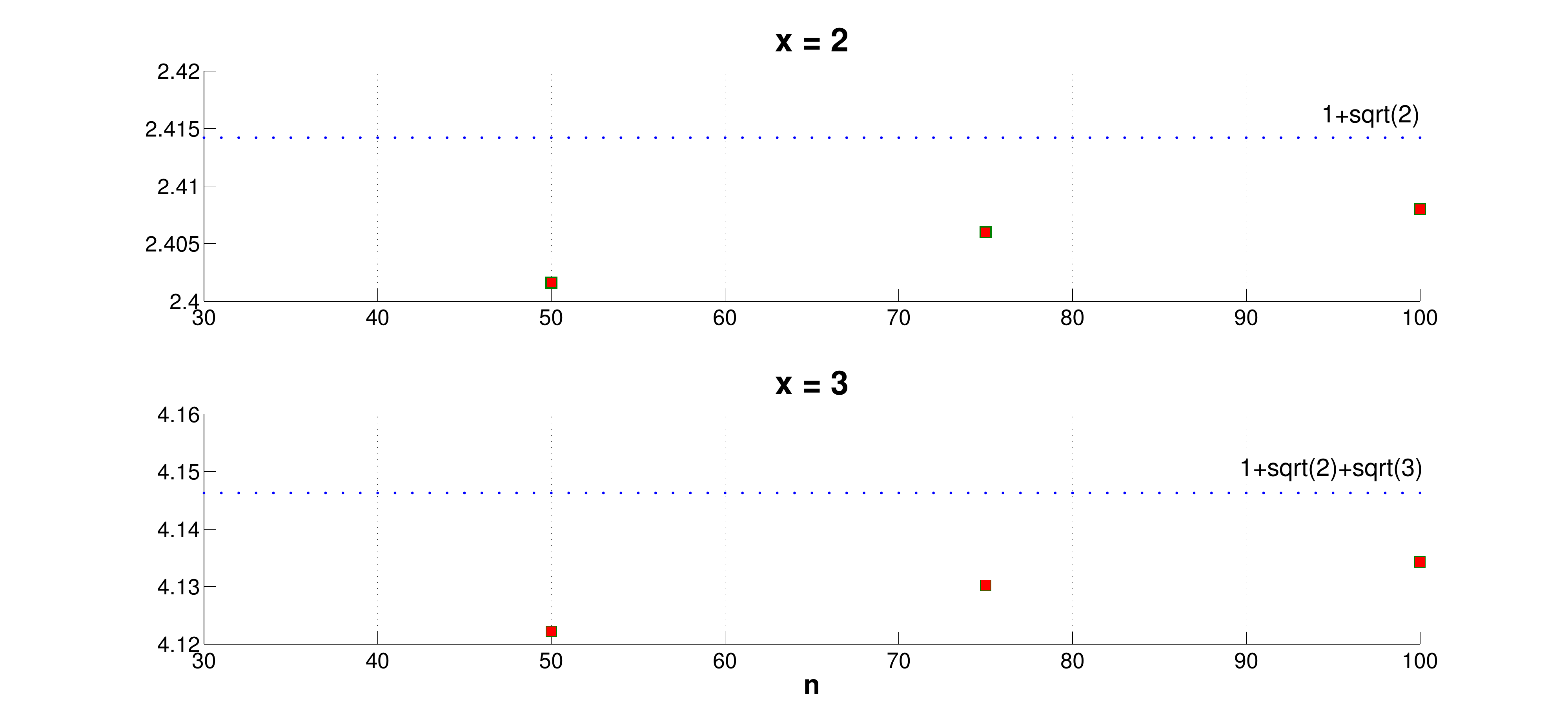}
\caption[The summability method $\Xi$ applied to the sum of square roots function]{$\Xi$ applied to sum of square roots function. Values of $\Xi$ are highlighted in red.}
\label{figureSSqrt}
\end{figure}

For our last example, we look into using the summability method $\Xi$ in evaluating the Euler-Maclaurin summation formula when it diverges. Here, if we apply Corollary \ref{corollary251_1} to the Harmonic sum, we obtain $\zeta_2 = \sum_{k=0}^\infty B_k$, where $B_k$ are again Bernoulli numbers. Clearly, the sum diverges but using $\Xi$ with small values of $n$ shows that the sum of all Bernoulli numbers can be rightfully assigned the value $\zeta_2$. 

To see that the sum of all Bernoulli numbers should be naturally assigned the value $\zeta_2$, it is instructive to alternatively consider the function $f(x)$ whose series expansion around $x=0$ is given in Eq \ref{Eq43Bern_1}. Using the asymptotic expression for Bernoulli numbers given by Euler \cite{Apostol1973, SandiferBernoulli}, it is straightforward to see that the series expansion in Eq \ref{Eq43Bern_1} converges for $x=0$ only and diverges elsewhere. Thus, without aid of summability theory, we cannot examine behavior of this function. Using the summability method $\Xi$, however, the function can be approximated despite the fact that it has been defined using a series expansion that diverges everywhere except at $x=0$.  
\begin{equation}\label{Eq43Bern_1}
f(x) = \sum_{k=0}^\infty B_k \, x^k 
\end{equation}

To show the behavior of the function in Eq \ref{Eq43Bern_1}, we use $\Xi$ for different values of $n$, which is shown in Table \ref{Tab43Bern}. Clearly, the function is well defined even though we only know its series expansion that is almost everywhere divergent! Using the Euler-Maclaurin Summation Formula, it can be shown that the series expansion in Eq \ref{Eq43Bern_1} corresponds to the function $f(x)= \frac{1}{x}\varpi^{(2)}(\frac{1}{x}) +x$ , where $\varpi$ is the log-factorial function \footnote{In MATLAB, the function is given by the command: \ttfamily{1/x*psi(1,1/x+1)+x}}. Thus, we can contrast the estimated values in Table \ref{Tab43Bern} with the actual values of the original function, which is given in the last row of the table. Clearly, they both agree as expected \footnote{Note that because the series expansion is divergent for all $x>0$, the condition of being analytic in $[x_0, x)$ is no longer valid so the summability method is not expected to work. However, it works reasonably well in this particular example. As a result, even for cases in which the limit $n\to\infty$ of $\Xi$ does not exist, we can still find approximate values of divergent sums using small values of $n$.}. Note that we could use this last result to define the divergent sum of all Bernoulli numbers by $\sum_{k=0}^\infty B_k = \zeta_2$, a result that was first stated by Euler \cite{Peng2002}. The interpretation of this last equation follows from the results in the previous section. In particular, we could say that whenever we perform algebraic operations such as additions and multiplications on the infinite sum of all Bernoulli numbers and arrive eventually at a convergent series, then we could compute the exact value of that convergent series by substituting for the sum of all Bernoulli numbers its value $\zeta_2$.

\definecolor{Gray}{gray}{0.9}
\definecolor{lightGreen}{rgb}{0.85,1,0.85}

\begin{table}[h] 
\centering\begin{tabular} { |l|c|c|c|c|c|c|c|c|c|}
\hline 
\rowcolor{Gray}
 \textbf{x=}  &  \textbf{-1} & \textbf{-0.7} & \textbf{-0.5} & \textbf{-0.2} & \textbf{ 0 } & \textbf{0.2} & \textbf{0.5} & \textbf{0.7} & \textbf{1}\\[7pt]
\hline
\textbf{n=20}  &  0.6407 & 0.7227 & 0.7882 & 0.9063 & 1.000 & 1.1063 & 1.2882 & 1.4227 & 1.6407\\[7pt]
\hline
\textbf{n=25}  &  0.6415 & 0.7233 & 0.7885 & 0.9064 & 1.000 & 1.1064 & 1.2885 & 1.4233 & 1.6415\\[7pt]
\hline
\textbf{n=30}  &  0.6425 & 0.7236 & 0.7888 & 0.9064 & 1.000 & 1.1064 & 1.2888 & 1.4236 & 1.6425\\[7pt]
\hline
\rowcolor{lightGreen}
\textbf{Exact}  &  0.6449 & 0.7255 & 0.7899 & 0.9066 & 1.000 & 1.1066 & 1.2899 & 1.4255 & 1.6449\\[7pt]
\hline
\end{tabular}
\caption {Computed values of $\sum_{k=0}^\infty B_k\,x^k$ using $\Xi$ summability method for different choices of $n$. Clearly, the function is well defined even though its series expansion is everywhere divergent except at $x=0$.}\label{Tab43Bern}
\end{table}

\subsection{Analysis of the Error Term} \label{Section4dot4}
In this section, we analyze the asymptotic behavior of the error term in the summability method given in Claim \ref{claim411} for analytic functions. We will show why the error term in the summability method is $O(\frac{1}{n})$ and provide a simple expression for its asymptotic value. Next, we provide an interpretation of the asymptotic expression of the error term and present a few numerical examples that illustrate its accuracy. To do this, we start with the following lemma.\\ \hrule

\begin{lemma}\label{lemma441} 
If  $[x_0, x]$ falls in the interior of the region of convergence of $\Xi$ for $f(x)$, then: 
\begin{equation}\label{lemma441Eq} 
\lim_{n\to\infty} \Big\{\sum_{k=1}^n \chi_n(k) \Big[f(x)-\sum_{j=0}^{k-1} \frac{f^{(j)}(x_0)}{j!} (x-x_0)^j\Big] \Big\}= (x-x_0)\,f'(x)
\end{equation}
\end{lemma}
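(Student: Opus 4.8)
The plan is to convert the $\chi_n$-weighted sum into a single integral by means of the integral form of Taylor's remainder, and then to recognize the integrand as a mild perturbation of the $\Xi$-approximant of $f'$. Write $a_j:=\frac{f^{(j)}(x_0)}{j!}(x-x_0)^j$ and $B_n:=\sum_{k=1}^n\chi_n(k)\bigl[f(x)-\sum_{j=0}^{k-1}a_j\bigr]$, so the goal is $\lim_{n\to\infty}B_n=(x-x_0)f'(x)$. Since $f$ is analytic on a neighbourhood of $[x_0,x]$, Taylor's theorem with integral remainder gives, for every $k\ge 1$,
$$f(x)-\sum_{j=0}^{k-1}a_j=\frac{1}{(k-1)!}\int_{x_0}^x(x-t)^{k-1}f^{(k)}(t)\,dt,$$
and pulling the finite sum inside the integral yields
$$B_n=\int_{x_0}^x\phi_n(t)\,dt,\qquad \phi_n(t):=\sum_{k=1}^n\frac{\chi_n(k)}{(k-1)!}(x-t)^{k-1}f^{(k)}(t)=\sum_{m=0}^{n-1}\chi_n(m+1)\,b_m(t),$$
where $b_m(t):=\frac{g^{(m)}(t)}{m!}(x-t)^m$ is the $m$-th Taylor coefficient of $g:=f'$ based at $t$ and evaluated at $x$.

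The core claim is that $\phi_n(t)\to f'(x)$ as $n\to\infty$ for each $t\in[x_0,x)$, locally uniformly on $[x_0,x-\delta]$. Using $\chi_n(m+1)=(1-\tfrac mn)\chi_n(m)$, split $\phi_n(t)=\sum_{m=0}^{n-1}\chi_n(m)b_m(t)-\tfrac1n\sum_{m=0}^{n-1}m\,\chi_n(m)b_m(t)$. The first sum is the $\Xi$-approximant of $g=f'$ up to the single term $\chi_n(n)b_n(t)$, which vanishes because $\chi_n(n)=n!/n^n$ decays super-exponentially while $b_n(t)=o(\kappa^n)$ in the $\Xi$-region; by the convergence theorem for $\Xi$ (Claim \ref{claim412}) it therefore tends to $g(x)=f'(x)$. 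Here one uses that, since $[x_0,x]$ is interior to the region of convergence of $\Xi$ for $f$, each subsegment $[t,x]$ is interior to the corresponding region for $f'$ (differentiation does not move singularities, and Cauchy's estimates keep the Taylor coefficients of $f'$ inside the $o(\kappa^m)$ growth restriction of Claim \ref{claim412}). The second sum equals $\sum_m\frac{m\,\chi_n(m)}{n}b_m(t)$, which by Lemma \ref{XiLimitSeq} together with $\sum_{m=0}^n m\chi_n(m)=n$ (Lemma \ref{lemmaSumXin}) is exactly the $\Xi$-sequence-limit of the sequence $\bigl(b_m(t)\bigr)_m$; since the series $\sum_m b_m(t)$ is $\Xi$-summable (to $f'(x)$), hence $\mathfrak{T}$-summable, Lemma \ref{TlimitZero} forces that sequence-limit to be $0$. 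Thus $\phi_n(t)\to f'(x)$.

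Finally, pass to the limit under the integral: the locally uniform convergence $\phi_n(t)\to f'(x)$ on $[x_0,x-\delta]$ together with a uniform bound $\sup_n\sup_{t\in[x_0,x]}|\phi_n(t)|<\infty$ — obtained from the same Borel--Okada-type estimate that underlies Claim \ref{claim412}, applied uniformly to $f'$ over the compact segment — gives, by dominated convergence,
$$\lim_{n\to\infty}B_n=\int_{x_0}^x\lim_{n\to\infty}\phi_n(t)\,dt=\int_{x_0}^x f'(x)\,dt=(x-x_0)\,f'(x),$$
since $f'(x)$ is constant in $t$. As a consistency check, $(x-x_0)f'(x)=\sum_{j\ge 1}j\,a_j$ (differentiate the Taylor series of $f$ and multiply by $x-x_0$), which is precisely the value $\Xi$ assigns to $\sum_{j\ge 1}j\,a_j$, so the answer is the natural one.

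The step I expect to be the main obstacle is the uniform control inside the core claim: showing that the corrective term $\tfrac1n\sum_m m\chi_n(m)b_m(t)$ vanishes and that $\phi_n(t)\to f'(x)$ uniformly in $t$ when $x$ lies outside the Taylor radius of $f'$ about $t$, so that the underlying series actually diverges and is only recovered by $\Xi$. This is in essence a uniform-in-base-point version of the convergence argument already carried out for $\Xi$ in the proof of Claim \ref{claim412}, and it is where the super-exponential decay of $\chi_n$ has to be balanced carefully against the at-most-geometric growth of the Taylor coefficients; the remaining ingredients (Taylor's remainder, interchanging a finite sum with an integral, dominated convergence, and the trivial final integral) are routine.
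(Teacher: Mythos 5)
Your proof is correct and follows essentially the same route as the paper's: both convert the inner bracket to Taylor's integral remainder, exchange the finite sum with the integral, and identify the resulting integrand as an (index-shifted) $\Xi$-approximant of $f'$ based at $t$ that converges to the constant $f'(x)$, which then integrates to $(x-x_0)\,f'(x)$. The only differences are cosmetic: you omit the paper's preliminary ODE step (which merely identifies what the limit must be if it exists), and you justify the shift $\chi_n(k)\mapsto\chi_n(k+1)$ and the sum--integral interchange in more detail, using dominated convergence where the paper appeals to uniform convergence on the compact segment.
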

\begin{proof}
It is straightforward to see that if the limit in Eq \ref{lemma441Eq} exists, then Eq \ref{lemma441Eq} must hold. To see this, define $g$ to be a functional of $f(x)$ such that: 
\begin{equation}\label{lemma441_1} 
g_f=\lim_{n\to\infty} \Big\{\sum_{k=1}^n \chi_n(k) \Big[f(x)-\sum_{j=0}^{k-1} \frac{f^{(j)}(x_0)}{j!} (x-x_0)^j\Big] \Big\}
\end{equation}
Now, we differentiate both sides with respect to $x$, which yields: 
\begin{align*}\label{lemma441_2} 
\frac{d}{dx} g_f&=\lim_{n\to\infty} \Big\{\sum_{k=1}^n \chi_n(k) \Big[ \frac{f^{(k)}(x_0)}{(k-1)!} (x-x_0)^{(k-1)}+f'(x)-\sum_{j=0}^{k-1} \frac{f^{(j+1)}(x_0)}{j!} (x-x_0)^j\Big] \Big\}\\
&=g_{f'} + \lim_{n\to\infty} \sum_{k=1}^n \chi_n(k) \frac{f^{(k)}(x_0)}{(k-1)!} (x-x_0)^{(k-1)}\\
&=g_{f'} + f'(x)
\end{align*}
Therefore, we have: 
\begin{equation}\label{lemma441_3}
\frac{d}{dx} g_f = f'(x)+ g_{f'}
\end{equation} 
Since $g_f(x_0)=0$, the solution is given by: 
\begin{equation}\label{lemma441_4}
g_f(x) = (x-x_0)\,f'(x)
\end{equation} 
The above derivation assumes that the limit exists. To prove that Eq \ref{lemma441Eq} holds under stated conditions, we note that $f(x)-\sum_{j=0}^{k-1} \frac{f^{(j)}(x_0)}{j!} (x-x_0)^j$ is the error term of the Taylor series expansion, which is \emph{exactly} given by: 
\begin{equation}\label{lemma441_5}
f(x)-\sum_{j=0}^{k-1} \frac{f^{(j)}(x_0)}{j!} (x-x_0)^j = \int_{x_0}^x \frac{f^{(k)}(t)}{(k-1)!} (x-t)^{k-1} \,dt
\end{equation} 
Upon using last expression, we have that: 
\small
\begin{equation}\label{lemma441_6}
g_f = \lim_{n\to\infty} \sum_{k=1}^{n} \chi_n(k) \int_{x_0}^x \frac{f^{(k)}(t)}{(k-1)!} (x-t)^{k-1} \,dt = \int_{x_0}^x  \lim_{n\to\infty}\sum_{k=1}^{n} \chi_n(k) \frac{f^{(k)}(t)}{(k-1)!} (x-t)^{k-1} \,dt
\end{equation} 
\normalsize
Here, the justification for exchanging sums with integrals is because $t\in [x_0, x]$ and the line segment $[x_0, x]$ is in the \emph{interior} of the star-like region of convergence of $\Xi$ for the function $f(x)$, so we have uniform convergence for $t\in [x_0, x]$. Since we have: 
\begin{equation}\label{lemma441_7}
 \lim_{n\to\infty}\sum_{k=1}^{n} \chi_n(k) \frac{f^{(k)}(t)}{(k-1)!} (x-t)^{k-1} = f'(x), \\ \text{ for all } t\in[x_0, x]
\end{equation} 
Plugging Eq \ref{lemma441_7} into Eq \ref{lemma441_6} yields the desired result.
\end{proof} \hrule

\begin{theorem}\label{theorem441} 
If a function $f(x)$ is analytic in an open disc around each point in the domain $[x_0, x]$ and $[x_0, x]$ falls in the interior of the region of convergence of $\Xi$ for $f(x)$, then the error term of the summability method is asymptotically given by:
\begin{equation}\label{theorem441ErrorTerm}
f(x)-\hat f_n(x) \sim \frac{f^{(2)}(x) (x-x_0)^2}{2n}
\end{equation}
\end{theorem}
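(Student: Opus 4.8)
The plan is to obtain the asymptotic error by first rewriting $f(x)-\hat f_n(x)$ through a summation by parts that exposes the weights $\chi_n(j)$ acting on the \emph{tails} of the Taylor series, and then to run essentially the integral‑representation argument already used in the proof of Lemma~\ref{lemma441}. Write $a_j=\frac{f^{(j)}(x_0)}{j!}(x-x_0)^j$ and $s_j=\sum_{i=0}^{j}a_i$ for the Taylor partial sums, so $\hat f_n(x)=\sum_{j=0}^{n}\chi_n(j)(s_j-s_{j-1})$ with $s_{-1}=0$. Using the identity $\chi_n(j)-\chi_n(j+1)=\tfrac{j}{n}\chi_n(j)$, which is immediate from the definition of $\chi_n$ in Claim~\ref{claim411}, together with $\sum_{j=0}^{n}j\,\chi_n(j)=n$ from Lemma~\ref{lemmaSumXin}, an Abel summation gives
\begin{equation*}
f(x)-\hat f_n(x)=\chi_n(n)\bigl(f(x)-s_n\bigr)+\frac1n\sum_{j=1}^{n-1}j\,\chi_n(j)\bigl(f(x)-s_j\bigr).
\end{equation*}
This is the same rearrangement that underlies the reformulation of $\Xi$ as an averaging method in Lemma~\ref{XiLimitSeq}. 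At $x=x_0$ every term vanishes, which matches the trivial boundary value $f(x_0)-\hat f_n(x_0)=0$.

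First I would dispose of the boundary term $\chi_n(n)(f(x)-s_n)$. Since $[x_0,x]$ lies in the \emph{interior} of the region of convergence of $\Xi$ for $f$, the Taylor coefficients obey $|a_j|\le C\rho^{\,j}$ for some $\rho<\kappa<e$, with $\kappa$ as in Claim~\ref{claim412}; hence $|s_n|$ and $|f(x)-s_n|$ grow at most like $\rho^{\,n}$, whereas $\chi_n(n)=n!/n^{n}\sim\sqrt{2\pi n}\,e^{-n}$ by Stirling, so $\chi_n(n)(f(x)-s_n)=o(1/n)$ and contributes nothing to the leading asymptotics. The substance is the second term, for which I would insert the exact Taylor remainder
\begin{equation*}
f(x)-s_j=\int_{x_0}^{x}\frac{f^{(j+1)}(t)}{j!}(x-t)^{j}\,dt,
\end{equation*}
swap the finite sum with the integral, and note that the integrand becomes $(x-t)\sum_{i=0}^{n-2}\chi_n(i+1)\frac{f^{(2+i)}(t)}{i!}(x-t)^{i}$. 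Because $\chi_n(i+1)\to1$ for each fixed $i$ and $[t,x]\subseteq[x_0,x]$ is inside the $\Xi$‑region of convergence of $f^{(2)}$, this inner sum converges (uniformly in $t\in[x_0,x]$) to $(x-t)f^{(2)}(x)$ by Taylor's theorem; integrating,
\begin{equation*}
n\bigl(f(x)-\hat f_n(x)\bigr)\;\longrightarrow\;\int_{x_0}^{x}(x-t)\,f^{(2)}(x)\,dt=\tfrac12(x-x_0)^2f^{(2)}(x),
\end{equation*}
which is exactly the asserted asymptotic.

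The main obstacle is the analytic bookkeeping in this last step: justifying the interchange of $n\to\infty$ with the integral and with the tail of the series, and in particular showing that the shifted‑weight sums $\sum_i\chi_n(i+1)b_i$ have the same limit as $\sum_i\chi_n(i)b_i$ — they differ by $\tfrac1n\sum_i i\,\chi_n(i)b_i$, whose vanishing is itself a Lemma~\ref{lemma441}‑type estimate (now applied to $f^{(3)}$). All of this rests on the hypothesis that $[x_0,x]$ sits in the \emph{interior} of the region of convergence of $\Xi$, which upgrades pointwise $\Xi$‑summability to the uniform control needed to pass limits through integrals, and this is precisely where that hypothesis earns its keep, just as in Lemma~\ref{lemma441}. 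An alternative, more structural route would imitate Lemma~\ref{lemma441} directly: set $\phi_f(x)=\lim_{n\to\infty}n\bigl(f(x)-\hat f_n(x)\bigr)$, show by term‑by‑term differentiation that $\phi_f'(x)=(x-x_0)f^{(2)}(x)+\phi_{f'}(x)$ with $\phi_f(x_0)=0$, and verify that $\phi_f(x)=\tfrac12(x-x_0)^2f^{(2)}(x)$ satisfies this relation. The existence of the limit still has to be secured by the integral‑representation estimate above, so the two routes share the same hard core.
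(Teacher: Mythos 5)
Your route is genuinely different from the paper's: the paper re-derives the error \emph{exactly} from the binary-pyramid construction of Claim \ref{claim411}, introduces the weighted averages $e_k^*$, passes to the limiting density $(1+k)(1-z)^k$, and only invokes Lemma \ref{lemma441} at the last step, whereas you reach the integral-representation argument in one line by Abel summation against the weights $\tfrac{j}{n}\chi_n(j)$. Your identity $f(x)-\hat f_n(x)=\chi_n(n)\bigl(f(x)-s_n\bigr)+\tfrac1n\sum_{j=1}^{n-1}j\,\chi_n(j)\bigl(f(x)-s_j\bigr)$ is algebraically correct, and the remainder of the computation (exact Taylor remainder, interchange of sum and integral, identification of the inner sum as a shifted-weight $\Xi$-sum for $f^{(2)}$) is at the same level of rigor as the paper's own proof of Lemma \ref{lemma441}; in outline this is a legitimate and considerably shorter derivation.

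There is, however, a genuine error in how you dispose of the boundary term. You write $\rho<\kappa<e$, but $\kappa\approx 3.5911>e$: the defining equation $\log\kappa-1/\kappa=1$ forces $\log\kappa>1$. The hypothesis therefore only gives $|a_j|\lesssim\rho^{\,j}$ with $\rho$ possibly in $(e,\kappa)$, in which case $\chi_n(n)\,|f(x)-s_n|\sim\sqrt{2\pi n}\,e^{-n}\rho^{\,n+1}\to\infty$; the boundary term is not $o(1/n)$ but divergent. This occurs for admissible inputs squarely inside the theorem's scope, e.g.\ $f(x)=1/(1+x)$ or $\log(1+x)$ with $x_0=0$ and $x=3\in(e,\kappa)$ --- the very example the paper checks numerically after the theorem. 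Since the total error does tend to zero, your ``main term'' must then diverge as well, so both of your separate limit claims fail in this regime even though their sum behaves correctly. The repair is to not split at all: since $\tfrac{n}{n}\chi_n(n)=\chi_n(n)$, the two pieces recombine into the single exact identity $f(x)-\hat f_n(x)=\tfrac1n\sum_{j=1}^{n}j\,\chi_n(j)\bigl(f(x)-s_j\bigr)$ (Lemma \ref{XiLimitSeq} applied to the tails, using $\sum_{j}j\,\chi_n(j)=n$ from Lemma \ref{lemmaSumXin}), and the whole sum --- the $j=n$ term included --- should be pushed through the Taylor-remainder integral and the uniform-convergence argument of Lemma \ref{lemma441}. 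With that repair, and with the shifted-weight discrepancy you already flag handled by the same estimate, the argument goes through.
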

\begin{proof}
Because the function $f(x)$ is analytic around each point in the domain $[x_0, x]$, define $\epsilon>0$ to be the distance between the line segment $[x_0, x]$ to the nearest singularity point of $f(x)$. In the argument of Claim \ref{claim411}, we have used the following linear approximations, where $f_j^{(k)} = f^{(k)}(x_0+j\Delta x)$: 
\begin{equation}\label{XiErrTherm_1}
f_j^{(k)} \approx f_{j-1}^{(k)} + f_{j-1}^{(k+1)} \Delta x  
\end{equation}

Because the distance from $x_0+j\Delta x$ to the nearest singularity point of $f(x)$ is at least $\epsilon$, then selecting $\Delta x<\epsilon$ or equivalently $n>\frac{x-x_0}{\epsilon}$ implies that the error term of the linear approximation is \emph{exactly} given by Eq \ref{XiErrTherm_2}. This follows from the classical result in complex analysis that a analytic function is equal to its Taylor series representation within its radius of convergence, where the radius of convergence is, at least, equal to the distance to the nearest singularity. 
\begin{equation}\label{XiErrTherm_2}
E_j^k = f_j^{(k)} - f_{j-1}^{(k)} - f_{j-1}^{(k+1)} \Delta x = \sum_{m=2}^\infty \frac{f_{j-1}^{(k+m)}}{m!} \Delta x^m
\end{equation}

Because higher derivatives at $x_0$ are computed exactly, we have $E_0^k =0$ . Now the linear approximation method was applied recursively in the argument of Claim  \ref{claim411}. Visually speaking, such repeated process mimics the expansion of a binary pyramid, depicted in Figure \ref{Fig441Pyramid}, whose nodes $(j, k)$ correspond to the linear approximations given by Eq \ref{XiErrTherm_1} and the two children of each node $(j, k)$ are given by $(j-1, k)$ and $(j-1, k+1)$ as stated in the equation. It follows, therefore, that the number of times a linear approximation is used is equal to the number of paths from root to the respective node in the binary pyramid, where root is $(n, 0)$. It is a well-known result that the number of such paths is given by $\binom{n-j}{k}$.
\begin{figure} [h] 
\centering
\includegraphics{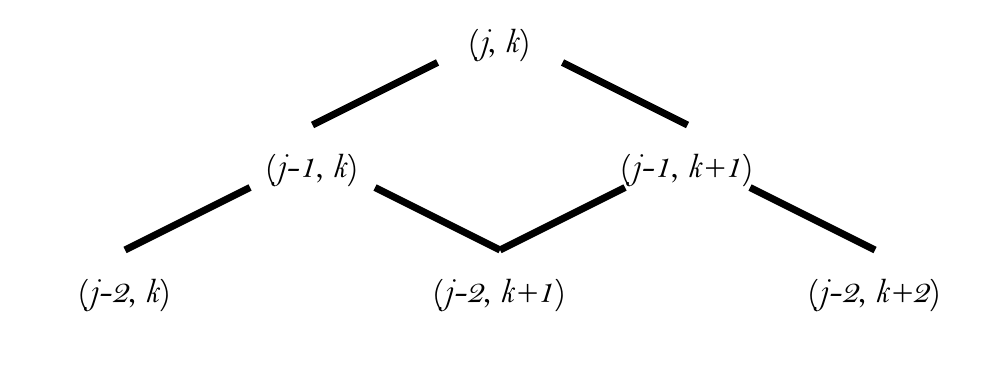}
\caption[A depiction of the recursive proof of the summability method $\Xi$]{A depiction of the recursive proof of the summability method in Claim \ref{claim411}}
\label{Fig441Pyramid}
\end{figure}
Consequently, the error term of the approximation method given in Claim \ref{claim411} is \emph{exactly} given by:
\begin{equation}\label{XiErrTherm_3}
f(x)-\hat f_n(x) = \sum_{j=1}^n \binom{n-j}{0} E_j^0 +  \sum_{j=1}^{n-1} \binom{n-j}{1} E_j^1 \Delta x+  \sum_{j=1}^{n-2} \binom{n-j}{2} E_j^2 \Delta x^2 \dotsm
\end{equation}
Define $e_k$ as a weighted average of the errors $E_j^k$ that is given by: 
\begin{equation}\label{XiErrTherm_4}
e_k = \frac{\sum_{j=1}^{n-k} \binom{n-j}{k} E_j^k}{\sum_{j=1}^{n-k} \binom{n-j}{k}} 
\end{equation}
Then, we have:
\begin{equation}\label{XiErrTherm_5}
f(x)-\hat f_n(x) = e_0 \sum_{j=1}^n \binom{n-j}{0} +  e_1 \sum_{j=1}^{n-1} \binom{n-j}{1} \Delta x+  e_2 \sum_{j=1}^{n-2} \binom{n-j}{2} \Delta x^2 \dotsm
\end{equation}
Now, we make use of the identity $\sum_{j=1}^{n-k} \binom{n-j}{k} = \binom{n}{k+1}$, and substitute $\Delta x = \frac{x-x_0}{n}$, which yields: 
\begin{equation}\label{XiErrTherm_6}
f(x)-\hat f_n(x) = \frac{n}{x-x_0} \sum_{k=1}^n \chi_n(k) \frac{e_{k-1}}{k!} (x-x_0)^k
\end{equation}
Interestingly, the right-hand side is of a similar form to the summability method itself in which the function $\chi_n(k)$ appears again. Now, define $e_k^*$ by:
\begin{equation}\label{XiErrTherm_7}
e_k^* = \frac{e_k}{\Delta x^2} =  \sum_{j=1}^{n-k} \frac{\binom{n-j}{k}}{\binom{n}{k+1}} \sum_{m=0}^\infty \frac{f_{j-1}^{(k+m+2)}}{(m+2)!} \Delta x^{m}
\end{equation}
This yields:
\begin{equation}\label{XiErrTherm_8}
f(x)-\hat f_n(x) = \frac{x-x_0}{n} \sum_{k=1}^n \chi_n(k) \frac{e_{k-1}^*}{k!} (x-x_0)^k
\end{equation}

In the last equation, we begin to see what the error term is $O(\frac{1}{n})$. This is because $e_k^*$ asymptotically lies between $\min_{z\in[x_0,x]} f^{(k+2)}(z)$ and $\max_{z\in[x_0,x]} f^{(k+2)}(z)$ when $n\to\infty$ as shown in Eq \ref{XiErrTherm_7}. So, we expect the infinite sum $ \sum_{k=1}^\infty \frac{e_{k-1}^*}{k!} (x-x_0)^k$ to be summable to a value $V\in\mathbb{C}$ using the summability method $\Xi$. 

Now, we examine the weighted average error terms $e_k^*$ at the limit $n\to\infty$. First, we note that the expression for $e_k^*$ given by Eq \ref{XiErrTherm_7} is exact and that the infinite summation converges because $n$ is chosen large enough such that $\Delta x$ is within the radius of convergence of the series by assumption. Therefore, it follows that $e_k^*$ is asymptotically given by Eq \ref{XiErrTherm_9}. Thus, $e_k^*$ is asymptotically a convex combination of $\frac{f_{j}^{(k+2)}}{2}$, iterated over $j$. The discrete probability distribution $p_n(k)=\binom{n-j}{k}/\binom{n}{k+1}$ approaches a probability density function at the limit $n\to\infty$ when $\frac{j}{n}$ is fixed at a constant $z$. Using Stirling's approximation, such probability density is given by Eq \ref{XiErrTherm_10}. For example, if $k=0$, the probability density function is uniform as expected.
\begin{equation}\label{XiErrTherm_9}
e_k^* \sim \frac{1}{2} \sum_{j=1}^{n-k} \frac{\binom{n-j}{k}}{\binom{n}{k+1}} f_{j-1}^{(k+2)} + O(n^{-2})
\end{equation}
\begin{equation}\label{XiErrTherm_10}
\rho(z)=(1+k)(1-z)^k,\\ \text{ where } z=\frac{j}{n}, \\ \text {and } 0\leq z \leq 1
\end{equation}
Using the probability density function in Eq \ref{XiErrTherm_10}, $e_k^*$ is asymptotically given by: 
\begin{equation}\label{XiErrTherm_11}
e_k^* \sim \frac{1+k}{2} \int_0^1 (1-t)^k f^{(k+2)} \big(x_0 + t(x-x_0)\big)\,dt
\end{equation}
Doing integration by parts yields the following recurrence identity:
\begin{equation}\label{XiErrTherm_12}
e_k^* \sim -\frac{1+k}{2(x-x_0)} f^{(k+1)}(x_0) + \frac{1+k}{x-x_0} e_{k-1}^* 
\end{equation}
Also, we have by direct evaluation of the integral in Eq \ref{XiErrTherm_10} when $k=0$:
\begin{equation}\label{XiErrTherm_13}
e_0^* \sim \frac{f'(x)-f'(x_0)}{2(x-x_0)} 
\end{equation}
Using Eq \ref{XiErrTherm_12} and Eq \ref{XiErrTherm_13}, we have:
\begin{equation}\label{XiErrTherm_14}
e_k^* \sim \frac{1}{2}  \frac{(1+k)!}{(x-x_0)^{k+1}} f'(x) - \frac{1}{2} \frac{(1+k)!}{(x-x_0)^{k+1}} \sum_{m=0}^k \frac{f^{(m+1)}(x_0)}{m!} (x-x_0)^m +o(1)
\end{equation}
The summation in the right-hand side of the last equation is itself the \emph{k}th degree Taylor polynomial for $f'(x)$ expanded around $x_0$. Plugging Eq \ref{XiErrTherm_14} into Eq \ref{XiErrTherm_8} yields: 
\begin{equation}\label{XiErrTherm_15} 
f(x)-\hat f_n(x) \sim \frac{x-x_0}{2n} \sum_{k=1}^n \chi_n(k) \big(f'(x)-\sum_{m=0}^{k-1} \frac{f^{(m+1)}(x_0)}{m!} (x-x_0)^m \big) + o(1/n)
\end{equation}
Using Lemma \ref{lemma441}, we arrive at the desired result:
\begin{equation}\label{XiErrTherm_Final} 
f(x)-\hat f_n(x) \sim \frac{f^{(2)}(x)\, (x-x_0)^2}{2n} + o(1/n)
\end{equation}
\end{proof} \hrule\vspace{12pt} 

To test the asymptotic expression given by Theorem \ref{theorem441}, suppose $f(x)=(1+x)^{-1}$ and suppose that we want to evaluate the function at $x=2$ using the summability method $\Xi$ applied to the Taylor expansion around $x_0=0$. The exact error term by direct application of the summability method when $n=40$ is 0.0037. The expression in Theorem \ref{theorem441} predicts a value of 0.0037, which is indeed the exact error term up to 4 decimal places.  For a second example, if $f(x)=\log{(1+x)}$, $x=3$, and $n=30$, then the exact error term is -0.0091 whereas Theorem \ref{theorem441} estimates the error term to be -0.0094.

The asymptotic expression for the error term given in Theorem \ref{theorem441} presents interesting insights as well as important precautions. First, we can immediately know if the summability method approaches the function from above or below depending on whether the function $f(x)$ is concave or convex at $x$. Such conclusion becomes intuitive if we keep in mind that the summability method essentially mimics the process of walking over the function using successive linear approximations. Thus, once we get close to $x$, first order linear approximation overestimates the value of the function if it is concave and underestimates it if convex, which is intuitive given the definition of convexity. Importantly, this implies that for sufficiently large $n$, we will obtain either successive upper bounds or successive lower bounds to the infinite sum as $n$ increases.

Second, in order for the asymptotic expression given by Theorem \ref{theorem441} to be accurate, it is crucial to keep indices correct inside the summation. For example, suppose $f(x)=(1+x)^{-2}$, whose Taylor series expansion is given by $f(x)=\sum_{k=1}^\infty (-1)^{k+1} k \, x^{k-1}$. If we wish to evaluate $f(x)$, then we know that the following holds for any fixed value of $m\ge 0$: 
\begin{equation}\label{PostTher441_1} 
f(x) = \lim_{n\to\infty}\big\{\sum_{k=1}^n (-1)^{k+1} \chi_n(k+m)\, k\, x^{k-1} \big\}
\end{equation}

However, while the definition in Eq \ref{PostTher441_1} is correct for any fixed value of $m\ge 0$, the error term given by Theorem \ref{theorem441} is only accurate when $m=0$. For example, if $n=100$, $x=1$ and $m=0$, the exact error term of the summability method is 0.0019, and the error estimate given by Theorem \ref{theorem441} is also 0.0019. However, if we choose $m=5$, the error term is also small but it is 0.0142 now, which is clearly different. This is because choosing $m=5$ is essentially equivalent to applying the summability method $\Xi$ for the \emph{different} function $f(x)=x^5 (1+x)^{-2}$ at $x=1$. Clearly, both assign the same value to the infinite sum, which is $f(1)=\frac{1}{4}$ in both cases but the value of the second derivative is now different. Applying Theorem \ref{theorem441} to the function $f(x)=x^5 (1+x)^{-2}$ for $n=100$ and $x=1$ yields an error estimate of 0.0145, which agrees well with the exact error when $m=5$. 

In general, we have the following corollary. \\ \hrule 

\begin{corollary}\label{ErrorTermStab_m}
Define $\hat f_{n,m}(x)$ by $\sum_{k=0}^n \chi_n(k+m) \frac{f^{(k)}(x_0)}{k!} (x-x_0)^k$. For any function $f(x)$ that is analytic in an open disc around each point in the domain $[x_0, x]$ and $[x_0, x]$ falls in the interior of the region of convergence of $\Xi$ for $f(x)$, we have: 
\begin{equation}\label{ErrorTermStab_m_1}
f(x) = \lim_{n\to\infty} \Big\{ \sum_{k=0}^n \chi_n(k+m) \frac{f^{(k)}(x_0)}{k!} (x-x_0)^k \Big\}
\end{equation}
In addition, the error term is asymptotically given by: 
\begin{equation}\label{ErrorTermStab_m_2}
f(x) -\hat f_n(x) \sim \frac{m(m-1) f(x) + 2m(x-x_0)f'(x) + (x-x_0)^2 f^{(2)}(x)}{2n}
\end{equation}
\end{corollary}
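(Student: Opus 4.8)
The plan is to reduce the shifted method $\hat f_{n,m}$ to the unshifted one $\hat f_n$, for which both convergence (Claim \ref{claim412}) and the precise asymptotic error term (Theorem \ref{theorem441}) are already available. The bridge is the elementary identity $\chi_n(k+m)=\chi_n(k)\,Q_{n,m}(k)$ with $Q_{n,m}(k)=\prod_{i=1}^m\bigl(1-\tfrac{k+i-1}{n}\bigr)$, immediate from the product form $\chi_n(j)=\prod_{i=1}^j(1-\tfrac{i-1}{n})$ of Claim \ref{claim411}. Writing $a_k=\tfrac{f^{(k)}(x_0)}{k!}(x-x_0)^k$, this gives $\hat f_{n,m}(x)=\sum_{k=0}^n\chi_n(k)\,Q_{n,m}(k)\,a_k$, and $Q_{n,m}(k)$ expands in powers of $1/n$ with polynomial-in-$k$ coefficients:
\begin{equation*}
Q_{n,m}(k)=1-\frac1n\Bigl(mk+\tfrac{m(m-1)}{2}\Bigr)+\sum_{\ell\ge 2}\frac{c_\ell(k)}{n^\ell},\qquad \deg_k c_\ell=\ell .
\end{equation*}
(Here and below we read Eq \ref{ErrorTermStab_m_2} as concerning $f(x)-\hat f_{n,m}(x)$.)

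First I would establish Eq \ref{ErrorTermStab_m_1}. The key observation is that for each fixed $r\ge 0$ the sum $\sum_{k=0}^n\chi_n(k)\,k^r a_k$ is \emph{itself} the unshifted $\Xi$-approximation at $x$ of the function $g_r:=\bigl((x-x_0)\tfrac{d}{dx}\bigr)^r f$: since $h^{(k)}(x_0)=k\,f^{(k)}(x_0)$ for $h(x)=(x-x_0)f'(x)$ by Leibniz, the sequence $(k a_k)$ is precisely the Taylor-coefficient sequence of $(x-x_0)f'$ about $x_0$ evaluated at $x$, and iterating gives the analogous statement for $(k^r a_k)$ and $g_r$. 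Each $g_r$ is analytic on the same domain as $f$, and because $[x_0,x]$ lies \emph{strictly} interior to the star-like region of $\Xi$-convergence of $f$, the geometric gap to the boundary absorbs the polynomial factors $k^r$, so $[x_0,x]$ is also interior to the $\Xi$-region of each $g_r$; Claim \ref{claim412} then gives $\sum_k\chi_n(k)k^r a_k\to g_r(x)$, in particular this stays bounded in $n$. Consequently every correction term $n^{-\ell}\sum_k\chi_n(k)c_\ell(k)a_k$ with $\ell\ge 2$ is $O(n^{-\ell})$, so $\hat f_{n,m}(x)-\hat f_n(x)\to 0$ and Eq \ref{ErrorTermStab_m_1} follows from $\hat f_n(x)\to f(x)$.

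For the error term, retaining only the $1/n$ part of $Q_{n,m}(k)$ yields
\begin{equation*}
\hat f_{n,m}(x)-\hat f_n(x)=-\frac{m}{n}\sum_{k=0}^n k\,\chi_n(k)\,a_k-\frac{m(m-1)}{2n}\sum_{k=0}^n\chi_n(k)\,a_k+o\!\left(\tfrac1n\right).
\end{equation*}
Letting $n\to\infty$ in the two coefficients, the first sum tends to $g_1(x)=(x-x_0)f'(x)$ (the $r=1$ case above, equivalently Lemma \ref{lemma441}) and the second to $f(x)$, so $\hat f_{n,m}(x)-\hat f_n(x)\sim-\bigl(m(x-x_0)f'(x)+\tfrac{m(m-1)}{2}f(x)\bigr)/n$. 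Subtracting this from the expansion $f(x)-\hat f_n(x)\sim f^{(2)}(x)(x-x_0)^2/(2n)$ of Theorem \ref{theorem441} gives exactly Eq \ref{ErrorTermStab_m_2}.

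The main obstacle is the uniform control needed to discard the $\ell\ge 2$ corrections: one must know that $\sum_k\chi_n(k)k^r a_k$ is bounded \emph{uniformly} in $n$ and that the tails of the resulting double sums are negligible, which forces a careful verification that the hypotheses of Claim \ref{claim412} genuinely transfer from $f$ to every $g_r=\bigl((x-x_0)\tfrac{d}{dx}\bigr)^r f$. This is where the word ``interior'' in the statement does the real work, and the argument parallels the uniform-convergence (interchange of sum and integral) step in the proof of Lemma \ref{lemma441}. Everything else is routine binomial and Leibniz bookkeeping.
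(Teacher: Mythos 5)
Your argument is correct, but it takes a genuinely different route from the paper. The paper's proof is a two-line change of variables: it sets $h(z)=f\bigl(z(x-x_0)+x_0\bigr)$, observes that the Taylor coefficients of $z^m h(z)$ about $z=0$ are exactly the coefficients of $h$ shifted by $m$ places, so that $\sum_{k}\chi_n(k+m)\,a_k$ is the \emph{unshifted} method $\Xi$ applied to $z^m h(z)$ at $z=1$, and then reads off both Eq \ref{ErrorTermStab_m_1} and Eq \ref{ErrorTermStab_m_2} from Claim \ref{claim412} and Theorem \ref{theorem441} applied to $z^m h(z)$; the error is $\tfrac{1}{2n}\bigl(z^m h\bigr)^{(2)}(1)=\tfrac{1}{2n}\bigl(m(m-1)h(1)+2mh'(1)+h^{(2)}(1)\bigr)$, which is Eq \ref{ErrorTermStab_m_2} after substituting $h'(1)=(x-x_0)f'(x)$ and $h^{(2)}(1)=(x-x_0)^2f^{(2)}(x)$. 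You instead keep the original variable, factor $\chi_n(k+m)=\chi_n(k)\,Q_{n,m}(k)$, expand $Q_{n,m}$ in powers of $1/n$, and identify the moment sums $\sum_k\chi_n(k)k^r a_k$ with $\Xi$-approximations of $\bigl((x-x_0)\tfrac{d}{dx}\bigr)^r f$. Your factorization of $\chi_n$ and your Leibniz computation are both correct, and the final bookkeeping reproduces Eq \ref{ErrorTermStab_m_2} exactly. What the paper's substitution buys is that the entire "transfer of hypotheses'' problem you flag at the end disappears: multiplying $h(z)$ by the entire function $z^m$ changes neither the singularity structure nor the growth rate of the Taylor coefficients, so the conditions of Claim \ref{claim412} and Theorem \ref{theorem441} hold for $z^m h(z)$ whenever they hold for $h$, and there are no $\ell\ge 2$ correction terms to control. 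What your version buys is an explicit view of where each piece of the numerator $m(m-1)f+2m(x-x_0)f'+(x-x_0)^2f^{(2)}$ comes from, and it connects the computation back to Lemma \ref{lemma441}. The one step you would still need to make rigorous — that $[x_0,x]$ interior to the $\Xi$-region of $f$ implies the same for each $g_r$, so that the moment sums are bounded uniformly in $n$ — is genuine but fillable (polynomial factors in $k$ do not affect the $o(\kappa^n)$ criterion of Claim \ref{claim412}, and $g_r$ is analytic wherever $f$ is); the paper sidesteps it entirely.
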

\begin{proof}
Define $h(z)=f(z(x-x_0)+x_0)$, whose series expansion around $z=0$ is given by:
\begin{equation}\label{ErrorTermStab_m_3}
h(z)=\sum_{k=0}^\infty \frac{f^{(k)}(x_0)}{k!} (x-x_0)^k z^k
\end{equation}
Applying the summability method to the Taylor series expansion of $f(x)$ is equivalent to applying it for the series expansion of $g(z)$ at $z=1$. In addition, shifting $\chi_n$ by $m$ as given in Eq \ref{ErrorTermStab_m_1} is equivalent to applying the summability method for the new function $z^m h(z)$. Therefore, both Eq \ref{ErrorTermStab_m_1} and \ref{ErrorTermStab_m_2} follow by Theorem \ref{theorem441}. 
\end{proof} \hrule\vspace{12pt} 

Interestingly, Eq \ref{ErrorTermStab_m_2} shows that the optimal choice of $m$ that yields fastest convergence may not be $m=0$. If we differentiate the right-hand side of Eq \ref{ErrorTermStab_m_2} with respect to $m$, we see that the optimal choice $m^*$ that yields fastest convergence is given by: 
\begin{equation}\label{ErrorTermStab_m_4}
m^* = \frac{1}{2} - \frac{f'(x)}{f(x)} (x-x_0) 
\end{equation}
For example, if we return to the function $f(x)=(1+x)^{-2}$ where $x=1$, Eq \ref{ErrorTermStab_m_4} suggests that choosing $m=1$ yields faster convergence than $m=0$. This can be confirmed numerically quite readily. For example, for $n=100$ and $m=0$, the error term is 0.0019. On the other hand, for $n=100$ and $m=1$, the error term is only -6.2339e-04. The error term predicted by Corollary \ref{ErrorTermStab_m} if $m=1$ is -6.0e-04, which is very close to the actual value as expected.

\section{Summary of Results} 
In this chapter, a generalized definition of infinite sums $\mathfrak{T}$ is presented, which is regular, linear, stable, and respects the Cauchy product. To compute the $\mathfrak{T}$ value of infinite sums, various methods can be employed such as Abel and Lindel\"of summation methods. In addition, a new summability method $\Xi$ is proposed in this chapter that is simple to implement in practice and is powerful enough to sum most divergent series of interest. 

The key advantage of using the $\mathfrak{T}$ definition of infinite sums is the fact that it admits a consistent algebra that includes performing linear combinations and even multiplication of divergent series. So, in principle, results on divergent series can be used to deduce new results on convergent series. In the following Chapter, we will use the $\mathfrak{T}$ definition of infinite sums to deduce the analog of the Euler-Maclaurin summation formula for oscillating sums, which will simplify the study of oscillating sums considerably and shed important insight into the subject of divergent series. 

\chapter{Oscillating Finite Sums} \label{Chapter5}
\epigraph{\emph{One cannot escape the feeling that these mathematical formulas have an independent existence and an intelligence of their own, that they are wiser than we are, wiser even than their discoverers.}}{Heinrich Hertz (1857 -- 1894)}

In this section, we use the $\mathfrak{T}$ definition of infinite sums, presented in the previous section, to deduce a family of series that can be thought of as the analog of the Euler-Maclaurin summation formula for oscillating sums. Such results can be used to perform many remarkable deeds with ease. For instance, they can be used to derive analytic expressions for summable divergent sums, obtain asymptotic expressions of oscillating sums, and even accelerate convergence of oscillating series by arbitrary orders of magnitude. They also shed interesting insight into the subject of divergent series. For instance, we will show the notable fact that, as far as the foundational rules of Summability Calculus are concerned, summable divergent series behave exactly as if they were convergent. We will first discuss the important special case of alternating sums, in which we present an analog of the Euler-Maclaurin summation formula that is similar to, but slightly different from, Boole's summation formula. After that, we will generalize results to a broader class of oscillating sums using the notion of \emph{preiodic sign sequences}. 

\section{Alternating Sums} \label{Section5dot1}
We will begin our treatment of alternating sums with the following lemma. \\ \hrule
\begin{lemma}\label{lemma511} 
Given an alternating infinite sum of the form $\sum_{k=a}^\infty (-1)^k g(k)$, where $g(k)$ is analytic in the domain $[a-1, \infty)$, and suppose that the infinite sum is defined in $\mathfrak{T}$ by a value $V\in\mathbb{C}$. Then $V$ is alternatively given by:
\begin{equation}\label{lemma51_1Eq}
V=(-1)^a \sum_{r=0}^\infty \frac{N_r}{r!} g^{(r)}(a-1), \\ \text{ where } N_r = \sum_{k=1}^\infty (-1)^{k+1}\, k^r 
\end{equation}
Here, both infinite sums are also interpreted using the generalized definition $\mathfrak{T}$.  
\end{lemma}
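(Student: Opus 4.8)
The plan is to mimic the derivation of the Euler--Maclaurin-type formulas in Theorem \ref{theorem251} and Example \ref{ExAlterSum}, but now working directly with the $\mathfrak{T}$ definition of infinite sums and exploiting its linearity and stability. First I would write $g(k)$ as its formal Taylor expansion around the point $k=a-1$, namely $g(k) = \sum_{r=0}^\infty \frac{g^{(r)}(a-1)}{r!}(k-a+1)^r$. Substituting this into $\sum_{k=a}^\infty (-1)^k g(k)$ and formally interchanging the two summations (justified because $\mathfrak{T}$ is linear, and the inner manipulations are the same ones already sanctioned for the summability operator $\mathbb{T}$ in the proof of Theorem \ref{theorem231}) gives
\begin{equation*}
\sum_{k=a}^\infty (-1)^k g(k) = \sum_{r=0}^\infty \frac{g^{(r)}(a-1)}{r!} \sum_{k=a}^\infty (-1)^k (k-a+1)^r .
\end{equation*}
The inner sum $\sum_{k=a}^\infty (-1)^k (k-a+1)^r$ is then re-indexed by $j = k-a+1$, which runs from $1$ to $\infty$; since $(-1)^k = (-1)^{a-1}(-1)^{j} = (-1)^a (-1)^{j+1}$, this inner sum equals $(-1)^a \sum_{j=1}^\infty (-1)^{j+1} j^r = (-1)^a N_r$, giving exactly the claimed identity.

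The substantive work is to justify each formal step as an identity in $\mathfrak{T}$ rather than a mere symbolic rearrangement. I would invoke the $\mathfrak{T}$/$\mathbb{T}$ machinery introduced in the proof of Theorem \ref{theorem231}: apply the Lindel\"of (or Mittag--Leffler) regularization $\xi_\delta$ to the double sum, use linearity (Lemma \ref{Tproperties}) to split it, use the fact that $\mathbb{T}$ correctly evaluates the Taylor expansion of $g$ within the Mittag--Leffler star, and then take $\delta \to 0$. Here the hypothesis that $g$ is analytic on $[a-1,\infty)$ is what guarantees that the Taylor series of $g$ around $a-1$ is a legitimate object to feed into $\mathbb{T}$; the hypothesis that $\sum_{k=a}^\infty (-1)^k g(k)$ is defined in $\mathfrak{T}$ with value $V$ is what anchors the left-hand side so that the regularized manipulations converge to $V$ in the limit. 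Each $N_r = \sum_{k=1}^\infty (-1)^{k+1} k^r$ is itself a divergent series that is nonetheless defined in $\mathfrak{T}$: as shown around Eq \eqref{zeta_7}, it arises from the Taylor expansion of $N_0(x) = -x/(1+x)$ (whose Mittag--Leffler star contains $x=1$), and in fact $N_r = \sum_{j=0}^r (-1)^j S(r,j)\, j!\, 2^{-(j+1)}$. So the statement of the lemma is meaningful and self-consistent.

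The main obstacle I anticipate is the interchange of the outer (Taylor-in-$r$) summation with the $\mathfrak{T}$-evaluation of the inner divergent series --- i.e., showing that $\mathfrak{T}$ of the double sum can be computed iteratively without ambiguity. This is precisely the kind of "infinitely many infinitesimal errors" issue flagged in Chapter \ref{Chapter4}, so I would not try to prove a general Fubini theorem for $\mathfrak{T}$; instead I would reduce, exactly as in the proof of Theorem \ref{theorem231}, to a single application of the regularized operator $\mathbb{T}$ with a fixed $\delta$, where everything is an honest absolutely convergent double sum, verify the rearrangement there, and only then pass to $\delta\to 0$, using property 3 (stability for Taylor expansions) of $\mathbb{T}$ to conclude. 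A secondary point requiring care is that the identity is asserted as a \emph{formal} equality of $\mathfrak{T}$-values (both sides may be genuinely divergent series), so the conclusion should be phrased, as the lemma does, as: whenever the left side has a $\mathfrak{T}$-value $V$, the right side also has a $\mathfrak{T}$-value and the two coincide. A short sanity check --- taking $g\equiv 1$, so $N_0 = 1/2$ and all higher $g^{(r)}$ vanish, recovering the Grandi-series value, and taking $g(k)=k$ to recover $\sum (-1)^{k+1}k = 1/4$ as in Eq \eqref{Eq42Discussion_2} --- would close the argument.
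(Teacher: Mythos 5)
Your proposal is correct and follows essentially the same route as the paper's own proof: expand $g(k)$ in its Taylor series about $a-1$ under the regularizations $\xi_{\delta_1},\xi_{\delta_2}$, interchange the two summations by linearity, and re-index the inner sum via $j=k-a+1$ so that it collapses to $(-1)^a N_r$. Your added remarks on where the interchange needs the $\mathbb{T}$ machinery, and the sanity checks with $g\equiv 1$ and $g(k)=k$, are consistent with (indeed slightly more careful than) the argument given in the text.
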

\begin{proof}
First, we note that $V$ is given by: 
\begin{equation}\label{lemma51_1_1}
V=\lim_{\delta\to 0} \big\{\sum_{k=a}^\infty (-1)^k\, \xi_\delta(k-a)\,g(k)\big\}
\end{equation}
Again, $\xi_\delta(j)$ is either given by Lindel\"of or Mittag-Leffler summability methods. However, since $g(k)$ is analytic in the domain $[a-1, \infty)$, then we can also rewrite $V$ using:
\begin{equation}\label{lemma51_1_2}
V=\lim_{\delta_1\to 0} \big\{\sum_{k=a}^\infty (-1)^k\, \xi_{\delta_1}(k-a)\,\lim_{\delta_2\to 0} \{\sum_{j=0}^\infty \frac{g^{(j)}(a-1)}{j!} \xi_{\delta_2}(j)\,(k-a+1)^j\}\big\}
\end{equation}
Therefore, we have: 
\begin{equation}\label{lemma51_1_3}
V=(-1)^a \lim_{\delta_2\to 0} \big\{\sum_{r=0}^\infty  \xi_{\delta_2}(r)\, \frac{g^{(r)}(a-1)}{r!}\lim_{\delta_1\to 0} \{\sum_{j=1}^\infty  \xi_{\delta_1}(j)\,(-1)^{j+1}\,j^r\}\big\}
\end{equation}
By definition of $N_r$, we have: 
\begin{equation}\label{lemma51_1_4}
V=(-1)^a\lim_{\delta\to 0} \big\{\sum_{r=0}^\infty  \xi_\delta(r)\, \frac{N_r}{r!} g^{(r)}(a-1) \big\}
\end{equation}
Thus, statement of the lemma holds. 
\end{proof} \hrule\vspace{12pt} 
Of course, Lemma \ref{lemma511} is not directly useful. It simply provides us with an alternative method of computing the $\mathfrak{T}$ value of divergent sums that are already directly summable using any analytic summability method such as Lindel\"of method, the Mittag-Leffler methods, or the summability method $\Xi$ of Claim \ref{claim411}. However, Lemma \ref{lemma511} can be used to deduce the analog of the Euler-Maclaurin summation formula for alternating sums, as shown next. \\ \hrule 

\begin{theorem}\label{theorem511} 
Given a simple finite sum of the form $f(n)=\sum_{k=a}^n (-1)^k g(k)$, where $\sum_{k=a}^\infty (-1)^k g(k)$ is defined in $\mathfrak{T}$ by a value $V\in\mathbb{C}$, then its unique natural generalization $f_G(n)$ is formally given by: 
\begin{equation}\label{theorem511_Eq1}
f_G(n) = (-1)^a g(a) + \sum_{r=0}^\infty \frac{N_r}{r!} \big[(-1)^n g^{(r)}(n) - (-1)^a g^{(r)}(a)\big]
\end{equation}
In addition, we have for all $n\in\mathbb{C}$: 
\begin{equation}\label{theorem511_Eq2}
f_G(n) = \sum_{k=a}^\infty (-1)^k g(k) - \sum_{k=n+1}^\infty (-1)^k g(k)
\end{equation}
\end{theorem}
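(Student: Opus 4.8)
The plan is to mirror, nearly verbatim, the proof of Theorem \ref{theorem251}, with the Bernoulli numbers $B_r$ replaced by $N_r$ and the Bernoulli generating function $t/(e^t-1)$ replaced by $e^t/(1+e^t)$. Concretely I would: (i) verify that the formal series in Eq \ref{theorem511_Eq1} satisfies the defining recurrence $f_G(n)=(-1)^n g(n)+f_G(n-1)$ and the boundary condition $f_G(a-1)=0$; (ii) invoke the uniqueness argument (determination of all derivatives at one point / successive polynomial approximation) to conclude this is \emph{the} natural generalization; and (iii) deduce Eq \ref{theorem511_Eq2} as an immediate consequence of Eq \ref{theorem511_Eq1} and Lemma \ref{lemma511}. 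The first thing to record is the generating-function identity $\sum_{r\ge 0}\frac{N_r}{r!}t^r=\frac{e^t}{1+e^t}$, which follows from $N_r=\sum_{k\ge1}(-1)^{k+1}k^r$ by writing $k^r=r!\,[t^r]e^{kt}$ and summing $\sum_{k\ge1}(-1)^{k+1}e^{kt}=e^t/(1+e^t)$ in $\mathfrak{T}$, exactly as in the proof of Lemma \ref{lemma511}.

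For step (i) I would read this identity through the summability operator $\mathbb{T}$ of the proof of Theorem \ref{theorem231}, so that $\sum_r\frac{N_r}{r!}g^{(r)}(x)$ is the value of the operator $e^{D}/(1+e^{D})$ (with $D=d/dx$) applied to $g$ at $x$. Then the difference $f_G(n)-f_G(n-1)$ computed from Eq \ref{theorem511_Eq1} collapses to $(-1)^n\sum_r\frac{N_r}{r!}\bigl(g^{(r)}(n)+g^{(r)}(n-1)\bigr)$, and the operator identity $\frac{e^{D}}{1+e^{D}}+\frac{1}{1+e^{D}}=1$ turns it into $(-1)^n g(n)$, establishing the recurrence formally. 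For the boundary condition I would substitute $n=a-1$ into Eq \ref{theorem511_Eq1} and apply Lemma \ref{lemma511} twice — once with lower bound $a$ to rewrite $\sum_r\frac{N_r}{r!}g^{(r)}(a-1)$ as $(-1)^a\sum_{k=a}^\infty(-1)^k g(k)$, and once with lower bound $a+1$ to rewrite $\sum_r\frac{N_r}{r!}g^{(r)}(a)$ as $(-1)^{a+1}\sum_{k=a+1}^\infty(-1)^k g(k)$ — so that stability of $\mathfrak{T}$ (the splitting $\sum_{k\ge a}=(-1)^a g(a)+\sum_{k\ge a+1}$) forces $f_G(a-1)=0$.

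Step (ii) is the argument of Theorem \ref{theorem251}: differentiating the recurrence $r$ times gives $f_G^{(r)}(n)-f_G^{(r)}(n-1)=\frac{d^r}{dn^r}\bigl[(-1)^n g(n)\bigr]$, so once one value of each $f_G^{(r)}$ is pinned down the whole Taylor expansion about $a-1$ is fixed; Lemma \ref{lemma511} supplies precisely those values, and this is the same function produced by the successive-polynomial-approximation scheme of Theorem \ref{StatofUniq} carried out with the extra factor $(-1)^k$ (i.e.\ with no spurious unit-period periodic term added). Step (iii) is then short: by Lemma \ref{lemma511}, $\sum_{k=n+1}^\infty(-1)^k g(k)=(-1)^{n+1}\sum_r\frac{N_r}{r!}g^{(r)}(n)$, so the right-hand side of Eq \ref{theorem511_Eq2} equals $\bigl(\sum_{k=a}^\infty(-1)^k g(k)\bigr)+(-1)^n\sum_r\frac{N_r}{r!}g^{(r)}(n)$, and rearranging Eq \ref{theorem511_Eq1} (again using Lemma \ref{lemma511} to replace the $g^{(r)}(a)$ term by $-\sum_{k=a+1}^\infty(-1)^k g(k)$) shows $f_G(n)$ equals the same expression.

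The main obstacle I expect is not the algebra but the analytic bookkeeping behind the operator manipulations: justifying the iterated $\xi_{\delta_1},\xi_{\delta_2}$ limits (as in Eq \ref{lemma51_1_2}--\ref{lemma51_1_4}), the term-by-term reading of $e^t/(1+e^t)$ as a $\mathbb{T}$-summable series, and the interchange of the $r$-summation with $d/dn$ — all of which rely on $g$ being analytic on $[a-1,\infty)$ and on the summability of the relevant series in $\mathfrak{T}$, which are exactly the hypotheses Lemma \ref{lemma511} is built to supply. A more classical alternative that avoids the divergent-series formalism would be to integrate a Boole-type identity built from Euler polynomials, parallel to the Bernoulli-polynomial proof of Euler--Maclaurin cited after Theorem \ref{theorem251}; I would note this as a fallback but present the $\mathfrak{T}$-based proof for consistency with the rest of the chapter.
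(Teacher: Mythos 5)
Your proposal is correct and rests on the same two pillars as the paper's proof --- Lemma \ref{lemma511} applied at the two endpoints, and the generating function $\sum_{r}\frac{N_r}{r!}t^r=\frac{e^t}{1+e^t}$ --- but it runs the derivation in the opposite direction and swaps out the uniqueness argument. The paper first establishes Eq \ref{theorem511_Eq2} (writing $\sum_{k=n+1}^\infty$ and $\sum_{k=a}^\infty$ via Lemma \ref{lemma511} and stability) and obtains Eq \ref{theorem511_Eq1} by subtraction; you instead verify the recurrence $f_G(n)-f_G(n-1)=(-1)^n g(n)$ directly from Eq \ref{theorem511_Eq1} via the operator identity $\frac{e^{D}}{1+e^{D}}(1+e^{-D})=1$ and the boundary condition via a double application of Lemma \ref{lemma511}, then recover Eq \ref{theorem511_Eq2} at the end. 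Both computations are sound and essentially equivalent in content (your operator identity is exactly the functional equation Eq \ref{Nr_4} that the paper later extracts from the theorem). The one place where you are thinner than the paper is your step (ii): you assert that the resulting function coincides with the unique natural generalization of Theorem \ref{StatofUniq}, whereas the paper actually proves this by the substitution $g(k)\mapsto e^{i\pi k}g(k)$ and the identity $\frac{e^{i\pi+D}}{1+e^{i\pi+D}}=-\frac{e^{D}}{1-e^{D}}$, which converts the $N_r$-expansion into the Bernoulli expansion and thereby reduces the claim to the already-established uniqueness of the Euler--Maclaurin generalization (Theorem \ref{theorem251}). Without that reduction (or an equivalent computation pinning the $N_r$-series to the $B_r$-series), your step (ii) shows only that Eq \ref{theorem511_Eq1} is \emph{a} generalization satisfying the recurrence, not that it is \emph{the} natural one; I would fold the paper's $e^{i\pi k}$ trick into your argument to close that gap. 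Your fallback via Euler polynomials is a reasonable classical alternative but, as you note, would sit awkwardly with the $\mathfrak{T}$-based framework of the chapter.
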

\begin{proof}
By Lemma \ref{lemma511}, we have Eq \ref{theorem511_1}, where both sums are interpreted using the generalized definition $\mathfrak{T}$.
\begin{equation}\label{theorem511_1}
\sum_{k=n+1}^\infty (-1)^k g(k) = (-1)^{n+1} \sum_{r=0}^\infty \frac{N_r}{r!} g^{(r)}(n)
\end{equation}
Because the definition $\mathfrak{T}$ is stable, we have: 
\small
\begin{equation}\label{theorem511_2}
\sum_{k=a}^\infty (-1)^k g(k) = (-1)^a\,g(a) +\sum_{k=a+1}^\infty (-1)^k g(k) = (-1)^a\,g(a)-(-1)^a\,\sum_{r=0}^\infty \frac{N_r}{r!} g^{(r)}(a)
\end{equation}
\normalsize
Now, we will show that Eq \ref{theorem511_Eq2} implies Eq \ref{theorem511_Eq1} and that it indeed corresponds to the unique most natural generalization to the simple alternating finite sum. First, starting from Eq \ref{theorem511_Eq2}, and upon using both Eq \ref{theorem511_1} and Eq \ref{theorem511_2}, we deduce that Eq \ref{theorem511_Eq1} holds as expected by subtraction.

Second, we can show that the generalized function $f_G(n)$ given by Eq \ref{theorem511_Eq1} is equivalent to the Euler-Maclaurin summation formula, which was argued earlier to be the unique natural generalization of finite sums. To see this, we note that by Eq \ref{theorem511_Eq1}: 
\begin{equation}\label{theorem511_3}
\sum_{k=a}^n  g(k) = g(a) +\sum_{r=0}^\infty \frac{N_r}{r!}\big[(-1)^n \frac{d^r}{dn^r}\big(e^{i\pi n}\,g(n)\big) - (-1)^a \frac{d^r}{da^r}\big(e^{i\pi a}\,g(a)\big)\big]
\end{equation}
Here, $i=\sqrt{-1}$. However, we have using the Calculus of Finite Differences:
\begin{equation}\label{theorem511_4}
\frac{d^r}{dx^r}\big(e^{i\pi x}\,g(x)\big) = e^{i\pi x} \big(i\pi + D_x\big)^r
\end{equation}
Here, $D_x$ is the differential operator of the function $g(x)$, i.e. $D_x^r=g^{(r)}(x)$, and the expression $(i\pi+D_x)^r$ is to be interpreted by formally applying the binomial theorem on $D_x$. For example, $(i\pi+D_x)^2=(i\pi)^2 D_x^0 + 2i\pi D_x^1 + D_x^2$. Therefore, Eq \ref{theorem511_3} can be rewritten as: 
\begin{equation}\label{theorem511_5}
\sum_{k=a}^n  g(k) = g(a) +\sum_{r=0}^\infty \frac{N_r}{r!}(i\pi+D_n)^r - \sum_{r=0}^\infty \frac{N_r}{r!}(i\pi+D_a)^r
\end{equation}
Later in Eq \ref{Nr_2}, we will show that $\frac{e^x}{1+e^x}=\sum_{r=0}^\infty \frac{N_r}{r!}\,x^r$. Using the Calculus of Finite Differences, we, thus, have: 
\begin{equation}\label{theorem511_6}
\sum_{k=a}^n  g(k) = g(a) +\frac{e^{i\pi+D_n}}{1+e^{i\pi+D_n}} - \frac{e^{i\pi+D_a}}{1+e^{i\pi+D_a}} = g(a)-\frac{e^{D_n}}{1-e^{D_n}}+\frac{e^{D_a}}{1-e^{D_a}}
\end{equation}
Using the series expansion of the function $\frac{e^x}{1+e^x}$, we deduce that Eq \ref{theorem511_7} holds, where $B_r$ are Bernoulli numbers. 
\begin{equation}\label{theorem511_7}
\sum_{k=a}^n  g(k) = g(a) + \sum_{r=0}^\infty \frac{B_r}{r!}(D^{r-1}_n - D^{r-1}_a)
\end{equation}
However, this last equation is precisely the Euler-Maclaurin summation formula. Thus, the generalized function $f_G(n)$ to alternating simple finite sums given by Eq \ref{theorem511_Eq1} and Eq \ref{theorem511_Eq2} is indeed the unique most natural generalization of alternating sums, which completes proof of the theorem \footnote{The summation formula of Theorem \ref{theorem511} is similar to, but different from, Boole's summation formula. Here, Boole's summation formula states that $\sum_{k=a}^{n} (-1)^k\,g(k) = \frac{1}{2} \sum_{k=0}^\infty \frac{E_k(0)}{k!} \big((-1)^n f^{(k)}(n+1)+(-1)^a f^{(k)}(a)\big)$, where $E_k(x)$ are the Euler polynomials \cite{Borwein2009, Borwein1989, Jordan1965}.}\footnote{Incidently, the summation formula for alternating sums given in this theorem can be alternatively deduced from the Euler-Maclaurin summation formula directly upon using the identity $\sum_{k=a}^n (-1)^k\,g(k) = \sum_{k=a}^n g(k) - 2\sum_{k=a}^{\frac{n}{2}} g(2k)$. However, using analytic summability theory yields a more general approach that can be easily extended to oscillating sums as will be shown later.}. 
\end{proof} \hrule\vspace{12pt} 

As shown earlier, the constants $N_r$ are given by the closed-form expression in Eq \ref{zeta_7}, or alternatively by $N_r=\frac{B_{r+1}}{r+1}(1-2^{1+r})$. For example, the first ten terms are $N_r=\{\frac{1}{2}, \frac{1}{4}, 0, -\frac{1}{8}, 0, \frac{1}{4}, 0, -\frac{17}{16}, 0, \frac{31}{4}, \ldots\}$. A generating function for the constants $N_r$ can be deduced from Lemma \ref{lemma511} if we let $g(k)=e^{kx}$, which yields: 
\begin{equation}\label{Nr_1}
\sum_{k=1}^\infty (-1)^k e^{kx} = \frac{1}{1+e^x}-1 = -\sum_{r=0}^\infty \frac{N_r}{r!}\,x^r
\end{equation}
Therefore, the generating function of $N_r$ is given by: 
\begin{equation}\label{Nr_2}
\frac{e^x}{1+e^x} = \sum_{r=0}^\infty \frac{N_r}{r!}\,x^r
\end{equation}

The constants $N_r$ can be thought of as the alternating analog of Bernoulli numbers. However, it was argued earlier that the defining property of Bernoulli numbers is the fact that they are the unique solutions to the functional equation: 
\begin{equation}\label{Nr_3} 
f'(n)=\sum_{r=0}^\infty \frac{B_r}{r!} [f^{(r)}(n)-f^{(r)}(n-1)]
\end{equation} 
From Theorem \ref{theorem511}, we can immediately deduce a similar defining property of the constants $N_r$. In particular, if we let $a=n-1$, we have that $N_r$ are the  solutions to the following functional equation:
\begin{equation}\label{Nr_4} 
f(n)=\sum_{r=0}^\infty \frac{N_r}{r!} [f^{(r)}(n)+f^{(r)}(n-1)]
\end{equation} 
By formally applying the Taylor series expansion of both sides of Eq \ref{Nr_4}, we arrive at the following recursive method for computing $N_r$:
\begin{equation}\label{Nr_5} 
N_r=\frac{1}{4} - \frac{1}{2}\sum_{k=1}^{r-1} \binom{r}{k} N_k \quad\quad\text{ if } r>0, \\ \text{ and } N_0=\frac{1}{2}
\end{equation} 

One immediate implication of Theorem \ref{theorem511} is that it gives a closed-form formula for alternating power sums. For example, we have:
\begin{equation}\label{Nr_6} 
\sum_{k=1}^n (-1)^k = -1 + \frac{1+(-1)^n}{2} = \frac{(-1)^n-1}{2}
\end{equation} 
\begin{equation}\label{Nr_7} 
\sum_{k=1}^n (-1)^k k = -\frac{1}{4} + (-1)^n \frac{2n+1}{4}
\end{equation} 
Note that in both examples, the $\mathfrak{T}$ sequence limit of $\{(-1)^n\}_n$ and $\{(-1)^n\, n\}_n$ as $n\to\infty$ is zero by Lemma \ref{TlimitZero} so the value of  the divergent sums is consistent with the $\mathfrak{T}$ sequence limit in these equations. In general, we have for all integers $s\ge 0$ the following analog of Bernoulli-Faulhaber formula:
\begin{equation}\label{Nr_8} 
\sum_{k=1}^n (-1)^k k^s = -N_s+ (-1)^n \sum_{r=0}^s \binom{s}{r} N_r\, n^{s-r}
\end{equation}  \\ \hrule

\begin{corollary}\label{corollary511} 
Suppose $g(n)$ is asymptotically of a finite differentiation order $m$, i.e. $g^{(m+1)}(n)\to 0$ as $n\to\infty$, and suppose that the value of $\sum_{k=a}^\infty (-1)^k\,g(k)$ exists in $\mathfrak{T}$ and is given by $V\in\mathbb{C}$, then $V$ can be alternatively evaluated using the following limiting expression:
\begin{equation}\label{cor511_1} 
V = \lim_{n\to\infty} \big\{\sum_{k=a}^n (-1)^k g(k) - (-1)^n \sum_{r=0}^m \frac{N_r}{r!} g^{(r)}(n)\big\}
\end{equation}
Taking higher order terms of the expression $\sum_{r=0}^m \frac{N_r}{r!} g^{(r)}(n)$ improves speed of convergence. Alternatively, and under stated conditions, the simple finite sum $\sum_{k=a}^n (-1)^k g(k)$ is \emph{asymptotically} given by the following expression, where error term vanishes as $n$ goes to infinity.
\begin{equation}\label{cor511_2} 
\sum_{k=a}^n (-1)^k g(k) \sim V + (-1)^n \sum_{r=0}^m \frac{N_r}{r!} g^{(r)}(n)
\end{equation}
\end{corollary}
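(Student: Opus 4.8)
The plan is to obtain Corollary \ref{corollary511} directly from Theorem \ref{theorem511} and Lemma \ref{lemma511}, and then to treat the hypothesis on the differentiation order exactly the way the ordinary Euler--Maclaurin formula is treated earlier (the remark preceding Eq.~\ref{corollary251_3Eq}): the infinite expansion is an asymptotic series that may be clipped at order $m$.

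First I would record the exact identity that follows by combining the two earlier results. Since $\sum_{k=a}^\infty (-1)^k g(k)$ exists in $\mathfrak{T}$ and $\mathfrak{T}$ is stable (Lemma \ref{Tproperties}), the tail $\sum_{k=n+1}^\infty (-1)^k g(k)$ also exists in $\mathfrak{T}$ for $n\ge a-1$, so Lemma \ref{lemma511} with $a$ replaced by $n+1$ gives $\sum_{k=n+1}^\infty (-1)^k g(k) = (-1)^{n+1}\sum_{r=0}^\infty \frac{N_r}{r!} g^{(r)}(n)$. Substituting this into Eq.~\ref{theorem511_Eq2} yields the exact formal identity
\[
\sum_{k=a}^n (-1)^k g(k) = V + (-1)^n \sum_{r=0}^\infty \frac{N_r}{r!} g^{(r)}(n),
\]
with the last series interpreted under $\mathfrak{T}$. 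Both claims, Eq.~\ref{cor511_1} and Eq.~\ref{cor511_2}, are then rearrangements of this identity once one shows that the tail $R_m(n):=\sum_{r=m+1}^\infty \frac{N_r}{r!} g^{(r)}(n)$ satisfies $R_m(n)\to 0$ as $n\to\infty$ (so that the left side minus the truncated correction equals $V+(-1)^nR_m(n)$), and that raising the truncation order shrinks $R_m(n)$ further.

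The main obstacle is precisely this tail estimate, since $\sum_r \frac{N_r}{r!} g^{(r)}(n)$ is generically divergent and $N_r$ grows like a Bernoulli number. I see two routes. The clean rigorous route avoids $R_m(n)$ altogether by using the decomposition noted in the footnote to Theorem \ref{theorem511}, namely writing $\sum_{k=a}^n (-1)^k g(k)$ as $\sum_{k} g(k)$ minus twice a sum of the form $\sum_k g(2k)$ (with the range of the second sum adjusted for the parities of $a$ and $n$): both pieces are ordinary finite sums whose summands are asymptotically of finite differentiation order, because $\frac{d^{m+1}}{dx^{m+1}} g(2x) = 2^{m+1} g^{(m+1)}(2x)\to 0$; applying the clipped ordinary Euler--Maclaurin formula Eq.~\ref{corollary251_3Eq} to each piece, collecting the two truncated expansions, and simplifying with the generating-function identity $\frac{e^x}{1+e^x} = \sum_r \frac{N_r}{r!} x^r$ (Eq.~\ref{Nr_2}) should reproduce the right-hand side of Eq.~\ref{cor511_1}. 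The lighter route, more in keeping with the surrounding exposition, is to invoke directly that the alternating Euler--Maclaurin formula of Theorem \ref{theorem511} is an asymptotic expansion, so that when $g^{(m+1)}(n)\to 0$ every term of $R_m(n)$ vanishes, hence $R_m(n)=o(1)$, with higher truncation orders giving correspondingly faster decay.

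Finally I would check consistency of the signs and indices (the $(-1)^n$ factors, and the shift $a\to n+1$ in Lemma \ref{lemma511}), observe that the limit in Eq.~\ref{cor511_1} exists precisely because the bracketed quantity equals $V+(-1)^nR_m(n)$ with $R_m(n)\to 0$, and then state Eq.~\ref{cor511_2} as the same fact written as an asymptotic equivalence with error term $(-1)^nR_m(n)$, together with the remark that adding higher-order terms of $\sum_{r} \frac{N_r}{r!}g^{(r)}(n)$ accelerates convergence because it makes the remaining tail start at a larger index.
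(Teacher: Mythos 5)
Your main line of argument is exactly the paper's: its entire proof of this corollary reads ``Follows immediately from Lemma \ref{lemma511} and Theorem \ref{theorem511},'' and your exact identity $\sum_{k=a}^n (-1)^k g(k) = V + (-1)^n \sum_{r=0}^\infty \tfrac{N_r}{r!} g^{(r)}(n)$, obtained by shifting the lower limit in Lemma \ref{lemma511} to $n+1$ and substituting into Eq.~\ref{theorem511_Eq2}, is precisely that combination. You go further than the paper by isolating the genuinely nontrivial step --- that the truncated tail $R_m(n)$ vanishes as $n\to\infty$ --- and your first route for it (the parity decomposition $\sum_{k}(-1)^k g(k) = \sum_k g(k) - 2\sum_k g(2k)$ followed by the clipped Euler--Maclaurin formula of Eq.~\ref{corollary251_3Eq}) is sound and is only hinted at in the paper's footnote; be aware that your ``lighter route'' (termwise vanishing of $R_m(n)$ implies $R_m(n)\to 0$) is not a valid inference for a divergent series interpreted in $\mathfrak{T}$, so the decomposition route is the one to keep.
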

\begin{proof} 
Follows immediately from Lemma \ref{lemma511} and Theorem \ref{theorem511}. 
\end{proof} \hrule\vspace{12pt} 

Corollary \ref{corollary511} provides us with a simple method of obtaining asymptotic expressions to alternating sums, assigning natural values to divergent alternating sums, and accelerating convergence of alternating sums as well. It can even allow us to \emph{derive analytic expressions} of divergent sums in many cases. For example, suppose we would like to apply the generalized definition $\mathfrak{T}$ to the alternating sum $\sum_{k=1}^\infty (-1)^{k+1} \log{k}$. Numerically, if we use the summability method $\Xi$ where we choose $n=100$ or $n=1000$, we get -0.2264 and -0.2259 respectively. Using higher order approximations show that the divergent sum is summable to around -0.2258 (up to 4 decimal places). Using Corollary \ref{corollary511}, we can derive the exact value of such divergent sum as follows. First, we note that $\log{n}$ is asymptotically of a finite differentiation order zero, thus we have:
\begin{align*} 
\sum_{k=1}^\infty (-1)^{k+1} \log{k} &= \lim_{n\to\infty}\big\{\sum_{k=1}^{2n} (-1)^{k+1}\,\log{k} + \frac{\log{(2n)}}{2}\big\} \\ 
&= \lim_{n\to\infty}\big\{\sum_{k=1}^{2n} \log{k} -2\sum_{k=1}^n \log{(2k)}+ \frac{\log{(2n)}}{2}\big\} \\
&= \lim_{n\to\infty}\big\{ \log{(2n)!} - 2n\log{2} - 2\log{n!} +  \frac{\log{(2n)}}{2}\big\} \\ 
&=\frac{1}{2} \log{\frac{2}{\pi}}
\end{align*}
Here, we used Stirling approximation in the last step. Indeed, we have $\frac{1}{2}\log{\frac{2}{\pi}} = -0.2258$. Similarly, if we apply Corollary \ref{corollary511} to the divergent sum $\sum_{k=1}^n (-1)^{k+1} \log{k!}$, whose asymptotic differentiation order is $m=1$, we obtain Eq \ref{Pcor511_1}, which can be confirmed numerically quite readily using the summability method $\Xi$. 
\begin{equation}\label{Pcor511_1}
\sum_{k=1}^\infty (-1)^{k+1} \log{k!} = \frac{1}{4} \log{\frac{2}{\pi}}
\end{equation}

As stated earlier, we can employ Corollary \ref{corollary511} to accelerate \lq\lq convergence'' of alternating sums, \emph{including alternating divergent sums} for which the generalized definition $\mathfrak{T}$ is implied. For example, suppose we would like to compute $\sum_{k=1}^\infty (-1)^k \log{k}$ using the summability method $\Xi$ of Claim \ref{claim411}. If we use $n=100$, we obtain a figure that is accurate to 3 decimal places only, which is expected given the fact that the summability method converges only linearly. If, on the other hand, we compute the divergent sum using Corollary \ref{corollary511}, where $m=1$ and $n=100$, we obtain a figure that is accurate to 7 decimal places! Here, we chose $m=1$ instead of $m=0$ to accelerate convergence. Moreover, applying Corollary \ref{corollary511} to the convergent sum $\sum_{k=1}^\infty \frac{(-1)^{k+1}}{k^2}$ using $n=100$ and $m=1$ yields a value of 0.8224670334, which is accurate to 10 decimal places! This is quite remarkable given the fact that we have only used 100 terms in such slowly converging sum. Of course, choosing higher values of $m$ accelerates convergence even more \footnote{It is worth mentioning that many algorithms exist for accelerating convergence of alternating series, some of which can sometimes yield several digits of accuracy per iteration. Internestingly, some of these algorithms such as the one proposed in \cite{Cohen2000} were also found to be capable of \lq\lq correctly'' summing some divergent alternating series.}. 

One example where Corollary \ref{corollary511} can be employed to obtain asymptotic expressions is the \emph{second factorials} function given by $f(n) = \sum_{k=1}^n k^{k^2}$. Similar to earlier approaches, we will first find an asymptotic expression of $\log{f(n)}$. However, this is easily done using the Euler-Maclaurin summation formula but its leading constant is unfortunately unknown in closed-form. To find an exact expression of that constant, we use the fact that $\sum_{k=1}^\infty (-1)^k k^2\,\log{k} = 7\zeta_3/(4\pi^2)$, which can be deduced immediately from the well-known analytic continuation of the Riemann zeta function. Here, we know by Eq \ref{zeta_1} that the aforementioned assignment indeed holds when interpreted using the generalized definition $\mathfrak{T}$. Now, we employ Corollary \ref{corollary511}, which yields the asymptotic expression of the second factorials function given in Eq \ref{secondFactAsympt}. Here, the ratio of the two sides of Eq \ref{secondFactAsympt} goes to unity as $n \to\infty$. The asymptotic expression in Eq \ref{secondFactAsympt} is mentioned in \cite{Choi2000}. 
\begin{equation}\label{secondFactAsympt}
 \sum_{k=1}^n k^{k^2} \sim e^{\frac{\zeta_3}{4\pi^2}}\,n^{\frac{2n^3+3n^2+n}{6}}\,e^{\frac{n}{12}-\frac{n^3}{9}}
\end{equation}

The formal expressions for the $\mathfrak{T}$ value of divergent sums can be used to deduce the Euler summation formula. \\ \hrule
\begin{lemma}{\textbf{(The Euler Sum Revisited)}} \label{EulerAgain}
Let $V_1=\sum_{k=a}^\infty (-1)^k g(k)$, when interpreted by $\mathfrak{T}$. Also, let  $V_2=\sum_{k=a}^\infty (-1)^k \sum_{j=a}^k g(k)$. If both $V_1$ and $V_2$ exist, then $V_1=2V_2$. 
\end{lemma}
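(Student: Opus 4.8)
The plan is to exploit the stability and linearity of the $\mathfrak{T}$ definition of infinite sums, combined with the fact that the partial sums $s_k = \sum_{j=a}^k g(j)$ satisfy the telescoping identity $g(k) = s_k - s_{k-1}$ with $s_{a-1} = 0$. Writing $V_1 = \sum_{k=a}^\infty (-1)^k(s_k - s_{k-1})$ and splitting the sum (legitimate because $\mathfrak{T}$ is linear and stable, so both pieces exist whenever $V_1$ does), one gets $V_1 = \sum_{k=a}^\infty (-1)^k s_k - \sum_{k=a}^\infty (-1)^k s_{k-1}$. A re-indexing of the second sum, shifting $k \mapsto k+1$ and using stability to peel off the boundary term $s_{a-1} = 0$, turns it into $-\sum_{k=a}^\infty (-1)^{k+1} s_k = \sum_{k=a}^\infty (-1)^k s_k = V_2$. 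Hence $V_1 = V_2 - (-V_2) = 2V_2$.

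More concretely, first I would record the telescoping relation $g(k) = s_k - s_{k-1}$, valid for $k \ge a$ with the convention $s_{a-1}=0$, which is just the empty-sum rule (Rule 3). Second, I would invoke linearity of $\mathfrak{T}$ to write $V_1 = A - B$ where $A = \sum_{k=a}^\infty (-1)^k s_k$ and $B = \sum_{k=a}^\infty (-1)^k s_{k-1}$; here one must note that $A$ exists because $A = V_2$ by definition, and then $B = A - V_1$ exists as well. Third, I would perform the index shift on $B$: setting $k = m+1$ gives $B = \sum_{m=a-1}^\infty (-1)^{m+1} s_m = (-1)^a s_{a-1} + \sum_{m=a}^\infty (-1)^{m+1} s_m$, and since $s_{a-1}=0$ this equals $-\sum_{m=a}^\infty (-1)^m s_m = -A = -V_2$. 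Substituting back, $V_1 = A - B = V_2 - (-V_2) = 2V_2$, which is the claim.

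The one subtlety worth being careful about — and the main obstacle, though a mild one — is the bookkeeping of which sums are guaranteed to exist in $\mathfrak{T}$ before the manipulations are performed. The cleanest route is to assume, as the lemma statement does, that both $V_1$ and $V_2$ exist; then stability lets us freely shift indices and detach boundary terms, and linearity lets us split and recombine, with every intermediate series automatically existing because it is an affine combination of $V_1$ and $V_2$. It would be a good idea to phrase the argument so that all rearrangements are among series already known to converge in $\mathfrak{T}$, rather than producing a new divergent object whose $\mathfrak{T}$-value would need separate justification; this mirrors exactly the style of reasoning used in the proof of Lemma \ref{TlimitZero} and Lemma \ref{EulerSum} earlier, so the write-up should stay short and purely formal.
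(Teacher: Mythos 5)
Your proof is correct, but it takes a genuinely different route from the paper's. You prove the identity by summation by parts at the level of the series itself: writing $g(k)=s_k-s_{k-1}$ with $s_{a-1}=0$, using linearity of $\mathfrak{T}$ to conclude that $B=\sum_{k=a}^\infty(-1)^k s_{k-1}$ exists with value $V_2-V_1$, and using stability plus the vanishing boundary term to show $B=-V_2$; equating the two expressions for $B$ gives $V_1=2V_2$ using nothing beyond the axioms recorded in Lemma \ref{Tproperties}. The paper instead routes the argument through the alternating summation formula of Theorem \ref{theorem511} and Lemma \ref{lemma511}: it represents $V_1$ and $V_2$ via the formal series $\sum_{r}\frac{N_r}{r!}g^{(r)}$ and $\sum_{r}\frac{N_r}{r!}G^{(r)}$, applies the differentiation rule $G^{(r)}(n)=\sum_{k=a}^n g^{(r)}(k)+G^{(r)}(a-1)$, and specializes to $n=a$ to extract $2V_2=V_1$. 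Your argument is more elementary and somewhat more general — it needs no analyticity of $g$ and no auxiliary divergent series, and every intermediate series is an affine combination of $V_1$ and $V_2$, so existence is automatic — whereas the paper's version has the virtue of exhibiting the result as a consequence of, and a consistency check on, the $N_r$-machinery developed in Chapter \ref{Chapter5}. Both are valid within the paper's framework; your bookkeeping of which sums exist before manipulating them is exactly the right point to be careful about, and you handle it correctly.
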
 
\begin{proof} 
We have by Theorem \ref{theorem511} that $V_1$ is given by: 
\begin{equation}\label{EulerAgain_1} 
V_1 = \sum_{k=a}^n (-1)^k\,g(k) - (-1)^n \sum_{r=0}^\infty \frac{N_r}{r!}\,g^{(r)}(n)
\end{equation}
Similarly, we have: 
\begin{equation}\label{EulerAgain_2} 
V_2 = \sum_{k=a}^n (-1)^k\,G(k) - (-1)^n \sum_{r=0}^\infty \frac{N_r}{r!}\,G^{(r)}(n), \\ G(k)=\sum_{j=a}^k g(j)
\end{equation}
However, by know by the basic rules of Summability Calculus that the following holds for all $r\ge 0$: 
\begin{equation}\label{EulerAgain_3} 
G^{(r)}(n) = \sum_{k=a}^n g^{(r)}(k) + G^{(r)}(a-1)
\end{equation}
Plugging Eq \ref{EulerAgain_3} into Eq \ref{EulerAgain_2} yields: 
\begin{equation}\label{EulerAgain_4} 
V_2 = \sum_{k=a}^n (-1)^k\,G(k) - (-1)^n\sum_{r=0}^\infty \frac{N_r}{r!} G^{(r)}(a-1) - (-1)^n \sum_{r=0}^\infty \frac{N_r}{r!} \sum_{j=a}^n g^{(r)}(j)
\end{equation}
We know by Lemma \ref{lemma511} that: 
\begin{equation}\label{EulerAgain_5} 
(-1)^a \sum_{r=0}^\infty \frac{N_r}{r!} G^{(r)}(a-1) = V_2
\end{equation}
Therefore, Eq \ref{EulerAgain_4} can be rewritten as: 
\begin{equation}\label{EulerAgain_6} 
V_2 = \sum_{k=a}^n (-1)^k\,G(k) - (-1)^{n+a} V_2 - (-1)^n \sum_{r=0}^\infty \frac{N_r}{r!} \sum_{j=a}^n g^{(r)}(j)
\end{equation}
The expression holds for all $n$. We set $n=a$ to obtain: 
\begin{equation}\label{EulerAgain_7} 
2V_2 = (-1)^a\,g(a) - (-1)^a \sum_{r=0}^\infty \frac{N_r}{r!} g^{(r)}(a) = V_1
\end{equation}
Therefore, we have the desired result. 
\end{proof} \hrule\vspace{12pt} 

Of course, the Euler summation formula easily follows from Lemma \ref{EulerAgain}. One example of Lemma \ref{EulerAgain} was already demonstrated for the $\sum_{k=1}^\infty (-1)^{k+1}\log{k!}$, where we showed in Eq \ref{Pcor511_1} that the $\mathfrak{T}$ value of such divergent sum is $\frac{\log{(2/\pi)}}{4}$. However, comparing this with the $\mathfrak{T}$ value of the divergent sum $\sum_{k=1}^\infty (-1)^{k+1} \log{k}$ given earlier, we see that the Lemma holds as expected. In fact, we immediately deduce that $\sum_{k=1}^\infty (-1)^{k+1} \log{S(k)} = \frac{\log{(2/\pi)}}{8}\approx -0.0564$, where $S(k)=k!\,S(k-1)$ is the superfactorial function. Again, this can be confirmed numerically using $\Xi$. For instance, using $\Xi$ with $n=50$ yields $\sum_{k=1}^\infty (-1)^{k+1} \log{S(k)} \approx -0.0571$ whereas choosing $n=100$ yields $\sum_{k=1}^\infty (-1)^{k+1} \log{S(k)} \approx -0.0568$. Both are very close to the correct value $-0.0564$. 

In addition, since $\sum_{k=1}^\infty (-1)^{k+1}\frac{1}{k} = \log{2}$, we deduce from Lemma \ref{EulerAgain} that: 
\begin{equation} \label{AlternHarmonicSum}
\sum_{k=1}^\infty (-1)^{k+1} H_k = \frac{\log{2}}{2}
\end{equation}
Therefore, we have by linearity of $\mathfrak{T}$: 
\begin{equation} \label{AlternHarmonicSum_2}
\sum_{k=1}^\infty (-1)^{k+1} (H_k-\log{k}-\lambda) = \frac{\log{\pi}-\lambda}{2}
\end{equation}
Here, we have used the values of divergent sums to compute a convergent sum! 

Now, Theorem \ref{theorem511} can be readily generalized to convoluted alternating sums as the following theorem shows. \\ \hrule 
\begin{theorem}\label{theorem512}
Given a convoluted finite sum of the form $f(n)=\sum_{k=a}^n (-1)^k\,g(k,n)$, then its unique natural generalization $f_G(n)$ is formally given by:
\begin{equation}\label{theorem512Eq} 
f_G(n) = (-1)^a\,g(a,n) + \sum_{r=0}^\infty \frac{N_r}{r!}\Big[(-1)^n \frac{\partial^r}{\partial t^r} g(t,n)\Big|_{t=n} - (-1)^a\frac{\partial^r}{\partial t^r} g(t,n)\Big|_{t=a}\Big]
\end{equation}
\end{theorem}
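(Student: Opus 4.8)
The plan is to reduce the convoluted alternating case to the simple alternating case of Theorem \ref{theorem511} using exactly the same device that was used to pass from Theorem \ref{theorem251} (the Euler--Maclaurin formula for simple sums) to Lemma \ref{convolutedEMFormula} (its convoluted analog). First I would fix an arbitrary constant $n_0$ and consider the \emph{simple} alternating finite sum $h(n) = \sum_{k=a}^n (-1)^k\,g(k,n_0)$, where the second argument is frozen. Since $g(\cdot,n_0)$ is a genuine function of one variable, Theorem \ref{theorem511} applies verbatim and gives the unique natural generalization
\begin{equation*}
h_G(n) = (-1)^a g(a,n_0) + \sum_{r=0}^\infty \frac{N_r}{r!}\Bigl[(-1)^n \tfrac{\partial^r}{\partial t^r} g(t,n_0)\big|_{t=n} - (-1)^a \tfrac{\partial^r}{\partial t^r} g(t,n_0)\big|_{t=a}\Bigr],
\end{equation*}
which holds for every admissible value of $n_0$, in particular for $n_0 = n$. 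Substituting $n_0 = n$ yields precisely the claimed formula \eqref{theorem512Eq}, and it manifestly interpolates the discrete values of $f(n)$ since at integer $n$ the frozen sum agrees with the convoluted sum.

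The substantive point — as in the proof of Theorem \ref{theorem311} — is to argue that the function so obtained is not merely \emph{an} interpolant but \emph{the unique most natural} generalization, i.e. that it agrees with what the generic differentiation rule (Lemma \ref{lemmaGeneric}) together with Lemma \ref{diffConvoluted} forces. So the second step is to differentiate \eqref{theorem512Eq} with respect to $n$, treating the two appearances of $n$ (as the bound, and as the second argument of $g$) separately. Differentiating the ``bound'' copy reproduces the alternating Euler--Maclaurin differentiation structure and, by Corollary \ref{corollary251_1} applied in its alternating form, collapses to $\sum_{r\ge0} \frac{N_r}{r!}\frac{\partial^r}{\partial t^r}g(t,n)|_{t=n}$ in the appropriate sense; differentiating the ``argument'' copy produces $\sum_{k=a}^n (-1)^k \frac{\partial}{\partial n} g(k,n)$ after using stability of $\mathfrak{T}$ and the telescoping identity for $N_r$ (the analog of \eqref{theorem511_1}--\eqref{theorem511_2}). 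The sum of the two pieces is exactly the convoluted differentiation rule one would write down by analogy with Lemma \ref{diffConvoluted}, so $f_G(n)$ of \eqref{theorem512Eq} has the correct derivative, and since $f_G(a) = g(a,a)$ by inspection, uniqueness of the natural generalization for simple alternating sums propagates to the convoluted case.

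The main obstacle is bookkeeping at the formal level: the series $\sum_r \frac{N_r}{r!}\,\partial_t^r g$ is in general divergent (just as the Euler--Maclaurin series is), so ``differentiate term by term and substitute $n_0=n$'' must be justified inside the summability framework $\mathfrak{T}$ rather than classically. Concretely, the interchange of $\partial/\partial n$ with the formal $\mathfrak{T}$-sum, and the claim that freezing-then-substituting commutes with the generic differentiation rule, are the steps that need care; I would handle them exactly as Theorem \ref{theorem311} did — expand everything into Taylor series around $n_0=a$ (in the $n$-slot), apply Taylor's theorem formally as in \eqref{theorem311_4}--\eqref{theorem311_5} to collapse $\sum_b \frac{(n-t)^b}{b!}\partial_t^b g(m,t) = g(m,n)$, and invoke the linearity and stability of the $\mathbb{T}$ operator so that the manipulations are legitimate at the level of formal power series. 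Once that scaffolding is in place the computation is routine, and the theorem follows by the same ``differentiation rule determines the generalization'' argument used throughout Chapters \ref{Chapter2} and \ref{Chapter3}.
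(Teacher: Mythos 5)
Your proposal is correct and follows essentially the same route as the paper, whose proof of this theorem is literally the one-line remark that it is \lq\lq similar to the proof of Lemma \ref{convolutedEMFormula} and Theorem \ref{theorem311}'' — i.e., freeze the second argument at $n_0$, apply Theorem \ref{theorem511}, substitute $n_0=n$, and then confirm via the generic differentiation rule that the result is the unique natural generalization. You have simply spelled out, correctly, what that reference entails.
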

\begin{proof}
Similar to the proof of Lemma \ref{convolutedEMFormula} and Theorem \ref{theorem311}. 
\end{proof} \hrule\vspace{12pt} 

Theorem \ref{theorem512} gives a simple method for deducing closed-form expressions and/or asymptotic expressions of alternating convoluted sums. For example, suppose $f(n)$ is given by the convoluted alternating sum $f(n)=\sum_{k=0}^n (-1)^k \log{(1+\frac{k}{n})}$, and suppose that we wish to find its asymptotic value as $n$ tends to infinity. Using Theorem \ref{theorem512}, we have: 
\begin{equation}\label{theorem512_1}
\sum_{k=0}^n (-1)^k\,\log{(1+\frac{k}{n})} \sim (-1)^n \frac{\log{2}}{2}
\end{equation}
For a second example, we have by Theorem \ref{theorem512}: 
\begin{equation}\label{theorem512_2}
\sum_{k=0}^n (-1)^k\,(k+n) = \frac{n}{2}+(-1)^n\,n+\frac{(-1)^n-1}{4}
\end{equation}
Note that the same expression in the last equation can be alternatively derived by splitting the sum into two alternating sums $\sum_{k=0}^n (-1)^k\,(k+n) = \sum_{k=0}^n (-1)^k\,k + n\sum_{k=0}^n (-1)^k$ , and by using the results in Eq \ref{Nr_6} and Eq \ref{Nr_7}.

Obviously, the approach employed so far for alternating sums can be readily extended to finite sums of the form $\sum_{k=a}^n e^{i\theta k}\,g(k)$ , which is stated in the following theorem. \\ \hrule 
\begin{theorem}\label{theorem513}
Given a simple finite sum of the form $f(n)=\sum_{k=a}^n e^{i\theta k}\,g(k)$, where $0<\theta <2\pi$, then its unique natural generalization $f_G(n)$ is formally given by:
\small
\begin{equation}\label{theorem513Eq1}
f_G(n)=e^{i\theta a}\,g(a) - \sum_{r=0}^\infty \frac{\Theta_r}{r!}\big[e^{i\theta n} g^{(r)}(n) - e^{i\theta a} g^{(r)}(a)\big], \\ \text{ where } \Theta_r=\sum_{k=1}^\infty e^{i\theta k}k^r
\end{equation}
\normalsize
Here, infinite sums are interpreted using the generalized definition $\mathfrak{T}$. In addition, we also have: 
\begin{equation} \label{theorem513Eqa1_1} 
f_G(n) = \sum_{k=a}^\infty  e^{i\theta k}\,g(k) -  \sum_{k=n+1}^\infty  e^{i\theta k}\,g(k)
\end{equation}
Also, $\sum_{k=a}^\infty e^{i\theta k}\,g(k)$ is formally given by: 
\small
\begin{equation}\label{theorem513Eq2}
\sum_{k=a}^\infty e^{i\theta k}\,g(k) = e^{i\theta(a-1)} \sum_{r=0}^\infty \frac{\Theta_r}{r!} g^{(r)}(a-1) = \sum_{k=a}^n e^{i\theta k}g(k) + \sum_{r=0}^\infty \frac{\Theta_r}{r!} e^{i\theta n} g^{(r)}(n)
\end{equation}
\normalsize
For instance, if $g(n)$ is asymptotically of a finite differentiation order $m$, then Eq \ref{theorem513Eq2} is interpreted as a limiting expression as $n\to\infty$, which is similar to Corollary \ref{corollary511}. In general, infinite sums are interpreted using the generalized definition $\mathfrak{T}$.
\end{theorem}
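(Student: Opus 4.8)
The plan is to follow exactly the template used for alternating sums in Theorem~\ref{theorem511} and Lemma~\ref{lemma511}, replacing the sign pattern $(-1)^k$ by the more general phase factor $e^{i\theta k}$, since nothing in those earlier arguments genuinely used that $\theta=\pi$. First I would establish the analogue of Lemma~\ref{lemma511}: if $\sum_{k=a}^\infty e^{i\theta k} g(k)$ exists in $\mathfrak{T}$ with value $V$, then $V = e^{i\theta(a-1)} \sum_{r=0}^\infty \frac{\Theta_r}{r!} g^{(r)}(a-1)$, where $\Theta_r = \sum_{k=1}^\infty e^{i\theta k} k^r$. The proof is the same two-step interchange: write $V$ as a Lindel\"of/Mittag-Leffler limit $\lim_{\delta\to 0}\sum_{k\ge a} e^{i\theta k}\xi_\delta(k-a)g(k)$, then expand $g(k)$ itself as a $\mathfrak{T}$-summed Taylor series around $a-1$ (legitimate because $g$ is analytic on $[a-1,\infty)$), swap the order of the two summability limits using linearity, and collect the inner sum over $k$ into $\Theta_r$. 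The key structural fact needed is that the $\Theta_r$ are well-defined in $\mathfrak{T}$, which follows because $\sum_{k\ge1} e^{i\theta k} k^r z^k$ arises from differentiating the geometric series $z/(1-e^{i\theta}z)$ and, for $0<\theta<2\pi$, the point $z=1$ lies in the Mittag-Leffler star of that rational function (its only pole is at $z=e^{-i\theta}\neq 1$ on the unit circle, off the segment $[0,1]$). This also yields the generating function $\sum_r \frac{\Theta_r}{r!}x^r$ by taking $g(k)=e^{kx}$, i.e. $\sum_{k\ge1} e^{(i\theta+x)k} = \dfrac{e^{i\theta+x}}{1-e^{i\theta+x}}$.

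Next I would prove the main interpolation formula. Applying the $\Theta_r$-expansion to the tail gives, formally, $\sum_{k=n+1}^\infty e^{i\theta k} g(k) = e^{i\theta n}\sum_{r=0}^\infty \frac{\Theta_r}{r!} g^{(r)}(n)$. By stability of $\mathfrak{T}$, $\sum_{k=a}^\infty e^{i\theta k}g(k) = e^{i\theta a}g(a) + \sum_{k=a+1}^\infty e^{i\theta k}g(k) = e^{i\theta a}g(a) - e^{i\theta a}\sum_{r=0}^\infty \frac{\Theta_r}{r!}g^{(r)}(a)$, which is the second displayed equation of the theorem. Defining $f_G(n)$ by Eq.~\eqref{theorem513Eqa1_1} as the difference of the head minus tail infinite sums (both $\mathfrak{T}$-interpreted), subtraction of the two tail expressions produces Eq.~\eqref{theorem513Eq1}. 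One then has to check two things: that this $f_G(n)$ satisfies the recurrence $f_G(n) = e^{i\theta n}g(n) + f_G(n-1)$ and the initial condition $f_G(a-1)=0$ (immediate from the difference-of-sums form and stability/linearity of $\mathfrak{T}$), and that it is the \emph{unique natural} generalization. For the latter I would mirror the trick at the end of Theorem~\ref{theorem511}: recognize $\frac{d^r}{dx^r}(e^{i\theta x}g(x)) = e^{i\theta x}(i\theta + D_x)^r$, so that $f_G(n)$ can be rewritten, using the generating function of $\Theta_r$, as $g(a) + \frac{e^{i\theta+D_n}}{1-e^{i\theta+D_n}} - \frac{e^{i\theta+D_a}}{1-e^{i\theta+D_a}}$ acting on $g$ — wait, more carefully, one keeps the $e^{i\theta}$ factor inside and observes the operator identity collapses the phase, recovering the Euler–Maclaurin operator $g(a) + \frac{e^{D_n}}{1-e^{D_n}} - \frac{e^{D_a}}{1-e^{D_a}}$ applied appropriately, hence agreement with the Euler–Maclaurin generalization which was already shown to be the unique natural one. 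The finite-differentiation-order remark (Eq.~\eqref{theorem513Eq2} read as a limit) then follows exactly as Corollary~\ref{corollary511} followed from Theorem~\ref{theorem511}: truncate the asymptotic series at $r=m$ since $g^{(m+1)}\to0$.

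The main obstacle I anticipate is justifying the operator-calculus collapse $\sum_r \frac{\Theta_r}{r!}(i\theta+D_x)^r \;\leadsto\; \frac{e^{i\theta+D_x}}{1-e^{i\theta+D_x}}$ and then simplifying it to the Bernoulli operator in a way that is clean when $\theta$ is generic rather than $\pi$. For $\theta=\pi$ the identity $\frac{e^{i\pi+D}}{1+e^{i\pi+D}} = \frac{e^{i\pi+D}}{1-e^{D}}\cdot(\text{something})$ used in Theorem~\ref{theorem511} exploited $e^{i\pi}=-1$; for general $\theta$ the phase does not cancel, so instead of reducing to Bernoulli numbers I would argue uniqueness differently — either directly via the functional equation $f(n) = \sum_r \frac{\Theta_r}{r!}\big(f^{(r)}(n)\big)$-type characterization paralleling Eq.~\eqref{Nr_4} (the $\Theta_r$ are the unique solutions), or by invoking the earlier machinery: the difference-of-infinite-sums construction automatically gives higher derivatives $\frac{d^r}{dn^r}f_G(n) = \sum_{k=a}^n \frac{d^r}{dn^r}(e^{i\theta k}g(k)) - \sum_{k=a}^\infty(\cdots)$, so all Taylor coefficients of $f_G$ at any base point are forced, and uniqueness of Taylor expansion (as in Corollary~\ref{cor231_1}) finishes it. Everything else is a mechanical transcription of the $\theta=\pi$ proofs, so I would keep the write-up short and refer back to Lemma~\ref{lemma511}, Theorem~\ref{theorem511}, and Corollary~\ref{corollary511} for the repeated steps.
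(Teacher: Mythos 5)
Your proposal follows exactly the route the paper takes: the paper's own proof of Theorem \ref{theorem513} is literally the one-line remark that it is similar to the proofs of Theorem \ref{theorem511} and Corollary \ref{corollary511}, and your transposition of Lemma \ref{lemma511} (expanding the $\mathfrak{T}$-value of the tail around the point below its lower limit via the constants $\Theta_r$), the stability step, and the head-minus-tail subtraction is precisely that argument, with the correct additional observation that $z=1$ lies in the Mittag-Leffler star of $z/(1-e^{i\theta}z)$ when $0<\theta<2\pi$. One small slip to fix in the write-up: since $e^{i\theta((a+1)-1)}=e^{i\theta a}$ carries no sign flip (unlike $(-1)^{a+1}=-(-1)^{a}$ in the alternating case), stability gives $\sum_{k=a}^\infty e^{i\theta k}g(k)=e^{i\theta a}g(a)+e^{i\theta a}\sum_{r=0}^\infty\frac{\Theta_r}{r!}g^{(r)}(a)$ with a plus sign, which is exactly what the subtraction against the tail $e^{i\theta n}\sum_{r=0}^\infty\frac{\Theta_r}{r!}g^{(r)}(n)$ requires in order to reproduce Eq \ref{theorem513Eq1}.
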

\begin{proof} 
Similar to the proofs of Theorem \ref{theorem511} and Corollary \ref{corollary511}. 
\end{proof} \hrule\vspace{12pt} 

If $\theta=\pi$, we have $\Theta_r=-N_r$. To find a generating function for $\Theta_r$ in general, we set $g(k)=e^{kx}$, which yields the generating function: 
\begin{equation}\label{ThetaGenFunction}
\sum_{k=1}^\infty e^{i\theta k} e^{kx} = \frac{e^{x+i\theta}}{1-e^{x+i\theta}} = \sum_{r=0}^\infty \frac{\Theta_r}{r!} x^r
\end{equation}

Of course, Theorem \ref{theorem513} can be extended to convoluted sums as well. In addition, by setting $a=n-1$ in Eq \ref{theorem513Eq1}, we deduce that the constants $\Theta_r$  are the unique solutions to the functional equation:
\begin{equation}\label{ThetaGenFunction_2}
f(n) = - \sum_{r=0}^\infty \frac{\Theta_r}{r!} \big(f^{(r)}(n)-e^{-i\theta} f^{(r)}(n-1)\big)
\end{equation}

\section{Oscillating Sums: The General Case} \label{Section5dot2}
More than two centuries ago, Daniel Bernoulli suggested that periodic oscillating sums should be interpreted probabilistically. For instance, he stated that the Grandi series $\sum_{k=0}^\infty (-1)^k$  should be assigned the value of $\frac{1}{2}$ because the partial sums are equal to one if $n$ is even and equal to zero otherwise; therefore the \emph{expected value} of the infinite sum is $\frac{1}{2}$. Using the same reasoning, Bernoulli argued that the divergent sum $1+0-1+1+0-1\ldots$ should be assigned the value of $\frac{2}{3}$ \cite{HardyDiverg, SandiferDiverg}. Remarkably, such values are indeed consistent with many natural summability methods such as Abel summability method and the summability method $\Xi$ given in Claim \ref{claim411}. Almost a century later, Cesaro captured and expanded the same principle by proposing his well-known summability method in 1890 that is given by Eq \ref{cesaroSum} \cite{HardyDiverg}. The intuition behind Cesaro summability method, however, is not Bernoulli's probabilistic interpretation, but the notion of what is currently referred to as the \emph{Cesaro mean}. Simply stated, if an infinite sum $\sum_{k=0}^\infty a_k$ converges to some value $V$, then the partial sums $\sum_{k=0}^j a_k$  approach $V$ for sufficiently large $j$. Thus, $\sum_{j=0}^n \sum_{k=0}^j a_k$ will approach $n\,V$ as $n\to\infty$, which makes Eq \ref{cesaroSum} a \emph{very natural definition} for divergent sums as well if the limit exists.
\begin{equation}\label{cesaroSum} 
\sum_{k=0}^\infty a_k = \lim_{n\to\infty} \big\{\frac{1}{n} \sum_{j=1}^n \sum_{k=0}^j a_k \big\} 
\end{equation}

In this section, we will generalize results of the previous section, which will allow us to prove that the intuitive reasoning of D. Bernoulli is indeed correct when infinite sums are interpreted using the generalized definition $\mathfrak{T}$. Of course, we have already established that $\mathfrak{T}$ is consistent with Cesaro summability method by proving that it is consistent with Abel summation.  Nonetheless, results of this section will allow us to deduce an alternative \emph{direct} proof to Bernoulli's probabilistic interpretation.

However, we will restrict our attention in this section to oscillating sums of the form $\sum_{k=a}^n s_k\, g(k)$, where $g(k)$ is analytic in the domain $[a, \infty)$ and $s_k$ is an arbitrary \emph{periodic} sequence of numbers that will be referred to as \emph{the sign sequence}. Common examples of sign sequences include $(1,1,1,\dots)$ in ordinary sums, $(1,-1,1,-1,\ldots)$ in the case of alternating sums, and $(e^{i2\pi/n}, e^{i4\pi/n}, e^{i6\pi/n}, \ldots)$ in harmonic analysis, and so on. It is important to keep in mind that the sign sequence is assumed to be periodic, but it does not have to be analytic.

Looking into the proofs of Lemma \ref{lemma511} and Theorem \ref{theorem511}, we note that if a similar approach is to be employed for oscillating sums of the form $\sum_{k=a}^n s_k\, g(k)$ then the sum $\sum_{k=a}^\infty s_k\,(k-a+1)^r$ has to exit in $\mathfrak{T}$ for all $r \ge 0$. Thus, our first point of departure is to ask for which periodic sign sequences $s_k$ do the sums $\sum_{k=a}^\infty s_k\,(k-a+1)^r$ exist in $\mathfrak{T}$ for all integers $r \ge 0$. Lemma \ref{lemma521} provides us with the complete answer. \\ \hrule 

\begin{lemma}\label{lemma521} 
Given a periodic sign sequence $s_k$ with period $p$, i.e. $s_{k+p} = s_k$, then the sums $\sum_{k=a}^\infty s_k\,(k-a+1)^r$ are well-defined in $\mathfrak{T}$ for all integers $r \ge 0$ if and only if $\sum_{k=a}^{a+p-1} s_k = 0$. That is, the sum of values of the sign sequence $s_k$ for any cycle of length $p$ is equal to zero.
\end{lemma}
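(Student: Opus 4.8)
The plan is to work directly with the $\mathfrak{T}$ definition via generating functions. After the change of index $k=a+m$ and using periodicity, the sum $\sum_{k=a}^{\infty}s_k\,(k-a+1)^r$ becomes $\sum_{m=0}^{\infty}t_m\,(m+1)^r$ where $t_m:=s_{a+m}$ is again $p$-periodic, and the hypothesis $\sum_{k=a}^{a+p-1}s_k=0$ becomes $\sum_{m=0}^{p-1}t_m=0$. By Definition \ref{TDefinition}, this sum has a $\mathfrak{T}$ value precisely when the power series $h_r(z):=\sum_{m=0}^{\infty}t_m\,(m+1)^r z^m$ (which has radius of convergence $1$) extends analytically to the closed segment $[0,1]$. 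So the whole lemma reduces to a statement about analyticity of these rational-type generating functions on $[0,1]$.

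First I would treat $r=0$. Since $t_m$ is $p$-periodic, summing in blocks of length $p$ gives the standard identity $h_0(z)=P(z)/(1-z^p)$ for $|z|<1$, with $P(z)=\sum_{j=0}^{p-1}t_j z^j$. Factoring $1-z^p=(1-z)\,Q(z)$ with $Q(z)=1+z+\dots+z^{p-1}$, the only zeros of the denominator are the $p$-th roots of unity; for $p\ge 1$ the only one lying on $[0,1]$ is $z=1$, and $Q(1)=p\neq 0$. Hence the rational function $P(z)/(1-z^p)$ — the unique analytic continuation of $h_0$ — is analytic on a neighbourhood of $[0,1]$ if and only if $z=1$ is not a pole, i.e.\ if and only if $P(1)=0$, that is $\sum_{m=0}^{p-1}t_m=\sum_{k=a}^{a+p-1}s_k=0$. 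This already proves the \textbf{only if} direction: if $\sum_{k=a}^{a+p-1}s_k\neq 0$ then $h_0$ has a simple pole at $z=1\in[0,1]$, so the $r=0$ sum has no $\mathfrak{T}$ value, and a fortiori not every $r\ge 0$ does.

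Next, for the \textbf{if} direction I would reduce every $r$ to the case $r=0$. The polynomials $1,\ (m+1),\ (m+1)(m+2),\dots,\ (m+1)\cdots(m+r)$ have distinct degrees $0,1,\dots,r$, hence form a basis of the space of polynomials in $m$ of degree $\le r$, so $(m+1)^r=\sum_{j=0}^{r}c_{r,j}\prod_{i=1}^{j}(m+i)$ for suitable constants $c_{r,j}$. Since $z^j h_0(z)=\sum_m t_m z^{m+j}$, term-by-term differentiation (valid for $|z|<1$) gives $\sum_m t_m\prod_{i=1}^{j}(m+i)\,z^m=\frac{d^j}{dz^j}\big[z^{j}h_0(z)\big]$, and therefore $h_r(z)=\sum_{j=0}^{r}c_{r,j}\,\frac{d^j}{dz^j}\big[z^{j}h_0(z)\big]$. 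When $\sum_{k=a}^{a+p-1}s_k=0$, the previous paragraph shows $h_0$ extends analytically to an open neighbourhood of $[0,1]$; hence $z^j h_0(z)$ and all of its derivatives are analytic there, so the right-hand side is analytic on $[0,1]$ and agrees with $h_r$ on $|z|<1$ by uniqueness of analytic continuation. Thus the $r$-th sum is $\mathfrak{T}$-defined, with value $h_r(1)$, for every $r\ge 0$, completing the proof.

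The step I expect to be the main (though modest) obstacle is making the reduction to $r=0$ airtight: the differentiation identity for $\frac{d^j}{dz^j}[z^j h_0(z)]$ is only asserted inside the disc $|z|<1$ where the series converges, and one must then invoke analytic continuation — $h_0$ extends past $[0,1]$, therefore so do $z^j h_0$ and its derivatives, and $h_r$ agrees with this extension on $|z|<1$. A secondary point worth stating explicitly is that for $p\ge 2$ the non-trivial $p$-th roots of unity genuinely avoid the segment $[0,1]$ (they are either non-real or equal to $-1$), which is exactly what confines all possible singularities of $h_0,\dots,h_r$ to the single point $z=1$; the edge case $p=1$ is trivial, since then $\sum_{k=a}^{a}s_k=s_a$ and the vanishing condition forces $s_k\equiv 0$.
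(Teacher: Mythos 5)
Your proof is correct and follows essentially the same route as the paper: both arguments hinge on the rational generating function $P(z)/(1-z^p)$, whose only possible singularity on $[0,1]$ is at $z=1$ and is removable precisely when the period-sum vanishes, and both obtain the general-$r$ case by expressing $\sum_k s_k (k-a+1)^r$ through derivatives of that function at $z=1$. The only (minor) divergences are that you establish necessity via the pole at $z=1$ where the paper invokes stability of $\mathfrak{T}$, and you spell out the reduction of general $r$ to $r=0$ via the identity $\sum_m t_m \prod_{i=1}^{j}(m+i)\,z^m=\frac{d^j}{dz^j}\bigl[z^{j}h_0(z)\bigr]$, which the paper leaves as a one-line remark.
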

\begin{proof}
Without loss of generality, we will set $a=1$ to simplify notations. To prove that the condition $\sum_{k=1}^{p} s_k = 0$ is necessary, we set $r=0$, which yields by stability: 
\begin{equation}\label{lemma521_1} 
\sum_{k=1}^\infty s_k = \sum_{k=1}^{p} s_k + \sum_{k=p+1}^\infty s_k = \sum_{k=1}^{p} s_k + \sum_{k=1}^\infty s_k
\end{equation}
Therefore, if $\sum_{k=1}^\infty s_k$ is summable to some value $V\in\mathbb{C}$, then $\sum_{k=1}^p s_k$ must be equal to zero. To show that the condition is sufficient, we first note by Taylor's theorem that: 
\begin{equation}\label{lemma521_2} 
\frac{x-1}{x^p-1} = 1-x+x^p-x^{p+1} + x^{2p} - x^{2p-1} \dotsm 
\end{equation}
Therefore, if we define a function $f(x)$ by: 
\begin{equation}\label{lemma521_3} 
f(x) = \frac{x-1}{x^p-1} \big(s_1 + (s_1+s_2)\,x + \dotsm + (s_1+s_2+\ldots+s_{p-1})\,x^{p-2}\big) 
\end{equation}
Then, the Taylor series expansion of $f(x)$ is given by: 
\begin{equation}\label{lemma521_4} 
f(x) = \sum_{k=0}^\infty s_{k+1}\,x^k
\end{equation}

It is straightforward to see that the condition $\sum_{k=a}^{a+p-1} s_k$  is also satisfied in the generating function $f(x)$. Because $f(x)$ is analytic in an open disc around each point in the domain $[0, 1]$, then applying the summability method to  $\sum_{k=1}^\infty s_k$  yields $f(1)$ by definition of $\mathfrak{T}$. In general, for all integers $r \ge 0$, applying the summability method to $\sum_{k=1}^\infty s_k\,k^r$ yields some linear combination of higher derivatives $f^{(m)}(1)$, therefore, they are also well-defined in $\mathfrak{T}$. 
\end{proof} \hrule 

\begin{corollary}\label{DBernoulliProb}
Given a periodic sequence $s_k$ with period $p$, where $\sum_{k=a}^{a+p-1} s_k = 0$. Then the sum $\sum_{k=a}^\infty s_k$  is well-defined in $\mathfrak{T}$. Moreover, its $\mathfrak{T}$ value agrees with Bernoulli's probabilistic interpretation.  
\end{corollary}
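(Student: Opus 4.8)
The plan is to deduce everything from the explicit generating function already built in the proof of Lemma~\ref{lemma521}. Well-definedness in $\mathfrak{T}$ is immediate: specializing Lemma~\ref{lemma521} to $r=0$, the hypothesis $\sum_{k=a}^{a+p-1}s_k=0$ is exactly what guarantees that $\sum_{k=a}^\infty s_k$ exists in $\mathfrak{T}$. Hence the only real content of the corollary is to identify this $\mathfrak{T}$-value with the ``expected value'' of the partial sums, which is Bernoulli's probabilistic interpretation; that is where the work goes.

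First I would fix notation. By stability of $\mathfrak{T}$ together with a harmless re-indexing of the periodic sequence (replace $s_j$ by $s_{j+a-1}$, which is still $p$-periodic with vanishing block sum), we may assume $a=1$. Write $S_j=\sum_{k=1}^{j}s_k$ for the partial sums, with $S_0=0$ by the empty-sum rule. Since any $p$ consecutive terms of $s_k$ sum to zero, we have $S_{j+p}=S_j$, so the partial sums are themselves $p$-periodic, cycling through the $p$ values $S_1,S_2,\dots,S_p$, and $S_p=0=S_0$. Bernoulli's probabilistic interpretation assigns to $\sum_{k=1}^\infty s_k$ the average of these partial sums over one full period, i.e.
\[
\frac1p\sum_{j=1}^{p}S_j=\frac1p\sum_{j=0}^{p-1}S_j=\frac1p\sum_{j=1}^{p-1}S_j ,
\]
and this is the quantity I must match.

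Next I would invoke the generating function from the proof of Lemma~\ref{lemma521}: the series $\sum_{k=0}^\infty s_{k+1}x^k$ is the Taylor expansion at $0$ of
\[
f(x)=\frac{x-1}{x^p-1}\bigl(S_1+S_2\,x+\dots+S_{p-1}\,x^{p-2}\bigr),
\]
which is analytic on a neighbourhood of the segment $[0,1]$; so by Definition~\ref{TDefinition} the $\mathfrak{T}$-value of $\sum_{k=1}^\infty s_k$ equals $f(1)$. Evaluating $f(1)$ is the entire calculation: factoring $x^p-1=(x-1)(1+x+\dots+x^{p-1})$ gives $\lim_{x\to1}\frac{x-1}{x^p-1}=\frac1p$, while the polynomial factor at $x=1$ equals $\sum_{j=1}^{p-1}S_j$. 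Therefore $f(1)=\frac1p\sum_{j=1}^{p-1}S_j$, which is precisely the period-average displayed above, proving the claim. As a sanity check I would confirm the two classical cases, $1-1+1-\cdots\mapsto\tfrac12$ (period $2$) and $1+0-1+1+0-1+\cdots\mapsto\tfrac23$ (period $3$), both of which come out correctly from this formula.

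I do not expect a genuine obstacle here; the care required is purely expository. The one delicate point is to state Bernoulli's probabilistic interpretation precisely enough — namely as the mean of the $p$ partial-sum values over one cycle, with the convention $S_0=S_p=0$ — so that the index bookkeeping in $f(1)=\frac1p\sum_{j=1}^{p-1}S_j$ transparently coincides with it, and to note that the reduction to $a=1$ is legitimate because $\mathfrak{T}$ is stable (and the re-indexed sign sequence still satisfies the hypothesis of Lemma~\ref{lemma521}).
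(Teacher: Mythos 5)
Your proposal is correct and follows essentially the same route as the paper: both reduce the claim to evaluating $f(1)$ for the generating function constructed in the proof of Lemma \ref{lemma521}, and both identify the result with the period-average of the partial sums (the paper evaluates the limit $\lim_{x\to 1}\frac{x-1}{x^p-1}=\frac{1}{p}$ by l'H\^opital where you factor $x^p-1$, a cosmetic difference). Your explicit observation that $S_p=0$ reconciles the sum over $p-1$ terms with the average over a full period, which the paper leaves implicit.
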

\begin{proof}
We have established in Lemma \ref{lemma521} that the sum is well-defined in $\mathfrak{T}$. We have also shown that applying the generalized definition $\mathfrak{T}$ to the sum $\sum_{k=a}^\infty s_k$  yields the value $f(1)$, where $f(x)$ is defined by Eq \ref{lemma521_3}. Using l'H\"ospital's rule, the assigned value is indeed given by:
\begin{equation}\label{DBernoulliProb_1}
\sum_{k=a}^\infty s_k = \frac{s_1 + (s_1+s_2) + \dotsm + (s_1+s_2+\ldots+s_p)}{p}
\end{equation}
Here, we have used the fact that $\sum_{k=1}^{p} s_k=0$, which follows by assumption. Therefore, the assigned value is indeed the average or expected value of the partial sums as Bernoulli suggested.
\end{proof} \hrule\vspace{12pt} 

One important advantage of Eq \ref{DBernoulliProb_1} is that it allows us to obtain \emph{exact values} of divergent sums $\sum_{k=a}^\infty s_k$ that satisfy the conditions of Lemma \ref{lemma521}, as opposed to approximating them by methods of computation such as Abel summability method or $\Xi$. Clearly, we desire a similar simple approach to obtain exact expressions of all divergent sums of the form $\sum_{k=a}^\infty s_k\,(k-a+1)^r$ that satisfy the conditions of Lemma \ref{lemma521}. Lemma \ref{lemma522} provides us with the answer. \\ \hrule

\begin{lemma}\label{lemma522} 
Given a periodic sign sequence $s_k$ with period $p$, where $\sum_{k=a}^{a+p-1} s_k=0$, let $S_r$ be given by $S_r=\sum_{k=a}^\infty s_k\,(k-a+1)^r$, where the divergent sum is interpreted using $\mathfrak{T}$. Then, $S_r$ can be \emph{exactly} computed using the recursive equation:
\begin{equation}\label{lemma522Eq}
S_r= -\frac{p^r}{r+1}\sum_{m=0}^{r-1} \binom{r+1}{m}\,p^{-m}\,S_m - \frac{1}{p(r+1)}\sum_{k=a}^{a+p-1} s_k\,(k-a+1)^{r+1}
\end{equation}
Here, the base case $S_0$ is given by Corollary \ref{DBernoulliProb}. 
\end{lemma}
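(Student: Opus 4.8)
The plan is to derive the recursion in Eq \ref{lemma522Eq} by exploiting stability of $\mathfrak{T}$ together with the periodicity $s_{k+p}=s_k$. As in the proof of Lemma \ref{lemma521}, I set $a=1$ without loss of generality (the general case follows by the translation invariance already established for the sign sequence, or simply by shifting the index). The quantity under study is $S_r = \sum_{k=1}^\infty s_k\,k^r$, interpreted in $\mathfrak{T}$, which is well-defined by Lemma \ref{lemma521} since $\sum_{k=1}^{p} s_k = 0$. The key idea is to split off the first period: by stability of $\mathfrak{T}$ (Lemma \ref{Tproperties}) we may write
\begin{equation*}
S_r = \sum_{k=1}^{p} s_k\,k^r + \sum_{k=p+1}^\infty s_k\,k^r,
\end{equation*}
and then re-index the tail by $k\mapsto k+p$, using $s_{k+p}=s_k$, to get $\sum_{k=p+1}^\infty s_k\,k^r = \sum_{k=1}^\infty s_k\,(k+p)^r$.

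The next step is to expand $(k+p)^r$ by the binomial theorem inside the $\mathfrak{T}$-sum. Since $\mathfrak{T}$ is linear (Lemma \ref{Tproperties}), I can distribute the finite binomial sum over the infinite sum term by term, which yields
\begin{equation*}
\sum_{k=1}^\infty s_k\,(k+p)^r = \sum_{m=0}^r \binom{r}{m}\,p^{\,r-m}\,S_m.
\end{equation*}
Combining with the previous display and isolating the $m=r$ term (which is just $S_r$ itself) gives
\begin{equation*}
S_r = \sum_{k=1}^{p} s_k\,k^r + S_r + \sum_{m=0}^{r-1} \binom{r}{m}\,p^{\,r-m}\,S_m,
\end{equation*}
so that $S_r$ cancels. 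This cancellation is exactly the phenomenon that makes divergent sums tractable here, but it also means we only obtain a \emph{linear relation among the $S_m$ with $m<r$ plus a boundary term}, not an explicit formula for $S_r$ — at first glance this looks like it determines nothing. The resolution, and the main subtlety of the argument, is to apply this identity not at exponent $r$ but at exponent $r+1$: doing so produces the relation
\begin{equation*}
0 = \sum_{k=1}^{p} s_k\,k^{r+1} + \sum_{m=0}^{r} \binom{r+1}{m}\,p^{\,r+1-m}\,S_m,
\end{equation*}
in which the highest-index unknown is now $S_r$ (the $m=r+1$ term, namely $S_{r+1}$, cancels as before). Solving this for $S_r$ — dividing through by the coefficient $\binom{r+1}{r}p = (r+1)p$ of the $m=r$ term and rearranging — gives precisely Eq \ref{lemma522Eq} after factoring $p^r$ out of the remaining sum.

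The remaining items are routine bookkeeping: verifying the base case $S_0$ coincides with the value furnished by Corollary \ref{DBernoulliProb} (it does, since for $r=0$ the relation at exponent $1$ reads $0 = \sum_{k=1}^p s_k\,k + (r{+}1)p\,S_0|_{r=0}$ up to the normalization, matching Eq \ref{DBernoulliProb_1} after the index shift and the $\sum s_k = 0$ simplification), and confirming that every manipulation used only linearity and stability of $\mathfrak{T}$, both of which hold unconditionally once the sums are known to exist in $\mathfrak{T}$ — and existence is guaranteed by Lemma \ref{lemma521}. I expect the only genuine obstacle to be presentational: making clear \emph{why} one must pass to exponent $r+1$ to extract $S_r$, since the naive identity at exponent $r$ is vacuous. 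Everything else is the binomial theorem plus term-by-term application of the two structural properties of $\mathfrak{T}$.
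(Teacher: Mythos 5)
Your proposal is correct and follows essentially the same route as the paper's proof: split off one period using stability, re-index the tail via periodicity, expand $(k-a+1+p)^r$ by the binomial theorem using linearity, and rearrange. The one point you elaborate that the paper compresses into \lq\lq rearranging the terms'' is exactly right — the identity at exponent $r$ is vacuous for $S_r$ because $S_r$ cancels, and one must apply it at exponent $r+1$ and solve for the $m=r$ term, whose coefficient $(r+1)p$ produces the stated normalization.
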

\begin{proof} 
Again, the proof rests on the fact that the sums $\sum_{k=a}^\infty s_k\,(k-a+1)^r$ are stable, which yields:
\begin{equation}\label{lemma522_1} 
\sum_{k=a}^\infty s_k\,(k-a+1)^r = \sum_{k=a}^{a+p-1}s_k\,(k-a+1)^r + \sum_{k=a}^\infty s_k\,(k-a+1+p)^r
\end{equation}
Expanding the factors $(k-a+1+p)^r$ using the Binomial Theorem and rearranging the terms yields the desired result. 
\end{proof} \hrule\vspace{12pt} 

Now, knowing that any periodic sequence $s_k$ with period $p$ that satisfy the condition of Lemma \ref{lemma521} imply that $\sum_{k=a}^\infty s_k\,(k-a+1)^r$ exist in $\mathfrak{T}$ for all integers $r \ge 0$, and given that we can compute the exact values of $\sum_{k=a}^\infty s_k\,(k-a+1)^r$ for all $r\ge 0$ using Corollary \ref{DBernoulliProb} and Lemma \ref{lemma522}, we are ready to generalize main results of the previous section. 

First, given an arbitrary periodic sign sequence $s_k$, we compute the average $\tau = \sum_{k=a}^{a+p-1} s_k$, and split the sum $\sum_{k=a}^n s_k\,g(k)$ into $ \tau \sum_{k=a}^n g(k) + \sum_{k=a}^n (s_k-\tau)\,g(k)$. The first term is a direct simple finite sum that can be analyzed using the earlier results of Summability Calculus in Chapter \ref{Chapter2}. The second term, on the other hand, satisfies the condition $\sum_{k=a}^{a+p-1}(s_k-\tau) = 0$ so Lemma \ref{lemma521}, Corollary \ref{DBernoulliProb} and Lemma \ref{lemma522} all hold.  \\ \hrule

\begin{theorem}\label{theorem521} 
Given a simple finite sum of the form $f(n)=\sum_{k=a}^n s_k\,g(k)$, where $s_k=(s_0, s_1, \ldots)$ is a periodic sign sequence with period $p$ that satisfies the condition $\sum_{k=a}^{a+p-1}s_k=0$. Then, for all $a\in\mathbb{N}$ and $n\ge a\in\mathbb{N}$, $f(n)$ is alternatively given by:
\begin{equation}\label{theorem521Eq1}
f(n) = s_a\,g(a) - \sum_{r=0}^\infty \frac{1}{r!}\big[S_r(1+n\bmod{p})\,g^{(r)}(n) - S_r(1+a\bmod{p})\,g^{(r)}(a)\big]
\end{equation}
Here, $S_r(x)=\sum_{k=0}^\infty s_{k+x}\, k^r$ can be exactly computed using Corollary \ref{DBernoulliProb} and Lemma \ref{lemma522}. In addition, if $g(n)$ is asymptotically of a finite differentiation order $m$, then:
\begin{equation}\label{theorem521Eq2}
\sum_{k=a}^\infty s_k\,g(k) = \lim_{x\to\infty} \Big\{\sum_{k=a}^{px}s_k\,g(k) + \sum_{r=0}^m \frac{S_r(1)}{r!}g^{(r)}(px)\Big\}
\end{equation}
\end{theorem}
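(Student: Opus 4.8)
The plan is to follow the template of the alternating case—Lemma~\ref{lemma511} and Theorem~\ref{theorem511}—with the single scalar constants $N_r$ replaced by the phase-dependent moments $S_r(\cdot)$ of the periodic sign sequence. The standing hypothesis $\sum_{k=a}^{a+p-1}s_k=0$ is exactly what Lemma~\ref{lemma521} needs in order to guarantee that every moment sum $\sum_k s_k\,(k-\cdot)^r$ lies in $\mathfrak{T}$, and Corollary~\ref{DBernoulliProb} together with Lemma~\ref{lemma522} evaluate these moments in closed form via the recursion; I would take those as the given ingredients. The heart of the argument is the tail identity
\begin{equation*}
\sum_{k=n+1}^\infty s_k\,g(k)=\sum_{r=0}^\infty \frac{S_r(1+n \bmod p)}{r!}\,g^{(r)}(n),
\end{equation*}
understood under $\mathfrak{T}$, where $S_r(1+n\bmod p)$ is the $r$-th moment of the period-$p$ sequence re-based at phase $n+1$ (the quantity Lemma~\ref{lemma522} computes).

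To establish the tail identity I would substitute $k=n+m$ with $m\ge 1$, expand $g(n+m)$ as a formal Taylor series about $n$, and interchange the sum over $m$ with the sum over the Taylor index. As in the proof of Lemma~\ref{lemma511}, this interchange is made rigorous under $\mathfrak{T}$ by inserting the regularizing operator $\mathbb{T}$ of Theorem~\ref{theorem231} (Lindel\"of / Mittag-Leffler), using its linearity and stability together with the analyticity of $g$ on $[a,\infty)$; after the limits are carried out, the remaining $m$-sum is precisely $\sum_{m\ge1}s_{n+m}m^r$, which is periodic in $n$ with period $p$ and coincides with the Lemma~\ref{lemma522} moment of the sequence started at $n+1$, i.e.\ $S_r(1+n\bmod p)$.

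Granting the tail identity, I would assemble Eq~\ref{theorem521Eq1} as follows. By stability of $\mathfrak{T}$, $f_G(n)=\sum_{k=a}^\infty s_k g(k)-\sum_{k=n+1}^\infty s_k g(k)$; applying the tail identity to $\sum_{k=a+1}^\infty s_k g(k)$ (after peeling off $s_a g(a)$) and to $\sum_{k=n+1}^\infty s_k g(k)$, then subtracting, reproduces Eq~\ref{theorem521Eq1} verbatim. Stability also yields the recurrence $f_G(n)=s_n g(n)+f_G(n-1)$ and the value $f_G(a)=s_a g(a)$, so $f_G$ is a genuine generalization. For the claim that it is the \emph{unique natural} generalization I would use the discrete Fourier decomposition $s_k=\sum_{j=1}^{p-1}c_j e^{2\pi i j k/p}$, which has no constant term precisely because the period sum vanishes; then $\sum_{k=a}^n s_k g(k)$ is a finite linear combination of sums of the form covered by Theorem~\ref{theorem513}, each of which was already shown to deliver the unique natural generalization, and—since extraction of the natural generalization is linear—the combination inherits uniqueness.

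Finally, for the asymptotic statement Eq~\ref{theorem521Eq2}: when $g$ has asymptotic differentiation order $m$, all $g^{(r)}(n)$ with $r>m$ vanish as $n\to\infty$, so the tail of the $r$-series truncates with vanishing error. Evaluating Eq~\ref{theorem521Eq1} at $n=px$ (so $n\bmod p=0$ and $1+n\bmod p=1$) and rearranging to isolate $\sum_{k=a}^\infty s_k g(k)=\sum_{k=a}^{px}s_k g(k)+\sum_{r\ge0}\frac{S_r(1)}{r!}g^{(r)}(px)$, then dropping the $r>m$ terms and letting $x\to\infty$, gives Eq~\ref{theorem521Eq2}. I expect the main obstacle to be the rigorous justification of the interchange of the two divergent summations under $\mathfrak{T}$—verifying that the iterated limit built from $\mathbb{T}$ is legitimate and that every intermediate series stays inside the Mittag-Leffler star—with the phase bookkeeping (keeping $1+n\bmod p$ distinct from $1+a\bmod p$) a secondary point of care; once the Fourier decomposition is in place, the reduction to Theorem~\ref{theorem513} for uniqueness is routine.
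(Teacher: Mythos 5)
Your proposal is correct and follows essentially the same route as the paper, whose proof of this theorem is literally a one-line reference to the arguments of Theorem \ref{theorem511} and Corollary \ref{corollary511}: you flesh out exactly that template, replacing $N_r$ by the phase-dependent moments from Lemma \ref{lemma521}, Corollary \ref{DBernoulliProb} and Lemma \ref{lemma522}, and deferring the extension to non-integer $n$ to the DFT decomposition just as the paper does in Corollary \ref{corollary522}. Your reading of the $m$-sum as $\sum_{m\ge 1}s_{n+m}\,m^r$ (the Lemma \ref{lemma522} convention, weights starting at $1$) is the one that makes Eq \ref{theorem521Eq1} come out right, even though it sits slightly at odds with the literal definition $S_r(x)=\sum_{k=0}^\infty s_{k+x}k^r$ given in the theorem statement; that off-by-one is an inconsistency in the paper's own notation, not a gap in your argument.
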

\begin{proof}
Similar to the proofs of Theorem \ref{theorem511} and Corollary \ref{corollary511}. Note that we cannot consider $f(n)$ given by Eq \ref{theorem521Eq1} to be the natural generalization $f_G(n)$ since it is so far only defined for $n \in\mathbb{N}$ due to the use of congruence. This limitation will be resolved shortly using the Discrete Fourier Transform (DFT).  
\end{proof} \hrule\vspace{12pt} 

Again, Theorem \ref{theorem521} allows us to obtain asymptotic expressions of oscillating sums and accelerate convergence. For instance, suppose $s_k=(0, 1, 0, -1, 0, 1, 0, -1, 0, \ldots)$, i.e. $p=4$ and $s_0=0$. Then, $S_0(1)$ can be computed using Corollary \ref{DBernoulliProb}, which yields:
\begin{equation}\label{theorem521_1} 
S_0(1)=1+0-1+0+1+0-1+0+\dotsm = \frac{1+1+0+0}{4}=\frac{1}{2}
\end{equation}
Similarly, $S_1(1)$ can be computed using Lemma \ref{lemma521}, which yields: 
\begin{equation}\label{theorem521_2} 
S_1(1)=\sum_{k=1}^\infty s_k\,k = -2S_0(1)-\frac{1\times 1 + 0 \times 4 - 1 \times 9 + 0 \times 16}{8} = 0
\end{equation}
Therefore, if we wish to obtain a method of evaluating the convergent sum $\sum_{k=1}^\infty \frac{s_k}{k} = \frac{1}{1}-\frac{1}{3}+\frac{1}{5}-\ldots=\frac{\pi}{4}$  with a cubic-convergence speed, we choose $m=1$ in Eq \ref{theorem521Eq2}. For instance, choosing $n=100$ yields:
\begin{equation}\label{theorem521_3} 
\sum_{k=1}^\infty \frac{s_k}{k} \approx \sum_{k=1}^{100} \frac{s_k}{k} + \frac{S_0(1)}{100} - \frac{S_1(1)}{100^2} = 0.785398
\end{equation}

Here, we obtain a figure that is accurate up to 6 decimal places. This is indeed quite remarkable given that we have only used 100 terms. Otherwise, if we compute the sum directly, we would need to evaluate approximately 500,000 terms to achieve the same level of accuracy! 

Finally, to obtain the unique natural generalization $f_G(n)$ for simple finite sums of the form $f(n)=\sum_{k=a}^n s_k\,g(k)$ in which $s_k$ is a periodic sign sequence, we use the Discrete Fourier Transform (DFT) as the following lemma shows. \\ \hrule 

\begin{lemma}\label{DFTs_k}
Given a periodic sign sequence $s_k=(s_0, s_1, \ldots)$ with period $p$, then $s_k$ can be generalized to all complex values of $k$ using the Discrete Fourier Transform (DFT). More precisely, we have:
\begin{equation}\label{DFT_1}
s_k = \sum_{m=0}^{p-1} \nu_m\,e^{i\frac{mk}{p}}, \\ \text{ where } \quad\nu_m = \frac{1}{p} \sum_{k=0}^{p-1} s_k\,e^{-i\frac{mk}{p}} 
\end{equation}
\end{lemma}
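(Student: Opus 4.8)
The plan is to verify that the proposed Discrete Fourier Transform representation genuinely reproduces the finite periodic sequence $s_k$ at integer arguments, and then observe that the right-hand side of Eq \ref{DFT_1} is manifestly defined for all $k\in\mathbb{C}$ since it is a finite linear combination of exponential functions $e^{i mk/p}$, each of which is entire. The core content is therefore the classical finite Fourier inversion identity restricted to a single period, together with the trivial remark that analytic continuation in $k$ is automatic once such a closed form is available.

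First I would fix the period $p$ and treat $(s_0,s_1,\dots,s_{p-1})$ as a vector in $\mathbb{C}^p$. I would introduce the $p$-th roots of unity $\omega = e^{i/p}$ (matching the paper's normalization, where the exponent is $mk/p$ rather than $2\pi mk/p$ — I will keep whatever constant the paper uses so as not to introduce undefined notation) and write the proposed synthesis and analysis formulas
\begin{equation}\label{DFT_plan_1}
s_k = \sum_{m=0}^{p-1} \nu_m\, e^{i\frac{mk}{p}}, \qquad \nu_m = \frac{1}{p}\sum_{j=0}^{p-1} s_j\, e^{-i\frac{mj}{p}}.
\end{equation}
Substituting the expression for $\nu_m$ into the expression for $s_k$ and interchanging the two finite sums yields $\sum_{j=0}^{p-1} s_j \cdot \frac{1}{p}\sum_{m=0}^{p-1} e^{i m(k-j)/p}$. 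The inner geometric sum over $m$ is the standard orthogonality relation: it equals $p$ when $k\equiv j \pmod p$ and $0$ otherwise, because $e^{i(k-j)/p}$ is then a primitive-or-nontrivial $p$-th root of unity and $\sum_{m=0}^{p-1}\zeta^m = (\zeta^p-1)/(\zeta-1) = 0$. Hence the double sum collapses to $s_k$ for integer $k$, confirming that Eq \ref{DFT_plan_1} interpolates the given sequence.

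Next I would note that, since $s_{k+p}=s_k$ is the only structural hypothesis, any interpolant that equals $s_k$ on the integers and is built from period-$p$ exponentials is a legitimate extension; the DFT form is the canonical one, using the $p$ lowest-frequency modes, and it is entire in $k$. In the spirit of the paper's earlier discussion of bandwidth as a complexity measure (Chapter \ref{ChaptIntro}), I would remark that this representation is the natural one: it is the unique trigonometric polynomial of degree $<p$ interpolating the samples, hence the "simplest" band-limited extension, which is exactly what Summability Calculus wants when it later needs $s_k$ at non-integer $k$. I would close by recording the consequence that every oscillating sum $\sum_{k=a}^n s_k\, g(k)$ decomposes as $\sum_{m=0}^{p-1}\nu_m \sum_{k=a}^n e^{imk/p} g(k)$, i.e. a finite combination of the exponentially-weighted sums already treated in Theorem \ref{theorem513}, so that all previous machinery transfers.

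The main obstacle is essentially bookkeeping rather than depth: one must be careful that the orthogonality relation is applied with the paper's (unconventional, $2\pi$-free) normalization so that $e^{i m(k-j)/p}$ is actually a $p$-th root of unity — if the paper literally means $e^{imk/p}$ without a $2\pi$, then the periodicity $s_{k+p}=s_k$ would not be matched and the exponent must tacitly carry the $2\pi$; I would state the identity with the normalization that makes it correct and flag this as the convention being used, rather than grinding through it. Beyond that subtlety, the only thing to check is the interchange of the two finite sums, which is unconditionally valid, so the proof is short.
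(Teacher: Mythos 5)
Your proof is correct and follows essentially the same route as the paper, whose own proof simply invokes the orthogonality of the exponentials $\{e^{i\frac{mk}{p}}\}_k$ (the ``projection'' argument) --- you have merely written out the geometric-sum orthogonality computation that the paper leaves implicit. Your flag about the normalization is well taken: as literally written the exponent lacks a factor of $2\pi$, so the modes do not have period $p$ in $k$ and the inversion identity only holds with the conventional $e^{i\frac{2\pi mk}{p}}$, which is evidently what is intended.
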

\begin{proof} 
By direct application of the Discrete Fourier Transform (DFT). Intuitively, $\nu_m$ is the \emph{projection} of the sequence $s_k$ on the sequence $\{e^{i\frac{mk}{p}}\}_k$  when both are interpreted as infinite-dimensional vectors and $\{e^{i\frac{mk}{p}}\}_k$ are orthogonal for different values of $m$. In addition, since $s_k$ is periodic, the basis are complete. Note that $\nu_0$ is the average value of the of sign sequence $s_k$, i.e. the DC component in engineering terminology. 
\end{proof} \hrule
\begin{corollary}\label{corollary522} 
Given a simple finite sum of the form $f(n)=\sum_{k=a}^n s_k\,g(k)$, where $s_k=(s_0, s_1, \ldots)$ is a periodic sign sequence with period $p$, then we have: 
\begin{equation}\label{theorem522_1} 
f(n) = \nu_0 \sum_{k=a}^n g(k) + \nu_1 \sum_{k=a}^n e^{i\frac{k}{p}}\,g(k) + \dotsm + \nu_{p-1} \sum_{k=a}^n e^{i\frac{(p-1)k}{p}}\,g(k)
\end{equation} 
Here, $\nu_m$ are defined in Lemma \ref{DFTs_k}. In addition, the unique natural generalization $f_G(n)$ for all $n\in\mathbb{C}$ is given by the sum of unique natural generalizations to all terms in the previous equation (see Section \ref{sectionSFSGC} and Theorem \ref{theorem513}). 
\end{corollary}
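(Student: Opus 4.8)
The plan is to reduce the statement to results already established, using the Discrete Fourier Transform of the sign sequence as the bridge. First I would invoke Lemma \ref{DFTs_k} to write $s_k = \sum_{m=0}^{p-1} \nu_m e^{imk/p}$ for every integer $k$, and substitute this into $f(n) = \sum_{k=a}^n s_k g(k)$. For integer $n \ge a$ the outer sum over $k$ is a genuine iterated addition, so it distributes over the finite inner sum over $m$ and the two summations may be interchanged, yielding $f(n) = \sum_{m=0}^{p-1} \nu_m \sum_{k=a}^n e^{imk/p} g(k)$, which is exactly Eq \ref{theorem522_1}. This step is essentially bookkeeping; the only things to note are that $\nu_0$ is the mean of the $s_k$ and that the $m=0$ term is a plain simple finite sum with analytic iterated function.

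Next I would treat each of the $p$ summands separately. The $m=0$ term $\nu_0 \sum_{k=a}^n g(k)$ has an analytic iterated function, so by Theorem \ref{theorem251} (equivalently Theorem \ref{StatofUniq}) it possesses a unique natural generalization. For each $m$ with $1 \le m \le p-1$ the exponent $m/p$ lies strictly in $(0, 2\pi)$, so $\sum_{k=a}^n e^{imk/p} g(k)$ is precisely the object covered by Theorem \ref{theorem513}, which furnishes its unique natural generalization $f_{G,m}(n)$ (formally, or as a genuine limit when $g$ is asymptotically of a finite differentiation order). I would then \emph{define} $f_G(n)$ to be $\nu_0 \, f_{G,0}(n) + \sum_{m=1}^{p-1} \nu_m \, f_{G,m}(n)$, an entire combination of the component generalizations, and verify it is a valid generalization of $f(n)$: it agrees with $f(n)$ at integers $n \ge a$ by the previous paragraph; it satisfies the initial condition $f_G(a) = \sum_m \nu_m e^{ima/p} g(a) = s_a g(a)$; and, since each component obeys its own recurrence $f_{G,m}(n) = e^{imn/p} g(n) + f_{G,m}(n-1)$, summing with the weights $\nu_m$ gives $f_G(n) = \left(\sum_m \nu_m e^{imn/p}\right) g(n) + f_G(n-1)$, i.e. $f_G(n) = s_n g(n) + f_G(n-1)$ with $s_n$ the DFT-interpolated sign sequence of Lemma \ref{DFTs_k}.

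For uniqueness I would appeal to linearity. Differentiating term by term via the generic differentiation rule (Lemma \ref{lemmaGeneric}) together with the differentiation rules for the two admissible component types, $f_G'(n)$ is a fixed linear combination of the component derivatives, each uniquely determined by Chapter \ref{Chapter2} and Theorem \ref{theorem513}; with the initial condition $f_G(a) = s_a g(a)$ this pins $f_G$ down. Equivalently, any natural generalization of $f$ must respect the linear decomposition forced by the DFT, because the passage to the natural generalization of a simple finite sum is itself a linear operation on the iterated function whenever that function splits as a sum of terms each of which Summability Calculus already handles.

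The main obstacle I anticipate is conceptual rather than computational: justifying that the DFT-reconstructed sum genuinely \emph{is} "the unique most natural generalization" of $f$. The difficulty is that $s_k g(k)$ is not analytic in $k$, so neither the successive-polynomial-approximation characterization of Theorem \ref{StatofUniq} nor the polynomial-fitting argument applies directly to $f$ itself. I would resolve this by taking the position already implicit in the treatment of alternating and oscillating sums in Sections \ref{Section5dot1}--\ref{Section5dot2}, namely that for iterated functions of the form "periodic sequence times analytic function" the canonical generalization is, by definition, the one obtained after splitting off the analytic pieces via the DFT, and then checking that this definition is \emph{consistent}: that it is independent of how one decomposes $s_k$, that it reduces to the earlier notions when $p = 1$ or $p = 2$, and that it matches the value already computed in Theorem \ref{theorem521} for integer arguments. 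That consistency check, together with the recurrence and initial-condition verification above, is the real content of the proof.
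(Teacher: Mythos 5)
Your proposal is correct and follows essentially the same route as the paper, whose entire proof of this corollary is the one-line appeal ``By Lemma \ref{DFTs_k}'': decompose $s_k$ via the DFT, interchange the two finite sums, and hand each component to the machinery of Section \ref{sectionSFSGC} (for $m=0$) and Theorem \ref{theorem513} (for $m\neq 0$). Your additional verifications of the initial condition, the recurrence, and uniqueness by linearity are exactly the details the paper leaves implicit, so nothing further is needed.
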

\begin{proof} 
By Lemma \ref{DFTs_k}. 
\end{proof} \hrule\vspace{12pt} 

\section{Infinitesimal Calculus on Oscillating Sums}\label{Section5dot3}
The general rules for performing infinitesimal calculus on simple finite sums of the form $\sum_{k=a}^n z(k)$  were presented earlier in Chapter \ref{Chapter2} that used the Euler-Maclaurin summation formula, which are also formally applicable to oscillating sums (see for instance the Example \ref{ExAlterSum}). In this section, nevertheless, we will show that if an oscillating infinite sum $\sum_{k=a}^n z(k)$  is well-defined in $\mathfrak{T}$, then the function $f(n)$ behaves with respect to infinitesimal calculus exactly as if it were a convergent function as $n\to\infty$, regardless of whether or not $\lim_{n\to\infty} f(n)$ exists in the classical sense of the word. In other words, $f(n)$ behaves as if it were a convergent function if the generalized $\mathfrak{T}$ limit of $f(n)$ as $n\to\infty$ exists. 

In his notebooks, Ramanujan captured the basic idea that ties summability theory to infinitesimal calculus. In his words, every series has a constant $c$, which acts \lq\lq like the center of gravity of the body.'' However, his definition of the constant $c$ was imprecise and frequently led to incorrect conclusions  \cite{Berndt6}. Assuming that such constant exists, Ramanujan deduced that the fractional sum can be defined by: 
\begin{equation}\label{Ramanujan_1} 
\sum_{k=a}^n z(k) = \sum_{k=a}^\infty z(k) - \sum_{k=n+1}^\infty z(k) = c_z(a) - c_z(n+1) 
\end{equation}
Here, $c_z(x)$ is the constant of the series $\sum_{k=x}^\infty z(k)$. Now, Ramanujan reasoned that: 
\begin{align*}\label{Ramanujan_2} 
\frac{d}{dn}\sum_{k=a}^n z(k) &= \frac{d}{dn}\big(\sum_{k=a}^\infty z(k) - \sum_{k=n+1}^\infty z(k)\big) \\
&= -\frac{d}{dn} c_z(n+1) = -\sum_{k=n+1}^\infty z'(k) \\
&= \sum_{k=a}^{n} z'(k) -  \sum_{k=a}^\infty z'(k) = \sum_{k=a}^{n} z'(k) - c_{z'}(a)
\end{align*}
Therefore, we recover Rule 1 of Summability Calculus. Such reasoning would have been correct if a precise and consistent definition of the constant $c$ existed. However, a consistent definition of $c$ for all series cannot be attained. For instance, suppose we have the function $f(n)=\sum_{k=1}^n (z(k)+\alpha)$ for some constant $\alpha$, then the above reasoning implies that $f'(n)$ is independent of $\alpha$, which is clearly incorrect because $f'(n)= \alpha + \frac{d}{dn} \sum_{k=1}^n z(k)$. 

In this section, we will show that Ramanujan's idea is correct but \emph{only if} the infinite sum $\sum_{k=a}^\infty z(k)$ is summable to a value $V\in\mathbb{C}$ using a summability method that is both regular, stable, and linear. Here, we will restrict analysis to the case of infinite sums that are well-defined in $\mathfrak{T}$. Now, introducing a constant $\alpha$ as in $f(n)=\sum_{k=1}^n (z(k)+\alpha)$ violates the summability condition so the method is well-defined. Therefore, we will indeed have that $\sum_{k=a}^n z(k) = \sum_{k=a}^\infty z(k) - \sum_{k=n+1}^\infty z(k)$. Moreover, we also have $\frac{d}{dn} \sum_{k=a}^n z(k) = \sum_{k=a}^n z'(k) - \sum_{k=a}^\infty z'(k)$, where all infinite sums are interpreted using the generalized definition $\mathfrak{T}$.  

We will begin our discussion with simple finite sums of the form $\sum_{k=a}^n e^{i\theta k} g(k)$  and state the general results afterwards. \\ \hrule

\begin{lemma}\label{Lemma531_1}
Given a simple finite sum of the form $f(n)=\sum_{k=a}^n e^{i\theta k} g(k)$, where the infinite sum $\sum_{k=a}^\infty e^{i\theta k} g(k)$ is defined by a value $V\in\mathbb{C}$ in $\mathfrak{T}$, then we have: 
\begin{equation}\label{Lemma531_1_1} 
\sum_{k=a}^n e^{i\theta k} g(k) = \sum_{k=a}^\infty e^{i\theta k} g(k) - \sum_{k=n+1}^\infty e^{i\theta k} g(k)
\end{equation} 
\begin{equation}\label{Lemma531_1_2} 
\frac{d}{dn} \sum_{k=a}^n e^{i\theta k} g(k) = \sum_{k=a}^n \frac{d}{dk} \big(e^{i\theta k} g(k)\big) - \sum_{k=a}^\infty \frac{d}{dk} \big(e^{i\theta k} g(k)\big)
\end{equation} 
\end{lemma}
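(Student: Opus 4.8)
The plan is to derive both identities in Lemma \ref{Lemma531_1} directly from the already-established machinery for $e^{i\theta k}$-weighted sums in Theorem \ref{theorem513}, together with the stability and linearity of the $\mathfrak{T}$ definition of infinite sums (Lemma \ref{Tproperties}) and Rule 1 of Table \ref{TableRules}. The central observation is that Theorem \ref{theorem513} already gives, in Eq \ref{theorem513Eqa1_1}, precisely the statement of Eq \ref{Lemma531_1_1}; so the first identity requires essentially no new work beyond invoking that theorem under the hypothesis that $\sum_{k=a}^\infty e^{i\theta k}g(k)$ exists in $\mathfrak{T}$. The real content of the lemma is Eq \ref{Lemma531_1_2}, the differentiation rule, and here the plan is to show that differentiating the decomposition in Eq \ref{Lemma531_1_1} term by term is legitimate and produces the claimed formula.

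First I would set $z(k) = e^{i\theta k}g(k)$ and write, using Eq \ref{Lemma531_1_1} as the definition of the generalized finite sum, $f_G(n) = V - R(n)$ where $R(n) = \sum_{k=n+1}^\infty z(k)$ denotes the tail, interpreted in $\mathfrak{T}$. Since $V$ is a constant independent of $n$, we have $f_G'(n) = -R'(n)$. The key step is to justify $R'(n) = \sum_{k=n+1}^\infty z'(k)$, i.e. that one may differentiate the $\mathfrak{T}$-summed tail under the summation sign. For this I would appeal to the same device used in the proof of Theorem \ref{theorem513} (and ultimately Theorem \ref{theorem511}): expand the tail via Lemma \ref{lemma511}'s analogue for $e^{i\theta k}$, namely $R(n) = e^{i\theta n}\sum_{r=0}^\infty \frac{\Theta_r}{r!} g^{(r)}(n)$ — this is exactly Eq \ref{theorem513Eq2} rearranged — which expresses $R(n)$ as a (formal, $\mathfrak{T}$-interpreted) combination of ordinary derivatives of $g$ evaluated at $n$, so that differentiation with respect to $n$ acts on each $g^{(r)}(n)$ and on the factor $e^{i\theta n}$ in the elementary way. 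Carrying out that differentiation and re-collecting terms using the generating-function identity for $\Theta_r$ (Eq \ref{ThetaGenFunction}) returns $-\sum_{k=n+1}^\infty \frac{d}{dk}(e^{i\theta k}g(k))$, again interpreted in $\mathfrak{T}$.

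Then I would close the loop with Rule 1: by the foundational derivative rule, $\frac{d}{dn}\sum_{k=a}^n z(k) = \sum_{k=a}^n z'(k) + c$ for a constant $c = f_G'(a-1)$. Combining this with $f_G'(n) = -\sum_{k=n+1}^\infty z'(k)$ and the stability/linearity of $\mathfrak{T}$ — which gives $\sum_{k=a}^n z'(k) + \sum_{k=n+1}^\infty z'(k) = \sum_{k=a}^\infty z'(k)$ whenever the infinite sum on the right exists in $\mathfrak{T}$ — identifies $c = -\sum_{k=a}^\infty z'(k)$ and yields Eq \ref{Lemma531_1_2}. One subtlety to flag explicitly: the decomposition $\sum_{k=a}^n z'(k) + \sum_{k=n+1}^\infty z'(k) = \sum_{k=a}^\infty z'(k)$ presupposes that $\sum_{k=a}^\infty z'(k)$ is itself $\mathfrak{T}$-summable; since $z'(k) = i\theta e^{i\theta k}g(k) + e^{i\theta k}g'(k)$ and $\sum e^{i\theta k}g(k)$ is summable by hypothesis, it suffices that $\sum e^{i\theta k}g'(k)$ exists in $\mathfrak{T}$, which follows from the same generating-function representation provided $g$ is analytic on $[a-1,\infty)$; I would state this as the standing regularity assumption inherited from Theorem \ref{theorem513}.

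The main obstacle I anticipate is not the algebra but the rigorous justification of differentiating the $\mathfrak{T}$-interpreted tail under the summation sign — i.e. commuting $\frac{d}{dn}$ with the limiting process $\lim_{\delta\to 0}\sum_j \xi_\delta(j)(\cdot)$ hidden inside $\mathfrak{T}$. The cleanest route, and the one I would take, is to avoid touching the divergent series directly and instead differentiate the \emph{finite} representation $R(n) = e^{i\theta n}\sum_{r}\frac{\Theta_r}{r!}g^{(r)}(n)$ where the $\Theta_r$ are fixed constants, so that termwise differentiation in $n$ is the ordinary (formal power series) operation already licensed throughout Chapter \ref{Chapter2} and Chapter \ref{Chapter5}; this reduces the lemma to manipulations formally identical to those already carried out in the proofs of Theorem \ref{theorem511} and Theorem \ref{theorem513}, so the proof can legitimately be abbreviated to "similar to the proofs of Theorem \ref{theorem513} and Corollary \ref{corollary511}, together with Rule 1 and stability of $\mathfrak{T}$."
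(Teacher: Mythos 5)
Your proposal is correct and follows essentially the same route as the paper: the first identity is taken directly from Theorem \ref{theorem513}, and the second is obtained by formally differentiating the $\Theta_r$-series representation of the generalization, invoking Rule 1 to introduce the constant $c$, and identifying $c$ with $-\sum_{k=a}^\infty \frac{d}{dk}\big(e^{i\theta k}g(k)\big)$ by comparison with the $\Theta_r$-expansion of that infinite sum. Your explicit remark that $\sum_{k=a}^\infty z'(k)$ must itself exist in $\mathfrak{T}$ is a detail the paper leaves implicit, but it does not change the argument.
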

\begin{proof}
Eq \ref{Lemma531_1_1} was already established in Theorem \ref{theorem513}. To prove Eq \ref{Lemma531_1_2}, we first note by Theorem \ref{theorem513} that: 
\begin{equation}\label{Lemma531_1_3}
f_G(n)=e^{i\theta a}\,g(a) - \sum_{r=0}^\infty \frac{\Theta_r}{r!}\big[e^{i\theta n} g^{(r)}(n) - e^{i\theta a} g^{(r)}(a)\big]
\end{equation}
In Theorem \ref{theorem513}, we have also shown that $\sum_{k=a}^\infty e^{i\theta k}\,g(k)$, interpreted using $\mathfrak{T}$, is formally given by:
\begin{equation}\label{Lemma531_1_4}
\sum_{k=a}^\infty e^{i\theta k}\,g(k) = \sum_{k=a}^n e^{i\theta k}g(k) + \sum_{r=0}^\infty \frac{\Theta_r}{r!} e^{i\theta n} g^{(r)}(n)
\end{equation}
Now, using the differentiation rule of simple finite sums, i.e. Rule 1 in Table \ref{TableRules}, we have:
\begin{equation}\label{Lemma531_1_5}
\frac{d}{dn} \sum_{k=a}^n e^{i\theta k} g(k) = \sum_{k=a}^n \frac{d}{dk} \big(e^{i\theta k} g(k)\big) +c
\end{equation}
On the other hand, differentiating both sides of Eq \ref{Lemma531_1_3} formally yields:
\begin{equation}\label{Lemma531_1_6}
f_G'(n) = -\sum_{r=0}^\infty \frac{\Theta_r}{r!} \frac{d}{dn}\big(e^{i\theta n} g^{(r)}(n)\big)
\end{equation}
Equating Eq \ref{Lemma531_1_5} and Eq \ref{Lemma531_1_6} yields:
\begin{equation}\label{Lemma531_1_7}
c= -\sum_{r=0}^\infty \frac{\Theta_r}{r!} \frac{d}{dn}\big(e^{i\theta n} g^{(r)}(n)\big) - \sum_{k=a}^n \frac{d}{dk} \big(e^{i\theta k} g(k)\big)
\end{equation}
Comparing last equation with Eq \ref{Lemma531_1_4} implies that: 
\begin{equation}\label{Lemma531_1_8}
c= -\sum_{k=a}^\infty \frac{d}{dk} \big(e^{i\theta k} g(k)\big)
\end{equation}
Plugging Eq \ref{Lemma531_1_8} into Eq \ref{Lemma531_1_5} yields the desired result. 
\end{proof} \hrule

\begin{lemma}\label{lemma531} 
Given a simple finite sum of the form $f(n)=\sum_{k=a}^n e^{i\theta k} g(k)$, where the infinite sum $\sum_{k=a}^\infty e^{i\theta k} g(k)$ is well-defined in $\mathfrak{T}$, then the unique natural generalization $f_G(n)$ is formally given by the series expansion: 
\begin{equation}\label{lemma531_Eq}
f_G(n) = \sum_{r=1}^\infty \frac{c_r}{r!} (n-a+1)^r, \\ \text{ where } c_r = -  \sum_{k=a}^\infty \frac{d^r}{dk^r} \big(e^{i\theta k} g(k)\big)
\end{equation}
Here, the infinite sums $c_r$ are interpreted using the generalized definition $\mathfrak{T}$. 
\end{lemma}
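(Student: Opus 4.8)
The plan is to identify the Taylor coefficients of $f_G(n)$ at the point $n=a-1$ and to show they are exactly $f_G^{(r)}(a-1)=c_r$ for $r\ge 1$, while the zeroth coefficient vanishes; this is the oscillating analogue of Corollary \ref{cor231_1} and of the semi-linear expansion of Theorem \ref{theorem231}. The two ingredients are Lemma \ref{Lemma531_1}, which already handles first derivatives, and the closure remark that differentiating an exponentially-modulated summand reproduces a summand of the same type.

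First I would record the base fact $f_G(a-1)=0$. This follows either from the empty sum rule (Rule 3 of Table \ref{TableRules}) applied to the generalized definition, or by setting $n=a-1$ in the closed form of Theorem \ref{theorem513} (Eq \ref{theorem513Eq1}), which gives $\sum_{k=a}^{a-1} e^{i\theta k}g(k)=0$. Hence the $r=0$ term of the Taylor expansion around $n=a-1$ is zero, which is why the stated series starts at $r=1$. Next I would show the hypotheses propagate under differentiation: writing $z(k)=e^{i\theta k}g(k)$, we have $z'(k)=e^{i\theta k}\big(i\theta g(k)+g'(k)\big)$, again of the form $e^{i\theta k}\tilde g(k)$, so iterating, $z^{(r)}(k)=e^{i\theta k}g_r(k)$ with $g_r$ obtained by formally applying $(D+i\theta)^r$ to $g$. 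By Theorem \ref{theorem513}, every infinite sum $\sum_{k=a}^\infty e^{i\theta k}\tilde g(k)$ is well-defined in $\mathfrak{T}$, so in particular $\sum_{k=a}^\infty z^{(r)}(k)$ exists in $\mathfrak{T}$ for all $r\ge 0$; this is what licenses repeated use of Lemma \ref{Lemma531_1}.

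Then I would apply Eq \ref{Lemma531_1_2} iteratively. Starting from $f_G'(n)=\sum_{k=a}^n z'(k)-\sum_{k=a}^\infty z'(k)$, the finite sum $\sum_{k=a}^n z'(k)$ meets the hypothesis of Lemma \ref{Lemma531_1} with $z'$ in place of $z$, so differentiating again gives $f_G^{(2)}(n)=\sum_{k=a}^n z^{(2)}(k)-\sum_{k=a}^\infty z^{(2)}(k)$, and by induction $f_G^{(r)}(n)=\sum_{k=a}^n z^{(r)}(k)-\sum_{k=a}^\infty z^{(r)}(k)$ for all $r\ge 1$, every infinite sum read in $\mathfrak{T}$. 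Evaluating at $n=a-1$ and using $\sum_{k=a}^{a-1} z^{(r)}(k)=0$ gives $f_G^{(r)}(a-1)=-\sum_{k=a}^\infty z^{(r)}(k)=c_r$. Assembling the Taylor series $f_G(n)=\sum_{r=0}^\infty \frac{f_G^{(r)}(a-1)}{r!}(n-a+1)^r$ and dropping the vanishing $r=0$ term yields precisely the claimed expansion.

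The point needing the most care is the iterated application of Lemma \ref{Lemma531_1}: at each stage one must verify that the finite sum being differentiated still satisfies that lemma's hypothesis, namely $\mathfrak{T}$-summability of its associated infinite series, and that is exactly why the closure property from Theorem \ref{theorem513} is invoked up front. Since the statement asserts the expansion only \emph{formally}, no separate convergence argument for the series in $(n-a+1)$ is required — the substance of the proof is the identification of the coefficients $c_r$.
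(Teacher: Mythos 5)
Your proposal is correct and is essentially the paper's own argument: the paper disposes of this lemma with the single line \lq\lq Follows immediately from Lemma \ref{Lemma531_1}'', and your write-up is precisely the natural elaboration of that — iterate Eq \ref{Lemma531_1_2} using the closure of the class $e^{i\theta k}\tilde g(k)$ under differentiation, evaluate the higher derivatives at $n=a-1$ via the empty sum rule to identify $f_G^{(r)}(a-1)=c_r$, and assemble the formal Taylor series. Your explicit attention to why the hypothesis of Lemma \ref{Lemma531_1} propagates at each stage of the induction is a detail the paper omits, and it is handled appropriately given the formal nature of the statement.
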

\begin{proof} 
Follows immediately from Lemma \ref{Lemma531_1}. 
\end{proof} \hrule\vspace{12pt} 

Note that in Lemma \ref{lemma531}, the function $f(n)$ behaves as if it were a convergent function with respect to the rules of infinitesimal calculus as discussed earlier. Such result can be generalized to oscillating sums of the form $\sum_{k=a}^n s_k\,g(k)$, in which $s_k$ is a periodic sign sequence that satisfies the condition $\sum_{k=a}^{a+p-1}s_k = 0$, where $s_k$ is decomposed into functions of the form $e^{i\theta k}$ using the Discrete Fourier Transform (DFT) as shown earlier in Lemma \ref{DFTs_k}. In fact, it can be generalized even further as the following theorem shows.\\ \hrule

\begin{theorem}\label{theorem531} 
Given a simple finite sum $\sum_{k=a}^n g(k)$, where $\sum_{k=a}^\infty g(k)$ exists in $\mathfrak{T}$, then we have: 
\begin{equation}\label{theorem531Eq1}
f_G^{(r)}(n) = \sum_{k=a}^n g^{(r)}(k) - \sum_{k=a}^\infty g^{(r)}(k)
\end{equation}
Similarly the indefinite integral is given by: 
\begin{equation}\label{theorem531Eq2}
\int^n f_G(t)\,dt = \sum_{k=a}^n \int^k g(t)\,dt + n\sum_{k=a}^\infty g(k) + c
\end{equation}
Here, $c$ is an arbitrary constant of integration, and all infinite sums are interpreted using the generalized definition $\mathfrak{T}$. 
\end{theorem}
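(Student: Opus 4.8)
The plan is to show that the unique natural generalization $f_G(n)$ of $\sum_{k=a}^n g(k)$ obeys, under $\mathfrak{T}$, the same Ramanujan-type identities that Corollary \ref{cor231_1} established for genuinely convergent sums, and then to read off the two displayed formulas. I would begin from the iterated differentiation rule, Eq \ref{Eq218}: there exist constants $c_r$, independent of $n$, with $f_G^{(r)}(n)=\sum_{k=a}^n g^{(r)}(k)+c_r$. Hence the first assertion reduces entirely to two points: that each $\sum_{k=a}^\infty g^{(r)}(k)$ is itself summable in $\mathfrak{T}$, and that $c_r=-\sum_{k=a}^\infty g^{(r)}(k)$. (Note the hypothesis $\sum_{k=a}^\infty g(k)\in\mathfrak{T}$ already forces the generating function $h(z)=\sum_{k=a}^\infty g(k)\,z^{k-a}$ to be regular at the origin, so $g$ grows at most geometrically, which is what makes the regularization below work.)

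To pin down the constants I would introduce a regularizing parameter. For $z$ inside the disc of convergence of $h$ about $0$, set $g_z(k)=z^{k-a}g(k)$; then $\sum_{k=a}^\infty g_z(k)=h(z)$ converges in the ordinary sense, so the convergent-sum identity underlying Corollary \ref{cor231_1} applies to $f_z(n)=\sum_{k=a}^n g_z(k)$, giving $f_{z,G}^{(r)}(n)=\sum_{k=a}^n \frac{d^r}{dk^r}g_z(k)-\sum_{k=a}^\infty \frac{d^r}{dk^r}g_z(k)$, with every series convergent and the tail equal, up to $\log z$-weighted lower-order terms tending to $g^{(r)}(k)$ as $z\to1$, to a derivative of $h$. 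I would then let $z$ run from $0$ to $1$ along the segment, continuing analytically through the Mittag-Leffler star of $h$ when the radius of convergence is below $1$, and use Lemma \ref{LemmaAbelNorlund} (Abel summation is consistent with and dominated by $\mathfrak{T}$) together with Definition \ref{TDefinition} to conclude that each $\sum_{k=a}^\infty \frac{d^r}{dk^r}g_z(k)$ is a boundary value of an analytic function, hence tends as $z\to1$ to a $\mathfrak{T}$-value, which must be the $\mathfrak{T}$-value of $\sum_{k=a}^\infty g^{(r)}(k)$. The finite sums $\sum_{k=a}^n \frac{d^r}{dk^r}g_z(k)$ tend trivially to $\sum_{k=a}^n g^{(r)}(k)$, and $f_{z,G}^{(r)}(n)\to f_G^{(r)}(n)$ because the natural generalization is furnished by the Euler--Maclaurin construction of Corollary \ref{corollary251_2}, which depends analytically on $z$. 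Matching the $n$-independent pieces yields $c_r=-\sum_{k=a}^\infty g^{(r)}(k)$, which is Eq \ref{theorem531Eq1}.

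For the integral formula I would not repeat the argument but reduce to the first part. By the integral rule (Rule 2 of Table \ref{TableRules}), $\int^n f_G(t)\,dt=\sum_{k=a}^n\int^k g(t)\,dt+c_1 n+c_2$, where $c_1=-\frac{d}{dx}\big(\sum_{k=a}^x\int^k g(t)\,dt\big)\big|_{x=a-1}$. Writing $h^*(n)=\sum_{k=a}^n\int^k g(t)\,dt$, its summand $w(k)=\int^k g(t)\,dt$ has $w'(k)=g(k)$, and $\sum_{k=a}^\infty w'(k)=\sum_{k=a}^\infty g(k)$ exists in $\mathfrak{T}$ by hypothesis, so the $r=1$ instance of the formula just proved applies to $h^*$ and gives $\frac{d}{dn}h^*(n)=\sum_{k=a}^n g(k)-\sum_{k=a}^\infty g(k)$. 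Evaluating at $n=a-1$ and using the empty-sum rule (Rule 3) gives $\frac{d}{dn}h^*(n)\big|_{n=a-1}=-\sum_{k=a}^\infty g(k)$, hence $c_1=\sum_{k=a}^\infty g(k)$, while $c_2$ remains the one genuinely arbitrary constant of integration — exactly Eq \ref{theorem531Eq2}.

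The main obstacle will be the passage $z\to1$: one must justify that differentiation with respect to $n$ commutes with the $\mathfrak{T}$-limit, i.e.\ that both the tail sums $\sum_{k=n+1}^\infty \frac{d^r}{dk^r}g_z(k)$ and the natural generalization $f_{z,G}(n)$ vary analytically, hence continuously, with the regularizing parameter as it is pushed to the boundary point $z=1$ along the Mittag-Leffler star. Two related technical points deserve the most care: handling the case where $h$ has radius of convergence below $1$, so that $[0,1]$ must be traversed as an analytic continuation rather than kept inside a single disc of ordinary convergence; and the fact that $g_z$ need not be semi-linear for general $g$ (it is when $g$ is polynomially bounded, which already covers many cases via Theorem \ref{theorem231}), so that the fully general version of the convergent-sum identity of Corollary \ref{cor231_1}, not merely its semi-linear statement, is what is being invoked for $g_z$. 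Everything else — the reduction via Eq \ref{Eq218}, the linearity and stability manipulations with $\mathfrak{T}$, and the integral step — is routine bookkeeping.
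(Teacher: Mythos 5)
Your route is genuinely different from the paper's, and the difference matters. The paper proves this theorem in one line: it rewrites $\sum_{k=a}^n g(k)=\sum_{k=a}^n e^{i\theta k}\bigl(e^{-i\theta k}g(k)\bigr)$ and invokes Lemmas \ref{Lemma531_1} and \ref{lemma531}, where the non-arbitrary constant of Rule 1 has already been identified with $-\sum_{k=a}^\infty \frac{d}{dk}\bigl(e^{i\theta k}g(k)\bigr)$ by comparing the formal $\Theta_r$-series of Theorem \ref{theorem513} against the differentiation rule; the integral formula is then read off exactly as you do in your last step. You instead propose an Abelian regularization $g_z(k)=z^{k-a}g(k)$, prove the identity for small $z$ where the infinite sums converge classically, and push $z\to 1$ through the Mittag-Leffler star. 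That is an attractive idea, but as written it has a genuine gap, and it sits precisely where you yourself flag the ``main obstacle.''

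Concretely: (i) your base case invokes ``the fully general version of the convergent-sum identity of Corollary \ref{cor231_1},'' but no such version exists in the paper --- Corollary \ref{cor231_1} is proved only for semi-linear sums, and $g_z$ need not be nearly convergent (Definition \ref{definition1} imposes sign, monotonicity and convexity conditions on $g_z$ \emph{and all its derivatives} that fail for sign-changing $g$ and are meaningless for complex $z$), so the statement you start from is itself unproven and is of essentially the same depth as the theorem you are trying to establish; one would have to re-derive it from Theorem \ref{theorem251} by letting $n\to\infty$ inside a divergent Bernoulli series, which is a nontrivial interchange. (ii) The limit $z\to 1$ requires that $f_{z,G}^{(r)}(n)\to f_G^{(r)}(n)$, i.e.\ that the operator ``take the natural generalization and differentiate $r$ times'' commutes with the $\mathfrak{T}$-limit in $z$; justifying this by saying the Euler--Maclaurin construction of Corollary \ref{corollary251_2} ``depends analytically on $z$'' begs the question, since that series diverges and its $z$-dependence is only formal --- this commutation \emph{is} the content of the theorem. (iii) A smaller slip: in the integral step you apply ``the $r=1$ instance'' of the first formula to $h^*(n)=\sum_{k=a}^n\int^k g(t)\,dt$, but the hypothesis of that instance is that $\sum_{k=a}^\infty \int^k g(t)\,dt$ exists in $\mathfrak{T}$, which does not follow from $\sum_{k=a}^\infty g(k)\in\mathfrak{T}$; you need to identify the constant $c_1$ of Rule 2 in Table \ref{TableRules} directly (as the paper effectively does via Eq \ref{Eq218}) rather than cite the first part with mismatched hypotheses. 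The cleanest repair is to adopt the paper's reduction to the oscillating-sum machinery, where the constant identification has already been carried out formally.
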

\begin{proof}
We can rewrite $f(n)$ as $f(n)=\sum_{k=a}^n e^{i\theta k} (e^{-i\theta k} g(k))$. Note that this is always valid because both initial condition and recursive identity still hold for all $n$. Using results of Lemma \ref{lemma531}, Eq \ref{theorem531Eq1} also holds in the general case. The integral rule follows immediately, which is also analogous to the case of the convergent sums.
\end{proof} \hrule\vspace{12pt} 

One example of Theorem \ref{theorem531} was already discussed in Section \ref{Sect2dot2} in which we showed that the function $f(n)=\sum_{k=0}^n \sin{k}$ is given by Eq \ref{postTherm531_1}, where $\beta_1$ and $\beta_2$ can be determined using any two values of $f(n)$. 
\begin{equation}\label{postTherm531_1}
f_G(n)=\sum_{k=0}^n \sin{k} = \beta_1\sin{n} -\beta_2(1-\cos{n})
\end{equation}

For example, setting $n=1$ and $n=2$ yields $\beta_1=\frac{1}{2}$. However, we also showed in Section \ref{Sect2dot2} that $f_G'(0)=\beta_1$. Using Theorem \ref{theorem531}, we can also deduce the same value of $\beta_1$ as follows: 
\begin{equation}\label{postTherm531_2}
f_G'(n)=\sum_{k=0}^n \cos{k} -\sum_{k=0}^\infty \cos{k} \Rightarrow f_G'(0) = 1 -\sum_{k=0}^\infty \cos{k}
\end{equation}
However, $\sum_{k=0}^\infty \cos{k}$ is equal to $\frac{1}{2}$ in $\mathfrak{T}$. Therefore, $f_G'(0)=\beta_1 = \frac{1}{2}$, which is consistent with the earlier claim. 

\section{Summary of Results} 
In this chapter, we used the generalized definition of infinite sums $\mathfrak{T}$ given earlier in Chapter \ref{Chapter4} to deduce the analog of the Euler-Maclaurin summation formula for oscillating sums, which can be used to deduce asymptotic expressions of oscillating sums, accelerate series convergence, and even deduce exact analytic expressions of summable divergent sums as well. In addition, results of this chapter can also be used to deduce asymptotic expressions of non-oscillating finite sums as illustrated for the second factorials function. We have also shown the remarkable fact that, as far as the foundational rules of Summability Calculus are concerned, summable divergent series indeed behave as if they were convergent. 

\chapter{Direct Evaluation of Finite Sums} \label{Chapter6}
\epigraph{\emph{The future influences the present, just as much as the past.}}{F. Nietzsche (1844 -- 1900)}

So far, we have looked at performing infinitesimal calculus on finite sums and products, which led to the Euler-Maclaurin summation formula as well as a family of Euler-like summation formulas for oscillating sums. Formally speaking, a simple finite sum is given by the Euler-like summation formulas but using those infinite sums is often difficult since they typically diverge even for simple functions. However, as shown in Chapter \ref{Chapter2}, Summability Calculus can be used to deduce series expansion of finite sums which, in turn, can be used to evaluate such functions for all $n\in\mathbb{C}$ using an appropriate analytic summability method if needed.

Consequently, we so far have two methods of \emph{evaluating} convoluted finite sums of the form $\sum_{k=a}^n s_k\, g(k,n)$, where $s_k$ is an arbitrary periodic sign sequence. First, we can use the generalized Euler-like family of summation formulas if they converge. Second, we can compute the series expansion using Summability Calculus and use an appropriate summability method afterwards. Unfortunately, both methods are often impractical.

In this section, on the other hand, we provide a simple \emph{direct} method of computing $\sum_{k=a}^n s_k\, g(k,n)$ for all $n\in\mathbb{C}$. In the case of simple finite sums \simplefinitesum\, in which $g(n)$ is asymptotically of a finite differentiation order $m$, the fairly recent work of M\"uller and Schleicher \cite{Mueller2010} has captured the general principle. Here, and as discussed earlier in the Chapter \ref{ChaptIntro}, the basic idea is to evaluate the finite sum asymptotically using approximating polynomials and propagate results backwards using the recurrence identity $f(n)=g(n)+f(n-1)$. 

To be more specific, the M\"uller-Schleicher method can be described as follows. Suppose that the iterated function $g(k)$ is asymptotically given by a polynomial $P_s(k)$ with degree $s$, where the approximation error vanishes asymptotically, i.e. $\lim_{k\to\infty}\{g(k)-P_s(k)\} = 0$. Then, for any $N\in\mathbb{N}$ and any $n\in\mathbb{N}$, we have $\sum_{k=N}^{n+N}g(k) \sim \sum_{k=N}^{n+N} P_s(k)$, where the latter can be evaluated using the Bernoulli-Faulhaber formula, which is itself a polynomial with degree $(s+1)$. Because the latter formula holds for any $n\in\mathbb{N}$, it is natural to \emph{define} the sum for all $n\in\mathbb{C}$ by the same polynomial. Of course, this is merely an approximation as $N\to\infty$. However, we can use the approach to approximate the finite sum $\sum_{k=1}^n g(k)$ for all $n\in\mathbb{C}$ upon using: 
\begin{equation}\label{MSMethodEq1}
\sum_{k=N}^{n+N}g(k) = \sum_{k=1}^n g(k) + \sum_{k=n+1}^{n+N} g(k) -\sum_{k=1}^{N-1} g(k)    
\end{equation} 
Consequently, we have: 
\begin{equation}\label{MSMethodEq2}
\sum_{k=1}^n g(k) = \sum_{k=N}^{n+N}g(k) - \sum_{k=n+1}^{n+N} g(k) + \sum_{k=1}^{N-1} g(k)
\end{equation} 

Now, choosing $N\in\mathbb{N}$ allows us to evaluate the sums $\sum_{k=n+1}^{n+N} g(k)$ and $ \sum_1^{N-1} g(k)$ directly by definition. Similarly, taking $N\to\infty$ allows us to evaluate the fractional sum $\sum_{k=N}^{n+N}g(k)$ with arbitrary accuracy as discussed earlier. Consequently, we can evaluate the original fractional sum $\sum_{k=a}^n g(k)$ with an arbitrary degree of accuracy by using the translation invariance property $\sum_{k=a}^n g(k) = \sum_{k=1}^{n-a} g(k+a)$ and by applying equation above at  the limit $N\to\infty$. Obviously, such approach is not restricted to analytic functions. For instance, if we define $f(n)=\sum_{k=1}^n \frac{1}{\lceil k\rceil}$, where $\lceil x \rceil$ is the \emph{ceiling} function, then the natural generalization implied by the above method is given by the \emph{discrete harmonic function} $f(n) = H_{\lceil n \rceil}$. This is the essence of the M\"uller-Schleicher method. 

However, if $g(n)$ is analytic, then we can Taylor's theorem to deduce that if $g^{(m+1)}(n) \to 0$ as $n\to\infty$, then $g(n)$ is asymptotically given by its first $(m+1)$ terms of the Taylor series expansion. That is, $g(n)$ in the latter case is indeed asymptotically approximated by a polynomial. This can be easily established, for example, by using the Lagrange reminder form. In this section, we show that the aforementioned approach is merely a special case of a more general formal method. Here, we present the general statement that is applicable to simple finite sums, even those in which $g(n)$ is not asymptotically of a finite differentiation order, convoluted finite sums of the form \convolutedsum\, and even oscillating convoluted finite sums of the form $\sum_{k=a}^n s_k\, g(k,n)$ for some arbitrary periodic sign sequence $s_k$. In other words, we extend the approach to the general case in which $g(k)$ may or may not be approximated by polynomials asymptotically. In addition, the statement readily provides a method of \emph{accelearing} convergence speed. 

Finally, we will establish the earlier claim that the Euler-Maclaurin summation formula is the unique natural generalization to simple finite sums by showing that it is the \emph{unique} generalization that arises out of polynomial fitting. Similar to the approach of Chapter \ref{Chapter2}, we will begin our treatment with the case of semi-linear simple finite sums and generalize results afterwards. 

\section{Evaluating Semi-Linear Simple Finite Sums} \label{Section6dot1}
The case of semi-linear simple finite sums is summarized in the following theorem. \\
\hrule 
\begin{theorem}\label{theorem611} 
Given a \emph{semi-linear} simple finite sum of the form \simplefinitesum, i.e. where $g(n)$ is nearly-convergent, then its unique natural generalization $f_G(n)$ for all $n\in\mathbb{C}$ can be evaluated using the following expression:
\begin{equation}\label{theorem611Eq}
f_G(n) = \lim_{s\to\infty} \Big\{(n-a+1)\,g(s) + \sum_{k=0}^s g(k+a)-g(k+n+1)\Big\}
\end{equation}
\end{theorem}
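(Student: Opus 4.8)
The plan is to start from Corollary \ref{cor231_1}, which already establishes that for a semi-linear simple finite sum whose associated infinite sum $\sum_{k=a}^\infty g(k)$ converges, the unique natural generalization satisfies $f_G(n) = \sum_{k=a}^\infty g(k) - \sum_{k=n+1}^\infty g(k)$. The present theorem does not assume convergence of $\sum_{k=a}^\infty g(k)$, so the first step is to rewrite the candidate expression so that the possibly divergent pieces cancel. Observe that for finite $s$ the bracketed quantity is $(n-a+1)g(s) + \sum_{k=0}^s g(k+a) - \sum_{k=0}^s g(k+n+1)$, which is a difference of two \emph{partial} sums plus a correction term, hence always well-defined regardless of convergence. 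So the real content is to show that this telescoping-style combination converges as $s\to\infty$ and that its limit is exactly $f_G(n)$.

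First I would handle the case where $g(k)$ itself tends to a finite limit (WLOG $0$, after subtracting a constant which only shifts $f_G$ by a linear term that the formula tracks correctly via the $(n-a+1)g(s)$ piece; I would make this reduction explicit). When $g(k)\to 0$ and $g$ is nearly-convergent, both $\sum_{k=0}^s g(k+a)$ and $\sum_{k=0}^s g(k+n+1)$ diverge in the same way, but their difference is $\sum_{k=0}^s \bigl(g(k+a)-g(k+n+1)\bigr)$, and since $g$ is monotone with vanishing derivative, $g(k+a)-g(k+n+1)$ behaves like $-(n-a+1)\,g'(k)$ asymptotically, so the tail is summable and the whole expression converges. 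Then I would identify the limit: rewrite $\sum_{k=0}^s g(k+a) - \sum_{k=0}^s g(k+n+1) = \sum_{k=a}^{a+s} g(k) - \sum_{k=n+1}^{n+1+s} g(k)$, and use that $\sum_{k=n+1}^{n+1+s} g(k)$, once $g(k)\to 0$, approaches $\sum_{k=n+1}^\infty g(k)$ interpreted in the natural (or $\mathfrak T$) sense, while the extra $(n-a+1)g(s)$ term is precisely the asymptotic correction that makes the difference of the two divergent partial sums converge to the difference of the two regularized infinite sums. Matching this against the expression in Corollary \ref{cor231_1} (extended via the results of Chapter \ref{Chapter4} on $\mathfrak T$ when the sums are merely $\mathfrak T$-summable) shows the limit equals $f_G(n)$.

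For the general semi-linear case where $g(k)$ need not converge (e.g. $g(k)=k^m$ with $0<m<1$, or $g(k)=\log^s k$), $g(k)$ still diverges only \emph{slowly} — it is nearly-convergent, so $g'(k)\to 0$. Here I would argue that $g(k+a)-g(k+n+1)\to 0$ as $k\to\infty$ (by the mean value theorem, this difference is $(a-n-1)g'(\xi_k)$ for some $\xi_k$ between $k+a$ and $k+n+1$, and $g'(\xi_k)\to 0$), but it may not be summable; the term $(n-a+1)g(s)$ is exactly what compensates the residual linear growth. Concretely, define $R_s := (n-a+1)g(s) + \sum_{k=a}^{a+s} g(k) - \sum_{k=n+1}^{n+1+s} g(k)$ and show $R_{s+1}-R_s = (n-a+1)\bigl(g(s+1)-g(s)\bigr) + g(a+s+1) - g(n+s+2) \to 0$ with the differences being asymptotically $\bigl(g'(\cdot)\bigr)$-controlled and, crucially, of one sign eventually (by the concavity/convexity in Definition \ref{definition1}), so $R_s$ is eventually monotone; combined with the boundedness argument from Lemma \ref{lem21}-style telescoping this gives convergence. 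Then I would verify the limit satisfies the two defining properties: the initial condition $R$-limit at $n=a$ gives $g(a)$ (the two sums coincide except for a shift, and $(n-a+1)g(s)=g(s)$ cancels), and the recurrence $f_G(n)-f_G(n-1) = g(n)$ follows directly by subtracting the expressions for $n$ and $n-1$ and letting $s\to\infty$. By the uniqueness established in Theorem \ref{theorem231} and Claim \ref{claimLinearFitting}, any function satisfying these that also has the right asymptotic (near-linear) behavior must be $f_G$, and Eq \ref{theorem611Eq} manifestly has derivative approaching $g(n)$ since $\frac{d}{dn}$ of the expression is $\sum_{k=0}^s g'(k+a) - \sum_{k=0}^s g'(k+n+1) + \dots \to g(n)$ in the limit.

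The main obstacle I anticipate is rigorously justifying the convergence of $R_s$ and the interchange of the limit $s\to\infty$ with differentiation with respect to $n$ in the case where $g$ diverges — the cancellation is delicate because individual terms blow up, and one must lean carefully on the nearly-convergent hypothesis (monotonicity, fixed sign of $g^{(2)}$, $g'\to 0$) rather than on any absolute convergence. A secondary subtlety is the reduction to the $g\to 0$ case: subtracting a constant is harmless, but one should confirm that the formula is genuinely invariant under $g(k)\mapsto g(k)+c$, which it is precisely because of the $(n-a+1)g(s)$ term (it absorbs the constant as $(n-a+1)c$, matching the change $f_G(n)\mapsto f_G(n)+(n-a+1)c$). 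Once these analytic points are pinned down, identifying the limit with $f_G$ is essentially bookkeeping against Corollary \ref{cor231_1} and the $\mathfrak T$-machinery of Chapter \ref{Chapter4}.
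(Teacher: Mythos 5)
Your proposal is correct and, once the preliminary detour through Corollary \ref{cor231_1} and the case split on whether $g(k)\to 0$ is stripped away, it reduces to the same three steps as the paper's proof: verify the initial condition, verify the recurrence identity via $\lim_{s\to\infty}\{g(s)-g(s+n+1)\}=0$ for nearly-convergent $g$, and identify the limit with the unique natural generalization by differentiating in $n$ and matching against Theorem \ref{theorem231} (equivalently Claim \ref{claimLinearFitting}). The only slip is the spurious term $\sum_{k=0}^s g'(k+a)$ in your final derivative computation --- the $n$-derivative of the bracketed expression is $g(s)-\sum_{k=0}^s g'(k+n+1)$, which rearranges exactly to the constant of Theorem \ref{theorem231} --- while your added attention to the actual convergence of the limit in $s$ addresses a point the paper leaves implicit.
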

\begin{proof}
The proof consists of three parts. First, we need to show that the initial condition holds. Second, we need to show that the required recurrence identity also holds. Third, we need to show that $f_G(n)$ given by Eq \ref{theorem611Eq} is equivalent to the unique natural generalization implied by Summability Calculus.

The proof that the initial condition holds is straightforward. Plugging $n=a-1$ into Eq \ref{theorem611Eq} yields $f(a-1)=0$. To show that the recurrence identity holds, we note that:
\begin{equation}\label{theorem611_1}
f_G(n) - f_G(n-1) = \lim_{s\to\infty} \big\{g(n)+g(s)-g(s+n+1)\big\}
\end{equation}
However, because $g(n)$ is nearly-convergent by assumption, then we have:
\begin{equation}\label{theorem611_2}
\lim_{s\to\infty} \big\{g(s)-g(s+n+1)\big\} = 0, \\ \text{ for all } n\in\mathbb{C}
\end{equation}
Thus, the function $f_G(n)$ given by Eq \ref{theorem611Eq} indeed satisfies the recurrence identity:
\begin{equation}\label{theorem611_3}
f_G(n)-f_G(n-1) = g(n)
\end{equation}
Finally, to show that $f_G(n)$ is identical to the unique natural generalization implied by Summability Calculus, we differentiate both sides of Eq \ref{theorem611Eq}, which yields: 
\begin{equation}\label{theorem611_4}
f_G'(n)= \lim_{s\to\infty} \big\{g(s)-\sum_{k=0}^s g'(k+n+a)\big\} = \sum_{k=a}^n g'(k) + \lim_{s\to\infty}\big\{g(s)-\sum_{k=a}^s g'(k)\big\}
\end{equation}
However, last equation is identical to Theorem \ref{theorem231}. Thus, $f_G(n)$ given by Eq \ref{theorem611Eq} is indeed the unique natural generalization to the simple finite sum.
\end{proof} \hrule
\begin{corollary}\label{corollary611} 
If $g(n)\to 0$ as $n\to\infty$, then the unique natural generalization to the simple finite sum $f(n)=\sum_{k=a}^n g(k)$ for all $n\in\mathbb{C}$ is given by: 
\begin{equation}\label{corollary611Eq} 
f_G(n) =  \sum_{k=0}^\infty \big(g(k+a)-g(k+n+1)\big)
\end{equation}
\end{corollary}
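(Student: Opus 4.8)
The plan is to obtain Corollary~\ref{corollary611} directly from Theorem~\ref{theorem611} by showing that the extra hypothesis $g(n)\to 0$ annihilates the correction term $(n-a+1)\,g(s)$ in the limiting expression. Fix $n\in\mathbb{C}$ and the constant lower bound $a$. By Theorem~\ref{theorem611} the unique natural generalization is
\begin{equation*}
f_G(n) = \lim_{s\to\infty} \Big\{(n-a+1)\,g(s) + \sum_{k=0}^s \big(g(k+a)-g(k+n+1)\big)\Big\},
\end{equation*}
and in particular this limit exists. Since $n$ and $a$ are held fixed while $s\to\infty$, the factor $(n-a+1)$ is a constant and $g(s)\to 0$ by hypothesis, so $(n-a+1)\,g(s)\to 0$. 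First I would invoke the algebra of limits: writing $S_s=\sum_{k=0}^s\big(g(k+a)-g(k+n+1)\big)$, the sequence $S_s$ equals the convergent bracketed sequence above minus the sequence $(n-a+1)\,g(s)$, which converges to zero; hence $\lim_{s\to\infty}S_s$ exists and equals $f_G(n)$. This is exactly the claimed identity $f_G(n)=\sum_{k=0}^\infty\big(g(k+a)-g(k+n+1)\big)$, where the infinite sum is now an ordinary convergent series.

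Then I would add a short consistency remark. When $m:=n-a+1\in\mathbb{N}$, the partial sum telescopes to $\sum_{k=a}^{a+m-1} g(k) - \sum_{k=s+a+1}^{s+a+m} g(k)$, and the second sum has exactly $m$ terms, each tending to $0$ as $s\to\infty$ because $g\to 0$; so the limit reduces to $\sum_{k=a}^{n} g(k)$, confirming that the formula already interpolates the discrete values without the correction term and explaining why that term could be dropped. I would also note that Corollary~\ref{corollary611} strengthens Corollary~\ref{cor231_1}: the latter required $\sum_{k=a}^\infty g(k)$ to converge, whereas here only $g(n)\to 0$ is assumed, so the individual tails $\sum_{k\ge 0} g(k+a)$ and $\sum_{k\ge 0} g(k+n+1)$ may each diverge while their termwise difference still converges.

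The only delicate point — and the one I would state most carefully — is the assertion that Eq \ref{corollary611Eq} is a genuinely convergent series rather than merely $\mathfrak{T}$-summable; but this is immediate from Theorem~\ref{theorem611}, since convergence of $S_s$ is forced by convergence of the bracketed sequence together with $(n-a+1)\,g(s)\to 0$. For a self-contained alternative (using that, in this section, $g$ is nearly-convergent), the mean value theorem gives $g(k+a)-g(k+n+1)=(a-n-1)\,g'(\xi_k)$ with $\xi_k\to\infty$, and near-convergence makes $g'$ eventually of one sign and summable by comparison with the telescoping series $\sum\big(g(k+1)-g(k)\big)$, which converges to $-g(k_0)$. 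No separate check of the recurrence identity or the initial condition is needed, as both are inherited from Theorem~\ref{theorem611}.
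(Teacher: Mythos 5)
Your proposal is correct and follows the same route as the paper, which simply derives the corollary from Theorem \ref{theorem611} by noting that the correction term $(n-a+1)\,g(s)$ vanishes as $s\to\infty$ when $g(n)\to 0$. Your additional remarks on the telescoping consistency check and the convergence of the series are sound elaborations of what the paper leaves implicit.
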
 
\begin{proof}
Follows immediately by Theorem \ref{theorem611}.
\end{proof} \hrule\vspace{12pt} 

As discussed earlier, because a simple finite sum is asymptotically linear in any bounded region $W$, we can push $W$ to infinity such that the simple finite sum becomes exactly linear. In such case, we know that $\sum_{k=s}^{s+W} g(k) \to (W+1)\,g(s)$ for any fixed $0\le W < \infty$. This provides a convenient method of evaluating fractional sums. To evaluate simple finite sums of the form $\sum_{k=a}^n g(k)$, for $n\in\mathbb{C}$, we use the backward recurrence relation $f(n-1)=f(n)-g(n)$. Theorem \ref{theorem611} shows that such approach yields the unique natural generalization to simple finite sums as defined by Summability Calculus. Of course, since Summability Calculus on simple finite sums was built to exploit such semi-linear property of simple finite sums, we expect both approaches to be equivalent, which is indeed the case. We will illustrate the statement of Theorem \ref{theorem611} using three examples. 

First, we will start with the log-factorial function $\varpi(n) = \sum_{k=1}^n \log{k}$ . Because $\log{n}$ is nearly-convergent, we use Theorem \ref{theorem611}, which yields:
\begin{align*} 
\log{n!} &= \lim_{s\to\infty} \Big\{ n\log{s} + \sum_{k=0}^s \log{\big(\frac{k+1}{k+n+1}\big)}\\
&= \lim_{s\to\infty} \Big\{ n\sum_{k=1}^s \log{\big(1+\frac{1}{k}\big)} +\sum_{k=0}^s \log{\big(\frac{k+1}{k+n+1}\big)} \\ 
&= -\log{(1+n)} + \sum_{k=1}^\infty \log{\Big(\big(1+\frac{1}{k}\big)^n \frac{k+1}{k+n+1}\Big)} 
\end{align*} 
Here, we used the fact that $\lim_{s\to\infty} \log{(1+\frac{1}{s})} =0$. Therefore, we have: 
\begin{equation}\label{EulerProduct} 
n! = \prod_{k=1}^\infty \big(1+\frac{1}{k}\big)^n \frac{k}{k+n}
\end{equation} 

Eq \ref{EulerProduct} is the famous Euler infinite product formula for the factorial function. In addition, we know by Theorem \ref{theorem611} that Eq \ref{EulerProduct} is an alternative definition of $\Gamma(n+1)$, where $\Gamma$ is the Gamma function because the log-Gamma function is the unique natural generalization of the log-factorial function as proved earlier in Lemma \ref{factorial_gamma}. 
 
Our second example is the harmonic sum $\sum_{k=1}^n \frac{1}{k}$. Using Corollary \ref{corollary611}, we immediately have the well-known expression in Eq \ref{Eq61Harmonic}. Of course, the expression in Eq \ref{Eq61Harmonic} is simple to prove for all $n\in\mathbb{N}$ but we reiterate here that Theorem \ref{theorem611} shows it actually holds for all $n\in\mathbb{C}$. 
\begin{equation}\label{Eq61Harmonic}
\sum_{k=1}^n \frac{1}{k} = \sum_1^\infty \Big(\frac{1}{k} - \frac{1}{k+n}\Big) = \sum_{k=1}^\infty \frac{n}{k(k+n)}
\end{equation} 

Our third and last example is the sum of square roots function $\sum_{k=1}^n \sqrt{k}$. Previously, we derived its series expansion in Eq \ref{Example_261_5} whose radius of convergence was $|n|\le 1$. In Section \ref{Section4dot2_2}, we used the summability method $\Xi$ to evaluate the Taylor series expansion in a larger region over the complex plane $\mathbb{C}$. Now, we use Theorem \ref{theorem611} to evaluate the function for all $n\in\mathbb{C}$. The sum of square roots function is plotted in Figure \ref{NewSumSqrt}. The values highlighted in green are evaluated by definition. Clearly, the function that results from applying Theorem \ref{theorem611} to the sum of square roots function $\sum_{k=1}^n \sqrt{k}$ correctly interpolates the discrete points as expected \footnote{Here, however, it appears that the function $\sum_{k=1}^n \sqrt{k}$ becomes complex-valued for $n<-1$}. 

\begin{figure} [h]
\centering
\includegraphics[scale=0.4]{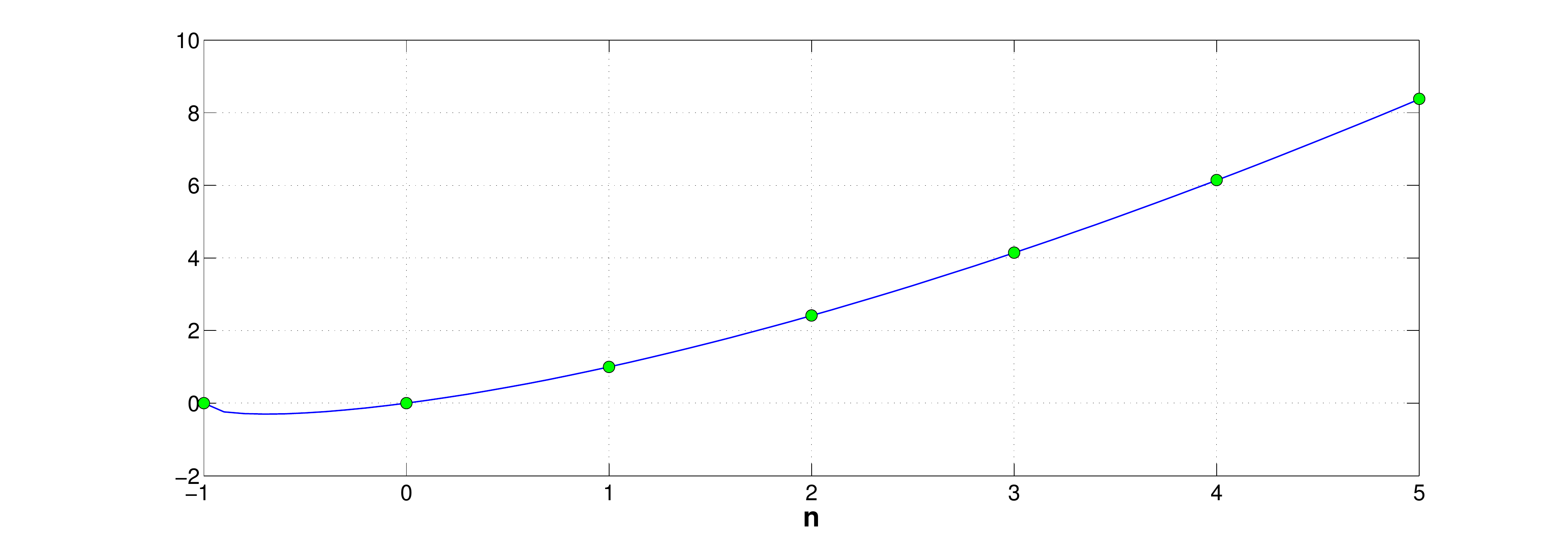}
\caption[The function $\sum_{k=1}^n \sqrt{k}$ plotted for $n \ge -1$]{The function $\sum_{k=1}^n \sqrt{k}$ evaluated using Theorem \ref{theorem611}. Values highlighted in green are exact and are evaluated directly by definition. }
\label{NewSumSqrt}
\end{figure}

\section{Evaluating Arbitrary Simple Finite Sums} \label{Section6dot2}
To evaluate arbitrary simple finite sums, we extend results of the previous chapter as the following theorem shows. \\ \hrule

\begin{theorem}\label{theorem621}
Given a simple finite sum of the form $f(n)=\sum_{k=a}^n g(k)$, then $f(n)$ for all $n\in\mathbb{C}$ is formally given by:
\begin{equation}\label{theorem621Eq1}
f(n) = \sum_{r=0}^\infty \frac{b_r(n)-b_r(a-1)}{r!}\,g^{(r)}(s) + \sum_{k=0}^s g(k+a)-g(k+n+1)
\end{equation} 
Here, Eq \ref{theorem621Eq1} holds \emph{formally} for any value $s$, where $b_r(n)=\sum_{k=1}^{n+1} k^r$, i.e. a polynomial given by the Bernoulli-Faulhaber formulaa. In the special case where $g(s)$ is asymptotically of finite differentiation order $m$, i.e. $g^{(m+1)}(s)\to 0$ as $s\to\infty$, then $f(n)$ can also be computed using the following limiting expression:
\begin{equation}\label{theorem621Eq2}
f(n) = \lim_{s\to\infty} \Big\{\sum_{r=0}^m \frac{b_r(n)-b_r(a-1)}{r!}\,g^{(r)}(s) + \sum_{k=0}^s g(k+a)-g(k+n+1)\Big\}
\end{equation} 
\end{theorem}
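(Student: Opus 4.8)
The plan is to follow the same three-part template used to prove Theorem \ref{theorem611}. Write $\tilde f(n)$ for the right-hand side of Eq \ref{theorem621Eq1}, and establish (i) the initial condition $\tilde f(a-1)=0$, (ii) the recurrence identity $\tilde f(n)=g(n)+\tilde f(n-1)$, and (iii) that $\tilde f$ coincides with the unique natural generalization $f_G$ supplied by Summability Calculus (Theorem \ref{theorem251}), which is what turns the \emph{formal} equality in Eq \ref{theorem621Eq1} into the desired statement. Throughout, $b_r(n)=\sum_{k=1}^{n+1}k^r$ is the Bernoulli--Faulhaber polynomial, and the two algebraic facts about it that I would use are $b_r(n)-b_r(n-1)=(n+1)^r$ and $b_r(a-1)=\sum_{k=1}^{a}k^r$, both immediate from the definition.

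Step (i) is trivial: putting $n=a-1$ into Eq \ref{theorem621Eq1} annihilates the first sum term-by-term since $b_r(a-1)-b_r(a-1)=0$, and collapses the second sum since $g(k+a)-g(k+(a-1)+1)=0$ for every $k$. For step (ii) I would compute $\tilde f(n)-\tilde f(n-1)$ directly. The first sum contributes $\sum_{r\ge0}\frac{b_r(n)-b_r(n-1)}{r!}g^{(r)}(s)=\sum_{r\ge0}\frac{(n+1)^r}{r!}g^{(r)}(s)$, which is, formally, the Taylor expansion of $g$ about $s$ evaluated at $s+(n+1)$, i.e. $g(s+n+1)$. The second sum contributes $\sum_{k=0}^{s}\bigl(g(k+n)-g(k+n+1)\bigr)$, a telescoping sum equal to $g(n)-g(s+n+1)$. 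Adding the two pieces gives exactly $g(n)$. It is precisely here that the word ``formally'' in the statement does its work: the series $\sum_{r}\frac{(n+1)^r}{r!}g^{(r)}(s)$ need not converge, and must be read as a formal series or in the $\mathfrak T$ sense, in the spirit of the proofs of Theorems \ref{theorem231} and \ref{theorem251}.

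For step (iii) I would differentiate Eq \ref{theorem621Eq1} in $n$ and match the resulting constant against Corollary \ref{corollary251_1}. Differentiating the recurrence from (ii) gives $\tilde f'(n)=g'(n)+\tilde f'(n-1)$, so by Rule 1 of Table \ref{TableRules} together with the uniqueness discussion around Theorem \ref{StatofUniq} it is enough to verify $\tilde f'(a-1)=\sum_{r\ge0}\frac{B_r}{r!}g^{(r)}(a-1)$. The key sub-lemma is that $b_r$ is itself differentiated by Summability Calculus: applying Corollary \ref{corollary251_1} to the power-sum finite sum gives $b_r'(a-1)=\sum_{s'=0}^{r}\binom{r}{s'}B_{s'}a^{\,r-s'}$. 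Substituting this into $\tilde f'(a-1)=\sum_{r\ge0}\frac{b_r'(a-1)}{r!}g^{(r)}(s)-\sum_{k=0}^{s}g'(k+a)$, using $\frac{1}{r!}\binom{r}{s'}=\frac{1}{s'!(r-s')!}$, re-indexing the double sum by $r=s'+t$, and collapsing $\sum_{t\ge0}\frac{a^t}{t!}g^{(s'+t)}(s)=g^{(s')}(s+a)$ by Taylor's theorem, yields $\tilde f'(a-1)=\sum_{s'\ge0}\frac{B_{s'}}{s'!}g^{(s')}(s+a)-\sum_{k=a}^{s+a}g'(k)$. Then Eq \ref{bernoulliNumbers_7} applied to $\sum_{k=a}^{s+a}g'(k)$ cancels all the $g^{(s')}(s+a)$ terms and leaves exactly $\sum_{r\ge0}\frac{B_r}{r!}g^{(r)}(a-1)=f_G'(a-1)$. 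Hence $\tilde f'=f_G'$, and since $\tilde f(a-1)=0=f_G(a-1)$ we conclude $\tilde f=f_G$, which is Eq \ref{theorem621Eq1}.

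Finally, for the limiting form Eq \ref{theorem621Eq2}: since Eq \ref{theorem621Eq1} is an exact identity for \emph{every} value of $s$, it suffices to show that when $g^{(m+1)}(s)\to0$ as $s\to\infty$ — so that, for the functions of interest, all higher derivatives $g^{(r)}(s)$ with $r>m$ vanish asymptotically (Definition \ref{asymptOrder}) — the tail $\sum_{r>m}\frac{b_r(n)-b_r(a-1)}{r!}g^{(r)}(s)$ tends to zero for each fixed $n$, because each coefficient $b_r(n)-b_r(a-1)$ is a fixed number. Truncating the first sum at $r=m$ therefore costs an error that disappears in the limit, which gives Eq \ref{theorem621Eq2}. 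I expect the main obstacle to be the convergence/reordering bookkeeping in step (iii) and in this last truncation argument: everything rests on formal Taylor expansions and on interchanging doubly infinite sums, so the cleanest route is to stay inside the formal/$\mathfrak T$ framework set up in Chapter \ref{Chapter4}, exactly as Theorems \ref{theorem231} and \ref{theorem251} were handled, and to lean on Definition \ref{asymptOrder} to legitimize the passage to Eq \ref{theorem621Eq2}.
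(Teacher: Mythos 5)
Your proposal is correct and follows essentially the same three-part route as the paper: verify the initial condition, obtain the recurrence from the telescoping second sum plus the formal Taylor series $\sum_r \frac{(n+1)^r}{r!}g^{(r)}(s)=g(s+n+1)$, and match the derivative against Theorem \ref{theorem251}. The only difference is in the last step, where the paper observes that the bracketed term in $\tilde f'(n)=\sum_{k=a}^n g'(k)+[\cdots]$ must be independent of $n$ and simply evaluates it at $n=-1$, where $b_r'(-1)=B_r$ directly, avoiding your double-sum re-indexing at $n=a-1$; both computations verify the same constant, so this is a cosmetic rather than substantive difference.
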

\begin{proof} 
Again, the proof consists of three parts. 

To show that the initial condition holds is straightforward. Plugging $n=a-1$ into Eq \ref{theorem621Eq1} yields $f(a-1)=0$. To show that the recurrence identity holds, we note,  as stated in the theorem, that the expression is formal so we will ignore the issue of convergence. Thus, starting from Eq \ref{theorem621Eq1}, we have:   
\begin{equation}\label{theorem621_2} 
f(n)-f(n-1) = \Big(\sum_{r=0}^\infty \frac{g^{(r)}(s)}{r!}(n+1)^r\Big)\, +g(n)-g(n+s+1)
\end{equation} 
The first term is a Taylor series expansion and it is formally equal to $g(n+s+1)$. Consequently, we indeed  have:  
\begin{equation}\label{theorem621_3} 
f(n)-f(n-1) =g(n)
\end{equation} 
So, the recurrence identity holds formally. Finally, to show that $f(n)$ given by Eq \ref{theorem621Eq1} is identical to the unique natural generalization implied by Summability Calculus, we differentiate both sides of Eq \ref{theorem621Eq1}, which yields: 
\begin{equation}\label{theorem621_4} 
f'(n) =  \sum_{r=0}^\infty \frac{b_r'(n)}{r!}\,g^{(r)}(s) - \sum_{k=0}^s g'(k+n+a)
\end{equation} 
Upon rearranging the terms, we have: 
\begin{equation}\label{theorem621_5} 
f'(n) = \sum_{k=a}^n g'(k) + \Big[ \sum_{r=0}^\infty  \frac{b_r'(n)}{r!}\,g^{(r)}(s) - \sum_{k=a}^{s+n+1} g'(k)\Big]
\end{equation} 
Comparing last equation with Rule 1 of Table \ref{TableRules} reveals that the term in square brackets must be independent of $n$. Therefore, we select $n=-1$, which yields: 
\begin{equation}\label{theorem621_6} 
f'(n) = \sum_{k=a}^n g'(k) + \Big[\sum_{r=0}^\infty  \frac{B_r}{r!}\,g^{(r)}(s) - \sum_{k=a}^{s} g'(k)\Big]
\end{equation} 
Here, $B_r$ are Bernoulli numbers. However, Eq \ref{theorem621_6} is identical to Theorem \ref{theorem251}. Therefore, $f(n)$ given by Eq \ref{theorem621Eq1} is indeed the unique natural generalization implied by Summability Calculus.
\end{proof} \hrule\vspace{12pt} 

One example of Theorem \ref{theorem621} is the hyperfactorial function $H(n)= \prod_{k=1}^n k^k$. Applying Theorem \ref{theorem621} to $\log{H(n)}$ yields the Euler-like infinite product formula:
\begin{equation}\label{theorem621_Ex_1}
\prod_{k=1}^n k^k = 4\,e^{n(n-1)/2-1} \prod_{k=2}^\infty \frac{(k-1)^n\,(1-\frac{1}{k})^{-k(n-2)-n(n-1)/2}\,(k+1)^{k+1}}{k(k-1)(k+n-1)^{k+n-1}}
\end{equation}
For example, setting $n=0$ in the the last equation yields: 
\begin{equation} \label{theorem621_Ex_2}
\prod_{k=2}^\infty (1+\frac{1}{k})(1-\frac{1}{k^2})^k = \frac{e}{4}
\end{equation}

Similarly, applying Theorem \ref{theorem621} to the superfactorial function $S(n)=\prod_{k=1}^n k!$  yields Eq \ref{theorem621_Ex_3}. Note that the same expression can be alternatively derived using Euler's infinite product formula for the factorial function.
\begin{equation} \label{theorem621_Ex_3}
\prod_{k=1}^n k! = \prod_{k=1}^\infty \big(1+\frac{1}{k}\big)^{n(n+1)/2}\;k^n\,\frac{k!}{\Gamma{(k+n+1)}}
\end{equation}
One immediate consequence of Eq \ref{theorem621_Ex_3} is given by: 
\begin{equation} \label{theorem621_Ex_4}
\frac{S'(0)}{S(0)} = \sum_{k=1}^\infty \Big(\frac{\log{k} + \log{(k+1)}}{2} \, + \lambda - H_k\Big)
\end{equation}

Using the series expansion for the log-superfactorial function that was derived earlier in Eq \ref{Eq263_12}, we have:
\begin{equation} \label{theorem621_Ex_5}
\sum_{k=1}^\infty \Big(\frac{\log{k} + \log{(k+1)}}{2} \, + \lambda - H_k\Big) = \frac{\log{(2\pi)}\, -1}{2} - \lambda 
\end{equation}

Eq \ref{theorem621_Ex_5} shows that the harmonic number $H_n$ converges to the average $(\log{k} + \log{(k+1)})/2$ much faster than its convergence to $\log{n}$. This is clearly shown in the fact that the sum of all approximation errors given by the earlier equation converges whereas the sum $\sum_{k=1}^\infty (\log{k} + \lambda - H_k)$ diverges. Note that, by contrast, we have Eq \ref{AlternHarmonicSum_2}. 

If $g(n)$ is asymptotically of a finite differentiation order $m$, we can use Eq \ref{theorem621Eq2}. However, adding more terms to the sum $\sum_{r=0}^m \frac{b_r(n)-b_r(a-1)}{r!}\,g^{(r)}(s)$ improves convergence speed. For example, if we return to the factorial function and choose $m=1$, we obtain: 
\begin{equation}\label{speedFactorial}
\log{n!} = \lim_{s\to\infty} \Big\{ n\log{s} + \frac{n^2+3n}{2s} + \sum_{k=1}^{s+1} \log{\frac{k}{k+n}} \Big\}
\end{equation}
By contrast, Euler's infinite product formula omits the term $\frac{n^2+3n}{2s}$. So, if we wish to compute a value such as $\Gamma(\pi+1)$, and choose $s=10^4$, we obtain numerically the two estimates  $\Gamma(\pi+1)\approx 1.9715$ and $\Gamma(\pi+1)\approx 1.9724$, which correspond to choosing $m=0$ and $m=1$ respectively. Here, the approximation error is $9\times 10^{-4}$ if $m=0$ but it is less than $1.6\times 10^{-7}$ if we use $m=1$. So, indeed, the formal expression in Theorem \ref{theorem621} yields a method of improving convergence speed.

Before we conclude this section, it is worth pointing out that Theorem \ref{theorem621} indicates that the Euler-Maclaurin summation formula is indeed closely related to Taylor's theorem. In fact, the expression in Eq \ref{theorem621Eq1} resembles Taylor series expansions, and we do obtain a Taylor series by simply subtracting $f(n-1)$ from $f(n)$. It should not be surprising, therefore, to note that analytic summability methods that are consistent with the $\mathfrak{T}$ definition of infinite sums can work reasonably well even with the Euler-Maclaurin summation formula. We have already illustrated this in Table \ref{Tab43Bern}, in which we showed that the sum $\sum_{r=0}^\infty B_r$ is \emph{nearly} summable using $\Xi$. In particular, using $\Xi$ with small $n$ yields reasonably accurate figures, and accuracy improves as we increase $n$ until we reach about $n=35$, after which the error begins to increase. This is also true for the sum $\sum_{r=1}^\infty \frac{B_r}{r}$, which is formally equivalent to Euler's constant $\lambda$ as discussed earlier. 

Such close correspondence between the Euler-Maclaurin summation formula and Taylor series expansion also shows that the Euler-Maclaurin formula is an asymptotic expansion. Here, if $g(n)$ is asymptotically of a finite differentiation order $m$, then by Taylor's theorem: 
\begin{equation}\label{EMisAsym}
\lim_{n\to\infty} \big\{g(n+h) - g(n) - \sum_{r=1}^m \frac{g^{(r)}(n)}{r!}\,h^r\} = 0
\end{equation}

Eq \ref{EMisAsym} is easily established upon using, for example, the Lagrange remainder form. Now, looking into Eq \ref{theorem621Eq2}, we see that the expression indeed satisfies initial condition and recurrence identity by referring to the earlier equation \ref{EMisAsym}. Consequently, if $g(n)$ is asymptotically of a finite differentiation order $m$, we have by differentiating both sides of Eq \ref{theorem621Eq2} the following result: 
\begin{equation}\label{EMisAsym_2}
\lim_{n\to\infty} \Big\{ \frac{d}{dn} \sum_{k=a}^n g(k) - \sum_{r=0}^m \frac{B_r}{r!}\,g^{(r)}(n)\Big\} = 0
\end{equation}

Last equation was the basis of many identities derived in Section \ref{sectionSFSGC}. This is a simple informal justification as to why the Euler-Maclaurin summation formula is more correctly interpreted as an asymptotic series expansion.

Finally, in Section \ref{Section_23}, we proved that Summability Calculus yields unique most natural generalization for semi-linear simple finite sums by reducing argument for unique natural generalization to the case of linear fitting since all semi-linear simple finite sums are asymptotically linear in bounded regions. Here, we can see that for the general case of simple finite sums, Summability Calculus yields unique natural generalization as well because it implicitly uses polynomial fitting. Because polynomial fitting is the simplest generalization if it can correctly interpolate discrete points, and given lack of additional special requirements, it is indeed the unique most natural generalization. So, where does polynomial fitting show up in Summability Calculus? 

To answer the question, we return to Theorem \ref{theorem621}. In the proof of Eq \ref{theorem621Eq1}, we note that the crucial property of the functions $b_r(x)$  that make $f_G(n)$ satisfy initial conditions and recurrence identity is the condition $b_r(x)-b_r(x-1)=(x+1)^r$. However, it is obvious that an infinite number of functions $b_r(x)$  exist that can satisfy such condition. Nevertheless, because the only condition required happens to be satisfied by the Bernoulli-Faulhaber formula, which is the unique \emph{polynomial solution} to the required condition, defining $b_r(x)=\sum_{k=0}^{x+1} k^r$ for fractional $x$ by the Bernoulli-Faulhaber formula is equivalent to polynomial fitting. Such choice of generalization coincides with the Euler-Maclaurin summation formula as proved in Theorem \ref{theorem621}. Consequently, the argument for unique natural generalization in Summability Calculus, in the general case, is reduced to the argument of polynomial fitting. 

\section{Evaluating Oscillating Simple Finite Sums}\label{Section6dot3}
In Chapter \ref{Chapter2}, we showed that a simple finite sum is formally given by the Euler-Maclaurin summation formula. In addition, we showed that simple finite sums of the form $\sum_{k=a}^n g(k)$  in which $g(n)$ is asymptotically of a finite differentiation order are easier to work with. For instance, instead of evaluating the entire Euler-Maclaurin summation formula, we can exploit the fact that $g(n)$ is asymptotically of a finite differentiation order and use  Eq \ref{corollary251_3Eq} instead if we wish to compute the derivative. Similarly, such advantage shows up again in Section \ref{Section6dot2} as stated in Eq \ref{theorem621Eq2}. 

On the other hand, if $g(k)$ is oscillating, i.e. of the form $g(k)=s_k\,z(k)$, where $s_k$ is an arbitrary non-constant periodic sign sequence, then $g(k)$ is not asymptotically of a finite differentiation order. Nevertheless, if $z(k)$ is, then we can use the Euler-like summation formula for alternating sums to exploit such advantage as well. In fact, this was the main motivation behind introducing such Euler-like family of summations, which allowed us to easily perform infinitesimal calculus and deduce asymptotic behavior as was done repeatedly throughout Chapter \ref{Chapter5}.

The objective of this section is to present the analog of Theorem \ref{theorem621} for oscillating simple finite sums. Similar to earlier results, this will provide us with a much simpler method of handling such sums. Because each function $\sum_{k=a}^n s_k\,z(k)$, where $s_k$ is periodic with period $p$, can be decomposed into a sum of a finite number of functions of the form $\sum_{k=a}^n e^{i \theta k} z(k)$ , as shown in Corollary \ref{corollary522}, we will focus on the latter class of functions. Again, the first component given by $\big(\frac{1}{p}\sum_{k=a}^{a+p-1} s_k\big) \sum_{k=a}^n g(k)$ is evaluated using Theorem \ref{theorem621} but all other components in which $\theta \neq 0$  are evaluated using the following Theorem. \\ \hrule 

\begin{theorem}\label{theorem631}
Given a simple finite sum of the form $f(n)=\sum_{k=a}^n e^{i\theta k} g(k)$, then its unique natural generalization $f_G(n)$ for all $n\in\mathbb{C}$ s formally given by Eq \ref{theorem631Eq1}, which holds for all $s$.
\small
\begin{equation}\label{theorem631Eq1} 
f_G(n) = e^{i\theta s} \sum_{r=0}^\infty \frac{\Omega_r(n) - \Omega_r(a-1)}{r!} g^{(r)}(s) + \sum_{k=0}^s e^{i\theta(k+a)}\,g(k+a)-e^{i\theta(k+n+1)}\,g(k+n+1)
\end{equation}
\normalsize
Here, $\Omega_r(n)=\sum_{k=1}^{n+1} e^{i\theta k}\,k^r$, i.e. an \lq\lq oscillating polynomial''. It is closed-form is given by Eq \ref{theorem631_4} below. 
\end{theorem}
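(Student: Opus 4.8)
The plan is to follow the three-part template used in the proof of Theorem \ref{theorem621}: (i) check the initial condition, (ii) verify the recurrence identity formally, and (iii) show that the right-hand side of Eq \ref{theorem631Eq1} coincides with the unique natural generalization already pinned down by Theorem \ref{theorem513}. Throughout, all manipulations are formal, so convergence of the Euler-like series is ignored, exactly as in Theorem \ref{theorem621}.

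For (i), substituting $n=a-1$ into Eq \ref{theorem631Eq1} kills every coefficient $\Omega_r(n)-\Omega_r(a-1)$ and makes the remaining (telescoping) sum collapse term by term, so $f_G(a-1)=0$ as required. For (ii), the structural fact driving everything is the oscillating analog of the defining property of the Bernoulli--Faulhaber polynomial, namely
\begin{equation*}
\Omega_r(x)-\Omega_r(x-1)=e^{i\theta(x+1)}(x+1)^r .
\end{equation*}
Writing $h(x)=e^{i\theta x}g(x)$ and subtracting $f_G(n-1)$ from $f_G(n)$, the first sum becomes $e^{i\theta(s+n+1)}\sum_{r\ge 0}\frac{(n+1)^r}{r!}g^{(r)}(s)$, which is formally $e^{i\theta(s+n+1)}g(s+n+1)$ by Taylor's theorem, while the telescoping sum reduces to $h(n)-e^{i\theta(n+s+1)}g(n+s+1)$. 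These cancel down to $f_G(n)-f_G(n-1)=h(n)=e^{i\theta n}g(n)$, so the recurrence holds formally.

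The substance is in (iii). Differentiating Eq \ref{theorem631Eq1} with respect to $n$ and reindexing the telescoping sum as a partial sum of $\sum_k h'(k)$ gives $f_G'(n)=\sum_{k=a}^n h'(k)+\big[e^{i\theta s}\sum_{r\ge 0}\frac{\Omega_r'(n)}{r!}g^{(r)}(s)-\sum_{k=a}^{n+s+1}h'(k)\big]$; comparing with Rule 1 of Table \ref{TableRules} shows the bracketed term is independent of $n$, exactly as in Theorem \ref{theorem621}. Evaluating it at a convenient value of $n$ — the analog of the choice $n=-1$ there — one identifies $\Omega_r'$ at that point with the constants $\Theta_r$ of Theorem \ref{theorem513} via their generating function $\frac{e^{x+i\theta}}{1-e^{x+i\theta}}=\sum_{r\ge 0}\frac{\Theta_r}{r!}x^r$ (Eq \ref{ThetaGenFunction}), and recognizes the surviving constant as $-\sum_{k=a}^\infty h'(k)$ interpreted in $\mathfrak{T}$, which is precisely the value of $f_G'(a-1)$ furnished by Lemma \ref{Lemma531_1}. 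This makes $f_G'(n)$ coincide with the derivative of the natural generalization of Theorem \ref{theorem513}; since both candidates share the initial condition $f_G(a-1)=0$, they are the same function. Because the closed form for $\Omega_r$ (Eq \ref{theorem631_4}) is a genuine ``oscillating polynomial'' solving $\Omega_r(x)-\Omega_r(x-1)=e^{i\theta(x+1)}(x+1)^r$, this is also the generalization obtained by the polynomial-type fitting argument of Section \ref{Section6dot2}.

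The main obstacle I expect is exactly this matching step in (iii): carefully tracking the index shift in the telescoping sum, choosing the evaluation point for $n$ so that $e^{i\theta s}\sum_r\frac{\Omega_r'(n)}{r!}g^{(r)}(s)$ collapses to a clean combination of the $\Theta_r$, and then confirming — via the $\mathfrak{T}$ machinery and the generating-function identity — that the leftover constant genuinely equals the $\mathfrak{T}$-value $-\sum_{k=a}^\infty\frac{d}{dk}\big(e^{i\theta k}g(k)\big)$ rather than a superficially different expression. Parts (i), (ii), and the Taylor-series collapse are routine once the earlier results of Chapters \ref{Chapter2} and \ref{Chapter5} are invoked.
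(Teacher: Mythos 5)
Your proposal is correct and follows essentially the same route as the paper's proof: the same three-part template (initial condition, formal recurrence via Taylor collapse, then differentiation and comparison with Rule 1), the same evaluation of the $n$-independent bracket at $n=-1$, and the same identification of $\Omega_r$ with the constants $\Theta_r$ of Theorem \ref{theorem513} via Eq \ref{theorem631_4}. The only cosmetic difference is that you route the final matching through Lemma \ref{Lemma531_1} and the $\mathfrak{T}$-value $-\sum_{k=a}^\infty\frac{d}{dk}\big(e^{i\theta k}g(k)\big)$, whereas the paper compares directly with the formal derivative series of Theorem \ref{theorem513}; these are equivalent.
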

\begin{proof} 
The proof that initial condition and recurrence identity hold is straightforward and is similar to the proof of Theorem \ref{theorem621}. To show that it is indeed the unique natural generalization implied by Summability Calculus, we differentiate both sides, which yields:
\begin{equation}\label{theorem631_1} 
f'(n) = e^{i\theta s} \sum_{r=0}^\infty \frac{\Omega'_r(n)}{r!}\,g^{(r)}(s) - \sum_{k=0}^s \frac{d}{dn} \big(e^{i\theta(k+n+1)}\,g(k+n+1)\big)
\end{equation}
Upon rearranging the terms: 
\begin{equation}\label{theorem631_2} 
f'(n) = \sum_{k=a}^n \frac{d}{dk}\big(e^{i\theta k}\,g(k)\big) + \Big[e^{i\theta s} \sum_{r=0}^\infty \frac{\Omega'_r(n)}{r!}\,g^{(r)}(s) - \sum_{k=a}^{s+n+1} \frac{d}{dk} \big(e^{i\theta k}\,g(k)\big)\Big]
\end{equation}
Again, comparing last equation with the differentiation rule of simple finite sums, i.e. Rule 1, we see that the second term must be independent of $n$. Therefore, choosing $n=-1$ yields: 
\begin{equation}\label{theorem631_3} 
f'(n) = \sum_{k=a}^n \frac{d}{dk}\big(e^{i\theta k}\,g(k)\big) + \Big[e^{i\theta s} \sum_{r=0}^\infty \frac{\Omega'_r(-1)}{r!}\,g^{(r)}(s) - \sum_{k=a}^{s} \frac{d}{dk} \big(e^{i\theta k}\,g(k)\big)\Big]
\end{equation}
Using Theorem \ref{theorem513}, we note that: 
\begin{equation}\label{theorem631_4}
\Omega_r(n) = \Theta_r - e^{i\theta(n+1)}\,\sum_{m=0}^r \binom{r}{m}\,\Theta_m\,(n+1)^{r-m}
\end{equation} 
Thus, Eq \ref{theorem631_3} can be rewritten as: 
\begin{equation}\label{theorem631_5} 
f'(n) = \sum_{k=a}^n \frac{d}{dk}\big(e^{i\theta k}\,g(k)\big) - \sum_{r=0}^\infty \frac{\Theta_r}{r!}\,\frac{d}{ds} (e^{i\theta s}\,g^{(r)}(s)) - \sum_{k=a}^{s} \frac{d}{dk} \big(e^{i\theta k}\,g(k)\big)
\end{equation}
Comparing this with Rule 1 reveals that: 
\begin{equation}\label{theorem631_6} 
f'(n) = - \sum_{r=0}^\infty \frac{\Theta_r}{r!}\,\frac{d}{ds} (e^{i\theta s}\,g^{(r)}(s))
\end{equation}
However, this is exactly what Theorem \ref{theorem513} states. Therefore, the function $f(n)$ given by Eq \ref{theorem631Eq1} is indeed the unique natural generalization $f_G(n)$ to the simple oscillating finite sum.
\end{proof} \hrule\vspace{12pt} 
As an example to Theorem \ref{theorem631}, suppose we would like to evaluate the alternating simple finite sum $\sum_{k=1}^\frac{1}{2} (-1)^k \log{k}$. Because $g(n)$ is asymptotically of a finite differentiation order zero, we have: 
\small
\begin{equation}\label{theorem631firstEx}
\sum_{k=1}^\frac{1}{2} (-1)^k \log{k} = \lim_{s\to\infty} \Big\{ (-1)^s (\Omega_0(\frac{1}{2}) - \Omega_0(0))\,\log{s} + \sum_{k=1}^s (-1)^{k}\log{k} - (-1)^{k+\frac{1}{2}} \log{(k+\frac{1}{2})}   \Big\} 
\end{equation}
\normalsize 
However, by know by definition that $\Omega_0(x) = \sum_{k=1}^{x+1} (-1)^k = -\frac{1}{2}((-1)^x+1)$. Thus, we have: 
\begin{equation}
\sum_{k=1}^\frac{1}{2} (-1)^k \log{k} = \lim_{s\to\infty} \Big\{ (-1)^s \frac{1-i}{2} \,\log{s} + \sum_{k=1}^s (-1)^{k}\log{k} - (-1)^{k+\frac{1}{2}} \log{(k+\frac{1}{2})}   \Big\} 
\end{equation}

Finally, this approach yields the value $\sum_{k=1}^\frac{1}{2} (-1)^k \log{k} \approx 0.2258 + i\,0.0450$. This is indeed correct as will be verified shortly. Here, the real part is given by $(\log{\pi}-\log{2})/2$. 

Whereas Theorem \ref{theorem631} is occasionally useful, it is often easier in practice to work with the analytic summability identity $\sum_{k=a}^n e^{i\theta k}\,g(k) = \sum_{k=a}^\infty e^{i\theta k}\,g(k) - \sum_{k=n+1}^\infty e^{i\theta k}\,g(k)$, where all infinite sums are interpreted using the $\mathfrak{T}$ definition. For example, suppose we would like to evaluate $\sum_{k=0}^{\frac{1}{2}} (-1)^k\,k$ directly, then we have: 
\small
\begin{equation}\label{theorem631Ex_1} 
\sum_{k=0}^\frac{1}{2} (-1)^k\,k = \sum_{k=0}^\infty (-1)^k\,k - \sum_{k=\frac{3}{2}}^\infty e^{i\pi k}\,k = \sum_{k=1}^\infty (-1)^k\,k - i \sum_{k=1}^\infty (-1)^k(k+\frac{1}{2}) = -\frac{1}{4} + i \frac{1}{2}
\end{equation} \normalsize

A second example would be the finite sum $\sum_{k=1}^\frac{1}{2} (-1)^k \log{k}$ addressed above. We can evaluate this expression directly using the analytic summability approach, which yields:
\begin{align*} 
\sum_{k=1}^\frac{1}{2} (-1)^k\,\log{k} &= \sum_{k=1}^\infty (-1)^k\,\log{k} - i \sum_{k=\frac{3}{2}}^\infty (-1)^{k-\frac{1}{2}} \log{k} \\ 
&= \frac{\log{\pi}-\log{2}}{2} + i \big(\log{\frac{3}{2}} -\log{\frac{5}{2}} + \log{\frac{7}{2}} - \dotsm\big) \\
&= \frac{\log{\pi}-\log{2}}{2} + i \big(\log{3} - \log{5} + \log{7} - \dotsm\big) - i \frac{\log{2}}{2}
\end{align*}

Now, we evaluate the sum $(\log{3} - \log{5} + \log{7} - \dotsm\big)$ using an analytic summability method such as $\Xi$, which yields a value of $0.3916$. Plugging this into last equation yields the desired result, which is also given by $0.2258 + i\,0.0450$. This is indeed identical to the value derived earlier by direct application of Theorem \ref{theorem631}. Of course, we can also calculate the same value by direct application of the summability method $\Xi$ to the two infinite sums $\sum_{k=1}^\infty (-1)^k\,\log{k}$ and $\sum_{k=\frac{3}{2}}^\infty (-1)^{k-\frac{1}{2}} \log{k}$, which yields the same final result. 

\section{Evaluating Convoluted Finite Sums} 
Former analysis can be generalized readily to the case of convoluted finite sums. We present the final results herein and omit the proofs to avoid repetitiveness. In general, the proofs are very similar to the proofs of earlier theorems. 
\begin{theorem}\label{theorem641}
Given a convoluted finite sum of the form $\sum_{k=a}^n e^{i\theta k}\,g(k,n)$ for any $\theta \in \mathbb{R}$, then its unique natural generalization is given formally by: 

\begin{equation}\label{theorem641Eq} 
f_G(n) = e^{i\theta s} \sum_{r=0}^\infty \frac{\Omega_r(n) - \Omega_r(a-1)}{r!}\frac{\partial^r}{\partial s^r} g(s, n) + \sum_{k=0}^s e^{i\theta(k+a)}g(k+a,n) - e^{i\theta(k+n)}g(k+n+1,n)
\end{equation} 

If $g(k, n)$ is asymptotically of a finite differentiation order $m$ with respect to the iteration variable $k$, then the infinite sum can be evaluated up to $m$ only and the overall expression is taken as a limiting expression where s tends to infinity. This is similar to earlier statements. In addition, if $\theta=2\pi n$ for some $n\in\mathbb{N}$, then $\Omega_r(x) = b_r(x)$, where the functions $b_r$ are defined in Theorem \ref{theorem621}. 
\end{theorem}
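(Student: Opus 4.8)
The plan is to imitate the structure of the proofs of Theorems \ref{theorem621} and \ref{theorem631}, each of which verified three things: (i) the claimed expression satisfies the initial condition, (ii) it satisfies the recurrence identity for convoluted sums, and (iii) its derivative agrees with the differentiation rule of convoluted sums (Lemma \ref{diffConvoluted}), hence it coincides with the unique natural generalization that Theorem \ref{theorem311} has already established. The new ingredient here compared with Theorem \ref{theorem631} is that $g$ now depends on both the iteration variable $k$ and the bound $n$, so the generic differentiation rule (Lemma \ref{lemmaGeneric}) must be invoked to split the derivative into the ``$n$ as upper bound'' part and the ``$n$ as second argument of $g$'' part, exactly as was done in the proof of Theorem \ref{theorem311} and Lemma \ref{diffConvoluted}.

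First I would check the initial condition: substituting $n = a-1$ collapses the finite sum $\sum_{k=0}^s e^{i\theta(k+a)}g(k+a,n) - e^{i\theta(k+n+1)}g(k+n+1,n)$ to a telescoping cancellation and forces $\Omega_r(a-1) - \Omega_r(a-1) = 0$ in the first block, giving $f_G(a-1) = 0$, so $f_G(a) = e^{i\theta a}g(a,a)$ as required. Next I would verify the recurrence $f_G(n) - f_G(n-1) = g(n,n) + \sum_{k=a}^{n-1}\big(g(k,n) - g(k,n-1)\big)$ of Eq \ref{Eq31_1}: here one treats the expression formally, as in Theorem \ref{theorem631}, and uses the defining property $\Omega_r(n) - \Omega_r(n-1) = e^{i\theta(n+1)}(n+1)^r$ of the oscillating polynomials together with the formal Taylor identity $\sum_{r=0}^\infty \frac{(n+1)^r}{r!}\frac{\partial^r}{\partial s^r}g(s,n)\big|_{s} = g(s+n+1,n)$ to reproduce the ``bound'' part of the recurrence, while the difference of the two finite sums over $k$ produces the convolution correction term $\sum_{k=a}^{n-1}(g(k,n)-g(k,n-1))$ after reindexing. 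The third step is the heart of the argument: differentiate Eq \ref{theorem641Eq} with respect to $n$, apply Lemma \ref{lemmaGeneric} to separate $\sum_{k=a}^n \frac{\partial}{\partial n}g(k,n)$ (which comes from the $n$-dependence of the summand and of the upper limit on the finite block) from the remaining terms, then — as in Theorem \ref{theorem631} — observe that the residual bracket must be independent of $n$ by Rule 1, set $n=-1$, and use $\Omega_r'(-1)$ together with Eq \ref{theorem631_4} ($\Omega_r(n) = \Theta_r - e^{i\theta(n+1)}\sum_{m=0}^r \binom{r}{m}\Theta_m (n+1)^{r-m}$) to reduce the bracket to $-\sum_{r=0}^\infty \frac{\Theta_r}{r!}\frac{\partial^r}{\partial m^r}\big(\cdot\big)$, matching the derivative in Lemma \ref{diffConvoluted} (via Theorem \ref{theorem513}). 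Since the derivative and the value at $n=a$ both agree with the unique natural generalization of Theorem \ref{theorem311}, the two functions coincide.

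Finally I would dispose of the two addenda. For the case $g(k,n)$ asymptotically of finite differentiation order $m$ in $k$: by Taylor's theorem (the same Lagrange-remainder argument used around Eq \ref{EMisAsym}) the tail $\sum_{r>m}$ of the Taylor block contributes vanishingly as $s\to\infty$, so the infinite sum may be truncated at $r=m$ and the whole identity read as a limit in $s$; this is word-for-word the reduction used in Theorem \ref{theorem621Eq2} and Theorem \ref{theorem631}. For $\theta = 2\pi n$ with $n\in\mathbb{N}$, the factor $e^{i\theta k} = 1$ for all integer $k$, so $\Omega_r(x) = \sum_{k=1}^{x+1} k^r = b_r(x)$ by definition, and Eq \ref{theorem641Eq} degenerates to Eq \ref{theorem621Eq1}, as claimed.

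\textbf{Main obstacle.} The delicate point is step (ii)/(iii): unlike the simple-sum case, differentiating the finite block $\sum_{k=0}^s e^{i\theta(k+a)}g(k+a,n) - e^{i\theta(k+n+1)}g(k+n+1,n)$ with respect to $n$ produces \emph{two} kinds of $n$-derivatives of $g$ — the $\frac{\partial}{\partial n}$ (second-argument) derivative and, through the shift $k+n+1$, the first-argument derivative — and one must be careful that the generic differentiation rule bookkeeping correctly recombines these into precisely $\sum_{k=a}^n \frac{\partial}{\partial n}g(k,n)$ plus the Euler--Maclaurin-type bracket, with no cross terms left over. Getting the reindexing and the $n=-1$ specialization to line up with Eq \ref{theorem631_4} is the only genuinely non-routine bit; everything else is a transcription of the already-established Theorems \ref{theorem311}, \ref{theorem513}, \ref{theorem621} and \ref{theorem631}.
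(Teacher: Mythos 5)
Your overall strategy — initial condition, then matching the derivative against Lemma \ref{diffConvoluted} (equivalently the Chapter \ref{Chapter5} differentiation rule for oscillating sums) to identify the expression with the unique natural generalization of Theorem \ref{theorem311} — is exactly the route the paper takes; its proof is the two-part sketch ``(1) initial condition, (2) derivative agreement,'' deferring the mechanics to Theorems \ref{theorem621} and \ref{theorem631} just as you do. Your treatment of the two addenda (truncation at order $m$ via the Lagrange-remainder argument, and the degeneration to $b_r(x)$ when $e^{i\theta k}\equiv 1$) also matches the intended reading.

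The one place you diverge is your step (ii). The paper explicitly notes that for convoluted sums \emph{no recurrence identity holds} in the sense used in Theorems \ref{theorem621} and \ref{theorem631}, which is precisely why its proof has only two parts rather than three. The recurrence Eq \ref{Eq31_1} has the form $f(n)-f(n-1)=g(n,n)+\sum_{k=a}^{n-1}\bigl(g(k,n)-g(k,n-1)\bigr)$, and for non-integer $n$ the correction term is itself a fractional convoluted sum — so ``verifying'' the recurrence presupposes the very generalization you are trying to construct. Moreover, your claim that differencing the finite block over $k$ ``produces the convolution correction term after reindexing'' does not go through as stated: both $g(k+a,n)$ and $g(k+n+1,n)$ carry the second argument $n$, so $f_G(n)-f_G(n-1)$ generates second-argument differences $g(\cdot,n)-g(\cdot,n-1)$ over the full range $k=0,\dots,s$, not just over $k=a,\dots,n-1$, and these do not visibly recombine into Eq \ref{Eq31_1}. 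Since steps (i) and (iii) already pin down the function (value at $n=a$ plus derivative everywhere), step (ii) is redundant; drop it and your proof coincides with the paper's.
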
 
\begin{proof} 
Similar to the proofs of earlier theorems. However, we note here that no recurrence identity holds so the proof consists of two parts: (1) showing that the initial condition holds, and (2) showing that the derivative of the expression in Eq \ref{theorem641Eq} is identical to the derivative of the unique most natural generalizations given in Chapter \ref{Chapter3} and Chapter \ref{Chapter5}. 
\end{proof} \hrule\vspace{12pt} 

Theorem \ref{theorem641} generalizes earlier results of this chapter to the broader class of convoluted sums. For example, suppose we wish to evaluate the convoluted finite sum $\sum_{k=1}^n \log{(1+\frac{k}{n})}$. Since $g(k,n)$ is asymptotically of a finite differentiation order zero with respect to $k$, we can evaluate $f_G(n)$ for all $n\in\mathbb{C}$ using the following expression:
\begin{equation}\label{thoerem641Ex1}
f_G(n) = \lim_{s\to\infty} \Big\{n \log{(1+\frac{s}{n})} + \sum_{k=0}^s \big[\log{(1+\frac{k+1}{n})} - \log{(2+\frac{k+1}{2})}\big]\Big\}
\end{equation} 

For example, if $n=2$, Eq \ref{thoerem641Ex1} evaluates to 1.0986, which we know is correct because $f_G(2)=\log{3}$. In general, we know in this particular example that $f_G(n)= \log{(2n)!} - \log{n!} - n\log{n}$. Therefore, we can also test Eq \ref{thoerem641Ex1} for fractional values of $n$. For instance, if $n=\frac{1}{2}$, then Eq \ref{thoerem641Ex1} evaluates to 0.4674, which is also correct because $f_G(\frac{1}{2}) = \frac{\log{2} - \log{\sqrt{\pi}}}{2}$. 

A second example would be the convoluted finite sum $\frac{1}{n} \sum_{k=1}^n k^{\frac{1}{n}}$. Because $\lim_{n\to\infty} n^\frac{1}{n} = 1$, we expect $f_G(n)\to 1$ as $n\to\infty$. In addition, $g(k)=k^\frac{1}{n}$ is asymptotically of a finite differentiation order zero if $n>1$ so the convoluted sum can be evaluated using: 
\begin{equation}\label{thoerem641Ex2} 
\frac{1}{n}\sum_{k=1}^n k^\frac{1}{n} = \lim_{s\to\infty} s^{\frac{1}{n}} + \frac{1}{n} \sum_{k=1}^{s+1} k^\frac{1}{n} - (k+n)^\frac{1}{n} \\ \text{ if } n>1
\end{equation} 
The function is plotted in Figure \ref{Chap6ConvEx2Figure}. In same figure, exact values of $f(n)$ for $n\in\mathbb{N}$ are indicated as well in green. Clearly, despite the apparent complexity of the convoluted finite sum, it is indeed a simple function to compute for $n\in\mathbb{C}$. In addition, we have $f_G(n) \to 1$ when $n\to\infty$ as expected. 
 
\begin{figure} [h]
\centering
\includegraphics[scale=0.4]{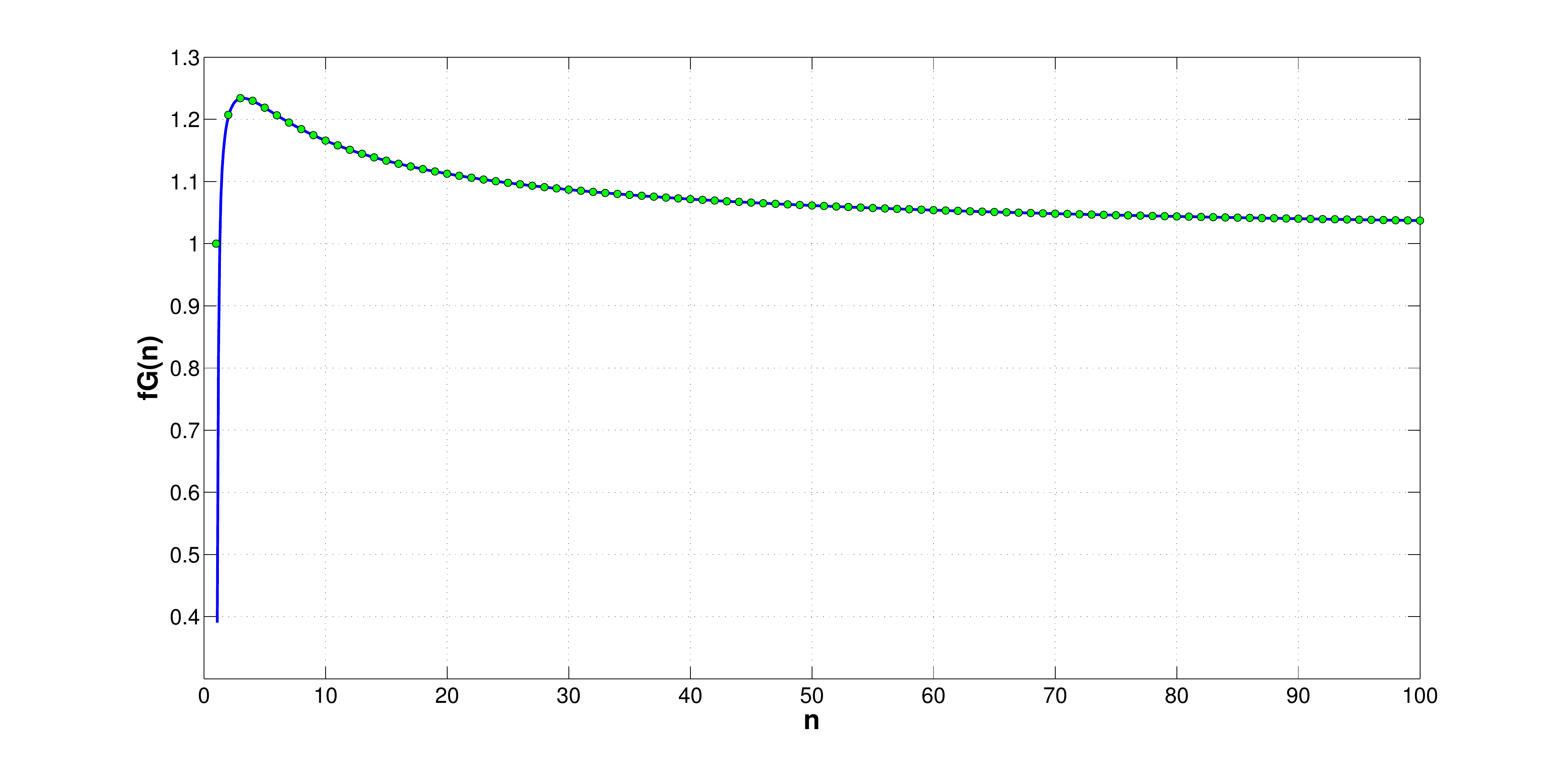}
\caption[The function $f_G(n)=\frac{1}{n}\sum_{k=1}^n k^\frac{1}{n}$ plotted for $n>1$]{The generalized definition of the convoluted finite sum $f_G(n)=\frac{1}{n}\sum_{k=1}^n k^\frac{1}{n}$ plotted against $n>1$ using Theorem \ref{theorem641}. }
\label{Chap6ConvEx2Figure}
\end{figure}

\section{Summary of Results} 
In this chapter, we presented a simple direct method for evaluating simple and convoluted finite sums. Whereas earlier analysis shows that Summability Calculus yields an elementary framework for handling finite sums such as performing infinitesimal calculus, deducing asymptotic expressions, as well as summing divergent series, all within a single coherent framework, the results of this chapter reveal that even the task of \emph{evaluating} convoluted finite sums of the general form $\sum_{k=a}^n s_k\,g(k,n)$ for $n\in\mathbb{C}$, where $s_k$ is an arbitrary periodic sign sequence, is straightforward as well. It is also shown that such method yields the same unique natural generalization that Summability Calculus implicitly operates on, and that uniqueness of natural generalization in Summability Calculus arises out of polynomial fitting. This is clearly a generalization to the earlier statement of linear fitting for semi-linear simple finite sums in Section \ref{Section_23}. In the following chapter, we expand results even more using the Calculus of Finite Differences, in which equivalent alternative results can be expressed using \emph{finite differences} as opposed to infinitesimal derivatives. 

\chapter{Summability Calculus and Finite Differences} \label{Chapter7}
\epigraph{\emph{What nature demands from us is not a quantum theory or a wave theory; rather, nature demands from us a synthesis of these two views.}}{Albert Einstein  (1879 -- 1955)}

In Chapter \ref{Chapter2} we stated that the unique natural generalization of finite sums must be inferred using information that is known to be correct with certainty. For example, using the differentiation rule of simple finite sums $f(n)=\sum_{k=a}^n g(k)$, we know with certainty that $f_G^{(r)}(n) -f_G^{(r)}(n-1)=g^{(r)}(n)$ holds for the unique most natural generalization $f_G(n)$ so we used this fact to derive the Euler-Maclaurin summation formula, which was, in turn, generalized to convoluted finite sums as well as oscillating convoluted finite sums in Chapters \ref{Chapter3} and \ref{Chapter5} respectively.

Nevertheless, there is also additional information that is known to be correct with certainty, which is the discrete values of finite sums $\sum_{k=a}^n g(k)$ for $(n-a)\in\mathbb{N}$. Of course, this information was also repeatedly used throughout earlier analysis but a valid question that arises is whether relying on this information \emph{solely} is sufficient to deduce unique natural generalization. The answer to the latter question is in the negative as the simple counter example $\sum_{k=0}^n \sin{(\pi k)}$ clearly illustrates. Here, $f(n)=0$ for all $n\in\mathbb{N}$ but the correct natural generalization is not the zero function since the zero function does not satisfy the recurrence identity $f(n)-f(n-1)=\sin{(\pi n)}$ for all $n\in\mathbb{C}$. Nevertheless, there are, in fact, common finite sums whose discrete samples are sufficient to deduce unique natural generalization such as the power sum function $\sum_{k=1}^n k^s$. So, why is it that discrete samples of the latter function carry complete information about global behavior whereas discrete samples of the former function do not? And if discrete samples are sufficient, can we use them to perform infinitesimal calculus? Answering these  questions brings us to the well-known subjects of the Sampling Theorem and the Calculus of Finite Differences.

In this chapter, we will use Summability Calculus to prove some of most foundational results in the Calculus of Finite Differences, and show how to perform infinitesimal calculus using discrete samples. Using such results, we will present the analog of all summation formulas presented throughout previous chapters using the language of \emph{finite differences} as opposed to infinitesimal derivatives. For example, we will re-establish the well-known result that the analog of the Euler-Maclaurin summation formula for discrete functions is given by Gregory quadrature formula. In addition, we will show that Summability Calculus presents a simple geometric proof to the well-known Shannon-Nyquist Sampling Theorem, and even prove a stronger statement, which we will call the Half Sampling Theorem. Moreover, we will use these results to deduce Newton's interpolation formula and show that it is intimately tied to the summability method $\Xi$ of Claim \ref{claim411}. All of these results will, in turn, be used to deduce many interesting identities related to fundamental constants such as $\pi$, $e$, and Euler's constant $\lambda$.

\section{Summability Calculus and the Calculus of Finite Differences} 
Our starting point into the Calculus of Finite Differences is to revisit the differentiation rule of simple finite sums given by Corollary \ref{corollary251_1}, a special case of which is written here in Eq \ref{Sec711DiffEq1}. Here, $\nabla$ stands for the backward difference operator and $B_1=1/2$. It is straightforward to express Eq \ref{Sec711DiffEq1} using the forward difference operator $\Delta$, which is given by the exact same expression in Eq \ref{Sec711DiffEq2} except that we now have $B_1=-1/2$.  
\begin{equation}\label{Sec711DiffEq1}
f'(n) = \sum_{r=0}^\infty \frac{B_r}{r!} \nabla f^{(r)} (n) \\, B_1=\frac{1}{2}
\end{equation} 
\begin{equation}\label{Sec711DiffEq2}
f'(n) = \sum_{r=0}^\infty \frac{B_r}{r!} \Delta f^{(r)} (n) \\, B_1=-\frac{1}{2}
\end{equation} 

Now, we note that  $\Delta^{(p)} f^{(m)}(n)$ can also be expressed using the exact same expression, namely: 
\begin{equation}\label{Sec711DiffEq3}
\Delta^{p} f^{(m)}(n) = \sum_{r=0}^\infty \frac{B_r}{r!} \Delta^{p+1} f^{(m+r-1)} (n) \\, B_1=-\frac{1}{2}
\end{equation} 

We also note in Eq \ref{Sec711DiffEq3} that $\Delta f^{(r-1)}(n)$ is used while evaluating $\Delta f^{(r)}(n)$ . However, $\Delta f^{(r-2)}(n)$  is, in turn, used in evaluating $\Delta f^{(r-1)}(n)$  and so on. Therefore, it is possible to express the derivative $f'(n) $ formally using \emph{solely} higher order differences $\Delta^p f(n)$. Tracing this operation term-by-term, suggests the following expression for the derivative:
\begin{align}\label{Sec711DiffEq4} 
f'(n) &= \frac{B_0}{0!} \Delta f(n) + \big(\frac{B_1}{1!}\frac{B_0}{0!}\big)\Delta^2 f(n) + \big(\frac{B_1}{1!}\frac{B_1}{1!}\frac{B_0}{0!} + \frac{B_2}{2!}\frac{B_0}{0!}\frac{B_0}{0!}\big) \Delta^3 f(n) \dotsm \\ 
\label{Sec711DiffEq5} &= \frac{\Delta f(n)}{1} - \frac{\Delta^2 f(n)}{2} + \frac{\Delta^3 f(n)}{3} - \dotsm 
\end{align} 

Eq \ref{Sec711DiffEq5} is well-known in the Calculus of Finite Differences (see for instance \cite{Jordan1965}). Upon making the substitution $n=h\,x$, for a small step size $h$, we arrive at a method of obtaining higher order approximations to first derivative that is given by Eq \ref{Sec711HigherDeriv_1}. Here, $\Delta_h = f(x+h)-f(x)$, and $\Delta_h^p = \Delta_h \Delta_h^{p-1}$. Note that in such notation, both the function $f$ and the independent variable $x$ are semantically identical for all symbols, hence they can be omitted.  
\begin{equation}\label{Sec711HigherDeriv_1} 
f'(x) = \frac{1}{h} \Big(\frac{\Delta_h}{1}-\frac{\Delta_h^2}{2} +\frac{\Delta_h^3}{3} - \frac{\Delta_h^4 }{4} + \dotsm \Big)
\end{equation}
Now, Eq \ref{Sec711HigherDeriv_1} can be used to obtain a similar expression for the second derivative $f^{(2)}(x)$: 
\begin{equation}\label{Sec711HigherDeriv_2} 
f^{(2)}(x) = \frac{1}{h} \Big(\frac{\Delta_h f'(x)}{1}-\frac{\Delta_h^2 f'(x)}{2} +\frac{\Delta_h^3 f'(x)}{3} - \frac{\Delta_h^4 f'(x)}{4} + \dotsm \Big)
\end{equation}
Plugging the expression Eq \ref{Sec711HigherDeriv_1} into Eq \ref{Sec711HigherDeriv_2}, we arrive at: 
\begin{equation}\label{Sec711HigherDeriv_3} 
f^{(2)}(x) = \frac{1}{h^2} \Big(\Delta_h^2-\Delta_h^3 +\frac{11}{12} \Delta_h^4 - \frac{5}{6} \Delta_h^5 + \dotsm \Big)
\end{equation}

Upon close inspection of the process used in the previous equation, we note that the coefficients in the second derivative are simply given by the series expansion of the function $\log^2{x}$ because composition behaves exactly as if it were algebraic multiplication. In general, we have the formal expression given in Eq \ref{Sec711HigherDeriv_4}. Here, the notation $\log^r(I+\Delta_h)$  means that the series expansion of the function $\log^r(1+x)$ is to be applied formally on the forward difference operator $\Delta_h$. In addition, $I$ stands for the \emph{unity symbol}, which is semantically given by $I=D^0=I^0=f(x)$.

\begin{equation}\label{Sec711HigherDeriv_4} 
f^{(r)}(x)=D^r = \frac{\log^r{(I+\Delta_h)}}{h^r}
\end{equation} 
So, using Eq \ref{Sec711HigherDeriv_4}, we have a simple method for obtaining higher-order approximations to infinitesimal derivatives.  It also readily yields the Cauchy definition of derivatives since we have $D^r = \frac{\log^r{(I+\Delta_h)}}{h^r} = \lim_{h\to 0} \{\frac{\log^r{(I+\Delta_h)}}{h^r}\} = \lim_{h\to 0}\big\{\Delta_h^r/h^r\big\}$. 

Now, because the unity symbol $I$ is semantically the same with respect to both the differentiation operator $D$ and the forward difference operator $\Delta$, taking the composition of both sides of Eq \ref{Sec711HigherDeriv_4} with respect to a function $z$ is valid. In other words, we have that for any function $z$, Eq \ref{Sec711HigherDeriv_5} holds. As will be shown shortly, Eq \ref{Sec711HigherDeriv_5} is a very incredible equation that summarizes almost all foundational results in calculus. Here, however, it is crucial to keep in mind that both sides of Eq \ref{Sec711HigherDeriv_5} are to be interpreted formally using Taylor series expansions. 
 
\begin{equation}\label{Sec711HigherDeriv_5}
z\big(h^r\,D^r\big) = z\big(\log^r{(I+\Delta_h)}\big)
\end{equation} 

Choosing difference functions $z$ yield new identities. For example, if $z(x)=e^x$ and $r=1$, then we have: 
\begin{equation}\label{DiffValuesOfZ_1}
e^{hD} = I + \Delta_h = f(x+h)
\end{equation} 

Eq \ref{DiffValuesOfZ_1} is basically a restatement of Taylor's theorem. However, we could have also selected $r=2$, which yields:
\begin{equation}\label{DiffValuesOfZ_2}
e^{h^2D^2} = (I + \Delta_h)^{\log{(I+\Delta_h)}}
\end{equation} 
Now, in order to make sense out of the last equation, we apply Taylor series expansion formally on both sides, which yields Eq \ref{DiffValuesOfZ_3}. Eq \ref{DiffValuesOfZ_3} can be confirmed numerically quite readily. 
\begin{equation}\label{DiffValuesOfZ_3}
I + \frac{h^2D^2}{1!} + \frac{h^4D^4}{2!} + \frac{h^6D^6}{3!} + \dotsm = I + \Delta_h^2 -\Delta_h^3+\frac{17}{4}\Delta_h^4 -\frac{11}{6}\Delta_h^5 + \dotsm
\end{equation} 

So far, we have shown that Eq \ref{Sec711HigherDeriv_5} generalizes the definition of higher-order derivatives and provides Taylor's theorem as well. Not surprisingly, there is much more to this equation. For instance, if we wish to find the anti-derivative, we select $r=-1$ in Eq \ref{Sec711HigherDeriv_4}, which yields:
\begin{equation}\label{DiffValuesOfZ_4}
D^{-1} = \frac{h}{\log{(I+\Delta_h)}} = h\,\Big(\Delta_h^{-1} + \frac{1}{2}\Delta_h^0 -\frac{1}{12}\Delta_h^1 + \frac{1}{24}\Delta_h^2 +\dotsm\Big)
\end{equation} 
Eq \ref{DiffValuesOfZ_4} generalizes the classical definition of anti-derivatives. We will return to it in the following section when we discuss Gregory's quadrature formula. Moreover, from Eq \ref{DiffValuesOfZ_1} we have: 
\begin{equation}\label{DiffValuesOfZ_5}
(I+\Delta_h)^\alpha = e^{\alpha hD} = f(x+\alpha h)
\end{equation} 
Eq \ref{DiffValuesOfZ_5} is a concise statement of Newton's interpolation formula. To see this, we select $\alpha = \frac{x-x_0}{h}$, which yields by the binomial theorem: 
\begin{equation}\label{DiffValuesOfZ_6}
f(x) = f(x) + \frac{\Delta_h}{1!} \Big(\frac{x-x_0}{h}\Big) + \frac{\Delta_h^2}{2!} \Big(\frac{x-x_0}{h}\Big)\Big(\frac{x-x_0}{h}-1\Big) + \dotsm 
\end{equation} 
Now, if we denote $n=\frac{x-x_0}{h}$, where $h$ is chosen such that $n$ is an integer, then Newton's interpolation formula in Eq \ref{DiffValuesOfZ_6} can be rewritten as: 
\begin{equation}\label{DiffValuesOfZ_7}
f(x) = \sum_{j=0}^n \chi_n(j) \frac{\Delta_h^j}{h^j} \frac{(x-x_0)^j}{j!} 
\end{equation} 
Interestingly, the function $\chi_n(j)$ shows up again. It is important to note that Eq \ref{DiffValuesOfZ_7} is an \emph{exact} expression that holds for all $n\in\mathbb{N}$, where again the sampling interval $h$ is selected such that $n$ is an integer. Choosing $h$ arbitrarily small yields Eq \ref{DiffValuesOfZ_8}. Note here that Newton's interpolation formula given by Eq \ref{DiffValuesOfZ_8} is the discrete mirror of the summability method $\Xi$ of Claim \ref{claim411}, where infinitesimal derivatives $f^{(j)}(x_0)$ are replaced with their approximations $\Delta_h^j /h^j$, thus bridging the two results.
\begin{equation}\label{DiffValuesOfZ_8}
f(x) = \lim_{n\to\infty} \Big\{\sum_{j=0}^n \chi_n(j) \frac{\Delta_h^j}{h^j} \frac{(x-x_0)^j}{j!}\Big\} 
\end{equation} 

So far, we have derived a generalized definition of derivatives and anti-derivatives as well as Newton's interpolation formula, which respectively are the discrete analog of differentiation, integration, and computing Taylor series expansion in infinitesimal calculus. Before we start applying these results, we need to return to the expression given by Eq \ref{Sec711HigherDeriv_4}, which was the building block of all subsequent analysis. In this equation, the well-known pitfall with \emph{undersampling} clearly arises. For example, if $f(x)=\sin{\pi x}$, and $h=1$, then $\Delta_h^j = 0$  and the right-hand side of Eq \ref{Sec711HigherDeriv_4} evaluates to the zero function even though the correct derivative is not. This arises because the simplest function that interpolates the samples $f(k)=0$ is itself the zero function so the Calculus of Finite Differences basically operates on the simplest of all possible generalizations. Thus, our goal is to know for which sampling interval $h$ does Eq \ref{Sec711HigherDeriv_4} hold? The answer is given by the following theorem. \\ \hrule

\begin{theorem}\label{theorem711} 
Given a bandlimited function $f(x)$ with bandwidth $B$ that is analytic over the domain $[x, \infty)$, and if the sampling interval $h$ satisfies the Nyquist criteria $h<\frac{1}{2B}$, then the $\mathfrak{T}$ definition of the infinite sum $\frac{1}{h} \sum_{j=0}^\infty (-1)^j \frac{\Delta_h^j}{j}$ is given by the function's derivative $f'(x)$. In general, Eq \ref{Sec711HigherDeriv_4} holds, where the right-hand infinite sum is interpreted using the generalized definition $\mathfrak{T}$. The strict inequality in the Nyquist criteria can be relaxed to an inequality $h\le \frac{1}{2B}$ if the function $f(x)$ has no bandwidth components at frequency $w=B$. 
\end{theorem}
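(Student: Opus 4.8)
The plan is to exploit the machinery already assembled in the paper: the generalized sum $\mathfrak{T}$, the operator identity $D^r = \log^r(I+\Delta_h)/h^r$ in its formal form, and the fact that for a bandlimited analytic function the discrete samples carry complete information. The key observation is that sampling a bandlimited function is equivalent to restricting a certain analytic function of a complex variable to a line segment on which it has no singularities, which is exactly the situation where $\mathfrak{T}$ (equivalently Lindel\"of or Mittag-Leffler summation) applies by Lemma \ref{computingT}.

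First I would set up the spectral representation. Write $f(x) = \int_{-B}^{B} F(w)\, e^{2\pi i w x}\, dw$ for the bandlimited function with bandwidth $B$. Then the forward difference becomes $\Delta_h^j f(x) = \int_{-B}^{B} F(w)\, (e^{2\pi i w h}-1)^j\, e^{2\pi i w x}\, dw$, so that the formal series $\frac{1}{h}\sum_{j=0}^\infty (-1)^{j+1} \frac{\Delta_h^j}{j}$ applied to $f$ is, term by term, $\int_{-B}^{B} F(w)\, \frac{1}{h}\log\!\big(1 + (e^{2\pi i w h}-1)\big)\, e^{2\pi i w x}\, dw$ — provided the interchange of the (summability-regularized) sum with the integral is legitimate. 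Now $\frac{1}{h}\log(e^{2\pi i w h}) = \frac{2\pi i w h}{h} = 2\pi i w$ exactly when $2\pi w h$ lies in the principal branch, i.e. when $|w h| < \frac{1}{2}$, which is precisely the Nyquist condition $h < \frac{1}{2B}$ applied to all $|w|\le B$. Under that condition the integrand collapses to $2\pi i w\, F(w)\, e^{2\pi i w x}$, whose integral over $[-B,B]$ is exactly $f'(x)$. The boundary case $|wh| = \frac{1}{2}$ is a branch point of $\log$, so the strict inequality is needed unless $F(\pm B) = 0$ (no spectral mass at $w = B$), which explains the relaxation stated in the theorem.

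The step I expect to be the main obstacle is justifying that the $\mathfrak{T}$-value of $\frac{1}{h}\sum_j (-1)^{j+1}\frac{\Delta_h^j}{j}$ genuinely equals this integral rather than merely agreeing with it formally. The honest route is: fix $x$, and consider the function $z \mapsto G(z) := \frac{1}{h}\sum_{j=1}^\infty (-1)^{j+1}\frac{(\Delta_h^j f)(x)}{j}\, z^{j}$. One must show $G(z)$ is analytic on a neighbourhood of the segment $[0,1]$ — this is where the Nyquist hypothesis does real work, because $(e^{2\pi i w h}-1)$ ranges over a circle of radius less than the radius of convergence of $\log(1+\zeta)$ along a suitable deformation only when $h < 1/(2B)$ — and then invoke Definition \ref{TDefinition} to conclude $\mathfrak{T}$-value $= G(1)$. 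The interchange of the resulting convergent power series with the (compact) integral over $[-B,B]$ is then routine by uniform convergence. Equivalently and perhaps more cleanly, one can pass through Lemma \ref{computingT}: compute $G(1)$ as $\lim_{\delta\to 0}\frac{1}{h}\sum_{j=1}^\infty \xi_\delta(j)(-1)^{j+1}\frac{\Delta_h^j f(x)}{j}$, push the Lindel\"of factors inside the spectral integral (legitimate since the kernel $(e^{2\pi i w h}-1)^j$ is bounded uniformly in $j$ on $[-B,B]$ when $|wh|\le\frac12$), and recognise the inner limit as the Lindel\"of sum of the convergent-in-the-Mittag-Leffler-star series for $\frac{1}{h}\log(1+\zeta)$ evaluated at $\zeta = e^{2\pi i w h}-1$.

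Finally, to obtain the general statement $D^r = \log^r(I+\Delta_h)/h^r$ for arbitrary $r$ (including $r$ negative, recovering Gregory's quadrature formula) and for the composition $z(h^rD^r) = z(\log^r(I+\Delta_h))$, I would bootstrap: having established the $r=1$ case as an operator identity valid on the space of bandlimited analytic functions with bandwidth $<\frac{1}{2h}$, note that $f'$ is again bandlimited with the same bandwidth (differentiation multiplies the spectrum by $2\pi i w$, which does not enlarge the support), so the identity can be iterated to give all positive integer $r$; negative $r$ follows because anti-differentiation on a bandlimited function with $F(0)$ handled separately likewise preserves bandwidth; and the general $z$ case then follows by applying the scalar function $z$ formally to the commuting operator $hD$, exactly as in the derivation preceding Eq \ref{Sec711HigherDeriv_5}, with all manipulations now justified because every series in sight is being summed in $\mathfrak{T}$ and $\mathfrak{T}$ respects linear combinations and (by Lemma \ref{Tproperties}) the Cauchy product that implements composition.
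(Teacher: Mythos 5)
Your proposal is correct and follows essentially the same route as the paper's own proof: pass to the Fourier/spectral representation, push the summability regularization ($\xi_\delta$ or the auxiliary variable $z$) inside the integral over $[-B,B]$, and recognize the inner sum as the Mittag-Leffler-star value of $\log^r(1+\zeta)$ at $\zeta=e^{iwh}-1$, with the Nyquist condition being exactly the requirement that the segment $[1,e^{iwh}]$ avoid the origin so the principal value $(iwh)^r$ is obtained. The only cosmetic differences are that the paper transforms both sides and invokes invertibility of $\mathcal{F}$ rather than interchanging sum and integral in the spectral representation, and it treats general $r$ directly via the coefficients of $\log^r(1+x)$ instead of bootstrapping from $r=1$.
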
 
\begin{proof} 
Let the Fourier transform of the function $f(x)$ be denoted $\mathcal{F}\{f\} = \mathbf{F}(w)$. Also, let $c_{r,j}$ be the coefficients of the series expansion of the function $\log^r{(1+x)}$. For example, we have $c_{1,j} = \frac{(-1)^{j+1}}{j}$. As defined earlier, let $\xi_\delta(j)$ be a function chosen such that the summability method $\lim_{\delta\to 0} \big\{ \sum_{j=0}^\infty \xi_\delta(j) a_j\big\}$ correctly sums any analytic function in its Mittag-Leffler star. For example, we have $\xi_\delta(j) = j^{-\delta j}$ in Lindel\"of summability method. Then, we have:
\begin{equation}\label{theorem711_1}
\mathcal{F}\Big\{\lim_{\delta\to 0} \big\{\frac{1}{h} \sum_{j=0}^\infty \xi_\delta(j)\,c_{r,j}\, \Delta_h^j\big\}\Big\} = \frac{1}{h} \lim_{\delta\to 0} \big\{\sum_{j=0}^\infty \xi_\delta(j)\,c_{r,j}\, \mathcal{F}\{\Delta_h^j\}\big\}
\end{equation} 
However, if $\mathcal{F}\{f\} = \mathbf{F}(w)$, then we know that $\mathcal{F}\{\Delta_h^j\} = \mathbf{F}(w)\, (e^{iwh}-1)^j$. Plugging this into the earlier equation Eq \ref{theorem711_1} yields: 
\begin{equation}\label{theorem711_2}
\mathcal{F}\Big\{\lim_{\delta\to 0} \big\{\frac{1}{h} \sum_{j=0}^\infty \xi_\delta(j)\,c_{r,j}\, \Delta_h^j\big\}\Big\} = \frac{\mathbf{F}(w)}{h} \lim_{\delta\to 0} \big\{\sum_{j=0}^\infty \xi_\delta(j)\,c_{r,j}\, (e^{iwh}-1)^j\big\}
\end{equation} 
Because the summability method correctly sums any function in its Mittag-Leffler star, then we know that Eq \ref{theorem711_3} holds as long as the line segment $[1, e^{iwh}]$ over the complex plane $\mathbb{C}$ does not pass through the origin. 
\begin{equation}\label{theorem711_3}
 \lim_{\delta\to 0} \big\{\sum_{j=0}^\infty \xi_\delta(j)\,c_{r,j}\, (e^{iwh}-1)^j\big\} = \log^r(e^{iwh}) = (iwh)^r
\end{equation} 
Because $\mathbf{F}(w)=0$ for all $|w|>B$, we need to guarantee that the line segment $[1, e^{iwh}]$ does not pass through the origin for all $|w|\le B$. However, to guarantee that such condition holds, the sampling interval $h$ has to satisfy the Nyquist criteria $h < \frac{1}{2B}$. Geometrically, $e^{iwh}$ forms an arc when parameterized by $w$ and this arc forms a half-circle that passes through the origin at $w=B$ if $h=\frac{1}{2B}$. Selecting $h < \frac{1}{2B}$  ensures that the arc never forms a half-circle for all $|w| \le B$, and that the line segment $[1, e^{iwh} ]$  does not pass the singularity point at the origin.

Therefore, if $f(x)$ is bandlimited with bandwidth $B$ and if $h<\frac{1}{2B}$, then the Fourier transform of the right-hand side of Eq \ref{Sec711HigherDeriv_4} evaluates to $(iw)^r\mathbf{F}(w)$, which is also the Fourier transform of $f'(x)$. Since the Fourier transform is invertible, both sides of Eq \ref{Sec711HigherDeriv_4} must be equivalent under stated conditions, when interpreted using the generalized definition $\mathfrak{T}$. 
\end{proof} \hrule\vspace{12pt} 

In order to see why the generalized definition $\mathfrak{T}$ is important, we start from Eq \ref{Sec711HigherDeriv_5} and apply the function $z(x)=\frac{1}{x}$ to both sides, which yields:
\begin{equation}\label{theorem711Ex_1} 
\frac{1}{I+\Delta_h} = e^{-hD}
\end{equation}
If $f(x)=e^x$ and $h=\log{2}$, then for $x=0$ the left-hand side evaluates to: 
\begin{equation}\label{theorem711Ex_2} 
\frac{1}{I+\Delta_h} = I-\Delta_h+\Delta_h^2 - \Delta_h^3 + \dotsm = 1-1+1-1+1 - \dotsm
\end{equation}
Of course, this series is divergent but its $\mathfrak{T}$ value is $\frac{1}{2}$ as discussed earlier. On the other hand, the right-hand side of Eq \ref{theorem711Ex_1} evaluates to: 
\begin{equation}\label{theorem711Ex_3} 
e^{-hD} = I-hD+\frac{h^2D^2}{2!} - \frac{h^3D^3}{3!} + \dotsm = e^{-\log{2}} = \frac{1}{2}
\end{equation}
Therefore, both sides agree as expected when interpreted using the generalized definition of infinite sums $\mathfrak{T}$. Theorem \ref{theorem711} allows us to deduce a stronger statement than the classical Shannon-Nyquist Sampling Theorem. \\ \hrule 
\begin{theorem}{\textbf{(The Half Sampling Theorem)}}\label{theorem712}
Given a bandlimited function $f(x)$ with bandwidth $B$ that is analytic over a domain $(a, \infty)$, then $f(x)$ for $a<x<\infty$ can be perfectly reconstructed from its discrete samples if: 
\begin{enumerate} 
\item The discrete samples are taken in the interval $(a, \infty)$. 
\item The sampling rate satisfies the Nyquist criteria. 
\end{enumerate} 
\end{theorem}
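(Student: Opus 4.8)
The plan is to leverage the machinery built in Theorem \ref{theorem711} together with Newton's interpolation formula in the form of Eq \ref{DiffValuesOfZ_8}. The key observation is that Theorem \ref{theorem711} already establishes, for a bandlimited $f(x)$ analytic on $[x,\infty)$ and sampling interval $h$ meeting the Nyquist criteria, that the forward-difference operator $\Delta_h$ (built entirely from samples $f(x), f(x+h), f(x+2h),\dots$ lying to the \emph{right} of $x$) reproduces all infinitesimal derivatives $f^{(r)}(x)$ when the resulting series is read in the sense of $\mathfrak{T}$. First I would fix an arbitrary point $x$ with $a < x < \infty$, and choose a reference point $x_0$ with $a < x_0 < x$ at which we begin sampling; since $x_0 > a$, the whole forward orbit $x_0, x_0+h, x_0+2h,\dots$ stays inside $(a,\infty)$, which is exactly where the samples are permitted to be taken and where $f$ is analytic.

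Next I would apply Theorem \ref{theorem711} at the point $x_0$ to recover every derivative $f^{(r)}(x_0)$ from the one-sided forward differences $\Delta_h^r$ at $x_0$, i.e. $f^{(r)}(x_0) = \lim_{\delta\to 0}\{h^{-r}\sum_{j=0}^\infty \xi_\delta(j)\,c_{r,j}\,\Delta_h^j\}$, all interpreted in $\mathfrak{T}$. Then I would feed these reconstructed derivatives into the $\Xi$ summability expansion of $f$ around $x_0$, or equivalently invoke the discrete mirror Eq \ref{DiffValuesOfZ_8}: $f(x) = \lim_{n\to\infty}\sum_{j=0}^n \chi_n(j)\,(\Delta_h^j/h^j)\,(x-x_0)^j/j!$. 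Here two things must be checked: (i) that $[x_0,x]$ lies in the interior of the region of convergence of $\Xi$ for $f$ — this follows because $f$, being bandlimited and analytic, is entire along the real axis, so the Mittag-Leffler star contains the whole segment and indeed the Taylor coefficients grow only like $O(\kappa^n)$ for any $\kappa$, well inside the convergence regime of Claim \ref{claim412}; and (ii) that the nested limits (the $\mathfrak{T}$-limit reconstructing $f^{(r)}(x_0)$ inside the $\Xi$-limit producing $f(x)$) may legitimately be combined — this is where linearity, stability, and regularity of $\mathfrak{T}$ from Lemma \ref{Tproperties}, together with the Fourier-side argument of Theorem \ref{theorem711}, do the work, since on the Fourier side everything collapses to the identity $(iw)^r \mathbf{F}(w)$ multiplied by the interpolation kernel, and the inverse Fourier transform is unambiguous.

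Concretely, I expect the cleanest route is to bypass the term-by-term nesting altogether and argue directly in the frequency domain, exactly as in the proof of Theorem \ref{theorem711}: the discrete samples $\{f(x_0+jh)\}_{j\ge 0}$ determine, through Newton's formula Eq \ref{DiffValuesOfZ_7}–\eqref{DiffValuesOfZ_8}, a function whose Fourier transform is $\mathbf{F}(w)$ multiplied by a kernel that equals $1$ for all $|w|\le B$ precisely when $h$ satisfies Nyquist (strict inequality, or equality if there is no component at $w=B$), because the relevant line segments $[1,e^{iwh}]$ avoid the origin. Since $\mathbf{F}(w)=0$ for $|w|>B$, the reconstructed transform coincides with $\mathbf{F}(w)$ everywhere, and invertibility of the Fourier transform yields $f(x)$ for every $x\in(a,\infty)$. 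The improvement over the classical Shannon-Nyquist statement is that only a \emph{one-sided} (half) sample set on $(a,\infty)$ is used, which is possible because $\mathfrak{T}$ assigns values to the divergent Newton series that a na\"ive summation cannot.

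\textbf{Main obstacle.} The delicate point is the justification of exchanging the Fourier transform with the $\mathfrak{T}$-regularized infinite sum and the interchange of the two nested summability limits; the formal steps mirror Theorem \ref{theorem711} but now the outer $\Xi$-limit of Newton's formula is also divergent for points $x$ far from $x_0$, so one must verify uniform control of $\xi_\delta$ and $\chi_n$ simultaneously. I expect this to be handled exactly as in Theorem \ref{theorem711}'s proof — reduce to the geometric-series / arc argument where $e^{iwh}$ traces a sub-half-circle for $|w|\le B$ — rather than by any new estimate, so the theorem should follow with the stated hypotheses essentially unchanged from Theorem \ref{theorem711}.
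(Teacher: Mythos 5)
Your proposal is correct and follows essentially the same route as the paper: apply Theorem \ref{theorem711} at a single point $x_0\in(a,\infty)$ to recover all derivatives $f^{(r)}(x_0)$ from the one-sided forward differences, then use analyticity to extend to all of $(a,\infty)$. The paper's own proof stops there by simply invoking rigidity of analytic functions, so all of your additional machinery (Newton's formula, the nested $\mathfrak{T}$/$\Xi$ limits, the Fourier-kernel interchange) is an explicit reconstruction scheme the paper does not attempt and does not need for the statement as given.
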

\begin{proof} 
By Theorem \ref{theorem711}, if the sampling rate satisfies the conditions of this theorem, then all higher derivatives $f^{(r)}(x_0)$ for some $x_0\in(a, \infty)$ can be computed exactly. By rigidity of analytic functions, this is sufficient to reconstruct the entire function in the domain $(a, \infty)$.
\end{proof} \hrule\vspace{12pt} 

Historically, extensions of the Sampling Theorem have generally focused on two aspects: 
\begin{enumerate} 
\item Knowing how our \emph{apriori} information about the function can reduce required sampling rate. Examples include sampling non-bandlimited signals, which is commonly referred to as \emph{multiresolution} or \emph{wavelet sampling theorems} (see for instance \cite{Vaid2001}). 
\item Knowing how \emph{side information} that are transmitted \emph{along with} the discrete samples can reduce the required sampling rate. For example, whereas the bandwidth of $\sin^3(2\pi x)$ is three-times larger than the bandwidth of $\sin(2\pi x)$, we can transmit the function $\sin{(2\pi x)}$ along with some side information telling the receiver that the function should be cubed. Examples of this type of generalization is discussed in \cite{Zhu92}. 
\end{enumerate} 

In Theorem \ref{theorem712}, on the other hand, it is stated that statement of the Shannon-Nyquist Sampling Theorem itself can be strengthened. In particular, if $f(x)$ is analytic over a domain $(a, \infty)$, then $f(x)$ can be perfectly reconstructed from its discrete samples if the sampling rate satisfies conditions of of the theorem. Note that unlike the classical Shannon-Nyquist Sampling Theorem, which states that samples have to be taken from $-\infty$ to $+\infty$, this theorem states that only \lq\lq half'' of those samples are sufficient, hence the name. An illustrative example is depicted in Figure \ref{NewtonInterpExSin}, where reconstruction is performed using Newton's interpolation formula. In the figure, the samples are taken from the function $\sin(x)$ with sampling interval $h=\frac{1}{2}$ and starting point $x_0=0$. Clearly, all 5\textsuperscript{th} degree, 10\textsuperscript{th} degree, and 20\textsuperscript{th} degree approximations work extremely well for $x\ge x_0$ as expected, but even the function's past can be reconstructed perfectly as higher degree approximations are used. 
\begin{figure} [h]
\centering
\includegraphics[scale=0.4]{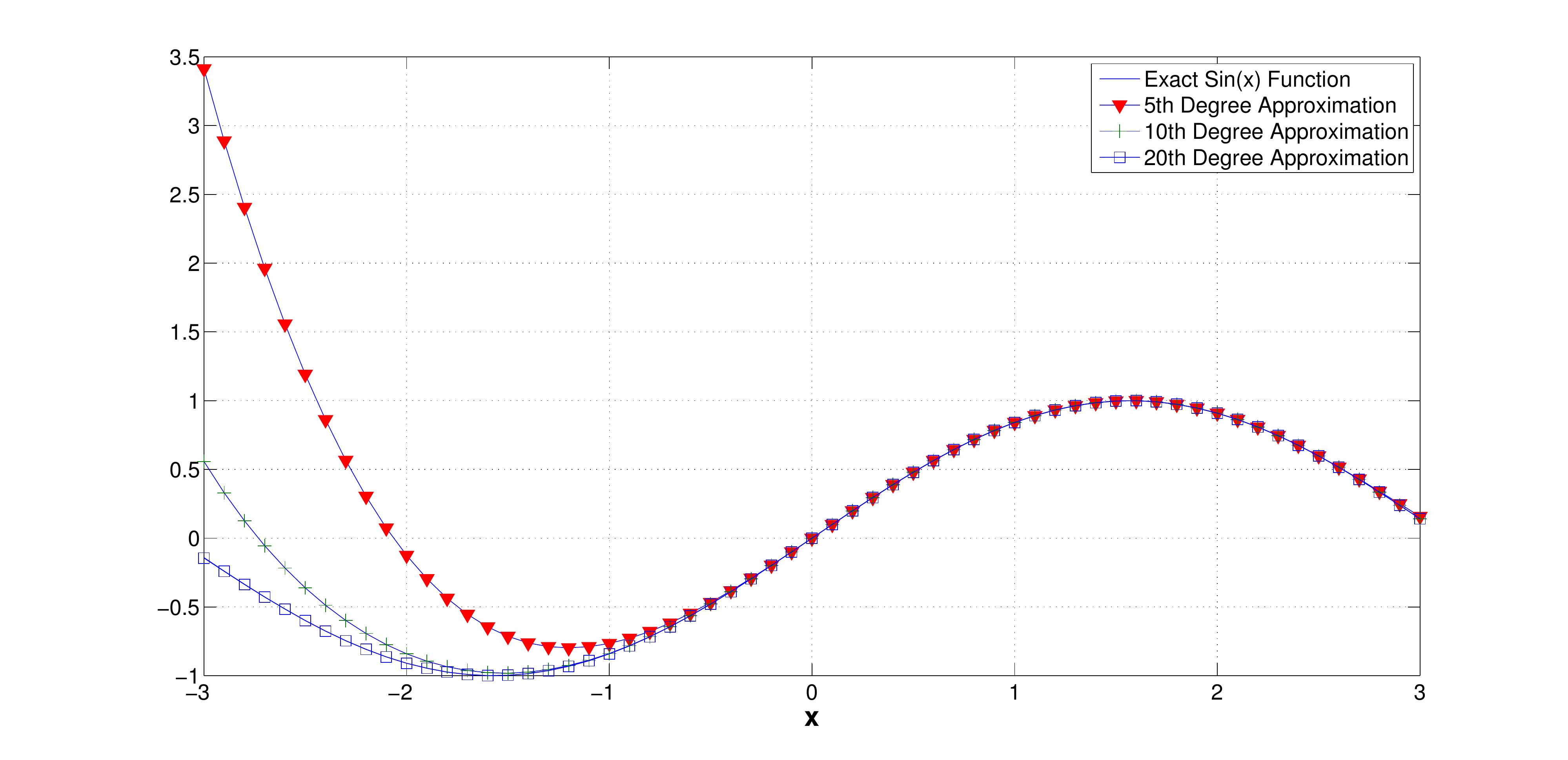}
\caption[Newton's interpolation formula applied to the discretized function $\sin{x}$]{Newton's interpolation formula applied to the discretized function $f(x)=\sin{x}$, where sampling interval is $h=\frac{1}{2}$.}
\label{NewtonInterpExSin}
\end{figure}

While  statement of Theorem \ref{theorem712} does provide the answer we were looking for, it is unfortunate that the classical definition of Fourier transforms cannot adequately capture the meaning of bandwidth. For instance, the function $\log{x}$, which is analytic over the domain $(0, \infty)$ is not Fourier transformable even though it is straightforward to prove that its bandwidth must, in fact, be equal to zero. To see this, we note here the following two well-known properties:
\begin{enumerate}
\item Adding a constant term to a function does not change its bandwidth. In other words, if $\mathfrak{B}f$ stands for the bandwidth of the function $f(x)$, then $\mathfrak{B}(f+c) = \mathfrak{B}(f)$. 
\item Rescaling units expands/shrinks bandwidth. In other words, $\mathfrak{B}f(c\,x) = c\,\mathfrak{B}f$ for all $c>0$. 
\end{enumerate} 

Of course, these two properties can be derived immediately using the definition of Fourier transformation. However, a simpler proof that does not rely on Fourier theory can also be inferred intuitively upon using the direct correspondence between the notion of bandwidth and the notion of minimum sampling rate. Here, adding a constant term to a function is immediately carried over to the discrete samples so the minimum sampling rate should not be altered by adding a constant term. Similarly,  defining $f(c\,x)$ is equivalent to \emph{rescaling} unit of the independent variable $x$ so the minimum sampling rate should be rescaled by the same proportion. Now, if we apply both properties to the logarithmic function $\log{x}$, we have: 
\begin{equation}\label{LogBWisZero_1} 
\mathfrak{B}\log{x} = \mathfrak{B}(\log{x} + c) = \mathfrak{B}\log{(e^c\,x)} = e^c\,\mathfrak{B}\log{x} 
\end{equation} 
However, because Eq \ref{LogBWisZero_1} must hold for all $c$, we must have $\mathfrak{B}\log{x}=0$. We can verify this by using Eq \ref{Sec711HigherDeriv_4}. Here, we note that regardless of the length of the sampling interval $h$, Eq \ref{Sec711HigherDeriv_4} seems to hold for the logarithmic function for all $r$.  For instance, choosing $h=1$ and $r=1$ yields the identity in Eq \ref{LogBWisZero_2}; a result that was also proved in \cite{Sondow08}. In \cite{GoldMartin2005}, the identity is both proved and generalized to $e^\frac{1}{x}$ for $x\in\mathbb{N}$ using probability theory. We will present similar other identities at a later section.
\begin{equation}\label{LogBWisZero_2} 
e=\Big(\frac{2}{1}\Big)^{\frac{1}{1}} \Big(\frac{2^2}{1\cdot 3}\Big)^{\frac{1}{2}}\Big(\frac{2^3\cdot 4}{1\cdot 3^3}\Big)^{\frac{1}{3}}\Big(\frac{2^4\cdot 4^4}{1\cdot 3^6\cdot 5}\Big)^{\frac{1}{4}}\dotsm 
\end{equation} 

\section{Discrete Analog of the Euler-like Summation Formulas} 
In this section, we present the discrete analog of the many Euler-like summation formulas deduced in earlier chapters. Our starting point is Eq \ref{DiffValuesOfZ_4}, which extends the classical Riemann sum and presents higher degree approximations to integrals. Choosing $h=1$ yields Gregory's quadrature formula given by Eq \ref{Sect72_Gregory}. Here, the constants $G_r=\{1, \frac{1}{2}, -\frac{1}{12}, \frac{1}{24}, -\frac{19}{720}, \frac{3}{160}, \dotsm\}$ are called Gregory coefficients (\begingroup\ttfamily OEIS A002206 (numerators) and A002207 (denominators)\endgroup \cite{OEISGreg1, OEISGreg2}). 

\begin{equation}\label{Sect72_Gregory} 
\sum_{k=a}^n g(k) = \int_a^n g(t)\,dt + \frac{g(n)+g(a)}{2} - \sum_{r=2}^\infty G_r \big[\Delta^{r-1}g(n) - \Delta^{r-1}g(a)\big]
\end{equation} 
Many methods exist for computing Gregory coefficients. One convenient recursive formula is given by \cite{OEISGreg1,Kluyver1924}: 
\begin{equation} \label{GregForm_1}
G_n=\frac{(-1)^{n+1}}{n+1}-\sum_{k=1}^{n-1} (-1)^k\,\frac{G_{n-k}}{k+1}, \\ G_0=1
\end{equation}

Clearly, Gregory's formula is analogous to the Euler-Maclaurin summation formula where finite differences replace infinitesimal derivatives. To deduce the analog of remaining Euler-like summation formulas, we note again that the discrete analog of Taylor's series expansion is given by Newton's interpolation formula: 
\begin{equation}\label{Sect72_NewtonInterp} 
f(x)= f(x_0) + \frac{\Delta}{1!} (x-x_0)_1 + \frac{\Delta^2}{2!} (x-x_0)_2 + \frac{\Delta^3}{3!} (x-x_0)_3 + \dotsm 
\end{equation}
Here, $\Delta=f(x_0+1)-f(x_0)$ and $(z)_n$ is the \emph{falling factorial} defined by $(z)_n = z(z-1)(z-2)\dotsm (z-n+1)$. One way of interpreting this equation is to note that it yields consistent results when the forward difference operator $\Delta$ is applied to both sides of the equation, which, in turn, follows because $\Delta(z)_n = n\,(z)_{n-1}$. 
Using analytic summability theory, and in a manner that is analogous to the development of Chapter \ref{Chapter5}, we have: 
\begin{equation}\label{Sect72_DivergSum_1}
\sum_{k=a}^\infty e^{i\theta k} g(k) = e^{i\theta a}\sum_{r=0}^\infty \frac{\Phi_r}{r!} \Delta^r g(a-1),\\ \text{ where } \Phi_r=\sum_{k=1}^\infty e^{i\theta k} (k)_r
\end{equation}
Here, we note that $\Phi_r$ is the $\mathfrak{T}$ value of an infinite sum that also involves the falling factorial function $(k)_r$. Because $(k)_r$ is a polynomial of degree $r$, the constants $\Phi_r$ can be computed from the constants $\Theta_r = \sum_{k=1}^\infty e^{i\theta k} k^r$. In particular, using Stirling numbers of the first kind $s(r,k)$, we have: 
\begin{equation}\label{PhiExpressionTheta}
\Phi_r = \sum_{k=0}^r s(r,k)\, \Theta_k
\end{equation}
Eq \ref{Sect72_DivergSum_1}, in turn, implies that: 
\begin{equation}\label{Sect72_DivergSum_2} 
\sum_{k=a}^n e^{i\theta k} g(k) = e^{i\theta} g(a) - \sum_{r=0}^\infty \frac{\Phi_r}{r!} \Big(e^{i\theta n}\,\Delta^r g(n)-e^{i\theta a}\,\Delta^r g(a)\Big)
\end{equation}
In addition, we have that the $\mathfrak{T}$ value of divergent sums is formally given by: 
\begin{equation}\label{Sect72_DivergSum_3} 
\sum_{k=a}^\infty e^{i\theta k} g(k) = \sum_{k=a}^n e^{i\theta k} g(k) + \sum_{r=0}^\infty \frac{\Phi_r}{r!} e^{i\theta n}\,\Delta^r g(n)
\end{equation}
Equally important, last equation is also a method of deducing asymptotic expressions to the oscillating sums $ \sum_{k=a}^n e^{i\theta k} g(k)$. Finally, to evaluate simple finite sums directly, in a manner that is analogous to the development of Chapter \ref{Chapter6}, we can use the following equation, which holds formally for all $s$: 
\begin{equation}\label{Sect72_DivergSum_4} 
\sum_{k=a}^n g(k) = \sum_{r=0}^\infty \frac{d_r(n)-d_r(a-1)}{r!} \Delta^r g(s) + \sum_{k=0}^s g(k+a)-g(k+n+1)
\end{equation}
Here, $d_r(n) = \sum_{k=1}^{n+1} (k)_r$. The proof of Eq \ref{Sect72_DivergSum_4} is analogous to the proof of Theorem \ref{theorem621}. If $\Delta^{m+1} g(s) \to 0$ as $s\to\infty$, Eq \ref{Sect72_DivergSum_4} can be used as an asymptotic expression and the infinite sum needs not be entirely evaluated. Similarly, we have the following evaluation method for oscillating finite sums:
\small
\begin{equation}\label{Sect72_EvalOscillatingSum_1} 
\sum_{k=a}^n e^{i\theta k} g(k) = e^{i\theta s} \sum_{r=0}^\infty \frac{\Upsilon_r(n) - \Upsilon_r(a-1)}{r!} \Delta^r g(s) + \sum_{k=0}^s e^{i\theta(k+a)}\,g(k+a) - e^{i\theta(k+n+1)}\,g(k+n+1)
\end{equation}
\normalsize
Here, $\Upsilon_r(x) = \sum_{k=1}^{x+1} e^{i\theta k}(k)_r$, which is available in analytic closed form. In particular, if $\Omega_r(x)$ is defined as in Theorem \ref{theorem631}, then we have: 
\begin{equation} 
\Upsilon_r(x) = \sum_{k=0}^r s(r,k)\, \Omega_r(x)
\end{equation}
Validity of the new formulas above is illustrated in the following section. 

\section{Applications to Summability Calculus} 
The rules derived in the previous section can be applied to arbitrary discrete functions. In the context of Summability Calculus, these rules present alternative formal methods of computing fractional sums using their unique most natural generalizations. For example, we can safely replace the Euler-Maclaurin summation formula with Gregory's formula to find boundary values of higher order derivatives so that the non-arbitrary constant in the differentiation rule of simple finite sums can be determined. In addition, we can use the methods given in Eq \ref{Sect72_DivergSum_4} and Eq \ref{Sect72_EvalOscillatingSum_1} to compute fractional sums directly instead of using the theorems of Chapter \ref{Chapter6}. 

However, we need to exercise caution when using any discretization of continuous functions to avoid under-sampling! Once discrete samples carry complete information of the original continuous functions, then the Calculus of Finite Differences can indeed become a useful tool for analyzing simple finite sums and products. In this section, we will present a few examples that illustrate the latter point.

\subsection{Example I: Binomial Coefficients and Infinite Products} 
In Eq \ref{LogBWisZero_2}, we derived an infinite product formula for the natural logarithmic base $e$ by applying Eq \ref{Sec711HigherDeriv_1} to the logarithmic function. Of course, the logarithmic function is merely one possibility. For example, if we apply Eq \ref{Sec711HigherDeriv_1} to the simple finite sum $\sum_{k=1}^n \log{k}$, we obtain Eq \ref{Ex731_1}. Eq \ref{Ex731_1} was first proved by Ser in 1926 and was later rediscovered by Sondow in 2003 using hypergeometric series \cite{Sondow03, Sondow08}. 
\begin{equation}\label{Ex731_1} 
e^\lambda = \Big(\frac{2}{1}\Big)^{\frac{1}{2}} \Big(\frac{2^2}{1\cdot 3}\Big)^{\frac{1}{3}}\Big(\frac{2^3\cdot 4}{1\cdot 3^3}\Big)^{\frac{1}{4}}\Big(\frac{2^4\cdot 4^4}{1\cdot 3^6\cdot 5}\Big)^{\frac{1}{5}}\dotsm 
\end{equation}

Moreover, we can use the same approach to derive additional interesting identities. For example, if we apply Eq \ref{Sec711HigherDeriv_1} to the log-superfactorial function, $\sum_{k=1}^n \log{k!}$, we obtain:
\begin{equation}\label{Ex731_2} 
\frac{e^{\lambda+\frac{1}{2}}}{\sqrt{2\pi}} = \Big(2\Big)^{\frac{1}{2}} \Big(\frac{2}{3}\Big)^{\frac{1}{3}}\Big(\frac{2\cdot 4}{3^2}\Big)^{\frac{1}{4}}\Big(\frac{2\cdot 4^3}{3^3\cdot 5}\Big)^{\frac{1}{5}}\dotsm 
\end{equation}
Again, the internal exponents are given by the binomial coefficients alternating in sign. Dividing Eq \ref{Ex731_1} by Eq \ref{Ex731_2} yields Eq \ref{Ex731_3}. This equation was derived in \cite{Sondow08} using double integrals via analytic continuation of Lerch's Transcendent.
\begin{equation}\label{Ex731_3} 
\sqrt{\frac{2\pi}{e}} = \Big(\frac{2}{1}\Big)^{\frac{1}{3}} \Big(\frac{2^2}{1\cdot 3}\Big)^{\frac{1}{4}}\Big(\frac{2^3\cdot 4}{1\cdot 3^3}\Big)^{\frac{1}{5}}\Big(\frac{2^4\cdot 4^4}{1\cdot 3^6\cdot 5}\Big)^{\frac{1}{6}}\dotsm 
\end{equation}

Finally, suppose we have $f(n)=\sum_{k=1}^n \log{\frac{4k-1}{4k-3}}$, and we wish to find its derivative at $n=0$. Using Summability Calculus, we immediately have by Theorem \ref{theorem231} that the following equation holds: 
\begin{equation}\label{Ex731_4}
f'(0) = 4\,\sum_{k=1}^\infty \Big(\frac{1}{4k-3} - \frac{1}{4k-1}\Big) = 4\,\Big(1-\frac{1}{3}+\frac{1}{5}-\frac{1}{7}+\dotsm\Big) = \pi
\end{equation} 
Using Eq \ref{Sec711HigherDeriv_1}, on the other hand, yields the infinite product formula:
\begin{equation}\label{Ex731_5}
f'(0) = \log \Big[ \big(\frac{3}{1}\big)^\frac{1}{1} \big(\frac{3\cdot 5}{1\cdot 7}\big)^\frac{1}{2} \big(\frac{3\cdot 5^2\cdot 11}{1\cdot 7^2\cdot 9}\big)^\frac{1}{3}\dotsm\Big]
\end{equation} 
Equating both equations yields Eq \ref{Ex731_6}. An alternative proof to such identity is given in \cite{Sondow08}. 
\begin{equation}\label{Ex731_6}
e^\pi =  \big(\frac{3}{1}\big)^\frac{1}{1} \big(\frac{3\cdot 5}{1\cdot 7}\big)^\frac{1}{2} \big(\frac{3\cdot 5^2\cdot 11}{1\cdot 7^2\cdot 9}\big)^\frac{1}{3}\dotsm
\end{equation}

\subsection{Example II: The Zeta Function Revisited} 
Suppose we start with the harmonic numbers $f(n)=\sum_{k=1}^n \frac{1}{k}$ and we wish to find its derivative at $n=0$. We know by Summability Calculus that the derivative is given by $\zeta_2$. Since we have for the harmonic numbers $\Delta^p = \frac{{(-1)}^{p+1}}{p}$, applying Eq \ref{Sec711HigherDeriv_1} also yields the original series representation for $\zeta_2$. However, such approach can also be applied to the generalized harmonic numbers $\sum_{k=1}^n \frac{1}{k^s}$, which would yield different series representations for the Riemann zeta function $\zeta_s$ as given by Eq \ref{Ex732_1}. Eq \ref{Ex732_1} follows immediately by Summability Calculus and Eq \ref{Sec711HigherDeriv_1}.  Interestingly, such identity is a globally convergent expression to the Riemann zeta function for all $s\in\mathbb{C}$. It was discovered by Helmut Hasse in 1930 \cite{MathWorldRiemannZeta}.  
\begin{equation}\label{Ex732_1} 
\zeta_s = \frac{1}{1-s} \sum_{k=1}^\infty \frac{1}{k} \sum_{j=1}^k \binom{k-1}{j-1} \frac{(-1)^{j}}{j^{s+1}}
\end{equation}
Note that both Eq \ref{Ex732_1} and infinite product representation for $e^\lambda$ in Eq \ref{Ex731_1} can both be used to establish the following well-known result: 
\begin{equation}\label{Ex732_2} 
\lim_{s\to 1} \big\{\zeta_s -\frac{1}{s-1}\big\} = \lambda
\end{equation}

\subsection{Example III: Identities Involving Gregory's Coefficients} 
Gregory's formula presents a method of numerical computation of integrals that is similar to the Euler-Maclaurin summation formula. In this example, we will use Gregory's formula to derive identites that relate Gregory coefficients with many fundamental constants. Our starting point would be the case of \emph{semi-linear} simple finite sums $\sum_{k=a}^n g(k)$ that have well-known asymptotic expansions $S_g(n)$ such that $\lim_{n\to\infty} \{S_g(n)-\sum_{k=a}^n g(k)\} = 0$. To recall earlier discussion, if a simple finite sum is semi-linear, then $g'(n)\to 0$ as $n\to\infty$, which implies that $\Delta g(n)\to 0$ as $n\to\infty$. Now, by Gregory's formula:
\begin{equation}\label{Ex733_1} 
\lim_{n\to\infty} \Big\{\sum_{k=a}^n g(k) - \frac{g(n)}{2} - \int_a^n g(t)\,dt\Big\}=  \sum_{r=1}^\infty G_r\,\Delta^{r-1} g(a)
\end{equation}

For example, if $g(k)=\frac{1}{k}$, and $a=1$, then $\Delta^{p} g(a) = \frac{(-1)^{p}}{p+1}$. Plugging this into the above formula yields: 
\begin{equation}\label{Ex733_2} 
\sum_{r=1}^\infty \frac{|G_r|}{r} = \lambda = \frac{1}{2} + \frac{1}{24} + \frac{1}{72} + \frac{19}{2880} + \frac{3}{800} + \frac{863}{362880} + \dotsm
\end{equation}
Eq \ref{Ex733_2} is one of the earliest expressions discovered that express Euler's constant $\lambda$ as a limit of rational terms. It was discovered by Kluyver in 1924 using the integral representation of the digamma function $\psi(n)$\cite{Kluyver1924}. Similarly, if we now let $g(k)=H_k$ and choose $a=0$, then we have: 
\begin{align*}\label{Ex733_3} 
\lim_{n\to\infty} \Big\{\sum_{k=0}^n H_k - \frac{H_n}{2} - &\int_0^n H_t\,dt\Big\}=  \lim_{n\to\infty} \Big\{(n+1)\big(H_{n+1}-1\big)-\frac{H_n}{2}-\lambda n- \log{n!}\Big\} \\ 
&= \lim_{n\to\infty} \Big\{n\,(H_n-\log{n}-\lambda)+\frac{H_n-\log{n}-\log{2\pi}}{2}\Big\} \\ 
&=\frac{1+\lambda-\log{2\pi}}{2} 
\end{align*}
Because $\Delta^{p} H_0 = \frac{(-1)^{p+1}}{p}$, we have: 
\begin{equation}\label{Ex733_4} 
\sum_{r=2}^\infty \frac{|G_r|}{r-1} =\frac{\log{2\pi}-1-\lambda}{2} =  \frac{1}{12} + \frac{1}{48} + \frac{19}{2160} + \dotsm
\end{equation}

Of course, such approach can be applied to many other functions. In addition, we have by Gregory's formula: 
\begin{equation}\label{Ex733_5} 
\sum_{k=a}^{a-1} g(k) = \int_a^{a-1} g(t)\,dt + \frac{g(a)+g(a-1)}{2} + \sum_{r=2}^\infty G_r \Delta^{r} g(a-1)
\end{equation}
Here, we have used the fact that $\Delta^{r-1} g(a)- \Delta^{r-1} g(a-1) = \Delta^{r} g(a-1)$. However, by the empty sum rule, Eq \ref{Ex733_5} implies that: 
\begin{equation}\label{Ex733_6} 
\int_{a-1}^{a} g(t)\,dt = \frac{g(a)+g(a-1)}{2} + \sum_{r=2}^\infty G_r\, \Delta^{r} g(a-1)
\end{equation}
Last equation allows us to deduce a rich set of identities. For example, if $g(k)=\frac{1}{k}$ and $a=2$, we have the following identity (compare it with Eq \ref{Ex733_2} and Eq \ref{Ex733_4}): 
\begin{equation}\label{Ex733_7} 
\sum_{r=1}^\infty \frac{|G_r|}{r+1} = 1-\log{2}
\end{equation}
Similarly, using $g(k) = \sin{\frac{\pi}{3}k}$ or $g(k) = \cos{\frac{\pi}{3}k}$ yield identities such as:
\begin{equation}\label{Ex733_8} 
|G_1| + |G_2| -|G_4| - |G_5| +|G_7|+|G_8| - |G_{10}| - |G_{11}| + \dotsm = \frac{\sqrt{3}}{\pi}
\end{equation}
\begin{equation}\label{Ex733_9} 
|G_2| + |G_3| -|G_5| - |G_6| +|G_8|+|G_9| - |G_{11}| - |G_{12}| + \dotsm = \frac{2\sqrt{3}}{\pi} -1
\end{equation}
\begin{equation}\label{Ex733_10} 
|G_1| - |G_3| -|G_4| + |G_6| +|G_7|-|G_9| - |G_{10}| + |G_{12}| + \dotsm = 1- \frac{\sqrt{3}}{\pi}
\end{equation}

Many other identities can be easily deduced using the series expansion of $\log^{-1}{(1+x)}$.
\section{Summary of Results} 
Throughout previous chapters, the focus of Summability Calculus has been on expressing unique natural generalization to simple and convoluted finite sums using infinitesimal derivatives. This manifested markedly in the Euler-like summation formulas. Starting from Chapter \ref{Chapter4}, however, the notion of summing or defining divergent series using a regular, stable, and linear method was introduced via a formal definition $\mathfrak{T}$. Such definition allowed us to expand the real of Summability Calculus quite naturally to oscillating finite sums, which simplified their analysis considerably, and allowed us to fill many gaps. 

While such approach is nearly complete, it is shown in this final chapter that Summability Calculus can still be expanded even further by expressing almost all its main results alternatively using the language of finite differences. In this chapter, Summability Calculus was used to derive foundational results in the Calculus of Finite Differences such as the discrete definition of derivatives, Newton's interpolation formula, and Gregory's integration formula. Such results were, in turn, used in conjunction with analytic summability theory to prove a stronger version of the celebrated Shannon-Nyquist Sampling Theorem, and to deduce the discrete analog of the many Euler-like summation formulas. In addition, it is illustrated how the Calculus of Finite Differences is indeed a valuable tool in analyzing finite sums; hence an integral part of Summability Calculus. 


\end{document}